\documentclass[preprint,11pt,3p]{elsarticle}


\usepackage{times}
\usepackage{color}
\usepackage[colorlinks=true]{hyperref}

\usepackage{amsmath,amssymb,amsthm,amscd}
\numberwithin{equation}{section}

\newtheorem{theorem}{Theorem}[section]

\newtheorem{proposition}{Proposition}[section]
\newtheorem{lemma}{Lemma}[section]
\newtheorem{definition}{Definition}[section]

\newtheorem{remark}{Remark}[section]
\newtheorem{example}{Example}[section]
\biboptions{sort&compress}

\journal{Elsevier}

\begin{document}
\begin{frontmatter}

\title{Multiple sign-changing  solutions for semilinear subelliptic Dirichlet problem}
\author[label1]{Hua Chen\corref{cor1}}
\ead{chenhua@whu.edu.cn}
\author[label2]{Hong-Ge Chen}
\ead{hongge\_chen@whu.edu.cn}
\author[label1]{Jin-Ning Li}
\ead{lijinning@whu.edu.cn}
\author[label1]{Xin Liao}
\ead{xin_liao@whu.edu.cn}

\address[label1]{School of Mathematics and Statistics, Wuhan University, Wuhan 430072, China}
\address[label2]{Wuhan Institute of Physics and Mathematics, Innovation Academy for Precision Measurement Science \\
 and Technology, Chinese Academy of Sciences, Wuhan 430071, China}

\cortext[cor1]{corresponding author}
\begin{abstract}
We study the following perturbation from symmetry problem for the semilinear subelliptic equation
\[ \left\{
      \begin{array}{cc}
      -\triangle_{X} u=f(x,u)+g(x,u) & \mbox{in}~\Omega, \\[2mm]
      u\in H_{X,0}^{1}(\Omega),\hfill
      \end{array}
 \right.  \]
where $\triangle_{X}=-\sum_{i=1}^{m}X_{i}^{*}X_{i}$ is the self-adjoint sub-elliptic operator associated with H\"{o}rmander vector fields $X=(X_{1},X_{2},\ldots,X_{m})$, $\Omega$ is an open bounded subset in $\mathbb{R}^n$, and $H_{X,0}^{1}(\Omega)$ denotes the weighted Sobolev space. We establish multiplicity results for sign-changing solutions using a perturbation method alongside refined techniques for invariant sets.  The pivotal aspect lies in the estimation of the lower bounds of min-max values associated with sign-changing critical points.  In this paper, we construct two distinct lower bounds of these min-max values. The first one is derived from the lower bound of Dirichlet eigenvalues of $-\triangle_{X}$, while the second one is based on the Morse-type estimates and  Cwikel-Lieb-Rozenblum type inequality in degenerate cases. These lower bounds provide different sufficient conditions for multiplicity results, each with unique advantages and are not mutually inclusive, particularly in the general non-equiregular case. This novel observation suggests that in some sense, the situation for sub-elliptic equations would have essential difference from the classical elliptic framework.

\end{abstract}
\begin{keyword}
 Degenerate elliptic equations \sep H\"{o}rmander operators\sep sign-changing solutions\sep   perturbation method\sep Morse index.

 \MSC[2020] 35A15\sep 35H20\sep 35J70
\end{keyword}
\end{frontmatter}
\section{Introduction and main results}
Let $X=(X_{1},X_{2},\ldots,X_{m})$ be a system of real smooth vector fields defined on an open domain $W\subset \mathbb{R}^{n}~(n\geq 2)$, satisfying the following condition:
\[ {\rm dim~ Lie}\{X_{1},X_{2},\ldots,X_{m}\}(x)=n, \qquad \forall x\in W. \leqno(H) \]
Condition (H) is known as the H\"{o}rmander's condition (cf. \cite{hormander1967}), and the vector fields $X$ satisfying condition (H) are usually referred to as H\"{o}rmander vector fields. We then denote by $ \triangle_{X}:=-\sum_{i=1}^{m}X_{i}^{*}X_{i}$ the H\"ormander operator associated with $X$, where $X_{i}^{*}=-X_{i}-{\rm div}(X)$ is the formal adjoint of $X_{i}$.

In this paper, we study the existence of sign-changing solutions to the semilinear subelliptic Dirichlet problem
\begin{equation}\label{problem1-1}
\left\{
      \begin{array}{cc}
      -\triangle_{X} u=f(x,u)+g(x,u) & \mbox{in}~\Omega, \\[2mm]
      u\in H_{X,0}^{1}(\Omega). \hfill           &
      \end{array}
 \right.
\end{equation}
Here, $\Omega\subset\subset W$ is a bounded open subset, $H_{X,0}^{1}(\Omega)$ denotes the Sobolev space associated with vector fields $X$ (precise definition provided in Section \ref{Section2} below),
   and $f\in C(\overline{\Omega}\times\mathbb{R})$ is a function satisfying the following assumptions:
 \begin{enumerate}[($f$1)]
 \item  $\lim_{u\to 0}{\frac{f(x,u)}{u}}= 0$ uniformly in $x\in\overline{\Omega}$.
            \item There exists $2<p<2_{\tilde{\nu}}^{*}:=\frac{2\tilde{\nu}}{\tilde{\nu}-2}$ and  $C>0$ such that
         \[ |f(x,u)|\leq C(1+|u|^{p-1})\qquad\forall (x,u)\in \overline{\Omega}\times \mathbb{R}, \]
       where  $\tilde{\nu}\geq 3 $ is the generalized M\'{e}tivier index (also called the non-isotropic dimension of $\Omega$ related to the vector fields $X$, see Definition \ref{def2-1} below).
      \item $f(x,-u)=-f(x,u)$ for all $(x,u)\in\overline{\Omega}\times \mathbb{R}$.
 \item There exist $\mu>2$ and $R_0>0$ such that
\[0<\mu F(x,u)\leq f(x,u)u\]
for all  $x\in\overline{\Omega}$ and all $u\in \mathbb{R}$ with $|u|\geq R_0$, where $F(x,u)=\int_{0}^{u}f(x,v)dv$.
\item $f(x,u)u\geq 0$ for all $(x,u)\in\overline{\Omega}\times \mathbb{R}$.
\end{enumerate}
Moreover, we assume that $g$ is a Carath\'{e}odory function defined on $\Omega\times \mathbb{R}$ such that
 \begin{enumerate}[($g$1)]
             \item  $\lim_{u\to 0}{\frac{g(x,u)}{u}}= 0$ uniformly in $x\in\overline{\Omega}$.
            \item There exist $0<\sigma<\mu-1$ and  $C>0$ such that
         \[ |g(x,u)|\leq C(1+|u|^{\sigma})\qquad\forall~ (x,u)\in \overline{\Omega}\times \mathbb{R}. \]
      \item $g(x,u)u\geq0$ for all $x\in \overline{\Omega},u\in \mathbb{R}$.
  \end{enumerate}

When $X=(\partial_{x_{1}},\ldots,\partial_{x_{n}})$, $\triangle_{X}$ reduces to the classical Laplacian $\triangle$, and the generalized Métivier index $\tilde{\nu}$ coincides with the topological dimension $n$. It is noteworthy that the perturbation term $g(x,u)$ is not assumed to be odd symmetric in $u$, thus breaking the symmetry of the corresponding functional. Such semilinear elliptic problems are usually referred to as ``perturbation from symmetry problems." Since the 1980s, significant progress has been made in this area by researchers such as Bahri \cite{Bahri1981}, Bahri-Brestycki \cite{Bahri-Berestycki1981}, Bahri-Lions \cite{Bahri1988,Bahri1992}, Struwe \cite{Struwe1980,Struwe2000}, Rabinowitz \cite{Rabinowitz1982,Rabinowitz1986}, and Tanaka \cite{Tanaka1989}. These papers have demonstrated the existence of infinitely many weak solutions to problem \eqref{problem1-1} under appropriate assumptions on $f$ and $g$.

Compared to positive and negative solutions, sign-changing solutions exhibit more intricate qualitative properties, such as  the number and shapes of nodal domains and the measure of nodal sets. Consequently, sign-changing solutions pose interesting mathematical challenges. Initially, the existence of sign-changing solutions for problem \eqref{problem1-1}, equipped with the classical Laplacian $\triangle$, was primarily explored in the symmetric case. Notable works in this area include those by  Bartsch-Liu-Weth \cite{Bartsch2005}, Li-Wang \cite{Li-Wang-2002-tams}, among others. The sign-changing solutions in the non-symmetric case
was first investigated by Schechter-Zou \cite{Schechter-Zou2005}. Specifically, by utilizing the concepts of invariant sets and critical point theory,   they established the following sufficient condition:
\begin{equation}\label{1-1-2}
  \frac{2p}{n(p-2)}-1>\frac{\mu}{\mu-\sigma-1}.
\end{equation}
for the existence of infinitely many sign-changing solutions. Subsequently, Ramos-Tavares-Zou \cite{Ramos2009} refined their result, providing another sufficient condition:
\begin{equation}\label{1-2}
  \frac{2p}{n(p-2)}>\frac{\mu}{\mu-\sigma-1}.
\end{equation}
We refer readers to \cite{ Wang2014, Wang2016,Schechter-Zou2006}  for more results concerning the sign-changing solutions of classical elliptic equations.

It is worth pointing out that condition \eqref{1-2} is broader than condition \eqref{1-1-2},  as the triple parameters $(p,\mu,\sigma)$ satisfying \eqref{1-1-2} must also satisfy \eqref{1-2}. By analyzing the arguments of Schechter-Zou \cite{Schechter-Zou2005} and Ramos-Tavares-Zou \cite{Ramos2009}, it turn outs that \eqref{1-1-2} is determined by estimates on min-max values derived from the lower bound of the $k$-th Dirichlet eigenvalue $\lambda_{k}$ of the Laplacian $\triangle$ on $\Omega$, specifically $\lambda_k\geq Ck^{\frac{2}{n}}$,  while \eqref{1-2} was achieved through estimates on the min-max values via the Morse index estimate for associated sign-changing critical points. Comparing these arguments with the previous multiplicity results obtained by Rabinowitz \cite{Rabinowitz1982,Rabinowitz1986} and Bahri-Lions \cite{Bahri1988,Bahri1992} respectively, a philosophy insight arises: \emph{The Morse index approach usually provides a broader sufficient condition for estimating the min-max values associated with sign-changing critical points, enabling problem \eqref{problem1-1} to possess a sequence of sign-changing solutions.}

Recently, the study of nonlinear degenerate elliptic equations on sub-Riemannian manifolds has garnered considerable attention. These equations, particularly those involving H\"{o}rmander vector fields, have been extensively studied on equiregular sub-Riemannian manifolds (i.e., under M\'{e}tivier's condition; see Remark \ref{remark2-1} below). Early investigations in this area include works by Jerison-Lee \cite{Jerison-lee1987}, Garofalo-Lanconelli \cite{Garofalo1992}, Xu-Zuily \cite{Xu1997}, Citti \cite{Citti1995}, and Loiudice \cite{Loiudice2007}.

In non-equiregular sub-Riemannian manifolds, the existence and multiplicity of weak solutions to problem \eqref{problem1-1} were initially explored by Luyen-Tri \cite{Luyen2019} for specific Grushin-type operators, and subsequently by Chen-Chen-Yuan \cite{Chen-Chen-Yuan2022} for general H\"{o}rmander operators. These studies extended the classical results initially established by Rabinowitz \cite{Rabinowitz1982,Rabinowitz1986}.  Furthermore, our recent work \cite{Chen-Chen-Li-Liao2022} has improved the results presented in \cite{Luyen2019,Chen-Chen-Yuan2022}. Building upon the multiplicity results presented in \cite{Chen-Chen-Yuan2022,Chen-Chen-Li-Liao2022}, it naturally arises to inquire:  \emph{For problem \eqref{problem1-1} associated with general H\"{o}rmander operators $\triangle_{X}$, are these weak solutions sign-changing?} In this paper, we provide a satisfactory answer to this question by presenting two different types of sufficient conditions for problem \eqref{problem1-1} to have a sequence of sign-changing solutions.

To explore sign-changing solutions for problem \eqref{problem1-1}, one effective strategy is to utilize the perturbation method developed by Rabinowitz in \cite{Rabinowitz1982,Rabinowitz1986}, along with  the principle of flow invariance within restricted cones, as introduced by Ramos-Tavares-Zou \cite{Ramos2009}. However,  in comparison to the classical elliptic case, several technical issues arise due to the degeneracy of H\"{o}rmander operators. Firstly, to investigate the sign-changing critical points of the perturbation functional, it is necessary to select suitable min-max values that are compatible with sign-changing settings. The min-max values proposed by Ramos-Tavares-Zou \cite{Ramos2009}, which are based on topological considerations regarding odd continuous extensions as detailed by Castro-Clapp \cite{Castro2006}, are inapplicable to H\"{o}rmander vector fields. Secondly, the understanding of the Weyl-type asymptotic formula for Dirichlet eigenvalues of H\"{o}rmander operators in non-equiregular sub-Riemannian manifolds remains incomplete. This gap in knowledge poses additional challenges in estimating the min-max values associated with sign-changing critical points, which are derived from the lower bounds of Dirichlet eigenvalues.

Drawing inspiration from Liu-Liu-Wang \cite{Wang2014}, we construct a new intersection lemma in sign-changing settings (see Lemma \ref{lemma3-10} below). This intersection lemma enables us to define the min-max value associated with sign-changing critical points for degenerate elliptic equations. Additionally, instead of seeking explicit lower bounds of Dirichlet eigenvalues for H\"{o}rmander operators, we introduce the following lower bound assumption:
\[ \lambda_k\geq Ck^{\frac{2}{\vartheta}}(\ln k)^{-\kappa}~~~\mbox{for sufficiently large}~~k,\leqno(L)\]
Here, $\lambda_k$ denotes the $k$-th Dirichlet eigenvalue of $-\triangle_{X}$ on $\Omega$, $n\leq\vartheta\leq \tilde{\nu}$, $\kappa\geq 0$ and $C>0$ are constants dependent on $X$ and $\Omega$. We would like to mention that the class of H\"{o}rmander operators under assumption $(L)$ is quite large. For instance, if $0\in \Omega \subset\subset W=\mathbb{R}^n$, and the H\"{o}rmander vector fields $X$ further satisfy the following homogeneity condition:
\begin{enumerate}
  \item [(H.1)]
   There exists a family of non-isotropic dilations $\{\delta_{t}\}_{t>0}$ of the form
  \[ \delta_{t}:\mathbb{R}^n\to \mathbb{R}^n,\qquad \delta_{t}(x)=(t^{\sigma_{1}}x_{1},t^{\sigma_{2}}x_{2},\ldots,t^{\sigma_{n}}x_{n}), \]
 where $1=\sigma_{1}\leq \sigma_{2}\leq\cdots\leq \sigma_{n}$ are positive integers, such that for all $t>0$ and $f\in  C^{\infty}(\mathbb{R}^n)$,
 \[ X_{j}(f\circ \delta_{t})=t(X_{j}f)\circ \delta_{t}\qquad \forall j=1,\ldots,m, \]
 \end{enumerate}
then $\triangle_{X}=-\sum_{j=1}^{m}X_{j}^{*}X_{j}$ is the so-called homogeneous H\"{o}rmander operator. As shown in \cite{Chen-Chen-Li2022}, the corresponding Dirichlet eigenvalue $\lambda_{k}$ exhibits the asymptotic behaviour  $\lambda_k\approx k^{\frac{2}{\vartheta}}(\ln k)^{-\frac{2d}{\vartheta}}$ as $k\to+\infty$, where $n\leq \vartheta\leq \tilde{\nu}$ is a positive rational number and $0\leq d\leq n-1$ is an integer.

Combining our intersection lemma and the lower bound assumption $(L)$ on Dirichlet eigenvalue of $-\triangle_{X}$, we  can now establish the following multiplicity results for sign-changing solutions to problem \eqref{problem1-1}.

\begin{theorem}\label{thm1}
Let $X=(X_1,X_{2},\ldots,X_m)$ satisfy condition $(H)$, and let $\Omega\subset\subset U$ be a bounded open subset. Assume that for sufficiently large $k\geq 1$, the Dirichlet eigenvalue $\lambda_k$ of $-\triangle_{X}$ on $\Omega$ satisfy the lower bound assumption $(L)$. Suppose further that the functions $f$ and $g$ satisfy the assumptions $(f1)$-$(f5)$ and $(g1)$-$(g3)$, respectively.  If
\[ \frac{2p}{\vartheta(p-2)}-\frac{\tilde{\nu}}{\vartheta}>\frac{\mu}{\mu-\sigma-1},\leqno(A1)\]
then the problem \eqref{problem1-1} possesses an unbounded sequence of sign-changing weak solutions in $H_{X,0}^{1}(\Omega)$.
\end{theorem}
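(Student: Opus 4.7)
The strategy combines Rabinowitz's perturbation-from-symmetry method with a descending-flow argument on invariant cones in the sign-changing setting. I begin with the natural energy functional
\[ J(u)=\frac{1}{2}\int_{\Omega}|Xu|^{2}\,dx-\int_{\Omega}F(x,u)\,dx-\int_{\Omega}G(x,u)\,dx, \qquad u\in H^{1}_{X,0}(\Omega), \]
whose critical points are the weak solutions to \eqref{problem1-1}. Since $g$ is not assumed odd, $J$ is not $\mathbb{Z}_{2}$-symmetric, so I replace it by a modified functional $\widetilde J$ obtained via a level-dependent truncation of the non-symmetric part of the perturbation (Rabinowitz trick). Assumptions (f2) and (g2) together with the subelliptic Sobolev embedding ensure that $\widetilde J$ satisfies the Palais--Smale condition on its large sublevel sets, and a direct energy comparison shows that any critical point of $\widetilde J$ with sufficiently high energy is in fact a critical point of the original $J$.

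Next, I set up the sign-changing framework. Let $P^{\pm}\subset H^{1}_{X,0}(\Omega)$ denote the cones of non-negative and non-positive functions, and for small $\epsilon>0$ put $P^{\pm}_{\epsilon}=\{u:\mathrm{dist}(u,P^{\pm})<\epsilon\}$ and $W_{\epsilon}=P^{+}_{\epsilon}\cup P^{-}_{\epsilon}$. Following Liu--Liu--Wang and Ramos--Tavares--Zou, I construct a locally Lipschitz pseudo-gradient vector field for $\widetilde J$ whose negative flow leaves $W_{\epsilon}$ positively invariant, so that any critical point lying outside $W_{\epsilon}$ must change sign. The new intersection lemma (Lemma~\ref{lemma3-10}) then allows me to define, for every sufficiently large $k$, a min-max value
\[ c_{k}=\inf_{\gamma\in\Gamma_{k}}\sup_{u\in\gamma(D_{k})\setminus W_{\epsilon}}\widetilde J(u), \]
with $\Gamma_{k}$ a class of odd continuous maps from a symmetric domain $D_{k}$ built from an increasing tower of finite-dimensional subspaces of $H^{1}_{X,0}(\Omega)$; the intersection lemma guarantees $\gamma(D_{k})\setminus W_{\epsilon}\neq\emptyset$, and standard deformation arguments show that each $c_{k}$ is either a sign-changing critical value of $\widetilde J$ or can be pushed down by the flow.

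The quantitative heart of the argument is the comparison between $c_{k}$ and the corresponding symmetric min-max value $b_{k}$. Using test subspaces generated by Dirichlet eigenfunctions of $-\triangle_{X}$, the superquadratic condition (f4), and the subelliptic Sobolev embedding $H^{1}_{X,0}(\Omega)\hookrightarrow L^{q}(\Omega)$ for $q\leq 2_{\tilde\nu}^{*}$, I will obtain an upper bound for $c_{k}$ polynomial in $k$ whose exponent is controlled by $p$, $\vartheta$, and $\tilde\nu$. Assumption $(L)$ then produces a lower bound for $b_{k}$ of order $k^{2/\vartheta}(\ln k)^{-\kappa}$ summed over a tower of eigenspaces, and the contribution of the symmetry-breaking term $g$ to the gap $c_{k}-b_{k}$ is controlled via (g2) and (f4) by a quantity polynomial in $k$ with exponent involving $\mu/(\mu-\sigma-1)$. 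Condition (A1) is exactly the numerical inequality between these three exponents which forces $c_{k}-b_{k}\to\infty$ fast enough to conclude that infinitely many $c_{k}$ give sign-changing critical points of the untruncated $J$; this yields the desired unbounded sequence of sign-changing weak solutions of \eqref{problem1-1}.

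The main obstacle I foresee is twofold. First, one must build the invariant-cone framework compatibly with the degenerate subelliptic gradient: the pseudo-gradient vector field has to simultaneously respect the Rabinowitz truncation producing $\widetilde J$ and the cones $P^{\pm}_{\epsilon}$, and verifying the $W_{\epsilon}$-invariance hinges on a careful choice of cut-off together with a Stampacchia-type argument for $\triangle_{X}$. Second, and more substantively, one must track the two a priori distinct indices $\vartheta$ and $\tilde\nu$ throughout the estimates: in the equiregular case they coincide with the homogeneous dimension and the argument reduces to a cleaner one, but in the general non-equiregular situation the spread $n\leq\vartheta\leq\tilde\nu$ is precisely what accounts for the non-classical term $-\tilde\nu/\vartheta$ appearing in (A1). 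Preventing the logarithmic factor $(\ln k)^{-\kappa}$ in $(L)$ from polluting the strict inequality at the end is a further technical point where extra care will be required.
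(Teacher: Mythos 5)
Your overall architecture matches the paper's: the Rabinowitz truncation producing a nearly-symmetric modification of the energy, the cones $P^{\pm}_a$ with a descending flow preserving them, the intersection Lemma~\ref{lemma3-10} ensuring the sign-changing min-max classes are nonempty, and a comparison between two min-max sequences (the paper's $b_k(J)$ and $c_k(J)$). The endgame you describe, however, is not the mechanism the paper uses, and as stated would not close the argument. First, a direct polynomial upper bound for $c_k$ from eigenfunction test subspaces is neither needed nor used for Theorem~\ref{thm1}; that kind of ingredient (via Morse index and the Cwikel--Lieb--Rozenblum inequality) belongs to the proof of Theorem~\ref{thm2}. Second, the conclusion is not that $c_k-b_k\to\infty$; it is that $c_k(J)>b_k(J)+1$ holds for infinitely many $k$, and this is obtained by contradiction.

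Concretely: assume $c_k(J)\leq b_k(J)+1$ for all $k\geq m$. Lemma~\ref{lemma3-2} (the symmetry-deviation estimate $|J(u)-J(-u)|\leq C(|J(u)|^{(\sigma+1)/\mu}+1)$), fed through the min-max definitions of $b_k$ and $c_k$, yields the recursion $b_{k+1}(J)\leq b_k(J)+Cb_k(J)^{(\sigma+1)/\mu}$, hence $b_{m+l}(J)\leq b_m(J)\exp\bigl(C\sum_{k=m}^{m+l-1}b_k(J)^{(\sigma+1-\mu)/\mu}\bigr)$. On the other hand, Lemma~\ref{lemma3-15} together with assumption $(L)$ gives the lower bound $b_k(J)\geq Ck^{\frac{2}{\vartheta}(\frac{p}{p-2}-\frac{\tilde\nu}{2})}(\ln k)^{-\kappa(\frac{p}{p-2}-\frac{\tilde\nu}{2})}$, and $(A1)$ is precisely the inequality that makes the exponent $\frac{2}{\vartheta}\bigl(\frac{p}{p-2}-\frac{\tilde\nu}{2}\bigr)\frac{\sigma+1-\mu}{\mu}$ strictly less than $-1$, so the series converges. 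Thus $b_k(J)$ would stay bounded, contradicting $b_k(J)\to\infty$; consequently infinitely many $k$ satisfy $c_k(J)>b_k(J)+1$, and Proposition~\ref{prop3-1} turns each such $c_k(J)$ into a sign-changing critical value of $J$ at level $>M_0$, hence of the original $E$. If you keep the rest of your plan but replace the "$c_k-b_k\to\infty$" step with this contradiction argument, you recover the paper's proof.
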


We further investigate the multiplicity of sign-changing solutions to problem \eqref{problem1-1} in the absence of assumption $(L)$. Utilizing the work of Bartsch-Liu-Weth \cite{Bartsch2005}, we introduce new min-max values that characterize sign-changing critical points through the relative genus. We then utilize the Marino-Prodi perturbation technique, as modified by Liu-Liu-Wang \cite{Wang2016}, to select perturbation functionals that exhibit a finite number of non-degenerate sign-changing critical points. Employing the Morse-Palais lemma and the transformation strategy proposed by Lazer-Solimini \cite{Lazer1988}, we establish a lower bound for the augmented Morse index at these sign-changing critical points. By combining Morse-index type estimates with the degenerate Cwikel-Lieb-Rozenblum inequality, we arrive at the following result.

\begin{theorem}\label{thm2}
Let $X=(X_{1},X_{2},\ldots,X_{m})$ and $\Omega$ satisfy the assumptions of Theorem \ref{thm1}. Suppose that the functions $f$ and $g$ satisfy the assumptions $(f1)$-$(f5)$ and $(g1)$-$(g3)$, respectively.  If
\[ \frac{2p}{\tilde{\nu}(p-2)}>\frac{\mu}{\mu-\sigma-1},\leqno(A2)\]
then the problem \eqref{problem1-1} possesses an unbounded sequence of sign-changing weak solutions.
\end{theorem}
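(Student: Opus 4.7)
The plan is to replace the eigenvalue bound $(L)$ used in Theorem \ref{thm1} by a Morse-theoretic estimate, via the Cwikel-Lieb-Rozenblum (CLR) inequality in its degenerate form. As in Theorem \ref{thm1}, I would first carry out Rabinowitz's truncation: introduce an auxiliary functional $J$ on $H_{X,0}^{1}(\Omega)$ obtained from the energy associated to $f$ plus a cut-off version of the primitive of $g$, so that $J$ is $\mathbb{Z}_2$-symmetric and dominates the true energy $I$ outside a large ball. Using $(f4)$, $(g2)$ one obtains the usual comparison $|c_{k}-b_{k}|\leq C(1+b_{k})^{(\sigma+1)/\mu}$ between the min-max values $c_{k}$ of $I$ and the symmetric min-max values $b_{k}$ of $J$, so the problem reduces to producing an unbounded sequence $b_{k}$. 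Following Bartsch-Liu-Weth \cite{Bartsch2005}, I would build these $b_{k}$ from an intersection formula based on the \emph{relative genus} $\gamma(A,P^{+}\cup P^{-})$, where $P^{\pm}$ are the positive/negative cones, adapted to the subelliptic setting via the invariant-cone flow already developed for Theorem \ref{thm1} (using the new intersection Lemma \ref{lemma3-10}). This guarantees each $b_{k}$ is attained at a sign-changing critical point $u_{k}$ of $J$, and a standard Moser-type iteration gives $\|u_{k}\|_{L^{\infty}}<\infty$ so that $J$ and $I$ share the relevant sign-changing critical points after choosing parameters suitably.

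Next, to avoid the eigenvalue assumption $(L)$, I would feed the sequence into the Marino-Prodi perturbation of Liu-Liu-Wang \cite{Wang2016}: perturb $J$ on bounded sublevel sets into a functional whose sign-changing critical points at level $b_{k}$ are non-degenerate and isolated, while the min-max value is preserved up to an $\varepsilon$. For such a non-degenerate sign-changing critical point $u$, the Morse-Palais lemma applies, and the Lazer-Solimini \cite{Lazer1988} transformation lets me read off the \emph{augmented Morse index} $m^{*}(u)$ of the linearized operator
\[-\triangle_{X}-V_{u},\qquad V_{u}(x):=f_{u}(x,u(x))+\text{perturbation},\]
as the minimal dimension of a subspace deformable into $P^{+}\cup P^{-}$. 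The intersection structure behind $b_{k}$ then forces $m^{*}(u_{k})\geq k$, giving the desired lower bound on Morse indices of sign-changing critical points at level $b_{k}$.

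Finally, I would invoke the Cwikel-Lieb-Rozenblum inequality for the Schr\"odinger operator $-\triangle_{X}+V$ in the degenerate setting, which bounds the number of non-positive eigenvalues by $C\|V_{-}\|_{L^{\tilde{\nu}/2}(\Omega)}^{\tilde{\nu}/2}$. Applied to $V=V_{u_{k}}$ and combined with $(f2)$, this gives
\[k\leq m^{*}(u_{k})\leq C\bigl(1+\|u_{k}\|_{L^{(p-2)\tilde{\nu}/2}}^{(p-2)\tilde{\nu}/2}\bigr)\leq C\bigl(1+\|u_{k}\|_{H_{X,0}^{1}}^{(p-2)\tilde{\nu}/2}\bigr),\]
using the subelliptic Sobolev embedding $H_{X,0}^{1}(\Omega)\hookrightarrow L^{2_{\tilde{\nu}}^{*}}(\Omega)$ (which forces $(p-2)\tilde{\nu}/2<2_{\tilde{\nu}}^{*}$ thanks to $(f2)$). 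Since $b_{k}\sim\|u_{k}\|_{H_{X,0}^{1}}^{2}$ up to lower-order terms governed by $(f4)$, this yields $b_{k}\geq Ck^{2p/(\tilde{\nu}(p-2))}/(\text{log corrections})$. Matched against the upper bound $|c_{k}-b_{k}|\lesssim b_{k}^{(\sigma+1)/\mu}$, condition $(A2)$ is exactly what makes $c_{k}\to\infty$ with the same leading order as $b_{k}$, producing an unbounded sequence of sign-changing solutions after letting the Marino-Prodi parameter tend to zero. The main technical obstacle is the third step: establishing the Morse-Palais normal form and the precise identification of $m^{*}(u_{k})$ via the Lazer-Solimini transform in the degenerate setting, since $-\triangle_{X}$ lacks uniform ellipticity and the Hessian of $J$ at $u_{k}$ is only a compact perturbation of an operator whose spectral calculus requires the CLR-type bounds rather than sharp Weyl asymptotics.
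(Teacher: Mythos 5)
Your proposal correctly identifies the main ingredients (Rabinowitz truncation, relative genus in cones, Marino–Prodi perturbation, Morse index estimates via CLR), but it has a genuine and crucial gap: you intend to run the Marino–Prodi perturbation and compute Morse indices directly for the perturbed functional $J$ itself, writing the Hessian potential as $V_{u}(x)=f_{u}(x,u(x))+\cdots$. Under the paper's assumptions, however, $f$ is only continuous in $u$ (assumption $(f2)$ gives a growth bound, not differentiability), so the energy $E$ and the cut-off functional $J$ are merely $C^{1}$, not $C^{2}$. The Morse index, the Morse–Palais normal form, the Fredholm structure of the Hessian, and the Marino–Prodi theorem all require $C^{2}$ data, and none of them is available for $J$. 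This step would simply fail as written.

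The paper resolves this with a step your proposal omits: it introduces the explicit polynomial model functional
\[ I_{p}(u)=\tfrac{1}{2}\int_{\Omega}|Xu|^{2}\,dx-C_{p}\int_{\Omega}|u|^{p}\,dx, \]
chosen via $(f1)$, $(f2)$, $(g1)$, $(g2)$ so that $J(u)\geq\tfrac{1}{2}I_{p}(u)$ for all $u$ (equation (4.1) in the paper). This pointwise domination transfers to the min-max values, $b_{k}(J)\geq\tfrac{1}{2}b_{k}(I_{p})$. Since $I_{p}$ is manifestly $C^{2}$ (indeed smooth), all of Section 4 — the relative-genus levels $d_{k,a}(I_{p})$, the Marino–Prodi perturbation to a nondegenerate $\widehat{I}$, the Lazer–Solimini deformation, and the augmented Morse index lower bound $m^{*}(u_{k},I_{p})\geq k-1$ — is carried out for $I_{p}$, not for $J$. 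The CLR inequality is then applied with the explicit potential $V=-p(p-1)C_{p}|u_{k}|^{p-2}$, and the critical point identity $pC_{p}\|u_{k}\|_{L^{p}}^{p}=\|Xu_{k}\|_{L^{2}}^{2}$ together with H\"older yields the clean (log-correction-free) bound $d_{k,a}(I_{p})\geq C(k-1)^{2p/(\tilde{\nu}(p-2))}$. Your final estimate contains a spurious ``log correction'' and uses a vaguer comparison $b_{k}\sim\|u_{k}\|_{H^{1}_{X,0}}^{2}$; the precise algebraic structure of $I_{p}$ is what makes this argument tight. Also, the Moser iteration to get $\|u_{k}\|_{L^{\infty}}<\infty$ you invoke is neither needed nor used in the paper. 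To repair your proposal you must either strengthen the hypotheses (assume $f$ is $C^{1}$ in $u$ with controlled derivative) or adopt the comparison functional $I_{p}$ as the carrier of all Morse-theoretic arguments.
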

\begin{remark}
Theorem \ref{thm1} and Theorem \ref{thm2} extend the classical results on sign-changing solutions previously established by Schechter-Zou \cite{Schechter-Zou2005} and  Ramos-Tavares-Zou \cite{Ramos2009}, respectively. In particular, if $X=(\partial_{x_{1}},\partial_{x_{2}},\ldots,\partial_{x_{n}})$, then $\triangle_{X}=\triangle$ and $\tilde{\nu}=n$. In this case, condition (A1) coincides with \eqref{1-1-2}, and condition (A2) simplifies to \eqref{1-2}.
\end{remark}
\begin{remark}
We would like to make more descriptions of the conditions $(A1)$ and $(A2)$. If the level set $H=\{x\in \Omega|\nu(x)=\tilde{\nu}\}$ has positive measure, \cite{Chen2021} establishes that $\lambda_k\sim C\cdot k^{\frac{2}{\tilde{\nu}}}$, where $\nu(x)$ denotes the pointwise homogeneous dimension defined in \eqref{2-1} below, and $C>0$ is a positive constant dependent on  $X$ and $\Omega$. This implies that $\tilde{\nu}=\vartheta$ and $\kappa=0$ in assumption $(L)$. In this case, condition $(A2)$, which is derived from estimating min-max values through the Morse index estimate on sign-changing critical points, is broader than condition $(A1)$. Note that the latter condition stems from estimates on min-max values that depend on the lower bound assumption  $(L)$.

However, it is important to mention that, unlike in the classical elliptic case, both approaches have their advantages in the degenerate case. In fact, when considering the general sub-elliptic equations with non-equiregular cases, there are examples associated with the triple parameters $(p,\mu,\sigma)$ that satisfy condition $(A1)$ but not condition $(A2)$ (see Section \ref{Section6} below). Therefore, Theorem \ref{thm2} cannot generally encompass Theorem \ref{thm1}. This presents a new phenomenon, where the multiplicity of sign-changing results for problem \eqref{problem1-1} in the degenerate case can significantly differ from those in the classical elliptic case.
\end{remark}

The rest of the paper will be organized as follows. In Section \ref{Section2}, we present several preliminary concepts, including the degenerate Rellich-Kondrachov compact embedding theorem, the degenerate Cwikel-Lieb-Rozenblum inequality, and appropriate estimates in critical point theory. In Section \ref{Section3}, we first construct a perturbation functional in sign-changing settings and discuss the corresponding intersection lemma and invariant properties. Then, we present the min-max values associated with the sign-changing critical points for the perturbation functional, along with a first lower bound of these min-max values derived from the lower bound of Dirichlet eigenvalue of $-\triangle_{X}$. In Section \ref{Section4}, we introduce the auxiliary functional and estimates augmented Morse index of its sign-changing critical points by using Marino-Prodi perturbation arguments and Morse-type estimates. In Section \ref{Section5}, we present the proofs of Theorem \ref{thm1} and Theorem \ref{thm2}. Finally, in Section \ref{Section6}, we illustrate the relationship between conditions $(A1)$ and $(A2)$ through a detailed example.

\textbf{\emph{Notations:}} For the sake of simplicity, different positive constants are usually denoted by $C$ sometimes without indices.

\section{Preliminaries}
\label{Section2}

\subsection{The framework of H\"{o}rmander vector fields}
Let  $X=(X_{1},X_{2},\ldots, X_{m})$ satisfy the H\"{o}rmander's condition $(H)$ on an open domain $W\subset \mathbb{R}^n$. For any
 bounded open subset $\Omega\subset\subset W$, there exists a smallest positive integer $r_{0}\geq 1$  such that $X_{1},X_{2},\ldots,X_{m}$,  together with their commutators of length up to $r_0$, span the tangent space $T_{x}(W)$ at each point  $x\in \overline{\Omega}$. The integer $r_{0}$ is referred to as the  H\"{o}rmander index of $\overline{\Omega}$ with respect to $X$. For H\"{o}rmander vector fields $X$, we can introduce the generalized M\'{e}tivier index, which is also called the non-isotropic dimension of $\Omega$ related to $X$ (cf. \cite{Chen2021,Yung2015}).

 \begin{definition}[Generalized M\'{e}tivier index]
    \label{def2-1}
    For each $x\in \overline{\Omega}$ and $1\leq j\leq r_{0}$, let $V_{j}(x)$
be the subspace of the tangent space at $x$  spanned by all
commutators of $X_{1},\ldots,X_{m}$ with length at most $j$. We denote by $\nu_{j}(x)$ the dimension of vector space $V_{j}(x)$ at  $x\in \overline{\Omega}$. The pointwise homogeneous dimension at $x$ is given
by
\begin{equation}\label{2-1}
  \nu(x):=\sum_{j=1}^{r_{0}}j(\nu_{j}(x)-\nu_{j-1}(x)),\qquad \nu_{0}(x):=0.
\end{equation}
Then we let
\begin{equation}\label{2-2}
  \tilde{\nu}:=\max_{x\in\overline{\Omega}} \nu(x)
\end{equation}
be the generalized M\'{e}tivier index of $\Omega$ associated with the vector fields $X$. According to \eqref{2-1} and \eqref{2-2},  $n+r_{0}-1\leq \tilde{\nu}< nr_{0}$ for $r_{0}>1$,
\end{definition}

\begin{remark}
\label{remark2-1}
 If for every $1\leq j\leq r_{0}$, $\nu_{j}(x)$ is a constant $\nu_{j}$ in some neighborhood of each $x\in \overline{\Omega}$, then we say  $X$ satisfy M\'{e}tivier's condition on $\Omega$ (cf.  \cite{Metivier1976}). The M\'{e}tivier index is given by
\begin{equation}\label{2-3}
  \nu=\sum_{j=1}^{r_{0}}j(\nu_{j}-\nu_{j-1}),\qquad \nu_{0}:=0,
\end{equation}
which coincides with the Hausdorff dimension of $\Omega$ related to the subelliptic metric induced by the vector fields $X$. Note that $\nu=\tilde{\nu}$ if the M\'{e}tivier's condition is satisfied.\par
 The M\'{e}tivier's condition, also known as the equiregular assumption in sub-Riemannian geometry (cf. \cite{Andrei2019}), imposes a strong restriction on the H\"ormander vector fields $X$. However, many H\"ormander vector fields fail to fulfill M\'{e}tivier's condition (e.g. the Grushin vector fields $X_{1}=\partial_{x_{1}}, X_{2}=x_{1}\partial_{x_{2}}$ in $\mathbb{R}^{2}$), and the generalized M\'{e}tivier index $\tilde{\nu}$ plays an important role in the geometry and  functional settings.
\end{remark}

Analogous to classical Sobolev spaces, the weighted Sobolev spaces associated with the vector fields
$X$ serve as the natural function spaces for addressing degenerate elliptic equations related to H\"{o}rmander operators $\triangle_{X}$. For vector fields $X=(X_{1},X_{2},\ldots,X_{m})$ defined on $W$, we set
\[ H_{X}^{1}(W)=\{u\in L^{2}(W):X_{j}u\in L^{2}(W),~ j=1,\ldots,m\}, \]
It is known that $H_{X}^{1}(W)$  is a Hilbert space endowed with the norm
\begin{equation}\label{2-4}
 \|u\|^2_{H^{1}_{X}(W)}=\|u\|_{L^2(W)}^2+\|Xu\|_{L^2(W)}^2=\|u\|_{L^2(W)}^2+\sum_{j=1}^{m}\|X_{j}u\|_{L^2(W)}^2.
\end{equation}
In addition, we denote by
   $H^{1}_{X,0}(\Omega)$ the closure of $C_{0}^{\infty}(\Omega)$ in $H_{X}^{1}(W)$, which is also a Hilbert space.

We next revist the Sobolev embedding theorems on $H^{1}_{X,0}(\Omega)$. In 1993, an initial study by Capogna-Danielli-Garofalo \cite{Capogna1993} established the embedding:
\begin{equation}\label{CDG}
  H_{X,0}^{1}(\Omega)\hookrightarrow L^{\frac{2Q}{Q-2}}(\Omega),
\end{equation}
where $Q$ denotes the local homogeneous dimension (refer to \cite[(3.4), p. 1166]{Capogna1996} and \cite[(3.1), p. 105]{Garofalo2018} for a precise definition) relative to the bounded set $\overline{\Omega}$. However, the Sobolev exponent $\frac{2Q}{Q-2}$ in \eqref{CDG} is not optimal for the space $H^{1}_{X,0}(\Omega)$ associated with general H\"{o}rmander vector fields. In 2015, Yung \cite[Corollary 1]{Yung2015} obtained that if $\Omega\subset\subset W$ is a bounded open set with smooth boundary $\partial\Omega$, then
\begin{equation}\label{Yung}
  \|f\|_{L^{\frac{2\tilde{\nu}}{\tilde{\nu}-2}}(\Omega)}\leq C\left(\|Xf\|_{L^2(\Omega)}+\|f\|_{L^2(\Omega)}\right)\qquad \forall f\in C^{\infty}(\overline{\Omega}),
\end{equation}
where $\tilde{\nu}$ is the generalized M\'{e}tivier index defined in \eqref{2-2}. From \cite[Proposition 2.2]{Chen2019}, \cite[(3.4), p. 1166]{Capogna1996} and \cite[(3.1), p. 105]{Garofalo2018}, it follows readily that $\tilde{\nu}\leq Q$. Additionally, as evidenced in  \cite[Example 2.4]{Chen-Chen-Li-Liao2022}, $\tilde{\nu}$ is strictly less than $Q$ in certain degenerate cases. Consequently, \eqref{Yung} suggests that the Sobolev exponent $\frac{2Q}{Q-2}$ in \eqref{CDG} can be refined to the sharp exponent $\frac{2\tilde{\nu}}{\tilde{\nu}-2}$ when $\partial\Omega$ is smooth.

Recalling the classical elliptic case, where the space $H_{0}^{1}(\Omega)$ consists of functions that ``\emph{vanishing at the boundary}",  the Sobolev embedding $H_{0}^{1}(\Omega)\hookrightarrow L^{\frac{2n}{n-2}}(\Omega)$ does not require any boundary smoothness. Building on this insight, the result in our recent paper  \cite{Chen-Chen-Li2024} extends \eqref{CDG} to arbitrary bounded open sets, adopting the sharp Sobolev exponent $\frac{2\tilde{\nu}}{\tilde{\nu}-2}$. Specifically, we have
 \begin{proposition}[Sharp Sobolev inequality, see {\cite[Theorem 1.1]{Chen-Chen-Li2024}}]
\label{prop2-1}
Let $X=(X_1,X_{2},\ldots,X_m)$ satisfy condition (H). Then, for any bounded open subset $\Omega\subset\subset W$, there exists a positive constant $C>0$ such that for $1\leq q\leq \frac{2\tilde{\nu}}{\tilde{\nu}-2}:=2_{\tilde{\nu}}^{*}$,
  \begin{equation}\label{sobolev-ine}
\|u\|_{L^{q}(\Omega)}\leq C\|Xu\|_{L^{2}(\Omega)}\qquad \forall u\in H^{1}_{X,0}(\Omega).
\end{equation}
Here, $\tilde{\nu}\geq 3$ is the generalized M\'{e}tivier index defined in \eqref{2-2}.
\end{proposition}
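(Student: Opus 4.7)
The plan is to reduce the inequality on the arbitrary bounded open set $\Omega$ to Yung's inequality \eqref{Yung} applied on a slightly larger open set with smooth boundary, exploiting the fact that $H^1_{X,0}(\Omega)$ elements extend by zero to any larger set without increasing the Sobolev norm. The delicate point is that naively enlarging $\Omega$ could raise the generalized Métivier index and thereby weaken the exponent, so care must be taken to preserve $\tilde{\nu}$.

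First I would establish an auxiliary geometric fact: there exists a bounded open set $\Omega^{*}$ with smooth boundary satisfying $\overline{\Omega}\subset \Omega^{*}\subset\subset W$ and $\tilde{\nu}(\Omega^{*})=\tilde{\nu}(\Omega)$. The key observation is that each integer-valued function $x\mapsto \nu_{j}(x)=\dim V_{j}(x)$ is lower semi-continuous (any $m$ commutators linearly independent at $x_{0}$ remain independent in a neighborhood), hence from the identity $\nu(x)=r_{0}n-\sum_{j=1}^{r_{0}-1}\nu_{j}(x)$ the pointwise homogeneous dimension $\nu(\cdot)$ is upper semi-continuous and integer-valued. Consequently $\{x\in W:\nu(x)\geq \tilde{\nu}(\Omega)+1\}$ is closed and disjoint from $\overline{\Omega}$, and any sufficiently narrow smooth tubular neighborhood $\Omega^{*}$ of $\overline{\Omega}$ misses this set.

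Next I would exploit this extension. For $u\in H_{X,0}^{1}(\Omega)$, pick $\varphi_{k}\in C_{0}^{\infty}(\Omega)$ converging to $u$ in $H_{X}^{1}$-norm, and let $\tilde{\varphi}_{k}$ denote the extension by zero to $\Omega^{*}$, which lies in $C_{0}^{\infty}(\Omega^{*})\subset C^{\infty}(\overline{\Omega^{*}})$ with $X\tilde{\varphi}_{k}$ extending $X\varphi_{k}$ by zero. Applying Yung's inequality \eqref{Yung} on the smooth domain $\Omega^{*}$ (for which $\tilde{\nu}(\Omega^{*})=\tilde{\nu}$) and restricting to the support in $\Omega$ yields
\[
\|\varphi_{k}\|_{L^{2_{\tilde{\nu}}^{*}}(\Omega)}\leq C\bigl(\|X\varphi_{k}\|_{L^{2}(\Omega)}+\|\varphi_{k}\|_{L^{2}(\Omega)}\bigr).
\]
To absorb the lower-order $L^{2}$ term I would invoke a sub-elliptic Poincaré inequality on $\Omega$, which follows at once from the Capogna-Danielli-Garofalo embedding \eqref{CDG} combined with Hölder's inequality on the bounded set $\Omega$: for $\varphi\in C_{0}^{\infty}(\Omega)$ one has $\|\varphi\|_{L^{2}(\Omega)}\leq C|\Omega|^{1/n'}\|\varphi\|_{L^{2Q/(Q-2)}(\Omega)}\leq C'\|X\varphi\|_{L^{2}(\Omega)}$. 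Substituting produces $\|\varphi_{k}\|_{L^{2_{\tilde{\nu}}^{*}}(\Omega)}\leq C\|X\varphi_{k}\|_{L^{2}(\Omega)}$.

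Finally, passing to the limit $k\to\infty$ (using Fatou's lemma on the left and $H^{1}_{X}$-convergence on the right) gives the sharp inequality \eqref{sobolev-ine} at $q=2_{\tilde{\nu}}^{*}$, and the full range $1\leq q\leq 2_{\tilde{\nu}}^{*}$ follows by a single Hölder application on the bounded set $\Omega$. The main obstacle is the first step, namely the preservation of $\tilde{\nu}$ under enlargement; without the semi-continuity of $\nu(\cdot)$ the enlarged domain could carry a strictly larger Métivier index and one would only recover a suboptimal exponent, exactly the gap that distinguishes the new statement from Yung's original result.
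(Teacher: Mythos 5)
Your proposal is correct in outline, and it is essentially the strategy the cited paper \cite{Chen-Chen-Li2024} is indicated to follow: the present paper only cites Proposition \ref{prop2-1} externally and its surrounding discussion (``functions vanishing at the boundary'' plus Yung's estimate \eqref{Yung}) points precisely to your zero-extension-to-a-smooth-enlargement argument. The one place that needs a touch of care is the semi-continuity step: the identity $\nu(x)=r_{0}n-\sum_{j<r_{0}}\nu_{j}(x)$ (valid by Abel summation) presupposes $\nu_{r_{0}}(x)=n$, which is guaranteed on $\overline{\Omega}$ by definition of $r_{0}$ and persists on an open neighborhood $V\supset\overline{\Omega}$ because $\nu_{r_{0}}$ is lower semi-continuous and bounded by $n$; the set $\{\nu\geq\tilde{\nu}(\Omega)+1\}$ is then relatively closed in $V$ rather than in all of $W$, which is enough to cut a smooth tube $\Omega^{*}$ with $\tilde{\nu}(\Omega^{*})=\tilde{\nu}(\Omega)$. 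With that localization made explicit, the argument (Yung on $\Omega^{*}$, Poincar\'{e} via \eqref{CDG} to drop the $L^{2}$ term, density and H\"{o}lder for the lower range of $q$) is complete and not circular, since \eqref{CDG} is a prior, independent result.
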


By setting $q=2$ in \eqref{sobolev-ine}, we derive the Friedrichs-Poincar\'{e} type inequality:
\begin{equation}\label{2-5}
 \|u\|_{L^{2}(\Omega)}\leq C\|Xu\|_{L^{2}(\Omega)}\qquad \forall u\in H^{1}_{X,0}(\Omega),
\end{equation}
which permits the adoption of the equivalent inner product $(\cdot,\cdot)_{H_{X,0}^{1}(\Omega)}$ on $H_{X,0}^{1}(\Omega)$, defined as:
\begin{equation}\label{2-6}
   (u,v)_{H_{X,0}^{1}(\Omega)}:=\int_{\Omega}Xu\cdot Xv dx,~~~\forall u,v\in H_{X,0}^{1}(\Omega).
\end{equation}
The associated norm is given by:
\begin{equation}\label{2-7}
   \|u\|_{H_{X,0}^{1}(\Omega)}^{2}:=\int_{\Omega}|Xu|^{2} dx,~~~\forall u\in H_{X,0}^{1}(\Omega).
\end{equation}
Throughout this paper, instead of \eqref{2-4}, we adopt \eqref{2-7} as the norm in $H_{X,0}^{1}(\Omega)$.

From Proposition \ref{prop2-1}, we can deduce  the following degenerate compact embedding theorem.
\begin{proposition}[Rellich-Kondrachov compact embedding theorem, see {\cite[Theorem 1.6]{Chen-Chen-Li2024}}]
\label{prop2-3}
Let $X$ and $\Omega$ satisfy the assumptions of Proposition  \ref{prop2-1}. Then  the embedding
\[ H_{X,0}^1(\Omega)\hookrightarrow L^s(\Omega) \]
is compact for every $1\leq s<2_{\tilde{\nu}}^*$, where $2_{\tilde{\nu}}^*=\frac{2\tilde{\nu}}{\tilde{\nu}-2}$ and $\tilde{\nu}\geq 3$ is the generalized M\'{e}tivier index defined in \eqref{2-2}.
\end{proposition}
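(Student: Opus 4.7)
The plan is to reduce the general case $1\le s<2_{\tilde\nu}^{*}$ to the single compactness statement $H_{X,0}^1(\Omega)\hookrightarrow\hookrightarrow L^2(\Omega)$, and then use interpolation together with the sharp Sobolev inequality of Proposition~\ref{prop2-1} to handle all other exponents. So the first move is to take an arbitrary sequence $\{u_k\}$ bounded in $H_{X,0}^1(\Omega)$ and, after extracting a weak limit $u\in H_{X,0}^1(\Omega)$ via Banach--Alaoglu (Hilbert space, reflexive), reduce to showing that $u_k\to u$ strongly in $L^s(\Omega)$ along a subsequence.

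The heart of the argument is compactness in $L^2(\Omega)$. Here I would appeal to Hörmander's subelliptic regularity theorem, which says that under condition~$(H)$ with Hörmander index $r_0$, any $v\in C_c^\infty(\Omega)$ satisfies the \emph{a priori} estimate
\begin{equation*}
\|v\|_{H^{1/r_0}(\mathbb{R}^n)}\le C\bigl(\|Xv\|_{L^2(\Omega)}+\|v\|_{L^2(\Omega)}\bigr),
\end{equation*}
with $C$ depending only on $X$ and $\Omega$. Extending $u_k$ by zero to $\mathbb{R}^n$ (legitimate because $u_k\in H_{X,0}^1(\Omega)$ is approximated by $C_c^\infty(\Omega)$ functions), the estimate passes to the limit and shows that $\{u_k\}$ is bounded in the classical fractional Sobolev space $H^{1/r_0}(\mathbb{R}^n)$, with uniformly compact support inside a fixed bounded neighborhood of $\overline{\Omega}$. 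The classical Rellich--Kondrachov theorem for the Euclidean space $H^{1/r_0}$ then yields a subsequence converging strongly in $L^2(\mathbb{R}^n)$, hence in $L^2(\Omega)$.

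For the remaining exponents I would split into two ranges. When $1\le s<2$, H\"older's inequality on the bounded set $\Omega$ gives $\|u_k-u\|_{L^s(\Omega)}\le |\Omega|^{1/s-1/2}\|u_k-u\|_{L^2(\Omega)}\to 0$. When $2<s<2_{\tilde\nu}^{*}$, I would choose $\theta\in(0,1)$ with $\tfrac{1}{s}=\tfrac{\theta}{2}+\tfrac{1-\theta}{2_{\tilde\nu}^{*}}$ and interpolate:
\begin{equation*}
\|u_k-u\|_{L^s(\Omega)}\le \|u_k-u\|_{L^2(\Omega)}^{\theta}\,\|u_k-u\|_{L^{2_{\tilde\nu}^{*}}(\Omega)}^{1-\theta}.
\end{equation*}
The first factor tends to zero by the previous step, while the second is uniformly bounded because Proposition~\ref{prop2-1} together with the boundedness of $\{u_k\}$ in $H_{X,0}^1(\Omega)$ gives a uniform bound on $\|u_k-u\|_{L^{2_{\tilde\nu}^{*}}(\Omega)}$. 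This completes the proof.

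The main obstacle I expect is the subelliptic $L^2$-compactness step: one has to be careful that Hörmander's regularity estimate, which is typically stated for compactly supported test functions on $\mathbb{R}^n$ or in an interior chart, extends cleanly to all of $H_{X,0}^1(\Omega)$ via density, and that the extension-by-zero operator is bounded from $H_{X,0}^1(\Omega)$ into the ambient $H^1_X(W)$ with support uniformly contained in a compact set. Once this technical point is handled, the rest of the argument is standard interpolation. An alternative that sidesteps Hörmander's theorem would be to verify the Fréchet--Kolmogorov criterion directly using the sub-Riemannian translation estimates for $\|u(\cdot+h)-u(\cdot)\|_{L^2}$ in terms of $\|Xu\|_{L^2}$ and the Carnot--Carath\'eodory distance, but the subelliptic regularity route is cleaner and exploits infrastructure already present in the Hörmander framework.
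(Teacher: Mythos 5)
This proposition is cited in the paper from \cite[Theorem 1.6]{Chen-Chen-Li2024} without a reproduced proof, so there is no in-paper argument to compare against; I can only assess your proposal on its own merits.

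Your argument is correct in structure and in the essential details. The reduction of the full range $1\le s<2_{\tilde\nu}^{*}$ to the single endpoint compactness $H_{X,0}^1(\Omega)\hookrightarrow\hookrightarrow L^2(\Omega)$ via H\"older (below $2$) and Lyapunov interpolation against the uniform $L^{2_{\tilde\nu}^{*}}$ bound from Proposition~\ref{prop2-1} (above $2$) is exactly the standard bootstrap and is airtight once the $L^2$ step is in place. The $L^2$ step itself is also sound: for $v\in C_c^\infty(\Omega)$ with $\Omega\subset\subset W$, the subelliptic a priori estimate $\|v\|_{H^{\varepsilon}(\mathbb{R}^n)}\le C(\|Xv\|_{L^2}+\|v\|_{L^2})$ holds uniformly for some $\varepsilon>0$, and passing to $H_{X,0}^1(\Omega)$ by density (together with zero extension and the fixed compact support in $\overline\Omega$) gives boundedness in a classical fractional Sobolev space supported in a fixed compact, whence the Euclidean Rellich--Kondrachov theorem applies. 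One small attribution correction: H\"ormander's original 1967 argument yields a subelliptic gain on the order of $\varepsilon=2^{1-r_0}$; the sharp exponent $\varepsilon=1/r_0$ you quote is due to Rothschild--Stein. This has no bearing on your proof, since any positive gain suffices for $L^2$ compactness, but the citation should be adjusted if the proof is written up. Your remark about the Fr\'echet--Kolmogorov alternative via Carnot--Carath\'eodory translation estimates is a legitimate second route and is closer to the argument usually given in the equiregular setting; the pseudodifferential route you take is cleaner here because it avoids having to build the metric machinery and works uniformly in the non-equiregular case.
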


Then, we consider the Dirichlet eigenvalue problem for the subelliptic Schr\"{o}dinger operator $-\triangle_{X}+V$ on $\Omega$,
  \begin{equation}\label{2-10}
    \left\{
      \begin{array}{ll}
    -\triangle_{X}u+V(x)u=\mu u   , & \hbox{$x\in \Omega$;} \\[2mm]
        u\in H_{X,0}^{1}(\Omega)
      \end{array}
    \right.
  \end{equation}
 where $V\in L^{\frac{p_{1}}{2}}(\Omega)$ with $p_{1}>\tilde{\nu}$.   Proposition \ref{prop2-1} and Proposition \ref{prop2-3} imply the validity of eigenvalue problem \eqref{2-10}.
 \begin{proposition}[{\cite[Proposition 2.12]{Chen-Chen-Li-Liao2022}}]
\label{prop2-5}
 Suppose that $X$ and $\Omega$ satisfy the assumptions of Proposition \ref{prop2-1}. If the potential term $V\in L^{\frac{p_{1}}{2}}(\Omega)$ with $p_{1}>\tilde{\nu}$, then the Dirichlet eigenvalue problem \eqref{2-10} of subelliptic Schr\"{o}dinger operator $-\triangle_{X}+V$ is well-defined, i.e. the self-adjoint operator $-\triangle_{X}+V$ admits a sequence of discrete Dirichlet eigenvalues $\mu_{1}\leq \mu_2\leq\cdots\leq\mu_k\leq\cdots$, and $\mu_{k}\to +\infty $ as $k\to +\infty$. Moreover, the corresponding eigenfunctions $\{\varphi_{k}\}_{k=1}^{\infty}$ constitute an orthonormal basis of $L^2(\Omega)$ and also an orthogonal basis of $H_{X,0}^{1}(\Omega)$.
\end{proposition}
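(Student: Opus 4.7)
The plan is to recast the eigenvalue problem \eqref{2-10} as a spectral problem for a compact self-adjoint operator on $L^2(\Omega)$, mirroring the classical treatment of $-\Delta + V$ on $H_{0}^{1}(\Omega)$. Concretely, I would work with the symmetric bilinear form
\[ a(u,v) := \int_{\Omega} Xu \cdot Xv \, dx + \int_{\Omega} V(x)\, u(x)\, v(x) \, dx, \qquad u,v \in H_{X,0}^{1}(\Omega), \]
and analyze it via the sharp Sobolev estimate (Proposition \ref{prop2-1}) and Rellich-Kondrachov compactness (Proposition \ref{prop2-3}).

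First I would verify that $a$ is well-defined and bounded. By H\"older's inequality with exponents $(p_{1}/2,\, q,\, q)$ where $q := 2p_{1}/(p_{1}-2)$, the potential term is controlled by $\|V\|_{L^{p_{1}/2}} \|u\|_{L^{q}} \|v\|_{L^{q}}$; since $p_{1} > \tilde{\nu}$ forces $q < 2_{\tilde{\nu}}^{*}$, Proposition \ref{prop2-1} absorbs this into the $H_{X,0}^{1}$ norm. Next, the compact embedding $H_{X,0}^{1} \hookrightarrow L^{q}$ of Proposition \ref{prop2-3} permits a standard compactness-interpolation argument giving, for every $\varepsilon > 0$, a constant $C_{\varepsilon}$ such that $\bigl|\int_{\Omega} V u^{2} \, dx\bigr| \leq \varepsilon \|Xu\|_{L^{2}}^{2} + C_{\varepsilon} \|u\|_{L^{2}}^{2}$. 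Fixing $\varepsilon = 1/2$ and a sufficiently large $C_{0} > 0$, the shifted form $\tilde{a}(u,v) := a(u,v) + C_{0}(u,v)_{L^{2}}$ becomes continuous, symmetric, and coercive on $H_{X,0}^{1}(\Omega)$ with $\tilde{a}(u,u) \geq \tfrac{1}{2}\|u\|_{H_{X,0}^{1}}^{2}$.

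With coercivity in hand, the Lax-Milgram theorem produces a solution operator $T : L^{2}(\Omega) \to H_{X,0}^{1}(\Omega)$ satisfying $\tilde{a}(Tf, v) = (f,v)_{L^{2}}$ for every $v$, and composing with the compact inclusion $\iota$ of Proposition \ref{prop2-3} yields a compact, positive, self-adjoint operator $K := \iota \circ T$ on $L^{2}(\Omega)$. The spectral theorem for such operators then delivers a sequence of positive eigenvalues $\lambda_{k} \searrow 0$ with $L^{2}$-orthonormal eigenfunctions $\{\varphi_{k}\}$; setting $\mu_{k} := 1/\lambda_{k} - C_{0}$ gives the desired Dirichlet eigenvalues of $-\triangle_{X} + V$ with $\mu_{k} \to +\infty$. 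The fact that $\{\varphi_{k}\}$ is a basis of $H_{X,0}^{1}(\Omega)$ (orthogonal with respect to the energy form $\tilde{a}$) follows from noting that any $u \in H_{X,0}^{1}$ that is $\tilde{a}$-orthogonal to every $\varphi_{k}$ must satisfy $Ku = 0$, hence $u = 0$.

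The principal obstacle is the coercivity-plus-compactness step for a potential $V$ that is merely $L^{p_{1}/2}$ rather than bounded: the strict inequality $p_{1} > \tilde{\nu}$ is indispensable because it is exactly the threshold at which the auxiliary exponent $q = 2p_{1}/(p_{1}-2)$ lies strictly below the critical Sobolev exponent $2_{\tilde{\nu}}^{*}$, so that Proposition \ref{prop2-3} supplies the compactness needed both for the lower-order bound on the potential term and for the compactness of $K$ itself. At the borderline case $p_{1} = \tilde{\nu}$ one would land on the critical exponent and this compactness-based approach would fail.
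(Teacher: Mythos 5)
Your proof is correct and takes the standard variational route (bilinear form, $\varepsilon$-absorption of the potential via compactness, Lax--Milgram, spectral theorem for the compact resolvent), which is essentially what the cited source relies on. The key exponent bookkeeping is right: $q=2p_{1}/(p_{1}-2)<2_{\tilde\nu}^{*}$ precisely because $p_{1}>\tilde\nu$, so Propositions~\ref{prop2-1}--\ref{prop2-3} give both boundedness of the form and compactness of the solution operator. One small but worth-noting point: the eigenfunctions of $-\triangle_{X}+V$ are orthogonal in the shifted energy form $\tilde a$ (equivalently, in $a$ itself, since $a(\varphi_j,\varphi_k)=\mu_k(\varphi_j,\varphi_k)_{L^2}=0$ for $j\neq k$), not in the plain $H_{X,0}^{1}$ inner product $\int_{\Omega}Xu\cdot Xv\,dx$ unless $V\equiv 0$; you flag this correctly by writing ``orthogonal with respect to the energy form $\tilde a$,'' and since $\tilde a$ is an equivalent inner product on $H_{X,0}^{1}(\Omega)$ this is the right reading of the statement's ``orthogonal basis of $H_{X,0}^{1}(\Omega)$.''
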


Using Proposition \ref{prop2-1} and the results in \cite{Frank2010,Levin1997}, we can deduce the following degenerate Cwikel-Lieb-Rozenblum inequality for the  subelliptic Schr\"{o}dinger operator $-\triangle_{X}+V$.

\begin{proposition}[Cwikel-Lieb-Rozenblum inequality, see {\cite[Proposition 2.13]{Chen-Chen-Li-Liao2022}}]
\label{CLR}
Let $p_{1}>\tilde{\nu}$ be a given positive constant and $V\in L^{\frac{p_{1}}{2}}(\Omega)$ be a function satisfying $V\leq 0$, then there exists a positive constant $C>0$ such that
\begin{equation}\label{clr-ine}
  N(0,-\triangle_{X}+V)\leq C\int_{\Omega}|V(x)|^{\frac{\tilde{\nu}}{2}}dx,
\end{equation}
where $N(0,-\triangle_{X}+V):=\#\{k:\mu_{k}<0 \}$ denotes the numbers of negative Dirichlet eigenvalue of $-\triangle_{X}+V$, and $\tilde{\nu}$ is the generalized M\'{e}tivier index of $X$.
\end{proposition}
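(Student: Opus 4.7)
The plan is to combine the sharp Sobolev inequality of Proposition~\ref{prop2-1} with the Birman--Schwinger reduction and the abstract Cwikel-type bound established in \cite{Frank2010,Levin1997}.

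First, since $V\leq 0$, write $V=-W$ with $W\geq 0$. By the Birman--Schwinger principle, an energy $E<0$ is an eigenvalue of $-\triangle_{X}+V$ if and only if $1$ is an eigenvalue of the compact, nonnegative operator $W^{1/2}(-\triangle_{X}-E)^{-1}W^{1/2}$ on $L^{2}(\Omega)$. Letting $E\uparrow 0$ and using monotone convergence, we obtain
\[ N(0,-\triangle_{X}+V)\;\leq\;\#\bigl\{k:\lambda_{k}(K)\geq 1\bigr\}, \]
where $K:=W^{1/2}(-\triangle_{X})^{-1}W^{1/2}$. The inverse $(-\triangle_{X})^{-1}$ is well defined thanks to the Friedrichs--Poincar\'{e} inequality \eqref{2-5}, and $K$ is compact because the embedding in Proposition~\ref{prop2-3} is compact for every $1\leq s<2_{\tilde{\nu}}^{*}$.

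Next, Proposition~\ref{prop2-1} asserts that $(-\triangle_{X})^{-1/2}:L^{2}(\Omega)\to L^{2_{\tilde{\nu}}^{*}}(\Omega)$ is bounded with $2_{\tilde{\nu}}^{*}=2\tilde{\nu}/(\tilde{\nu}-2)$. This is precisely the mapping hypothesis required to invoke an abstract Cwikel bound: by the Cwikel-type arguments of \cite{Frank2010} and the Levin--Solomyak machinery of \cite{Levin1997}, the operator $W^{1/2}(-\triangle_{X})^{-1/2}$ belongs to the weak Schatten class $\mathcal{S}^{\tilde{\nu},\infty}$ on $L^{2}(\Omega)$ with
\[ \bigl\|W^{1/2}(-\triangle_{X})^{-1/2}\bigr\|_{\mathcal{S}^{\tilde{\nu},\infty}}\;\leq\;C\,\bigl\|W^{1/2}\bigr\|_{L^{\tilde{\nu}}(\Omega)}\;=\;C\,\|V\|_{L^{\tilde{\nu}/2}(\Omega)}^{1/2}. \]
The factorization $K=\bigl(W^{1/2}(-\triangle_{X})^{-1/2}\bigr)\bigl(W^{1/2}(-\triangle_{X})^{-1/2}\bigr)^{\!*}$ then gives $\lambda_{k}(K)\leq C\,\|V\|_{L^{\tilde{\nu}/2}(\Omega)}\,k^{-2/\tilde{\nu}}$ for all $k\geq 1$.

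Finally, the condition $\lambda_{k}(K)\geq 1$ forces $k\leq C^{\tilde{\nu}/2}\,\|V\|_{L^{\tilde{\nu}/2}(\Omega)}^{\tilde{\nu}/2}$, hence
\[ N(0,-\triangle_{X}+V)\;\leq\;C'\int_{\Omega}|V(x)|^{\tilde{\nu}/2}\,dx, \]
which is \eqref{clr-ine}. The hypothesis $V\in L^{p_{1}/2}(\Omega)$ with $p_{1}>\tilde{\nu}$ ensures both that the right-hand side is finite (by H\"{o}lder's inequality on the bounded set $\Omega$) and that Proposition~\ref{prop2-5} applies, so that the Birman--Schwinger step is rigorously justified. \textbf{The main obstacle} is the passage from the Sobolev mapping property of $(-\triangle_{X})^{-1/2}$ to a weak Schatten-class bound for $W^{1/2}(-\triangle_{X})^{-1/2}$; in the classical Euclidean setting this is Cwikel's theorem, and in the degenerate subelliptic setting it is precisely the content of the abstract frameworks of \cite{Frank2010, Levin1997}, whose only structural input is the Sobolev inequality \eqref{sobolev-ine} delivered by Proposition~\ref{prop2-1}.
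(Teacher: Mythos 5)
The paper does not give a proof here: Proposition~\ref{CLR} is quoted from \cite[Proposition 2.13]{Chen-Chen-Li-Liao2022}, and the only indication of method is the one-line remark that it follows from Proposition~\ref{prop2-1} together with \cite{Frank2010,Levin1997}. Your expansion is essentially that same route: Birman--Schwinger reduction to the operator $K=W^{1/2}(-\triangle_{X})^{-1}W^{1/2}$, the Sobolev inequality \eqref{sobolev-ine} read as boundedness of $(-\triangle_{X})^{-1/2}:L^{2}\to L^{2_{\tilde\nu}^{*}}$, and an abstract Cwikel/CLR bound to count the eigenvalues $\geq 1$ of $K$. The conclusion and the scaling of the exponents are correct.

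One caveat worth flagging: your final sentence asserts that the ``only structural input'' to the abstract machinery of \cite{Frank2010,Levin1997} is the Sobolev inequality. That understates what is actually used. In the Euclidean case Cwikel's theorem relies on Fourier/translation structure; in the abstract Levin--Solomyak and Frank--Lieb--Seiringer frameworks the substitute is that $-\triangle_{X}$ generates a sub-Markovian (positivity-preserving, $L^{\infty}$-contractive) semigroup, i.e., that $\int_{\Omega}|Xu|^{2}\,dx$ is a Dirichlet form. It is the Sobolev inequality \emph{together with} this Markov/Dirichlet-form structure that yields the heat-kernel ultracontractivity $\|e^{t\triangle_{X}}\|_{L^{1}\to L^{\infty}}\lesssim t^{-\tilde\nu/2}$ on which the weak-Schatten (or, in Levin--Solomyak's formulation, the Lieb/Rozenblum-type) bound for $W^{1/2}(-\triangle_{X})^{-1/2}$ ultimately rests. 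Since $-\triangle_{X}=\sum_{i}X_{i}^{*}X_{i}$ does have this structure, your argument goes through, but the proof should explicitly invoke it rather than attribute everything to \eqref{sobolev-ine}. With that addition the proof matches the paper's intended derivation.
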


\subsection{Basic variational settings}
Let $H$ be a Hilbert space. We recall the following definitions.
 \begin{definition}
 \label{def2-2}
For $E\in C^{1}(H, \mathbb{R})$, we say a sequence $\{u_m\}_{m=1}^{\infty}$ in $H$ is a Palais-Smale sequence for functional $E$, if ${|E(u_m)|}\leq C$ uniformly in $m$, and ${\|E'(u_m)\|}_{H}\rightarrow0$ as $m\rightarrow\infty$, where $E':H\to H$ denotes
the Fr\'{e}chet derivative of $E$.
 \end{definition}

  \begin{definition}
  \label{def2-3}
 A functional $E\in C^{1}(H, \mathbb{R})$ satisfies the Palais-Smale condition  if any Palais-Smale sequence has a subsequence that is convergent in $H$.
  \end{definition}

We next introduce the weak solution to problem \eqref{problem1-1}.

\begin{definition}
\label{def2-4}
We say $u\in H_{X,0}^{1}(\Omega)$ is a weak solution to problem  \eqref{problem1-1} if
\begin{equation}
\label{2-12}
  \int_{\Omega}Xu\cdot Xvdx-\int_{\Omega}f(x,u)vdx-\int_{\Omega}g(x,u)vdx=0,~~~\forall v\in H_{X,0}^{1}(\Omega).
\end{equation}
\end{definition}

Consider the  energy functional $E: H_{X,0}^{1}(\Omega)\to \mathbb{R}$ of problem \eqref{problem1-1}, defined by
\begin{equation}\label{2-13}
  E(u):=\frac{1}{2}\int_{\Omega}{{|{Xu}|}^2} dx-\int_{\Omega}F(x,u) dx-\int_{\Omega}G(x,u)dx.
\end{equation}
Then, we have
\begin{proposition}[{\cite[Proposition 3.2]{Chen-Chen-Li-Liao2022}}]
\label{prop2-9}
If the functions $f$ and $g$ respectively satisfy the assumptions $(f2)$ and $(g2)$, then
  \[ E(u)=\frac{1}{2}\int_{\Omega}{{|{Xu}|}^2} dx-\int_{\Omega}F(x,u) dx-\int_{\Omega}G(x,u)dx \]
belongs to $C^{1}(H_{X,0}^{1}(\Omega), \mathbb{R})$. Thus the semilinear equation \eqref{problem1-1} is the Euler-Lagrange equation of the variational problem for the energy functional \eqref{2-13}. Furthermore, the Fr\'{e}chet derivative of $E$ at $u$ is given by
\begin{equation}\label{2-14}
 (E'(u),v)_{H_{X,0}^{1}(\Omega)}=\int_{\Omega}Xu\cdot Xvdx-\int_{\Omega}f(x,u)vdx-\int_{\Omega}g(x,u)vdx.
\end{equation}
Therefore, the critical point of $E$ in $H_{X,0}^{1}(\Omega)$ is the weak solution to \eqref{problem1-1}.
\end{proposition}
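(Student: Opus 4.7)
The plan is to decompose $E = E_1 - E_2 - E_3$, where
\[ E_1(u) = \tfrac{1}{2}\int_\Omega |Xu|^2\,dx, \qquad E_2(u) = \int_\Omega F(x,u)\,dx, \qquad E_3(u) = \int_\Omega G(x,u)\,dx, \]
and verify separately that each summand lies in $C^1(H_{X,0}^1(\Omega), \mathbb{R})$. The quadratic term $E_1$ is immediate from the definition \eqref{2-6} of the inner product: $E_1(u) = \tfrac{1}{2}\|u\|_{H_{X,0}^1(\Omega)}^2$, so $E_1$ is smooth with $(E_1'(u),v)_{H_{X,0}^1(\Omega)} = \int_\Omega Xu\cdot Xv\,dx$. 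The real work is concentrated in the two Nemytskii-type integrals $E_2$ and $E_3$.

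For $E_2$, I would use $(f2)$ to bound $|F(x,u)| \leq C(|u|^2 + |u|^p)$ with $2 < p < 2^*_{\tilde{\nu}}$; since $H_{X,0}^1(\Omega) \hookrightarrow L^p(\Omega)$ by Proposition \ref{prop2-1}, the functional $E_2$ is finite on $H_{X,0}^1(\Omega)$. To obtain G\^ateaux differentiability, I would expand $F(x,u+tv) - F(x,u) = t\int_0^1 f(x, u+stv)v\,ds$, dominate the integrand uniformly for small $t$ by $C(|v| + (|u|+|v|)^{p-1}|v|) \in L^1(\Omega)$ via $(f2)$ and H\"older, and pass to the limit by dominated convergence. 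Continuity of $u \mapsto E_2'(u)$ in $(H_{X,0}^1(\Omega))^*$ then reduces to the continuity of the Nemytskii operator $u \mapsto f(\cdot, u)$ from $L^p(\Omega)$ into $L^{p/(p-1)}(\Omega)$, which is a classical consequence of the Carath\'eodory property of $f$ together with the growth bound $|f(x,u)|^{p/(p-1)} \leq C(1 + |u|^p)$. The identical argument, with $\sigma+1$ playing the role of $p$ and $(g2)$ in place of $(f2)$, handles $E_3$ (the compact embedding of Proposition \ref{prop2-3} can be invoked to pass from G\^ateaux to Fr\'echet differentiability cleanly).

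Assembling these pieces yields $E \in C^1(H_{X,0}^1(\Omega),\mathbb{R})$ with
\[ (E'(u),v)_{H_{X,0}^1(\Omega)} = \int_\Omega Xu\cdot Xv\,dx - \int_\Omega f(x,u)v\,dx - \int_\Omega g(x,u)v\,dx, \]
and direct comparison with the weak-solution identity \eqref{2-12} in Definition \ref{def2-4} identifies critical points of $E$ with weak solutions of \eqref{problem1-1}. The chief point I expect to require care is verifying continuity of the Nemytskii operators in the correct dual Lebesgue spaces, since everything hinges on matching the growth exponents $p$ and $\sigma+1$ against the sharp subelliptic Sobolev exponent $2^*_{\tilde{\nu}}$ from Proposition \ref{prop2-1}; once that is in place, the remainder is routine dominated convergence.
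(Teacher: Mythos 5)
The paper does not prove Proposition~\ref{prop2-9} in-line — it is quoted verbatim from \cite[Proposition 3.2]{Chen-Chen-Li-Liao2022} — so there is no house proof here to compare against, but your decomposition into $E_1-E_2-E_3$ and the Nemytskii-operator argument is the standard and correct route, and is surely what the cited reference does. Two small corrections of detail: integrating the bound from $(f2)$ gives $|F(x,u)|\le C(|u|+|u|^p)$ rather than $C(|u|^2+|u|^p)$; this is harmless since $\Omega$ is bounded and $|u|\le 1+|u|^p$, but you should not attribute a quadratic lower-order term to $(f2)$ alone (that would require $(f1)$, which is not assumed here). Second, the passage from a continuous G\^ateaux derivative to Fr\'echet $C^1$-regularity is a consequence of the mean-value inequality for Banach-space-valued maps and needs only the \emph{continuity} of $u\mapsto E_i'(u)$ in the dual norm — the compact embedding of Proposition~\ref{prop2-3} is not what carries that step. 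Compactness is, of course, essential elsewhere: it is what makes the operators $K_1,K_2$ in \eqref{3-35} map bounded sets to relatively compact sets, so that $E'$ is identity-plus-compact, a property used repeatedly in Section~\ref{Section3}. With those two points adjusted your argument is complete.
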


\section{Perturbation from symmetry in sign-changing settings}
\label{Section3}
In this section, we combine the perturbation method with sign-changing settings.

\subsection{Perturbation theory}

We first invoke the perturbation arguments by Rabinowitz \cite{Rabinowitz1986},  incorporating modifications inspired by of Ramos-Tavares-Zou \cite{Ramos2009}. Roughly speaking, we construct a perturbation functional $J$, which is a modification of $E$ such that $J$ and $E$ share the same critical points and critical values at high energy levels.
Then, seeking a sequence of sign-changing solutions to problem \eqref{problem1-1} amounts to showing that $J$ has a sequence of sign-changing critical points. To ensure the exposition is reasonably self-contained,  we repeat the details in the construction of $J$.

Before constructing the functional $J$, we present some necessary estimates. According to assumptions ($f$1)-($f$2) and ($g$1)-($g$2), we have
\begin{equation}\label{3-1}
\begin{aligned}
 |f(x,u)| &\leq \varepsilon |u|+C(\varepsilon)|u|^{p-1},~~~\forall (x,u)\in \overline{\Omega}\times\mathbb{R},\\
|g(x,u)| &\leq \varepsilon |u|+C(\varepsilon)|u|^{\sigma},~~~\forall (x,u)\in \overline{\Omega}\times\mathbb{R},
\end{aligned}
\end{equation}
which means that for all $(x,u)\in\overline{\Omega}\times\mathbb{R}$ and any $\varepsilon>0$,
\begin{equation}\label{3-2}
|F(x,u)|\leq \varepsilon|u|^2+C(\varepsilon)|u|^{p},\quad\mbox{and}\quad |G(x,u)| \leq \varepsilon|u|^2+C(\varepsilon)|u|^{\sigma+1}.
\end{equation}
By assumption ($f$4), for $x\in\overline{\Omega}$ and $|u|\geq R_0$ we have
\[ u|u|^{\mu}\frac{\partial}{\partial u}\left(|u|^{-\mu}F(x,u)\right)=f(x,u)u-\mu F(x,u)\geq 0, \]
 which implies that $F(x,u)|u|^{-\mu}\geq \gamma_0(x):=R_0^{-\mu}\min\{F(x,-R_0),F(x,R_0)\}$.
 Since the positive functions $F(x, R_0)$ and $F(x,-R_0)$ belong to $C(\overline{\Omega})$, there exists a constant $c_{1}>0$ such that $\gamma_0(x)\geq c_{1}>0$ for any $x\in\overline{\Omega}$. Thus
 \begin{equation}\label{3-3}
 F(x,u)\geq c_{1}|u|^\mu,~~~\forall x\in\overline{\Omega},~|u|\geq R_0.
 \end{equation}
 Combining \eqref{3-3} and assumption ($f$4), for some $c_{2}, c_{3}>0 $, we have
 \begin{equation}\label{3-4}
\frac{1}{\mu}\left(uf(x,u)+c_3\right)\geq F(x,u)+c_2\geq c_1|u|^\mu~~~\forall (x,u)\in \overline{\Omega}\times\mathbb{R}.
\end{equation}
Observing that $\mu>2$ and $1+\sigma<\mu$, \eqref{3-2} and \eqref{3-4} imply that $1+\sigma<\mu\leq p$. Thus, assumption ($g$2) gives
\begin{equation}\label{3-5}
|g(x,u)|\leq C(1+|u|^{p-1})~~~\forall (x,u)\in \overline{\Omega}\times\mathbb{R}.
\end{equation}
Moreover,
\begin{equation}\label{3-6}
|g(x,u)| \leq \varepsilon|u|+C(\varepsilon)|u|^{p-1},~~~|G(x,u)| \leq \varepsilon|u|^2+C(\varepsilon)|u|^{p},~~~\forall (x,u)\in \overline{\Omega}\times\mathbb{R}.
\end{equation}

Then, we present the following Lemmas \ref{lemma3-1}-\ref{lemma3-7} under the assumptions  $(f1)$-$(f5)$ and $(g1)$-$(g3)$.

\begin{lemma}\label{lemma3-1}
There exists a positive constant $A>0$ such that if $u\in H_{X,0}^{1}(\Omega)$ is a critical point of $E$, then
 \begin{equation}\label{3-7}
\int_{\Omega}F(x,u)dx\leq A(I(u)^2+1)^{\frac{1}{2}},
\end{equation}
where
\begin{equation}\label{3-8}
  I(u):=\frac{1}{2}\int_{\Omega}|Xu|^2dx-\int_{\Omega}F(x,u)dx.
\end{equation}
\end{lemma}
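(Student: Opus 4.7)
The plan is to test the critical point equation against $u$ itself and then invoke the super-quadratic estimate \eqref{3-4} together with the sign condition $(g3)$. Since the conclusion has the rather loose form $\int_\Omega F(x,u)\,dx\leq A(I(u)^2+1)^{1/2}$, it essentially suffices to establish a linear bound $\int_\Omega F(x,u)\,dx\leq C(|I(u)|+1)$.

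First I would use Proposition \ref{prop2-9} (formula \eqref{2-14}) with $v=u$, which, combined with $E'(u)=0$, yields the Pohozaev-type identity
\[
\int_\Omega |Xu|^2\,dx=\int_\Omega f(x,u)u\,dx+\int_\Omega g(x,u)u\,dx.
\]
Next I would integrate the pointwise inequality \eqref{3-4} over $\Omega$, which gives $\int_\Omega f(x,u)u\,dx\geq \mu\int_\Omega F(x,u)\,dx - C_1$ for some constant $C_1>0$ independent of $u$, and then discard the $g$-term by invoking $(g3)$, namely $g(x,u)u\geq 0$ almost everywhere. Altogether,
\[
\int_\Omega |Xu|^2\,dx\geq \mu\int_\Omega F(x,u)\,dx - C_1.
\]

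Finally, recalling the definition $I(u)=\tfrac{1}{2}\int_\Omega |Xu|^2\,dx-\int_\Omega F(x,u)\,dx$, I would rewrite the last display as $(\mu-2)\int_\Omega F(x,u)\,dx\leq 2I(u)+C_1$. Since $\mu>2$ and (by $(f5)$ integrated) $F(x,u)\geq 0$, this yields
\[
\int_\Omega F(x,u)\,dx\leq \frac{2I(u)+C_1}{\mu-2}\leq \frac{2|I(u)|+C_1}{\mu-2},
\]
from which the desired bound $\int_\Omega F(x,u)\,dx\leq A(I(u)^2+1)^{1/2}$ follows by choosing $A$ large enough and using $|t|+c\leq (2+2c)\sqrt{t^2+1}$ for all $t\in\mathbb{R}$.

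I do not expect a real obstacle here: the argument is a routine consequence of the Ambrosetti--Rabinowitz-type estimate \eqref{3-4} together with the sign hypothesis $(g3)$, which is precisely what allows the lower-order perturbation $g$ to be absorbed without affecting the estimate. The only place one should be careful is that \eqref{3-4} was derived from $(f4)$ globally in $u$ (after adjusting constants to handle $|u|<R_0$), so the integrated lower bound holds with a single constant $C_1$ depending only on $f$ and $|\Omega|$.
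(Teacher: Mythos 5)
Your proposal is correct and follows essentially the same route as the paper: both test $E'(u)=0$ against $u$, discard the nonnegative term $\int_\Omega g(x,u)u\,dx$ via $(g3)$, and invoke the integrated form of \eqref{3-4} to extract $(\mu/2-1)\int_\Omega F(x,u)\,dx\leq I(u)+C$. The paper organizes the algebra as $I(u)=I(u)-\tfrac12(E'(u),u)_{H_{X,0}^1(\Omega)}$ rather than first isolating $\int_\Omega|Xu|^2\,dx$, but this is only a cosmetic difference.
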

\begin{proof}
  Let $u\in H_{X,0}^{1}(\Omega)$ be a critical point of $E$, we have $(E'(u),u)_{H_{X,0}^1(\Omega)}=0$. Hence, by \eqref{3-4} and assumption ($g$3) we obtain
  \begin{equation}
\begin{aligned}\label{3-9}
I(u)&=I(u)-\frac{1}{2}(E'(u),u)_{H_{X,0}^1(\Omega)}=\int_{\Omega}\left(\frac{1}{2}uf(x,u)-F(x,u)+\frac{1}{2}g(x,u)u\right)dx\\
&\geq \left(\frac{\mu}{2}-1\right)\int_{\Omega}F(x,u)dx-C\geq a_1 \int_{\Omega}F(x,u)dx-a_2,
\end{aligned}
\end{equation}
where $a_{1},a_{2}$ are some positive constants. Consequently,  \eqref{3-7} is derived from \eqref{3-9}.
\end{proof}

Let $\chi \in C^\infty(\mathbb{R})$ be a real smooth  function with $\chi(\xi)\equiv 1$ for $\xi\leq 1$, $\chi(\xi)\equiv 0$ for $\xi\geq 2$, and $\chi'(\xi)\in (-2,0)$ for $\xi \in (1,2)$. Denote by
\[Q(u):=2A(I(u)^2+1)^{\frac{1}{2}}\]
and
\[\theta(u):=\chi\left(Q(u)^{-1}\int_{\Omega}F(x,u)dx \right).\]
 Lemma \ref{lemma3-1} indicates that, if $u$ is a critical point of $E$,  $Q(u)^{-1}\int_{\Omega}F(x,u)dx$ lies in $\left[0,\frac{1}{2}\right]$ and then $\theta(u)=1$. Now, we present the perturbation functional
\begin{equation}\label{3-10}
\begin{aligned}
J(u)&:=E(u)+(1-\theta(u))\int_{\Omega} G(x,u)dx\\
&=\frac{1}{2}\int_{\Omega}{{|{Xu}|}^2} dx-\int_{\Omega}F(x,u) dx-\theta(u)\int_{\Omega}G(x,u)dx~~~\forall u\in H_{X,0}^{1}(\Omega).
\end{aligned}
\end{equation}
Clearly, $J(u)=E(u)$ if $u$ is a critical point of $E$. Since $\chi$ is smooth, then $\theta\in C^1(H_{X,0}^1(\Omega), \mathbb{R})$ and therefore $J\in C^1(H_{X,0}^1(\Omega), \mathbb{R})$. Furthermore, $J(u)$ satisfies the following estimate.
\begin{lemma}\label{lemma3-2}
There is a positive constant $C>0$ such that
\begin{equation}\label{3-11}
|J(u)-J(-u)|\leq C(|J(u)|^{\frac{\sigma+1}{\mu}}+1)~~~\forall u\in H_{X,0}^{1}(\Omega).
\end{equation}

\end{lemma}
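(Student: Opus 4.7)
The plan rests on exploiting the parity of everything except the $G$-term. By the oddness of $f$ in $(f3)$ we have $F(x,\cdot)$ even, and obviously $|X(-u)|^2=|Xu|^2$, so the functional $I$ in \eqref{3-8} and hence $Q(u)=2A(I(u)^2+1)^{1/2}$ and $\theta(u)=\chi(Q(u)^{-1}\int_\Omega F(x,u)\,dx)$ are all even in $u$. Consequently the only asymmetric contribution to $J$ comes from $\theta(u)\int_\Omega G(x,u)\,dx$, and
\[
J(u)-J(-u)=\theta(u)\int_\Omega\bigl[G(x,-u)-G(x,u)\bigr]dx.
\]
If $\theta(u)=0$ the lemma is trivial, so the plan is to work on the set where $\theta(u)>0$, which by the definition of $\chi$ forces $\int_\Omega F(x,u)\,dx\le 2Q(u)\le 4A(|I(u)|+1)$.

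Next I would control the right-hand side by the $L^\mu$-type quantity $\int F$. From $(g2)$ together with $\sigma+1<\mu\le p$ (derived in \eqref{3-5}) and H\"older,
\[
\Bigl|\int_\Omega\bigl[G(x,-u)-G(x,u)\bigr]dx\Bigr|\le C\int_\Omega\bigl(1+|u|^{\sigma+1}\bigr)dx\le C\Bigl(\int_\Omega|u|^\mu dx\Bigr)^{\frac{\sigma+1}{\mu}}+C.
\]
Then \eqref{3-4} gives $\int_\Omega|u|^\mu\,dx\le C\bigl(\int_\Omega F(x,u)\,dx+1\bigr)$, so combining the two estimates,
\[
|J(u)-J(-u)|\le C\Bigl(\int_\Omega F(x,u)\,dx+1\Bigr)^{\frac{\sigma+1}{\mu}}+C.
\]

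The remaining task is to bound $\int F$ by $|J(u)|+1$. Writing $J(u)=I(u)-\theta(u)\int_\Omega G(x,u)\,dx$ and applying the same $L^{\sigma+1}$-by-$L^\mu$ chain to the $G$-term yields
\[
|I(u)|\le |J(u)|+C\Bigl(\int_\Omega F(x,u)\,dx+1\Bigr)^{\frac{\sigma+1}{\mu}}+C,
\]
while on the support of $\theta$ we already have $\int F\le 4A(|I(u)|+1)$. Substituting, with $t:=\frac{\sigma+1}{\mu}<1$, gives $|I(u)|\le |J(u)|+C(|I(u)|+1)^t+C$, and since $t<1$ the term $C(|I(u)|+1)^t$ is absorbed into $\tfrac12|I(u)|$ once $|I(u)|$ is large; this delivers $|I(u)|\le C(|J(u)|+1)$ and hence $\int_\Omega F(x,u)\,dx\le C(|J(u)|+1)$. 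Plugging back into the previous display produces the claimed bound $|J(u)-J(-u)|\le C(|J(u)|^{(\sigma+1)/\mu}+1)$.

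The main obstacle is this last absorption step: one must verify that the sub-linear growth of $(|I|+1)^t$ really can be swallowed by $|I|$ uniformly, handling the low-energy regime by the constant $C$ on the right-hand side. Everything else is straightforward bookkeeping from the Ambrosetti--Rabinowitz inequality \eqref{3-4}, the cutoff property of $\chi$, and the evenness of $F$ and $|Xu|^2$.
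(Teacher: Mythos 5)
Your proof is correct and uses the same mechanism as the paper's: evenness of $F$, $I$, $Q$, $\theta$ isolates $J(u)-J(-u)=\theta(u)\int_\Omega\bigl(G(x,-u)-G(x,u)\bigr)dx$, the case $u\notin{\rm supp}\,\theta$ is trivial, on ${\rm supp}\,\theta$ the cutoff property gives $\int_\Omega F\,dx\le C(|I(u)|+1)$, and a Young-type absorption of the sublinear exponent $(\sigma+1)/\mu<1$ closes the chain. Your route is a modest streamlining: you relate $|I|$ to $|J|$ directly via $I=J+\theta\int_\Omega G\,dx$ and close in a single absorption step, whereas the paper passes through the intermediate functional $E$ (first $|I|\le C|E|+C$ from $I=E+\int_\Omega G\,dx$, then $|E|\le C|J|+C$), using two absorptions.
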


\begin{proof}
If $u\notin {\rm supp}~\theta$, then $\theta(u)=0$, and \eqref{3-11} is trivially holds since $F(x,u)=F(x,-u)$.
Hence, we only need to consider the case where  $u\in {\rm supp}~\theta$. For each $u\in {\rm supp}~\theta$, the definition of $\theta$ implies
\begin{equation}\label{3-12}
\int_{\Omega}F(x,u)dx\leq 4A(I(u)^2+1)^{\frac{1}{2}}\leq C|I(u)|+C.
\end{equation}
  Then by assumptions $(g2)$ and $(g3)$, \eqref{3-4} and \eqref{3-12} we have
   \begin{equation}\label{3-13}
    \begin{aligned}
   0\leq\int_{\Omega}G(x,u)dx&\leq C\int_{\Omega}\left(|u|+|u|^{\sigma+1}\right)dx\leq\int_{\Omega}\left(C+C|u|^{\sigma+1}\right)dx\\
   &\leq C+C\left(\int_{\Omega}|u|^\mu dx\right)^{\frac{\sigma+1}{\mu}}\leq C+C\left(\int_{\Omega}(F(x,u)+C) dx\right)^{\frac{\sigma+1}{\mu}}\\
   &\leq C+C\left(|I(u)|+C \right)^{\frac{\sigma+1}{\mu}}\leq C+C|I(u)|^{\frac{\sigma+1}{\mu}}.
    \end{aligned}
  \end{equation}
Besides, \eqref{3-8} and \eqref{3-13} implies
  \begin{equation}\label{3-14}
    \begin{aligned}
    |I(u)|=\left|E(u)+\int_{\Omega}G(x,u)dx\right|\leq |E(u)|+C|I(u)|^{\frac{\sigma+1}{\mu}}+C.
    \end{aligned}
  \end{equation}
Since $\frac{\sigma+1}{\mu}<1$, the term $C|I(u)|^{\frac{\sigma+1}{\mu}}$ in \eqref{3-14} can be absorbed into the left hand side by Young's inequality and we have
  \begin{equation}\label{3-15}
  |I(u)|\leq C|E(u)|+C.
  \end{equation}
Observing that $I(u)$ and $\theta(u)$ are both even functionals, we can deduce from \eqref{3-10}, \eqref{3-13} and \eqref{3-15} that
  \begin{equation}\label{3-16}
    \begin{aligned}
    |J(u)-J(-u)|&\leq \theta(u)\left|\int_{\Omega}G(x,u)dx-\int_{\Omega}G(x,-u)dx\right|\\
    &\leq C+C|I(u)|^{\frac{\sigma+1}{\mu}}\leq C+C|E(u)|^{\frac{\sigma+1}{\mu}}.
    \end{aligned}
  \end{equation}
Additionally, \eqref{3-10} and \eqref{3-13} give that
  \begin{equation}\label{3-17}
    |E(u)|\leq |J(u)|+\left|\int_{\Omega}G(x,u)dx\right|\leq |J(u)|+C|E(u)|^{\frac{\sigma+1}{\mu}}+C,
  \end{equation}
which means $|E(u)|\leq C|J(u)|+C$. Therefore,
 $|J(u)-J(-u)|\leq C(|J(u)|^{\frac{\sigma+1}{\mu}}+1)$ for all $u\in H_{X,0}^{1}(\Omega)$.
\end{proof}

Let us analyze the Fr\'{e}chet derivative of functional $J$. For any $u,v\in H_{X,0}^1(\Omega)$, we have
\begin{equation}\label{3-18}
(J'(u), v)_{H_{X,0}^{1}(\Omega)}=(u,v)_{H_{X,0}^{1}(\Omega)}-(K_J(u),v)_{H_{X,0}^{1}(\Omega)},
\end{equation}
where
\begin{equation}\label{3-19}
 (K_J(u),v)_{H_{X,0}^{1}(\Omega)}=(1+T_2(u))\int_{\Omega}f(x,u)v dx+\theta(u)\int_{\Omega}g(x,u)vdx-T_1(u)\int_{\Omega}Xu\cdot Xvdx,
\end{equation}
with
\begin{equation}
\begin{aligned}\label{3-20}
T_1(u)&=\chi'(\varphi(u))\varphi(u)(2A)^2Q(u)^{-2}I(u)\int_{\Omega}G(x,u)dx,\\
T_2(u)&=\chi'(\varphi(u))Q(u)^{-1}\int_{\Omega}G(x,u)dx+T_1(u),
\end{aligned}
\end{equation}
and
\begin{equation}\label{3-21}
 \varphi(u)=Q(u)^{-1}\int_{\Omega}F(x,u)dx.
\end{equation}

Then, it follows that
\begin{lemma}
\label{lemma3-3}
The operators $T_{1}$ and $T_{2}$ satisfy
\begin{equation}\label{3-22}
\begin{split}
|T_1(u)|&\leq C(|I(u)|^{\frac{\sigma+1}{\mu}}+1)|I(u)|^{-1},\\
|T_2(u)|&\leq C(|I(u)|^{\frac{\sigma+1}{\mu}}+1)|I(u)|^{-1}.
\end{split}
\end{equation}
Moreover, if $J(u)\to +\infty$, then  $I(u)\to+\infty$, $T_{1}(u)\to 0$ and $T_2(u)\to 0$.
\end{lemma}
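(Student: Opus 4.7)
The plan is to verify both estimates by direct computation, exploiting the fact that $T_1(u)$ and $T_2(u)$ both vanish whenever $\chi'(\varphi(u)) = 0$, and then to derive the asymptotic claim from a clean algebraic identity between $J(u)$ and $I(u)$.

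First I would note that $T_1$ and $T_2$ carry the factor $\chi'(\varphi(u))$, so they are nonzero only on the set where $\varphi(u) \in (1,2)$; on this set $u$ lies in $\mathrm{supp}\,\theta$, so the estimate
\[
0 \leq \int_{\Omega} G(x,u)\,dx \leq C\bigl(1+|I(u)|^{(\sigma+1)/\mu}\bigr),
\]
already obtained in \eqref{3-13} during the proof of Lemma \ref{lemma3-2}, applies; moreover, $|\chi'(\varphi(u))|\leq C$ and $|\chi'(\varphi(u))\varphi(u)|\leq C$ hold there. A direct simplification gives the identity $(2A)^2 Q(u)^{-2} = (I(u)^2+1)^{-1}$, so from the definition \eqref{3-20} of $T_1$ one obtains
\[
|T_1(u)| \leq C\,\frac{|I(u)|}{I(u)^2+1}\bigl(1+|I(u)|^{(\sigma+1)/\mu}\bigr).
\]
Since $|I(u)|/(I(u)^2+1) \leq |I(u)|^{-1}$ whenever $I(u)\neq 0$, the first bound in \eqref{3-22} follows. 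The second bound comes from the triangle inequality applied to the splitting $T_2(u) = \chi'(\varphi(u))Q(u)^{-1}\int_\Omega G(x,u)\,dx + T_1(u)$, together with the estimate $Q(u)^{-1} \leq C(1+|I(u)|)^{-1}$ and the bound on $T_1$ just obtained.

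For the asymptotic statement, I would extract the identity $J(u) = I(u) - \theta(u)\int_\Omega G(x,u)\,dx$ directly from \eqref{3-10} and \eqref{3-8}. By assumption $(g3)$ the integrand $G(x,u)$ is nonnegative, and $\theta(u) \in [0,1]$, hence $I(u) \geq J(u)$ holds pointwise on $H_{X,0}^1(\Omega)$. Consequently, $J(u) \to +\infty$ forces $I(u) \to +\infty$. Since $\sigma+1 < \mu$ makes the exponent $(\sigma+1)/\mu - 1$ strictly negative, the two bounds just established immediately yield $T_1(u), T_2(u) \to 0$. The whole argument is essentially bookkeeping; the only step requiring real care is the algebraic simplification $(2A)^2 Q(u)^{-2}=(I(u)^2+1)^{-1}$, which is precisely what allows the explicit $I(u)$ factor in $T_1$ to produce the $|I(u)|^{-1}$ decay rate rather than something slower, and verifying that the bound \eqref{3-13} from Lemma \ref{lemma3-2} can be reused on the subset where $\chi'(\varphi(u)) \neq 0$.
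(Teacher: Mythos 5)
Your proof is correct and follows essentially the same route as the paper: for the bounds \eqref{3-22} you use the vanishing of $\chi'(\varphi(u))$ outside $\varphi(u)\in(1,2)$, the estimate \eqref{3-13}, the explicit simplification $(2A)^2Q(u)^{-2}=(I(u)^2+1)^{-1}$, and the crude bounds $|\chi'|\leq 2$, $|\varphi(u)|\leq 2$ on the relevant set — all of which the paper subsumes under the one-line claim that \eqref{3-22} ``is derived from \eqref{3-13} and \eqref{3-20}.'' For the asymptotic statement your argument is slightly cleaner than the paper's: you observe directly from \eqref{3-8} and \eqref{3-10} that $J(u)=I(u)-\theta(u)\int_\Omega G(x,u)\,dx$, and since $\theta\in[0,1]$ and $G\geq 0$ (a consequence of $(g3)$), you get the pointwise inequality $I(u)\geq J(u)$, from which $I(u)\to+\infty$ is immediate; the paper instead uses the weaker $I(u)+\int_\Omega G\geq J(u)$ combined again with \eqref{3-13} to produce \eqref{3-23}, which implicitly requires a case split on whether $u\in\mathrm{supp}\,\theta$. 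Your version avoids that and is preferable.
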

\begin{proof}
If $u\in \text{supp}~\theta$ with  $\varphi(u)<1$, then $\chi'(\varphi(u))=0$, which gives $T_{1}(u)=T_{2}(u)=0$.
For $u\in \text{supp}~\theta$ with $1\leq \varphi(u)\leq 2$, \eqref{3-22} is derived from \eqref{3-13} and \eqref{3-20}.

On the other hand, observing that
\[I(u)+\int_{\Omega}G(x,u) dx\geq J(u),\]
 by \eqref{3-13} we have
\begin{equation}\label{3-23}
I(u)+C|I(u)|^{\frac{\sigma+1}{\mu}}\geq J(u)-C.
\end{equation}
Consequently,  $I(u)\to+\infty$, $T_{1}(u)\to 0$ and $T_2(u)\to 0$ as $J(u)\to +\infty$.
\end{proof}

\begin{lemma}
\label{lemma3-4}
There is a constant $M_0>0$ such that if $J(u)\geq M_0$ and $J'(u)=0$, then $J(u)=E(u)$ and $E'(u)=0$.
\end{lemma}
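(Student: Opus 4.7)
The plan is to prove that any critical point of $J$ at a sufficiently high energy level must automatically lie in the region where $\theta\equiv 1$, so that the perturbation in the construction of $J$ is effectively switched off. Specifically, I will show that $J'(u)=0$ together with $J(u) \geq M_0$ (for $M_0$ large) forces $\varphi(u) \leq 1/2$, which immediately gives $\theta(u) = \chi(\varphi(u)) = 1$ and $\chi'(\varphi(u)) = 0$. The definition \eqref{3-10} then yields $J(u) = E(u)$, while \eqref{3-20} gives $T_{1}(u) = T_{2}(u) = 0$, so \eqref{3-18}--\eqref{3-19} reduce $J'(u)$ to $E'(u)$, forcing $E'(u) = 0$.

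The core computation is to test $J'(u)=0$ against $u$ itself. Using \eqref{3-18} and \eqref{3-19} with $v=u$ gives
\[
(1+T_{1}(u))\|u\|_{H_{X,0}^{1}(\Omega)}^{2}=(1+T_{2}(u))\int_{\Omega}f(x,u)u\,dx+\theta(u)\int_{\Omega}g(x,u)u\,dx.
\]
Substituting $\|u\|_{H_{X,0}^{1}(\Omega)}^{2}=2I(u)+2\int_{\Omega}F(x,u)\,dx$, applying the Ambrosetti-Rabinowitz-type lower bound $\int_{\Omega}f(x,u)u\,dx \geq \mu\int_{\Omega}F(x,u)\,dx - C$ deduced from \eqref{3-4}, and discarding the nonnegative term $\theta(u)\int_{\Omega}g(x,u)u\,dx$ (by $(g3)$ and $\theta(u)\geq 0$), a rearrangement produces
\[
\bigl[\mu(1+T_{2}(u))-2(1+T_{1}(u))\bigr]\int_{\Omega}F(x,u)\,dx \leq 2(1+T_{1}(u))I(u)+C.
\]
By Lemma \ref{lemma3-3}, choosing $M_{0}$ large enough makes $I(u)$ large and $T_{1}(u), T_{2}(u)$ as small as desired whenever $J(u) \geq M_{0}$; the bracketed coefficient therefore stays close to $\mu-2>0$, and division yields
\[
\int_{\Omega}F(x,u)\,dx \leq \frac{2(1+T_{1}(u))}{\mu(1+T_{2}(u))-2(1+T_{1}(u))}I(u)+C'.
\]
The fraction on the right converges to $2/(\mu-2)$ as $M_{0}\to\infty$, which is precisely the asymptotic regime in which the constant $A$ of Lemma \ref{lemma3-1} was calibrated. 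By selecting $A$ in the construction of $J$ slightly larger than the minimal value required by Lemma \ref{lemma3-1}, one secures $\int_{\Omega}F(x,u)\,dx \leq A(I(u)^{2}+1)^{1/2}$ for all $u$ with $J(u)\geq M_{0}$, i.e., $\varphi(u) \leq 1/2$.

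From this point the conclusion is immediate: since $\chi\equiv 1$ on $(-\infty,1]$, the bound $\varphi(u) \leq 1/2$ implies $\theta(u)=1$ and $\chi'(\varphi(u))=0$; then \eqref{3-20} gives $T_{1}(u)=T_{2}(u)=0$, \eqref{3-10} gives $J(u)=E(u)$, and \eqref{3-18}--\eqref{3-19} collapse to the Fr\'{e}chet derivative relation for $E$. The main technical point, rather than a conceptual obstacle, is ensuring that the constant $A$ fixed once and for all in the construction of $J$ is large enough to absorb the slack produced by the nonzero $T_{1}, T_{2}$ while remaining compatible with Lemma \ref{lemma3-1}; this is possible precisely because the estimate obtained here at critical points of $J$ differs from the one in Lemma \ref{lemma3-1} only by terms that tend to zero as $J(u)\to+\infty$.
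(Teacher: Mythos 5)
Your proof is correct and rests on the same core computation as the paper's: test $J'(u)=0$ against $u$, invoke the Ambrosetti--Rabinowitz condition \eqref{3-4} for $\int_\Omega f(x,u)u\,dx$, and use Lemma~\ref{lemma3-3} to control $T_1(u),T_2(u)$ and to ensure $I(u)$ is large. The difference is one of organization rather than substance: the paper separates into three cases according to $\varphi(u)<1$, $\varphi(u)>2$, and $1\leq\varphi(u)\leq2$ (invoking Lemma~\ref{lemma3-1} directly in the first case, showing $J(u)=I(u)$ in the second, and estimating the $g$-terms via Young's inequality in the third), whereas you treat all cases at once and simply drop $\theta(u)\int_\Omega g(x,u)u\,dx\geq 0$ using $(g3)$ and $\theta\geq 0$. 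Your unified version is cleaner, and the paper's case analysis is not actually required, since in the outer cases $T_1(u)=T_2(u)=0$ anyway and your inequality just reproduces \eqref{3-9}. One small imprecision at the end: you suggest enlarging the constant $A$ so as to conclude $\varphi(u)\leq 1/2$, but in fact no adjustment of $A$ is needed (and none is made in the paper). The factor of $2$ already built into $Q(u)=2A(I(u)^2+1)^{1/2}$ provides exactly the slack required: your estimate gives $\int_\Omega F(x,u)\,dx\leq A'(I(u)^2+1)^{1/2}$ for some $A'$ that tends to the Lemma~\ref{lemma3-1} constant as $T_1,T_2\to 0$, hence $A'<2A$ for $M_0$ large, and $\varphi(u)\leq A'/(2A)<1$ is all that is needed since $\chi\equiv 1$ and $\chi'\equiv 0$ on $(-\infty,1]$.
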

\begin{proof}
 It is sufficient to show that if $M_0$ is large and $u$ is a critical point of $J$ with $J(u)\geq M_0$, then \begin{equation}\label{3-24}
Q(u)^{-1}\int_{\Omega}F(x,u)dx<1.
\end{equation}
Indeed, \eqref{3-24} gives $\varphi<1$ and $\theta\equiv1$ in a neighborhood of $u$. This means $\chi'(\varphi(u))=0$ and $T_{1}(u)=T_{2}(u)=0$. Hence,  $J(u)=E(u)$ and $J'(u)=E'(u)=0$, which yields  Lemma \ref{lemma3-4}.

Let $u\in H_{X,0}^{1}(\Omega)$ be a critical point of $J$. If $u\in \text{supp}~\theta$ with $\varphi(u)<1$, then $T_1(u)=T_2(u)=0$ and $\theta(v)\equiv1 $ in a neighborhood of $u$. Thus, $u$ is also a critical point of $E$ and Lemma \ref{lemma3-1} gives \eqref{3-24}. For $u\notin \text{supp}~\theta$, one has $\varphi(u)>2$, and $T_1(u)=T_2(u)=0$ due to $\chi'(\varphi(u))=0$. Observe that
\begin{equation}\label{3-25}
\begin{aligned}
J(u)&=J(u)-\frac{1}{2(1+T_1(u))}(J'(u), u)_{H_{X,0}^{1}(\Omega)}\\
&=\frac{1+T_2(u)}{2(1+T_1(u))}\int_{\Omega}f(x,u)udx-\int_{\Omega}F(x,u)dx\\
&-\theta(u)\int_{\Omega}G(x,u)dx+\frac{\theta(u)}{2(1+T_{1}(u))}\int_{\Omega}g(x,u)udx\\
&=\frac{1}{2}\int_{\Omega}f(x,u)udx-\int_{\Omega}F(x,u)dx,
\end{aligned}
\end{equation}
and $I(u)=J(u)$ for the functional $I(u)$ defined in \eqref{3-8}.
By \eqref{3-25} and the similar estimates in \eqref{3-9}, we can obtain \eqref{3-24}. When $u\in \text{supp}~\theta$ with $1\leq \varphi(u)\leq 2$,  Lemma \ref{lemma3-3} implies that there exists a positive constant $M_{0}$ such that
$|T_{1}(u)|, |T_{2}(u)|\leq \frac{1}{2}$ and $ \frac{1+T_{2}(u)}{1+T_{1}(u)}>\frac{1}{\mu}+\frac{1}{2}$ for $J(u)\geq M_0$.
Owing to \eqref{3-8} and \eqref{3-10},
\begin{equation}\label{3-26}
J(u)\leq |J(u)|\leq |I(u)|+|\theta(u)|\left|\int_{\Omega}G(x,u)dx\right|\leq |I(u)|+\left|\int_{\Omega}G(x,u)dx\right|.
\end{equation}
On the other hand, by \eqref{3-4} and \eqref{3-25}, we have
\begin{equation}\label{3-27}
\begin{aligned}
J(u)&\geq \left(\frac{1+T_2(u)}{2(1+T_1(u))}-\frac{1}{\mu}\right)\int_{\Omega}(uf(x,u)+c_{3}) dx\\
&-\widehat{C}(u)\int_{\Omega}|g(x,u)u|dx-\theta(u)\left|\int_{\Omega}G(x,u)dx\right|-c_8,
\end{aligned}
\end{equation}
where $$\widehat{C}(u)=\left|\frac{\theta(u)}{2(1+T_{1}(u))}\right|\leq 1.$$
According to assumption ($g$2), \eqref{3-2}, \eqref{3-26} and \eqref{3-27}, we obtain
\begin{equation}\label{3-28}
\begin{aligned}
|I(u)|&\geq \left(\frac{1}{2}\left(\frac{1}{\mu}+\frac{1}{2}\right)-\frac{1}{\mu}\right)\int_{\Omega}(uf(x,u)+c_{3}) dx-\int_{\Omega}|g(x,u)u|dx-2\int_{\Omega}|G(x,u)|dx-c_8\\
&\geq \frac{1}{2}\left(\frac{\mu}{2}-1\right)\int_{\Omega}F(x,u)dx-C\int_{\Omega}|u|^{2}dx-C\int_{\Omega}|u|^{\sigma+1}dx-C.
\end{aligned}
\end{equation}
The Young's inequality gives that for any $u\in\mathbb{R}$ and any $\varepsilon>0$,
 if $0<s<\mu$,
\begin{equation}\label{3-29}
  |u|^s\leq \varepsilon|u|^\mu+\varepsilon^{-\frac{s}{\mu-s}}.
\end{equation}
Using \eqref{3-4}, \eqref{3-28} and \eqref{3-29}, we can also deduce \eqref{3-24} by the similar estimates in \eqref{3-9} with $A$ replaced by a larger constant which is smaller than $2A$.
\end{proof}

\begin{lemma}
\label{lemma3-5}
There is a constant $M_1\geq M_0$ such that $J$ satisfies Palais-Smale condition on $\widehat{A}_{M_{1}}:=\{u\in H_{X,0}^{1}(\Omega):J(u)\geq M_{1}\}$, where $M_0$ is the positive constant appeared in Lemma \ref{lemma3-4}.
\end{lemma}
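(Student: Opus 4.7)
The plan is to show the standard Palais--Smale implications hold on the high-energy set $\widehat{A}_{M_1}$, by combining the smallness of the perturbation operators $T_1, T_2$ (Lemma \ref{lemma3-3}) with the subcritical growth of $g$ relative to $f$ (assumptions ($f4$) and ($g2$)) and the degenerate Rellich--Kondrachov embedding (Proposition \ref{prop2-3}). Let $\{u_n\}\subset \widehat{A}_{M_1}$ be a Palais--Smale sequence, so $J(u_n)\to c\geq M_1$ and $J'(u_n)\to 0$ in $H_{X,0}^{1}(\Omega)$. The goal is to extract a strongly convergent subsequence in $H_{X,0}^{1}(\Omega)$, and this splits into a boundedness step and a compactness step.

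For boundedness, I would repeat the algebraic manipulation already used in \eqref{3-25}: writing
\[
J(u_n)-\frac{1}{2(1+T_1(u_n))}\bigl(J'(u_n),u_n\bigr)_{H_{X,0}^{1}(\Omega)}
=\left(\frac{1+T_2(u_n)}{2(1+T_1(u_n))}-\frac{1}{\mu}\right)\int_{\Omega}\!\!(f(x,u_n)u_n+c_3)\,dx +R_n,
\]
where $R_n$ collects the $F$-constant, the $G$-term $-\theta(u_n)\int G(x,u_n)\,dx$, and the $g$-term $\tfrac{\theta(u_n)}{2(1+T_1)}\int g(x,u_n)u_n\,dx$. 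By Lemma \ref{lemma3-3}, choosing $M_1$ large enough forces $|T_1(u_n)|,|T_2(u_n)|\leq\tfrac{1}{2}$ and the coefficient in front of $\int (fu_n+c_3)$ to be bounded below by some $\kappa>0$, since $\mu>2$. Using ($g2$), ($g3$) and the key inequality $\sigma+1<\mu$, Young's inequality absorbs all the $G$ and $gu_n$ contributions into a small multiple of $\int F(x,u_n)\,dx$ plus a constant, via \eqref{3-4} and the lower bound $F(x,u)\geq c_1|u|^\mu$ on $\{|u|\geq R_0\}$. Combined with $|(J'(u_n),u_n)|\leq \|J'(u_n)\|\,\|u_n\|_{H_{X,0}^{1}}=o(1)\|u_n\|_{H_{X,0}^{1}}$ and the boundedness of $J(u_n)$, this yields $\int_\Omega F(x,u_n)\,dx\leq C+o(1)\|u_n\|_{H_{X,0}^{1}}$. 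Plugging back into the definition \eqref{3-10} of $J(u_n)$ together with the $L^{\sigma+1}$-bound coming from the $F$-estimate, I recover $\|u_n\|_{H_{X,0}^{1}}^{2}\leq C+o(1)\|u_n\|_{H_{X,0}^{1}}$, hence $\{u_n\}$ is bounded.

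For the compactness step, extract a subsequence with $u_n\rightharpoonup u$ in $H_{X,0}^{1}(\Omega)$. By Proposition \ref{prop2-3} the embedding into $L^s(\Omega)$ is compact for every $1\leq s<2^*_{\tilde\nu}$, so $u_n\to u$ in $L^p(\Omega)$ with the subcritical exponent $p$ from ($f2$), and also in $L^2(\Omega)$. Rewriting \eqref{3-18}--\eqref{3-19} gives
\[
\bigl(J'(u_n),u_n-u\bigr)_{H_{X,0}^{1}(\Omega)}=(1+T_1(u_n))(u_n,u_n-u)_{H_{X,0}^{1}(\Omega)}-(1+T_2(u_n))\!\!\int_{\Omega}\!f(x,u_n)(u_n-u)\,dx -\theta(u_n)\!\!\int_{\Omega}\!g(x,u_n)(u_n-u)\,dx.
\]
The left-hand side is $o(1)$ since $\{u_n-u\}$ is bounded. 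Using the growth bounds \eqref{3-1} and \eqref{3-5}, Hölder's inequality together with $L^p$-convergence shows the two integral terms tend to zero. Since $T_1(u_n)\to 0$ ensures $1+T_1(u_n)\geq\tfrac{1}{2}$ for $M_1$ large, I deduce $(u_n,u_n-u)_{H_{X,0}^{1}(\Omega)}\to 0$, i.e. $\|Xu_n\|_{L^2}^{2}\to \|Xu\|_{L^2}^{2}$. Combined with weak convergence in the Hilbert space $H_{X,0}^{1}(\Omega)$, this yields $u_n\to u$ strongly.

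The main technical difficulty is the boundedness step: one must simultaneously handle the nonsymmetric perturbation $G$ inside the truncation $\theta$ (whose derivative creates the $T_1, T_2$ coupling to $\|Xu_n\|^{2}$) and still extract a quantitatively positive multiple of $\int F(x,u_n)\,dx$. The choice of $M_1$ must be large enough that (i) Lemma \ref{lemma3-3} makes the coefficient $\tfrac{1+T_2}{2(1+T_1)}-\tfrac{1}{\mu}$ strictly positive, and (ii) $1+T_1(u_n)$ is bounded away from zero so the final strong convergence step is legitimate. Everything else is routine once the degenerate Sobolev/compactness framework of Section \ref{Section2} is in place.
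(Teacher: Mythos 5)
Your proof is correct, but both main steps proceed along a different (equally standard) route than the paper's. For boundedness, the paper picks a free parameter $\rho$ strictly between $\frac{1}{\mu(1+T_2(u_m))}$ and $\frac{1}{2(1+T_1(u_m))}$ (inequality \eqref{3-31}), so that $J(u_m)-\rho(J'(u_m),u_m)$ retains a positive coefficient $\varepsilon(1+T_1(u_m))$ on $\|u_m\|^2_{H_{X,0}^{1}(\Omega)}$; the $\|u_m\|^2$ bound then drops out directly. You instead take $\rho=\tfrac{1}{2(1+T_1(u_n))}$, which kills the $\|u_n\|^2$ term and reproduces the identity \eqref{3-25} from Lemma \ref{lemma3-4}; this gives a linear-in-$\|u_n\|$ bound on $\int_\Omega F(x,u_n)\,dx$, and you then have to recover the $\|u_n\|^2$ bound indirectly by feeding the $F$- and $G$-estimates back into the definition \eqref{3-10} of $J$. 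Both algebraic routes are fine; yours has the minor advantage that it avoids justifying a single uniform choice of $\rho$ over $\widehat{A}_{M_1}$. For compactness, the paper isolates $u_m$ from the explicit decomposition $J'(u_m)=(1+T_1(u_m))u_m+(1+T_2(u_m))K_1(u_m)+\theta(u_m)K_2(u_m)$ and uses that $K_1,K_2$ are compact operators (Proposition \ref{prop2-3} / \eqref{3-35}); you instead use the classical device of pairing $J'(u_n)$ with $u_n-u$, showing the nonlinear integrals vanish via the compact embedding, Hölder, and the growth bounds \eqref{3-1}, \eqref{3-5}, and concluding $\|u_n\|_{H_{X,0}^{1}}\to\|u\|_{H_{X,0}^{1}}$ hence strong convergence in the Hilbert space. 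Both arguments rely on the same Rellich--Kondrachov compactness and the uniform lower bound $1+T_1(u_n)\geq\tfrac{1}{2}$; they differ only in packaging. One small imprecision: your displayed identity for $J(u_n)-\tfrac{1}{2(1+T_1)}(J'(u_n),u_n)$ is written as if the subtraction of $\tfrac{1}{\mu}\int(fu_n+c_3)$ is exact, whereas the term $\tfrac{1}{\mu}\int(fu_n+c_3)-\int F$ should appear explicitly in $R_n$ (it is nonnegative by \eqref{3-4}, so the argument goes through), and the prefactors $(1+T_2(u_n))$, $\theta(u_n)$ on the integral terms in the compactness step should be noted as uniformly bounded before passing to the limit.
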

\begin{proof}
Lemma \ref{lemma3-5} amounts to showing that there exists $M_1> M_0$ such that if the sequence $\{u_m\}_{m=1}^{\infty}\subset H_{X,0}^1(\Omega)$ satisfies $M_1\leq J(u_m)\leq M^*$ for some $M^*>0$ and $J'(u_m)\to 0$ as $m\to \infty$, then $\{u_m\}_{m=1}^{\infty}$ is bounded in $H_{X,0}^1(\Omega)$ and admits a convergent subsequence.

For sufficient large $m$ (such that $\|J'(u_m)\|_{H_{X,0}^{1}(\Omega)}<1$) and any $\rho>0$, it follows from \eqref{3-10}, \eqref{3-18} and \eqref{3-19} that
\begin{equation}\label{3-30}
\begin{aligned}
M^*+\rho\|u_m\|_{H_{X,0}^1(\Omega)}&\geq J(u_m)-\rho(J'(u_m), u_m)_{H_{X,0}^1(\Omega)}\\
&\geq \left(\frac{1}{2}-\rho(1+T_1(u_m))\right)\|u_m\|^2_{H_{X,0}^1(\Omega)}\\
&+\rho(1+T_2(u_m))\int_{\Omega}f(x,u_m)u_m dx-\int_{\Omega}F(x,u_m)dx\\
&+\rho\theta(u_m)\int_{\Omega}g(x,u_{m})u_m dx-\theta(u_{m})\int_{\Omega}G(x,u_{m})dx.
\end{aligned}
\end{equation}
From Lemma \ref{lemma3-3} we know  there is a positive constant $M_1>M_{0}$, such that $|T_{1}(u)|, |T_{2}(u)|\leq \frac{1}{2}$ and $ \frac{1+T_{2}(u)}{1+T_{1}(u)}>\frac{2}{\mu}$ on $\widehat{A}_{M_{1}}$. Now, taking
 $\rho>0$ and $\varepsilon>0$ such that
\begin{equation}\label{3-31}
\frac{1}{2(1+T_1(u_m))}> \rho +\varepsilon > \rho-\varepsilon > \frac{1}{\mu(1+T_2(u_m))},
\end{equation}
we can deduce from  \eqref{3-4}, \eqref{3-29}, \eqref{3-30}, \eqref{3-31} and assumption ($g$2) that
\begin{equation*}
  \begin{aligned}
  &M^*+\rho\|u_m\|_{H_{X,0}^1(\Omega)}\\
&\geq
\varepsilon(1+T_1(u_m))\|u_m\|^2_{H_{X,0}^1(\Omega)}+\left(\frac{1}{\mu}+\varepsilon(1+T_2(u_m))\right)\int_{|u_m|\geq R_0}f(x,u_m)u_m dx\nonumber\\
&+\rho(1+T_2(u_m))\int_{|u_m|\leq R_0}f(x,u_m)u_m dx-\int_{\Omega}F(x,u_m)dx\\
&+\rho\theta(u_m)\int_{\Omega}g(x,u_{m})u_m dx-\theta(u_{m})\int_{\Omega}G(x,u_{m})dx\\
&\geq \frac{\varepsilon}{2}\|u_m\|^2_{H_{X,0}^1(\Omega)}+\left(\frac{1}{\mu}+\frac{\varepsilon}{2}\right)\int_{|u_m|\geq R_0}f(x,u_m)u_m dx\\
&-\int_{|u_m|\geq R_0}F(x,u_m)dx-\int_{\Omega}|g(x,u_{m})u_{m}|dx-\int_{\Omega}|G(x,u_{m})|dx-C\\
&\geq \frac {\varepsilon\|u_m\|^2_{H_{X,0}^1(\Omega)}}{2}+\frac{\varepsilon \mu}{2}\int_{\Omega}(F(x,u_m)+c_{2})dx-\int_{\Omega}|g(x,u_{m})u_{m}|dx-\int_{\Omega}|G(x,u_{m})|dx-C\\
&\geq
\frac{\varepsilon\|u_m\|^2_{H_{X,0}^1(\Omega)}}{2}+\frac{\varepsilon \mu}{2}\int_{\Omega}(F(x,u_m)+c_{2})dx-C\int_{\Omega}(|u_m|+|u_m|^{1+\sigma})dx-C\\
&\geq \frac{\varepsilon}{2}\|u_m\|^2_{H_{X,0}^1(\Omega)}+\frac{\varepsilon \mu}{4}\int_{\Omega}(F(x,u_m)+c_{2})dx-C\geq \frac{\varepsilon}{2}\|u_m\|^2_{H_{X,0}^1(\Omega)}-C,
\end{aligned}
\end{equation*}
which yields that $\{u_m\}_{m=1}^{\infty}$ is bounded in $H_{X,0}^1(\Omega)$.

We next show that $\{u_m\}_{m=1}^{\infty}$ has a convergent subsequence in  $H_{X,0}^1(\Omega)$.
For any $u\in H_{X,0}^{1}(\Omega)$, the Rellich-Kondrachov compact embedding theorem (Proposition \ref{prop2-3}) implies that the
linear functionals $K_{1}(u),K_{2}(u)$ given by
\begin{equation}\label{3-35}
  ( K_{1}(u),v)_{H_{X,0}^1(\Omega)}:=-\int_{\Omega}f(x,u)vdx\quad\mbox{and}\quad ( K_{2}(u),v)_{H_{X,0}^1(\Omega)}:=-\int_{\Omega}g(x,u)vdx~~~~\forall v\in H_{X,0}^{1}(\Omega)
\end{equation}
 map the bounded sets in $H_{X,0}^{1}(\Omega)$ to the relatively compact sets in $H_{X,0}^{1}(\Omega)$ (cf. \cite[Proposition 2.15]{Chen-Chen-Li-Liao2022}). It follows from \eqref{3-18} and \eqref{3-19} that the Fr\'{e}chet derivative of $J$ can be decomposed into
\begin{equation}\label{3-37}
J'(u_m)=(1+T_1(u_m))u_m+(1+T_2(u_m))K_{1}(u_m)+\theta(u_{m})K_{2}(u_{m}).
\end{equation}
Furthermore, since $\{T_1(u_m)\}_{m=1}^{\infty}$, $\{T_2(u_m)\}_{m=1}^{\infty}$ and $\{\theta(u_m)\}_{m=1}^{\infty}$ are bounded, there exists a subsequence $\{u_{m_k}\}_{k=1}^{\infty}\subset \{u_{m}\}_{m=1}^{\infty}$ such that
\begin{equation}\label{3-38}
\lim_{k\to\infty}T_1(u_{m_k})=\widetilde{a_1},~~\lim_{k\to\infty}T_2(u_{m_k})=\widetilde{a_2},~~\lim_{k\to\infty}\theta(u_{m_k})=\widetilde{a_3}.
\end{equation}
Observing  $\{u_{m_{k}}\}_{k=1}^{\infty}$ is also bounded in $H_{X,0}^1(\Omega)$, $K_{1}(u_{m_{k}})$ and $K_{2}(u_{m_{k}})$ converge along a subsequence $\{u_{m_{k_{j}}}\}_{j=1}^{\infty}\subset \{u_{m_k}\}_{k=1}^{\infty}$. As a result, we conclude that $\{u_{m_{k_j}}\}_{j=1}^{\infty}$ converges in $H_{X,0}^1(\Omega)$.
\end{proof}

Let $N_k$ be the subspace spanned by the first $k$ Dirichlet eigenfunctions of $-\triangle_{X}$, and let $\lambda_k$ denote the $k$-th eigenvalue. Then we have

\begin{lemma}\label{lemma3-6}
For each fixed $k\geq 1$,
\[\lim_{u\in N_k,~\|u\|_{H_{X,0}^{1}(\Omega)}\to+\infty}J(u)=-\infty.\]
\end{lemma}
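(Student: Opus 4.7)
The plan is to exploit the superquadratic growth of $F$ against the quadratic growth of the $\|Xu\|_{L^2}^2$ term, using crucially that $N_k$ is finite dimensional. First I would observe that the $G$-perturbation in the definition of $J$ has a favorable sign. Indeed, assumption $(g3)$ (i.e.\ $g(x,u)u \geq 0$) gives $g(x,u)\geq 0$ for $u\geq 0$ and $g(x,u)\leq 0$ for $u\leq 0$, so a direct splitting of $G(x,u)=\int_0^u g(x,v)\,dv$ in the two cases shows $G(x,u)\geq 0$ everywhere. Combined with $\theta(u)\in[0,1]$ (since $\chi$ takes values in $[0,1]$), this yields the one-sided bound
\[
J(u)\;\leq\;\frac{1}{2}\|u\|_{H_{X,0}^{1}(\Omega)}^{2}-\int_{\Omega} F(x,u)\,dx.
\]

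Next I would apply the pointwise lower bound \eqref{3-4}, which provides $F(x,u)\geq c_1 |u|^{\mu}-c_2$ for all $(x,u)\in\overline{\Omega}\times\mathbb{R}$, and hence
\[
\int_{\Omega} F(x,u)\,dx \;\geq\; c_1 \|u\|_{L^{\mu}(\Omega)}^{\mu}-c_2|\Omega|.
\]
Because $N_k$ is a finite-dimensional subspace of $H_{X,0}^{1}(\Omega)$, all norms on $N_k$ are equivalent; in particular, there exists a constant $C_k>0$, depending on $k$, such that
\[
\|u\|_{L^{\mu}(\Omega)}\;\geq\; C_k\,\|u\|_{H_{X,0}^{1}(\Omega)} \qquad \forall u\in N_k.
\]

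Putting the estimates together yields, for all $u\in N_k$,
\[
J(u)\;\leq\;\frac{1}{2}\|u\|_{H_{X,0}^{1}(\Omega)}^{2}-c_1 C_k^{\mu}\|u\|_{H_{X,0}^{1}(\Omega)}^{\mu}+c_2|\Omega|,
\]
and since $\mu>2$ by assumption $(f4)$, the right-hand side tends to $-\infty$ as $\|u\|_{H_{X,0}^{1}(\Omega)}\to +\infty$ within $N_k$, which is precisely the claim.

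I do not anticipate a serious obstacle: the only subtlety is realizing that the auxiliary factor $\theta(u)$ and the cutoff construction of $J$ cannot spoil the conclusion, which follows because the modified term contributes with the favorable sign and is dominated by the $L^{\mu}$ coercivity of $F$. The argument is essentially the standard finite-dimensional coercivity-to-anti-coercivity mechanism, adapted to the subelliptic Hilbert space via norm equivalence on $N_k$.
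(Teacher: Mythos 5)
Your proof is correct, and it follows the same overall strategy as the paper's: drop the $G$-term by the sign observation ($G\geq 0$ from $(g3)$, $\theta\in[0,1]$), invoke the superquadratic lower bound $F(x,u)\geq c_1|u|^\mu-c_2$ from \eqref{3-4}, and then exploit the finite dimensionality of $N_k$ together with $\mu>2$. The one place you diverge is in how you pass from $\|u\|_{H^1_{X,0}}$ to $\|u\|_{L^\mu}$ on $N_k$: you appeal to the abstract fact that all norms on a finite-dimensional space are equivalent, giving an unspecified constant $C_k$, whereas the paper uses the particular structure of $N_k$ as the span of the first $k$ eigenfunctions — via the min-max theorem one has $\|u\|^2_{H^1_{X,0}}\leq\lambda_k\|u\|^2_{L^2}$ on $N_k$, and then H\"older on the bounded domain gives $\|u\|_{L^\mu}^\mu\geq|\Omega|^{1-\mu/2}\|u\|_{L^2}^\mu$. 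Your route is marginally more elementary and works for an arbitrary finite-dimensional subspace, while the paper's makes the $k$-dependence explicit through the eigenvalue $\lambda_k$; for the purposes of this lemma the two are interchangeable. One small point worth making explicit: the $L^\mu$-norm is indeed a norm on $N_k$ because $\mu\leq p<2^*_{\tilde\nu}$ (as noted after \eqref{3-4}), so the Sobolev embedding of Proposition~\ref{prop2-1} applies.
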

\begin{proof}
The min-max theorem indicates that
$\|u\|_{H_{X,0}^{1}(\Omega)}^2\leq \lambda_k\|u\|_{L^2(\Omega)}^2$ for all $u\in N_k$.
Since $\mu>2$,  we obtain from \eqref{3-4} that
\begin{align*}
  \lim_{u\in N_k,~\|u\|_{H_{X,0}^{1}(\Omega)}\to+\infty}\frac{\int_{\Omega}F(x,u)dx}{\|u\|_{H_{X,0}^{1}(\Omega)}^2}
  &\geq \lim_{u\in N_k,~\|u\|_{L^2(\Omega)}\to+\infty}\frac{
c_1\|u\|_{L^{\mu}(\Omega)}^\mu-c_{2}|\Omega|}{\lambda_k\|u\|_{L^2(\Omega)}^2}\\
&\geq\lim_{u\in N_k,~\|u\|_{L^2(\Omega)}\to+\infty}\frac{
c_1|\Omega|^{1-\frac{\mu}{2}}\|u\|_{L^{2}(\Omega)}^\mu-c_{2}|\Omega|}{\lambda_k\|u\|_{L^2(\Omega)}^2}=+\infty.
\end{align*}
Observing that $G(x,u)\geq0$, we conclude from \eqref{3-10} that  $\lim_{u\in N_k,~\|u\|_{H_{X,0}^{1}(\Omega)}\to+\infty}J(u)=-\infty$.
\end{proof}

\begin{lemma}\label{lemma3-7}
For each fixed $k\geq 2$, there exist positive constants $C, C_0$  independent of $k$ such that
  \begin{equation}\label{3-39}
    J(u) \geq C \lambda_{k}^{\frac{2(2_{\tilde{\nu}}^{*}-p)}{(p-2)(2_{\tilde{\nu}}^{*}-2)}}\qquad\forall u\in Q_k,
  \end{equation}
  where
 \begin{equation}\label{3-40}
Q_k:=\left\{u\in N_{k-1}^\perp:\|u\|_{L^{p}(\Omega)}=\rho_{k}\right\}~~~\mbox{with}~~~\rho_{k}:=C_0
  \lambda_{k}^{\frac{2(2_{\tilde{\nu}}^{*}-p)}{p(p-2)(2_{\tilde{\nu}}^{*}-2)}}.
 \end{equation}

\end{lemma}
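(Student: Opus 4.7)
The plan is to derive the lower bound by combining two ingredients: an $L^p$-interpolation inequality between $L^{2}$ and the critical Sobolev space $L^{2_{\tilde{\nu}}^{*}}$, and the spectral gap $\|u\|_{L^2(\Omega)}^2 \leq \lambda_k^{-1}\|u\|_{H^{1}_{X,0}(\Omega)}^2$ that holds on $N_{k-1}^{\perp}$. Once these are in hand, the nonlinear part of $J$ can be absorbed into the quadratic part, after which the choice $\rho_k = C_0 \lambda_k^{2(2_{\tilde{\nu}}^{*}-p)/(p(p-2)(2_{\tilde{\nu}}^{*}-2))}$ emerges as the optimiser of a quadratic-minus-$p$-th-power polynomial in $\rho_k$.

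Step 1 (interpolation plus spectral gap). Since $2<p<2_{\tilde{\nu}}^{*}$, H\"older's inequality yields
\begin{equation*}
\|u\|_{L^p(\Omega)} \leq \|u\|_{L^2(\Omega)}^{\theta}\|u\|_{L^{2_{\tilde{\nu}}^{*}}(\Omega)}^{1-\theta}, \qquad \theta = \frac{2(2_{\tilde{\nu}}^{*}-p)}{p(2_{\tilde{\nu}}^{*}-2)}.
\end{equation*}
Applying the sharp Sobolev inequality (Proposition \ref{prop2-1}) to the second factor and using the spectral gap $\|u\|_{L^2(\Omega)}^2\leq\lambda_k^{-1}\|u\|_{H^{1}_{X,0}(\Omega)}^2$ for $u\in N_{k-1}^{\perp}$, I obtain $\|u\|_{L^p(\Omega)}\leq C\lambda_k^{-\theta/2}\|u\|_{H^{1}_{X,0}(\Omega)}$, equivalently
\begin{equation*}
\|u\|_{H^{1}_{X,0}(\Omega)}^{2} \geq C_1 \lambda_k^{\theta}\|u\|_{L^p(\Omega)}^{2},
\end{equation*}
with $C_1>0$ independent of $k$.

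Step 2 (control of the nonlinear part and absorption). From \eqref{3-2} and \eqref{3-6}, together with $0\leq\theta(u)\leq 1$, for any $\varepsilon>0$
\begin{equation*}
\int_{\Omega}F(x,u)\,dx + \theta(u)\int_{\Omega}G(x,u)\,dx \leq 2\varepsilon\|u\|_{L^2(\Omega)}^{2} + C(\varepsilon)\|u\|_{L^p(\Omega)}^{p}.
\end{equation*}
Since $k\geq 2$, the estimate $\|u\|_{L^2(\Omega)}^{2}\leq \lambda_2^{-1}\|u\|_{H^{1}_{X,0}(\Omega)}^2$ is valid; picking $\varepsilon=\lambda_2/16$ absorbs the $L^2$-term and gives, for every $u\in Q_k$,
\begin{equation*}
J(u) \geq \frac{1}{4}\|u\|_{H^{1}_{X,0}(\Omega)}^{2} - C_2\rho_k^{p}.
\end{equation*}
Inserting Step 1 and $\|u\|_{L^p(\Omega)}=\rho_k$,
\begin{equation*}
J(u) \geq \frac{C_1}{4}\lambda_k^{\theta}\rho_k^{2} - C_2\rho_k^{p} = \rho_k^{2}\Bigl(\tfrac{C_1}{4}\lambda_k^{\theta} - C_2\rho_k^{p-2}\Bigr).
\end{equation*}
With the ansatz $\rho_k = C_0\lambda_k^{\theta/(p-2)}$ one has $\rho_k^{p-2}=C_0^{p-2}\lambda_k^{\theta}$; choosing $C_0$ once and for all so that $C_2 C_0^{p-2}\leq C_1/8$ yields $J(u)\geq \tfrac{C_1 C_0^{2}}{8}\lambda_k^{\theta p/(p-2)}$. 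A direct computation shows $\frac{\theta p}{p-2}=\frac{2(2_{\tilde{\nu}}^{*}-p)}{(p-2)(2_{\tilde{\nu}}^{*}-2)}$, which is exactly the exponent appearing in \eqref{3-39}.

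The main obstacle is not any single hard step but the bookkeeping of constants: every constant produced by Sobolev, interpolation, and the spectral gap must be independent of $k$, so that the smallness requirement on $C_0$ can be fixed before the induction on $k$ begins. The guiding intuition is that $\rho_k$ is precisely the critical point of the polynomial $\rho\mapsto \tfrac{C_1}{4}\lambda_k^{\theta}\rho^{2}-C_2\rho^{p}$, and this critical value gives the sharp exponent in \eqref{3-39}; any other scaling of $\rho_k$ in $\lambda_k$ would lead either to a negative lower bound or to a weaker one.
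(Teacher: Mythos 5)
Your proof is correct and follows essentially the same path as the paper's: the same $L^2$–$L^{2^*_{\tilde\nu}}$ interpolation with exponent $\theta=r=\frac{2(2^*_{\tilde\nu}-p)}{p(2^*_{\tilde\nu}-2)}$, the same Rayleigh--Ritz spectral gap on $N_{k-1}^\perp$, the same absorption of the $\varepsilon|u|^2$-term into the quadratic part, and the same choice of $\rho_k$ balancing the resulting quadratic-minus-$p$-th-power polynomial. The only cosmetic difference is that you phrase Step~1 as a lower bound on $\|u\|_{H^1_{X,0}}^2$ in terms of $\|u\|_{L^p}^2$ rather than as an upper bound on $\|u\|_{L^p}$, which is equivalent.
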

\begin{proof}
Using \eqref{3-2} and \eqref{3-6}, we have for small $\varepsilon>0$,
\begin{equation}\label{3-41}
\begin{aligned}
J(u)&=\frac{1}{2}\int_{\Omega}|Xu|^2dx-\int_{\Omega}F(x,u)dx-\theta(u)\int_{\Omega}G(x,u)dx\\
&\geq\frac{1}{2}\int_{\Omega}|Xu|^2dx-\int_{\Omega}(\varepsilon|u|^2+C(\varepsilon)|u|^p)dx\\
&\geq \frac{1}{4}\|u\|_{H_{X,0}^{1}(\Omega)}^2-C\|u\|_{L^{p}(\Omega)}^p.
\end{aligned}
\end{equation}
Since for all $ u \in N_{k-1}^\perp$, the Rayleigh-Ritz formula and Proposition \ref{prop2-1} give that
\begin{equation}\label{3-42}
  \|u\|_{L^{2}(\Omega)}\leq \lambda_{k}^{-\frac{1}{2}}\|u\|_{H_{X,0}^{1}(\Omega)}
\end{equation}
and
\begin{equation}\label{3-43}
\|u\|_{L^{2_{\tilde{\nu}}^{*}}(\Omega)}\leq C\|u\|_{H_{X,0}^{1}(\Omega)}.
\end{equation}
Substituting $r=\frac{2(2_{\tilde{\nu}}^{*}-p)}{p(2_{\tilde{\nu}}^{*}-2)}$, then $\frac{1}{p}=\frac{r}{2}+\frac{1-r}{2_{\tilde{\nu}}^{*}}$. \eqref{3-42} and \eqref{3-43} imply
\begin{equation}\label{3-44}
\|u\|_{L^p(\Omega)}\leq \|u\|_{L^{2}(\Omega)}^r\|u\|_{L^{2_{\tilde{\nu}}^{*}}(\Omega)}^{1-r}\leq
C \lambda_{k}^{-\frac{2_{\tilde{\nu}}^{*}-p}{p(2_{\tilde{\nu}}^{*}-2)}}\|u\|_{H_{X,0}^{1}(\Omega)}\quad \forall u \in N_{k-1}^\perp.
\end{equation}
Hence, by \eqref{3-41} and \eqref{3-44}
\[ J(u)\geq C\lambda_{k}^{\frac{2(2_{\tilde{\nu}}^{*}-p)}{p(2_{\tilde{\nu}}^{*}-2)}}\|u\|_{L^{p}(\Omega)}^{2}-C\|u\|_{L^{p}(\Omega)}^{p}. \]
 If we take $\|u\|_{L^{p}}=C_0\lambda_{k}^{\frac{2(2_{\tilde{\nu}}^{*}-p)}{p(p-2)(2_{\tilde{\nu}}^{*}-2)}}$ for suitable small $C_0>0$, then $J(u)  \geq C \lambda_{k}^{\frac{2(2_{\tilde{\nu}}^{*}-p)}{(p-2)(2_{\tilde{\nu}}^{*}-2)}}$.
\end{proof}

\subsection{Sign-changing settings}

We next introduce some notation related to the sign-changing settings. Throughout the paper, we adopt the notations $u^{+}:=\max\{u,0\}$ and $u^{-}:=\min\{u,0\}$.

\begin{lemma}[{\cite[Proposition 2.9]{Chen-Chen-Li2024}}]\label{lemma3-8}
  If $u\in H_{X,0}^{1}(\Omega)$, then $u^{+}, u^{-} \in H_{X,0}^{1}(\Omega)$, and
  \[ Xu^{+}=\left\{
              \begin{array}{ll}
                Xu, & \hbox{$u>0$;} \\
                0, & \hbox{$u\leq 0$.}
              \end{array}
            \right.\qquad Xu^{-}=\left\{
              \begin{array}{ll}
                0, & \hbox{$u>0$;} \\
                Xu, & \hbox{$u\leq 0$.}
              \end{array}
            \right. \]

\end{lemma}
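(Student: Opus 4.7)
The plan is to adapt the classical Stampacchia truncation argument to the H\"ormander setting, using a smooth approximation of the positive part combined with the density of $C_0^\infty(\Omega)$ in $H_{X,0}^1(\Omega)$. For $\varepsilon>0$ I introduce the $C^1$ function $f_\varepsilon(t)=\sqrt{t^2+\varepsilon^2}-\varepsilon$ for $t\geq 0$ and $f_\varepsilon(t)=0$ for $t<0$; the quantitative features I will exploit are $|f_\varepsilon(t)|\leq t^+$, $|f_\varepsilon'(t)|\leq 1$, $f_\varepsilon(t)\to t^+$, and $f_\varepsilon'(t)\to \chi_{(0,\infty)}(t)$ pointwise as $\varepsilon\to 0$.

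First I would fix $\{u_m\}\subset C_0^\infty(\Omega)$ with $u_m\to u$ in $H_{X,0}^1(\Omega)$ and, along a subsequence, a.e.\ on $\Omega$. Because $f_\varepsilon$ is $C^1$ and each $X_j$ is a first-order differential operator, the classical chain rule yields $X_j(f_\varepsilon(u_m))=f_\varepsilon'(u_m)X_j u_m$ pointwise; since $f_\varepsilon(u_m)\in C^1_c(\Omega)$, a standard mollification places it in $H_{X,0}^1(\Omega)$. Letting $m\to\infty$ with $\varepsilon$ fixed, a Lipschitz estimate gives $f_\varepsilon(u_m)\to f_\varepsilon(u)$ in $L^2(\Omega)$, and a dominated-convergence argument using $|f_\varepsilon'|\leq 1$, the a.e.\ convergence $u_m\to u$, and the $L^2$-convergence $X_j u_m\to X_j u$ yields $f_\varepsilon'(u_m)X_j u_m\to f_\varepsilon'(u)X_j u$ in $L^2(\Omega)$. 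Completeness of $H_{X,0}^1(\Omega)$ then gives $f_\varepsilon(u)\in H_{X,0}^1(\Omega)$ with $X_j f_\varepsilon(u)=f_\varepsilon'(u)X_j u$.

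Next I let $\varepsilon\to 0$ with $u$ fixed and repeat the dominated-convergence step: $f_\varepsilon(u)\to u^+$ in $L^2$ dominated by $|u|$, and $f_\varepsilon'(u)X_j u\to \chi_{\{u>0\}}X_j u$ in $L^2$ dominated by $|X_j u|$. Hence $u^+\in H_{X,0}^1(\Omega)$ with $X_j u^+=\chi_{\{u>0\}}X_j u$. Applying the same argument to $-u$ gives $u^-\in H_{X,0}^1(\Omega)$ with $X_j u^-=\chi_{\{u<0\}}X_j u$. Summing these identities and using $u=u^++u^-$ forces $X_j u=\chi_{\{u\neq 0\}}X_j u$, so $X_j u=0$ a.e.\ on $\{u=0\}$; this upgrades the indicator formulas to the piecewise statement of the lemma.

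The main obstacle is transferring the chain rule from the smooth approximants to the weak derivatives of a merely $H_{X,0}^1$-function, and simultaneously maintaining membership in $H_{X,0}^1(\Omega)$ rather than only $H_X^1(\Omega)$ through the two limits. Both difficulties are controlled by the uniform bound $|f_\varepsilon'|\leq 1$, by the fact that $f_\varepsilon(u_m)$ inherits the compact support of $u_m$ inside $\Omega$, and by completeness of $H_{X,0}^1(\Omega)$; the degeneracy of the quadratic form associated with $X$ does not actually enter, since each $X_j$ is still a first-order operator and therefore obeys the same formal Leibniz rule as a classical partial derivative.
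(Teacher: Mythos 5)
Your proof is correct, and it follows the standard Stampacchia truncation argument ($f_\varepsilon(t)=\sqrt{t^2+\varepsilon^2}-\varepsilon$, chain rule on smooth approximants, then two dominated-convergence passes) adapted verbatim to the H\"ormander setting, which is exactly what one expects for this result. Note that the paper itself does not prove this lemma; it cites it as Proposition~2.9 of \cite{Chen-Chen-Li2024}, so there is no in-text proof to compare against, but your argument is the natural and essentially canonical one. One small stylistic point: the passage where you claim $f_\varepsilon'(u_m)X_ju_m\to f_\varepsilon'(u)X_ju$ in $L^2$ by ``a dominated-convergence argument'' is not literally a single application of dominated convergence (since $X_ju_m$ has no fixed $L^2$ majorant); the intended split
\[
f_\varepsilon'(u_m)X_ju_m-f_\varepsilon'(u)X_ju=f_\varepsilon'(u_m)\bigl(X_ju_m-X_ju\bigr)+\bigl(f_\varepsilon'(u_m)-f_\varepsilon'(u)\bigr)X_ju,
\]
with the first term bounded by $\|X_ju_m-X_ju\|_{L^2}$ and dominated convergence applied only to the second, should be made explicit. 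With that clarification the proof is complete, and you are also right that the final observation $Xu=0$ a.e.\ on $\{u=0\}$ is genuinely needed to convert the indicator formulas into the piecewise statement of the lemma (specifically for $Xu^-=Xu$ on $\{u\leq0\}$).
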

 For any $a>0$, we let
\[ \begin{aligned}
  P^{\pm}&:=\{u\in H_{X,0}^{1}(\Omega):\pm u\geq 0 \},\\
  P^\pm _a&:=\{u\in H_{X,0}^{1}(\Omega):{\rm dist}(u, P^{\pm})<a\},
\end{aligned}
\]
where ${\rm dist}(u,P^{\pm})=\inf_{w\in P^\pm }\|u-w\|_{H_{X,0}^{1}(\Omega)}$. Then, we denote by
\[ P_a:=P^+_a\cup P^-_a,\qquad S_a:=H_{X,0}^{1}(\Omega)\setminus P_a.\]
We see that every $u\in S_a$ is  sign-changing. For any $R>0$, we define the ball
\[  B_R:=\{u\in H_{X,0}^{1}(\Omega): \|u\|_{H_{X,0}^{1}(\Omega)}<R\}.\]

\begin{definition}\label{def4-1}
Let $A$ be a subset of $H_{X,0}^{1}(\Omega)$. If $u\in A$ implies $-u\in A$, then we say $A$ is symmetric.
\end{definition}

To construct suitable critical values for sign-changing critical points within the context of H\"{o}rmander vector fields, a
more intricate intersection lemma is required. This intersection lemma is  based on the following result.

\begin{lemma}\label{lemma3-9}
For any $\rho>0$ and $a>0$, let
\[Z_{a}(\rho):=\overline{P_{a}}\cap \{u\in H_{X,0}^{1}(\Omega): \|u\|_{L^p(\Omega)}=\rho\},\]
where $\overline{P_{a}}$ is the closure of $P_{a}$ in $H_{X,0}^{1}(\Omega)$.
 Then there exists a constant $a(\rho)=C\rho>0$ such that $\gamma(Z_{a}(\rho))\leq 1$ for all $0<a<a(\rho)$, where $\gamma(A)$ is the Krasnoselskii genus (see \cite{Struwe2000,Rabinowitz1986}) of a closed symmetry set $A$, and $C>0$ is a positive constant independent of $\rho$.
\end{lemma}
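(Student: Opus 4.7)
The plan is to exhibit a continuous odd map $\psi : Z_a(\rho) \to \mathbb{R}\setminus\{0\}$ for $a$ sufficiently small, which by the definition of the Krasnoselskii genus immediately yields $\gamma(Z_a(\rho))\leq 1$. The natural candidate is
\[
\psi(u):=\|u^{+}\|_{L^{p}(\Omega)}^{p}-\|u^{-}\|_{L^{p}(\Omega)}^{p},
\]
which is continuous on $H_{X,0}^{1}(\Omega)$ (since $u\mapsto u^{\pm}$ is continuous into $H_{X,0}^{1}(\Omega)$ by Lemma \ref{lemma3-8} and $H_{X,0}^{1}(\Omega)\hookrightarrow L^{p}(\Omega)$ by Proposition \ref{prop2-1}) and evidently odd.

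The key quantitative step is to show that every $u\in \overline{P_{a}^{+}}$ has small negative part. Because $P^{+}$ is closed and convex in $H_{X,0}^{1}(\Omega)$, for any $u\in \overline{P_{a}^{+}}$ there exists $v\in P^{+}$ with $\|u-v\|_{H_{X,0}^{1}(\Omega)}\leq a$. A pointwise check gives $|u^{-}|\leq |u-v|$ a.e.\ whenever $v\geq 0$, so
\[
\|u^{-}\|_{L^{p}(\Omega)}\leq \|u-v\|_{L^{p}(\Omega)}\leq C_{p}\|u-v\|_{H_{X,0}^{1}(\Omega)}\leq C_{p}a,
\]
where $C_{p}$ is the Sobolev constant from Proposition \ref{prop2-1}. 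The analogous estimate $\|u^{+}\|_{L^{p}(\Omega)}\leq C_{p}a$ holds on $\overline{P_{a}^{-}}$ by symmetry.

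Now set $a(\rho):=\rho/(2^{1/p}C_{p})$, so that $C_{p}a < \rho/2^{1/p}$ whenever $0<a<a(\rho)$. If $u\in Z_{a}(\rho)\cap \overline{P_{a}^{+}}$, then $\|u\|_{L^{p}(\Omega)}^{p}=\rho^{p}$ and $\|u^{-}\|_{L^{p}(\Omega)}^{p}<\rho^{p}/2$, hence
\[
\psi(u)=\|u\|_{L^{p}(\Omega)}^{p}-2\|u^{-}\|_{L^{p}(\Omega)}^{p}>0.
\]
The symmetric calculation gives $\psi(u)<0$ on $Z_{a}(\rho)\cap \overline{P_{a}^{-}}$. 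Since $Z_{a}(\rho)\subset \overline{P_{a}^{+}}\cup \overline{P_{a}^{-}}$, we conclude $\psi\not= 0$ on $Z_{a}(\rho)$, and therefore $\psi:Z_{a}(\rho)\to\mathbb{R}\setminus\{0\}$ is a continuous odd map, proving $\gamma(Z_{a}(\rho))\leq 1$.

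The only subtlety, and the step I would verify carefully, is the pointwise inequality $|u^{-}|\leq |u-v|$ when $v\geq 0$: on $\{u\geq 0\}$ it is trivial, while on $\{u<0\}$ we have $u-v\leq u<0$ so $|u-v|=v-u\geq -u=|u^{-}|$. Everything else is a direct combination of the Sobolev inequality from Proposition \ref{prop2-1} and a choice of $C:=1/(2^{1/p}C_{p})$ for the constant in $a(\rho)=C\rho$, which is manifestly independent of $\rho$.
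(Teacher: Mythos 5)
Your proof is correct and follows essentially the same strategy as the paper's: construct a continuous odd real-valued map on $Z_a(\rho)$ that never vanishes, using the Sobolev inequality to show that elements of $\overline{P_a^\pm}$ have small $L^p$-norm of the wrong-sign part. The only difference is cosmetic — you take $\psi(u)=\|u^+\|_{L^p}^p-\|u^-\|_{L^p}^p$ and argue by a direct sign check, while the paper takes $\psi(u)=\mathrm{dist}(u,P^+)-\mathrm{dist}(u,P^-)$ and argues by contradiction — but the key estimate $\|u^\mp\|_{L^p(\Omega)}\leq C_p\,\mathrm{dist}(u,P^\pm)$ is identical in both. (Minor note: the continuity you actually need is of $u\mapsto u^\pm$ from $H^1_{X,0}$ into $L^p$, which follows at once from the pointwise bound $|u^\pm-v^\pm|\leq|u-v|$ and the Sobolev embedding, without invoking Lemma~\ref{lemma3-8}.)
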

\begin{proof}
 Let $\psi: Z_{a}(\rho)\to\mathbb{R}$ be the odd map given by
 \[\psi(u):={\rm dist}(u,P^+)-{\rm dist}(u,P^-).\]
   Observing that for any $w\in  P^{\mp}$,
\[ \int_{\Omega}|u^{\pm}|^pdx = \int_{\{x\in \Omega:\pm u\geq 0\}} |u|^pdx\notag\leq \int_{\{x\in \Omega:\pm u\geq 0\}} |u-w|^pdx\leq\int_{\Omega} |u-w|^pdx.\]
Since $u=u^{+}+u^-$, Proposition \ref{prop2-1} derives that
   \begin{equation}\label{3-45}
    \|u^{\pm}\|_{L^{p}(\Omega)}\leq\inf_{w\in  P^{\mp}}\|u-w\|_{L^{p}(\Omega)}\leq \widehat{C}_{p} \inf_{w\in P^{\mp}}\|u-w\|_{H_{X,0}^{1}(\Omega)}=\widehat{C}_{p} {\rm dist}(u, P^{\mp}).
  \end{equation}
Set  $a(\rho)=\frac{\rho}{2\widehat{C}_p}$. We claim that $\psi\neq 0$ on $Z_{a}(\rho)$  for any  $0<a<a(\rho)$. Note that
\[ \overline{P_{a}}=\overline{P^+_a}\cup \overline{P^-_a}=\{u\in H_{X,0}^{1}(\Omega): {\rm dist}(u, P^{+})\leq a~~\mbox{or}~~{\rm dist}(u, P^{-})\leq a\}. \]
If $\psi(u_0)=0$ for some $u_0\in Z_{a_{0}}(\rho)$ with $0<a_0<a(\rho)$, then ${\rm dist}(u_0,P^+)={\rm dist}(u_0,P^-)\leq a_{0}$.
By the definition of $Z_{a}(\rho)$ and \eqref{3-45},
  \[\rho=\|u_{0}\|_{L^p(\Omega)}\leq \|u_{0}^+\|_{L^p(\Omega)}+\|u_{0}^-\|_{L^p(\Omega)}\leq \widehat{C}_p{\rm dist}(u_{0},P^+)+\widehat{C}_p{\rm dist}(u_{0},P^-)\leq 2\widehat{C}_pa_{0},\]
  which contradicts the fact  $0<a_0<a(\rho)=\frac{\rho}{2\widehat{C}_p}$. Thus, by the definition of Krasnoselskii genus, we have $\gamma(Z_{a}(\rho))\leq1$ for all $0<a<a(\rho)$.
\end{proof}

\begin{lemma}[Intersection lemma]
\label{lemma3-10}
   For any fixed integer $k\geq 3$, there exist  $R_k^0>0$ and $a^0>0$, such that for any $R>R_k^0$ and any $0<a<a^0$ we have
  \[\phi(N_k\cap B_{R})\cap S_{a}\cap Q_{k-1}\neq \varnothing\]
  holds for any odd map $\phi \in C(N_k, H_{X,0}^{1}(\Omega))$ satisfying $\phi|_{N_k\cap B_{R}^c}=\mathbf{id }$. Here $a^0>0$ is a positive constant
 independent of $k$, $Q_{k-1}$ is the set defined in \eqref{3-40}, and $\mathbf{id}$ denotes the  identity mapping.
\end{lemma}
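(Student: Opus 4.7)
The plan is to reduce the non-emptiness claim to the genus bound of Lemma \ref{lemma3-9} via a Borsuk-Ulam-type argument. First, I would introduce the symmetric preimage
\[ A:=\phi^{-1}(Q_{k-1})\cap N_{k}\cap\overline{B_{R}}=\bigl\{u\in N_{k}\cap\overline{B_{R}}:P_{N_{k-2}}\phi(u)=0~\text{and}~\|\phi(u)\|_{L^{p}(\Omega)}=\rho_{k-1}\bigr\},\]
which is symmetric because $\phi$ is odd and $Q_{k-1}$ is symmetric. Since all norms are equivalent on the finite-dimensional subspace $W:=N_{k}\cap N_{k-2}^{\perp}=\vspan\{e_{k-1},e_{k}\}$, there exists $c_{k}>0$ with $\|u\|_{L^{p}(\Omega)}\geq c_{k}\|u\|_{H_{X,0}^{1}(\Omega)}$ for all $u\in W$. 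Setting $R_{k}^{0}:=\rho_{k-1}/c_{k}$, any $u\in\partial B_{R}\cap N_{k}$ with $\phi(u)\in Q_{k-1}$ would force $u=\phi(u)\in W$ (by $\phi|_{N_{k}\cap B_{R}^{c}}=\mathbf{id}$ together with $Q_{k-1}\subseteq N_{k-2}^{\perp}$) and $\|u\|_{L^{p}(\Omega)}=\rho_{k-1}$, contradicting $\|u\|_{L^{p}(\Omega)}\geq c_{k}R>\rho_{k-1}$. Hence $A\subseteq N_{k}\cap B_{R}$, i.e., no boundary points.

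The crux is the genus bound $\gamma(A)\geq 2$. I would derive it from the odd continuous map
\[ \Phi:N_{k}\cap\overline{B_{R}}\to N_{k-2},\qquad \Phi(u):=P_{N_{k-2}}\phi(u),\]
whose restriction to $\partial B_{R}\cap N_{k}$ coincides with the linear projection $P_{N_{k-2}}$; consequently the boundary zero set $\Phi^{-1}(0)\cap\partial B_{R}\cap N_{k}=W\cap\partial B_{R}$ is a $1$-sphere of genus $2$. A Krasnoselskii-type argument, obtained by extending any hypothetical odd continuous map $\xi:Y\to\mathbb{R}\setminus\{0\}$ (where $Y:=\Phi^{-1}(0)\cap N_{k}\cap\overline{B_{R}}$) to all of $N_{k}\cap\overline{B_{R}}$ and combining $\Phi$ with $\xi$ to reach a Borsuk-Ulam contradiction on the boundary sphere $\partial B_{R}\cap N_{k}$, shows that $\gamma(Y\setminus\{0\})\geq 2$. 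Finally, the even continuous function $u\mapsto\|\phi(u)\|_{L^{p}(\Omega)}$ vanishes at $0$ and exceeds $\rho_{k-1}$ on $Y\cap\partial B_{R}=W\cap\partial B_{R}$ for $R>R_{k}^{0}$, so a symmetric intermediate-value/deformation retraction along this level function within $Y$ transfers the genus bound to the level set $A$, yielding $\gamma(A)\geq 2$.

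With the genus bound in hand, the conclusion follows swiftly. Because the exponent defining $\rho_{k}$ is positive when $2<p<2_{\tilde{\nu}}^{*}$, the sequence $\{\rho_{k}\}$ is monotone increasing; hence setting $a^{0}:=C\rho_{2}$, with $C$ the universal constant from Lemma \ref{lemma3-9}, yields a constant independent of $k$ such that $\gamma(Z_{a}(\rho_{k-1}))\leq 1$ whenever $0<a<a^{0}$ and $k\geq 3$. Suppose for contradiction that $\phi(N_{k}\cap B_{R})\cap S_{a}\cap Q_{k-1}=\varnothing$. Since $A\subseteq N_{k}\cap B_{R}$, this forces $\phi(A)\subseteq Q_{k-1}\cap\overline{P_{a}}\subseteq Z_{a}(\rho_{k-1})$, and the monotonicity of the genus under the odd continuous map $\phi|_{A}$ yields $\gamma(A)\leq\gamma(\phi(A))\leq\gamma(Z_{a}(\rho_{k-1}))\leq 1$, contradicting $\gamma(A)\geq 2$.

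The main obstacle is justifying the genus lower bound $\gamma(A)\geq 2$. The difficulty is that the codimension-one constraint $\|\phi(u)\|_{L^{p}(\Omega)}=\rho_{k-1}$ is \emph{even} in $u$, so it cannot be bundled with the odd map $\Phi$ into a single odd continuous map $N_{k}\to\mathbb{R}^{k-1}$ amenable to the standard Borsuk-Ulam theorem. The resolution, as sketched, exploits the prescribed boundary behaviour $\phi|_{\partial B_{R}\cap N_{k}}=\mathbf{id}$, which manufactures a genus-$2$ sphere inside $\Phi^{-1}(0)$ at the boundary, and then descends this topological complexity across the even level-set condition via a symmetric deformation within $Y$.
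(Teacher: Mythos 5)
Your overall architecture — form the preimage $A:=\phi^{-1}(Q_{k-1})\cap N_{k}\cap\overline{B_{R}}$, show $A\subset B_{R}$, bound $\gamma(A)\geq 2$, and then use monotonicity of genus under $\phi|_{A}$ together with Lemma \ref{lemma3-9} to reach a contradiction — is a genuinely different decomposition from the paper's, which instead sets $T:=\{u\in N_{k}\cap B_{R}:\|\phi(u)\|_{L^{p}}<\rho_{k-1}\}$, uses $\gamma(\partial T)=k$, and runs the contradiction through the covering $\phi(\partial T)\subset(\phi(\partial T)\cap S_{a})\cup Z_{a}(\rho_{k-1})$ plus genus subadditivity and the projection $\mathcal{P}\colon H_{X,0}^{1}\to N_{k-2}$. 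The boundary step, the choice of $a^{0}=C\rho_{2}$, and the final monotonicity-plus-Lemma \ref{lemma3-9} contradiction are fine. But the central claim $\gamma(A)\geq 2$ is not established by your sketch, and this is a genuine gap, not a presentational one.

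The issue is exactly where you locate it: the Borsuk--Ulam argument you describe shows that any closed symmetric subset of $Y:=\Phi^{-1}(0)\cap N_{k}\cap\overline{B_{R}}$ which contains $Y\cap\partial B_{R}=W\cap\partial B_{R}$ and avoids the origin has genus $\geq 2$ (extend a hypothetical odd $\xi$ to $N_{k}\cap\overline{B_{R}}$, pair with $\Phi$, and apply Borsuk--Ulam on the sphere). That tells you $\gamma(Y\cap\{\psi\geq\rho_{k-1}\})\geq 2$, where $\psi(u)=\|\phi(u)\|_{L^{p}}$. However, $A=Y\cap\{\psi=\rho_{k-1}\}$ is a \emph{proper subset} of $Y\cap\{\psi\geq\rho_{k-1}\}$ and does not contain $Y\cap\partial B_{R}$, so that Borsuk--Ulam conclusion does not apply to $A$. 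The ``symmetric intermediate-value/deformation retraction along this level function within $Y$'' that is supposed to push the bound down to the level set is not a theorem: $Y$ is merely a closed symmetric subset of $N_{k}$, not a manifold or ANR, $\psi|_{Y}$ has no gradient flow, and there is no guarantee of an odd retraction of $\{\psi\geq\rho_{k-1}\}$ onto $\{\psi=\rho_{k-1}\}$ inside $Y$. (The fact that $A$ happens to equal $\Phi^{-1}(0)\cap\{\psi=\rho_{k-1}\}$, i.e.\ the zero set of an odd map intersected with an \emph{even} level constraint, is precisely why the obvious Borsuk--Ulam bundling fails, as you note.) A rigorous route to $\gamma(A)\geq 2$ does exist — take the paper's $\partial T$, note $\Phi|_{\partial T}$ is odd into $N_{k-2}\cong\mathbb{R}^{k-2}$, and apply the dimension-reduction property of genus to get $\gamma(\Phi^{-1}(0)\cap\partial T)\geq k-(k-2)=2$, then use $\Phi^{-1}(0)\cap\partial T\subset A$ — but that recovers the sublevel-set device your proof was trying to avoid.

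A secondary concern, which interacts with the first: your $R_{k}^{0}=\rho_{k-1}/c_{k}$ uses only the norm-equivalence constant on the two-dimensional subspace $W=N_{k}\cap N_{k-2}^{\perp}$. That suffices for the boundary step $A\subset B_{R}$, because any $u\in A\cap\partial B_{R}$ is forced into $W$. But if you want to rescue the genus bound via the sublevel set $T$, you need $\|\phi(u)\|_{L^{p}}=\|u\|_{L^{p}}>\rho_{k-1}$ for \emph{every} $u\in N_{k}\cap\partial B_{R}$ (so that $\overline{T}$ avoids $\partial B_{R}$), and since $c_{k}\geq c_{k}'$, where $c_{k}'$ is the comparison constant on all of $N_{k}$, your $R_{k}^{0}$ can be strictly smaller than the threshold $\rho_{k-1}/c_{k}'$ that makes this work. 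The paper's choice $R_{k}^{0}=2C_{k}\rho_{k-1}$ with $C_{k}=\sqrt{\lambda_{k}}|\Omega|^{1/2-1/p}$ is taken from the full space $N_{k}$ precisely to avoid this.
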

\begin{proof}
Using the min-max theorem and H\"{o}lder's inequality we have
\[ \|u\|_{H_{X,0}^{1}(\Omega)}\leq \sqrt{\lambda_{k}}\|u\|_{L^{2}(\Omega)}\leq  \sqrt{\lambda_{k}}|\Omega|^{\frac{1}{2}-\frac{1}{p}}\|u\|_{L^p(\Omega)}\qquad \forall u\in N_k. \]
 Define $R_k^0=2C_{k}\rho_{k-1}$, where $C_{k}:=\sqrt{\lambda_{k}}|\Omega|^{\frac{1}{2}-\frac{1}{p}}$ and $\rho_{k-1}=C_0
  \lambda_{k-1}^{\frac{2(2_{\tilde{\nu}}^{*}-p)}{p(p-2)(2_{\tilde{\nu}}^{*}-2)}}$ given by \eqref{3-40}. For any $R>R_k^0$  and  any odd map $\phi \in C(N_k, H_{X,0}^{1}(\Omega))$ satisfying $\phi|_{N_k\cap B_{R}^c}=\mathbf{id }$, we define:
  \[T:=\{u\in N_k\cap B_{R}:\|\phi(u)\|_{L^p(\Omega)}< \rho_{k-1}\}.\]
Consequently,  $T$ is a bounded symmetric open set containing the origin in $N_k$. According to the property of Krasnoselskii genus (see \cite[Proposition 5.2]{Struwe2000}), it follows that $\gamma(\partial T)=k$.

We then assert that for any $u\in \partial T$,  $\|\phi(u)\|_{L^p(\Omega)}=\rho_{k-1}$ and $\|u\|_{H_{X,0}^{1}(\Omega)}<R$. This implies that $\phi(\partial T)\cap N_{k-2}^\perp\subset Q_{k-1}$ and $\partial T\subset B_{R}\cap N_k$. Indeed, for $u\in \partial T$, there are only two possibilities:
\begin{itemize}
  \item $\|u\|_{H_{X,0}^{1}(\Omega)}=R$ and $\|\phi(u)\|_{L^p(\Omega)}\leq \rho_{k-1}$;
  \item $\|\phi(u)\|_{L^p(\Omega)}=\rho_{k-1}$ with $\|u\|_{H_{X,0}^{1}(\Omega)}< R$.
\end{itemize}
 If $\|u\|_{H_{X,0}^{1}(\Omega)}= R$ for some $u\in \partial T$, then by the definition of $\phi$ we have $\phi(u)=u$ and
  \[R_k^0=2C_{k}\rho_{k-1}<R\leq\|u\|_{H_{X,0}^{1}(\Omega)}\leq C_k\|u\|_{L^p(\Omega)}=C_k\|\phi(u)\|_{L^p(\Omega)}\leq C_k\rho_{k-1}\]
  which leads to a contradiction. Therefore, for any  $u\in \partial T$, we have $\|\phi(u)\|_{L^p(\Omega)}=\rho_{k-1}$ and $\|u\|_{H_{X,0}^{1}(\Omega)}<R$.

 \par
  Suppose that $\phi(\partial T)\cap S_a\cap Q_{k-1}=\varnothing$ for some fixed $a>0$. We consider the projection
\[ \mathcal{P}:H_{X,0}^{1}(\Omega)\to N_{k-2}.\]
   Since $\phi(\partial T)\cap N_{k-2}^\perp\subset Q_{k-1}$, it follows that $0\notin \mathcal{P}(\phi(\partial T)\cap S_a)$. Indeed, if $\mathcal{P}(y)=0$ for some $y\in \phi(\partial T)\cap S_a$, then $y\in N_{k-2}^\perp$ and $\phi(\partial T)\cap S_a\cap N_{k-2}^\perp\neq \varnothing$, which contradicts  $\phi(\partial T)\cap S_a\cap Q_{k-1}=\varnothing$. Observing that the restriction $\mathcal{P}|_{\phi(\partial T)\cap S_a}$ is odd and continuous on $\phi(\partial T)\cap S_a$, we have $\gamma(\phi(\partial T)\cap S_a)\leq k-2$. \par

   We next show that
   \begin{equation}\label{3-46}
     \phi(\partial T)\subset (\phi(\partial T)\cap S_a)\cup Z_{a}(\rho_{k-1}).
   \end{equation}
Indeed, for any $v\in \phi(\partial T)$, we have $\|v\|_{L^p(\Omega)}=\rho_{k-1}$, which implies that $v\in \{u\in H_{X,0}^{1}(\Omega): \|u\|_{L^p(\Omega)}=\rho_{k-1}\}$. Clearly, $v\in \phi(\partial T)\cap S_a$ provided $v\notin P_a$, and  $v\in Z_{a}(\rho_{k-1})$ if
 $v\in P_a$.

According to the property of Krasnoselskii genus and Lemma \ref{lemma3-9}, there exists $a(\rho_{k-1})>0$ such that for all $0<a<a(\rho_{k-1})$,
  \[k=\gamma(\partial T)\leq\gamma(\overline{\phi(\partial T)})=\gamma(\phi(\partial T))\leq\gamma(\phi(\partial T)\cap S_a)+\gamma(Z_{a}(\rho_{k-1}))\leq k-2+1=k-1,\]
  which gives a contradiction. Hence,  $\phi(\partial T)\cap Q_{k-1}\cap S_{a}\neq \varnothing$ for all $0<a<a(\rho_{k-1})$. Because of $\partial T\subset B_{R_k}\cap N_k$, we obtain
   \[\phi(N_k\cap B_{R})\cap S_{a}\cap Q_{k-1}\neq \varnothing,\qquad\forall R>R_k^0,~0< a<a(\rho_{k-1}).\]
Note that $\rho_{k}=C_0
  \lambda_{k}^{\frac{2(2_{\tilde{\nu}}^{*}-p)}{p(p-2)(2_{\tilde{\nu}}^{*}-2)}}$ given by \eqref{3-40} is increasing with respect to $k$. We can thus choose an $a^0:=a(\rho_2)=C\rho_2>0$ such that
  \[\phi(N_k\cap B_{R})\cap S_{a}\cap Q_{k-1}\neq \varnothing,\qquad\forall R>R_k^0,~0\leq a<a^{0}.\]
\end{proof}

We next construct the invariant properties for the operator $K_J$ determined by \eqref{3-19}.

\begin{lemma}\label{lemma3-11}
Under assumptions  $(f1)$-$(f5)$ and $(g1)$-$(g3)$, there exist some positive constants $M_2>M_1>0$ and $
0<\overline{a}<a^0$ such that for any
$0<a<\overline{a}$,
\[ K_J( u) \in P^\pm_\frac{a}{4} \]
holds for all $u\in P^\pm_a\cap\{u\in H_{X,0}^1(\Omega):J(u)>M_2\}$, where $K_J: H_{X,0}^{1}(\Omega)\to H_{X,0}^{1}(\Omega)$ is the operator defined in \eqref{3-19} above.

\end{lemma}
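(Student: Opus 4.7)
The strategy is to decompose $K_J$ into two pieces that admit independent distance estimates. Comparing \eqref{3-19} with the Riesz identifications in \eqref{3-35}, one obtains
\[
K_J(u)=A(u)+B(u),\qquad A(u):=-(1+T_2(u))K_1(u)-\theta(u)K_2(u),\quad B(u):=-T_1(u)u.
\]
Since $P^++P^+\subset P^+$, a routine trial-sum argument yields the subadditivity
$\text{dist}(K_J(u),P^+)\leq \text{dist}(A(u),P^+)+\text{dist}(B(u),P^+)$,
so it suffices to bound each piece by $\tfrac{a}{8}$. The case $P^-$ is symmetric, so I will only treat $P^+$.

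The first step fixes signs. By Lemma \ref{lemma3-3}, once $J(u)$ is large we have $I(u)\to+\infty$ and $|T_1(u)|,|T_2(u)|\to 0$; the explicit formula \eqref{3-20}, combined with $\chi'\leq 0$, $\varphi,Q>0$, $I(u)>0$, and $\int_\Omega G(x,u)\,dx\geq 0$ (which follows from $(g3)$), then forces $T_1(u)\leq 0$. The consequence is crucial: because $-T_1(u)\geq 0$ is a nonnegative scalar and $P^+$ is a convex cone invariant under nonnegative scaling,
\[
\text{dist}(B(u),P^+)=|T_1(u)|\,\text{dist}(u,P^+)<|T_1(u)|\,a\leq \frac{a}{8},
\]
provided $M_2\geq M_1$ is taken large enough that $|T_1(u)|\leq \tfrac{1}{8}$ whenever $J(u)>M_2$. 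This cone-scaling identity sidesteps what is the main technical obstacle of the proof, namely that $\|u^-\|_{H_{X,0}^{1}(\Omega)}$ is not controlled by $\text{dist}(u,P^+)$ in general.

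For $A(u)$ the trial element is $z_A:=-(1+T_2(u))K_1(u^+)-\theta(u)K_2(u^+)$. Assumptions $(f5)$ and $(g3)$ give $f(x,u^+),g(x,u^+)\geq 0$; testing the defining identity $(K_1(u^+),v)_{H_{X,0}^{1}(\Omega)}=-\int_\Omega f(x,u^+)v\,dx$ with $v=(K_1(u^+))^+$ forces $\|(K_1(u^+))^+\|_{H_{X,0}^{1}(\Omega)}^2\leq 0$, hence $K_1(u^+)\leq 0$; the same test-function argument gives $K_2(u^+)\leq 0$. Since $1+T_2(u),\theta(u)\geq 0$, we conclude $z_A\in P^+$. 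Because $f(x,0)=0$ from $(f1)$, the pointwise identity $f(x,u)-f(x,u^+)=f(x,u^-)$ (and its analogue for $g$) gives
\[
A(u)-z_A=-(1+T_2(u))K_1(u^-)-\theta(u)K_2(u^-).
\]
Testing $(K_1(u^-),K_1(u^-))_{H_{X,0}^{1}(\Omega)}=-\int_\Omega f(x,u^-)K_1(u^-)\,dx$ and using \eqref{3-1} with the Sobolev embedding of Proposition \ref{prop2-1} yields $\|K_1(u^-)\|_{H_{X,0}^{1}(\Omega)}\leq C(\|u^-\|_{L^2(\Omega)}+\|u^-\|_{L^p(\Omega)}^{p-1})$, and analogously for $K_2(u^-)$ via \eqref{3-6}. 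The elementary pointwise inequality $|u^-(x)|\leq |u(x)-w(x)|$ for any $w\in P^+$, together with Proposition \ref{prop2-1}, produces $\|u^-\|_{L^q(\Omega)}\leq C\,\text{dist}(u,P^+)<Ca$ for $1\leq q\leq 2_{\tilde{\nu}}^*$, whence
\[
\text{dist}(A(u),P^+)\leq \|A(u)-z_A\|_{H_{X,0}^{1}(\Omega)}\leq C_*(a+a^{p-1}).
\]

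Selecting $\overline{a}\in(0,a^0)$ small enough that $C_*(a+a^{p-1})\leq \tfrac{a}{8}$ for all $a\in(0,\overline{a})$, and then $M_2\geq M_1$ large enough (via Lemma \ref{lemma3-3}) that $|T_1(u)|\leq \tfrac{1}{8}$ on $\{J>M_2\}$, gives $\text{dist}(K_J(u),P^+)<\tfrac{a}{4}$, as required. The $P^-$ case is identical: $f(x,u^-)\leq 0$ and $g(x,u^-)\leq 0$ force $K_1(u^-),K_2(u^-)\geq 0$ by the same maximum-principle argument, the trial element becomes $-(1+T_2(u))K_1(u^-)-\theta(u)K_2(u^-)\in P^-$, and $-T_1(u)u\in P^-$ whenever $u\in P^-$. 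The delicate point of the proof is precisely the $B=-T_1u$ contribution: a direct computation of $\|(K_J(u))^-\|^2=(K_J(u),(K_J(u))^-)_{H_{X,0}^{1}(\Omega)}$ would leave a term of size $|T_1(u)|\,\|u^-\|_{H_{X,0}^{1}(\Omega)}$ that need not be of order $a$, whereas the cone-scaling identity converts it into $|T_1(u)|\,a$, which is.
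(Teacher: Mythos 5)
Your decomposition $K_J(u)=A(u)+B(u)$ with $A(u)=-(1+T_2(u))K_1(u)-\theta(u)K_2(u)$ and $B(u)=-T_1(u)u$ is exactly the paper's splitting $K_J(u)=w+(-T_1(u)u)$, and the cone-scaling identity for $B(u)$ (using that $-T_1(u)\geq 0$ and small once $I(u)>0$, forced by large $J(u)$) is the paper's argument verbatim. The only real difference is how you bound $\text{dist}(A(u),P^+)$: the paper simply uses the trial point $w^+\in P^+$, reducing to $\|w^-\|^2=(w,w^-)$ and applying the pointwise inequality $f(x,u)w^-\leq f(x,u^-)w^-$ from $(f5)$; you instead construct a different trial point $z_A:=-(1+T_2)K_1(u^+)-\theta K_2(u^+)$, verify $z_A\in P^+$ by a sign argument (test with $(K_1(u^+))^+$), and then exploit the additivity $K_1(u)-K_1(u^+)=K_1(u^-)$. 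Both routes lead to the same Hölder–Sobolev estimate on $u^-$, so this is a cosmetic rather than structural variation; the paper's choice $w^+$ is slightly more economical since it requires no separate sign verification.

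There is, however, a small but genuine flaw in your final step. You absorb the $\varepsilon$-dependent bound from \eqref{3-1} into a single fixed constant $C_*$ and then claim that ``selecting $\overline{a}$ small enough'' yields $C_*(a+a^{p-1})\leq\tfrac{a}{8}$. Dividing by $a$, this would require $C_*(1+a^{p-2})\leq\tfrac{1}{8}$, whose left side tends to $C_*$ as $a\to 0$; so it fails unless $C_*<\tfrac{1}{8}$ to begin with, which you have not arranged. The coefficient in front of the linear-in-$a$ term \emph{cannot} be made small by shrinking $a$; it must be made small beforehand by choosing $\varepsilon$ in \eqref{3-1} and \eqref{3-6}. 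This is precisely what the paper does in order to obtain the explicit factor $\tfrac{1}{9}$ in \eqref{3-50}. Your proof repairs immediately by keeping $\|A(u)-z_A\|\leq C\varepsilon\,\text{dist}(u,P^+)+C(\varepsilon)\,\text{dist}(u,P^+)^{p-1}$, fixing $\varepsilon$ so that $C\varepsilon<\tfrac{1}{16}$, and only then choosing $\overline{a}$ so that $C(\varepsilon)a^{p-2}<\tfrac{1}{16}$ on $(0,\overline{a})$; but as written the order of quantifiers is wrong and the claimed inequality does not hold.
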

\begin{proof}
By Lemma \ref{lemma3-3}, we can choose a $M_2>0$ such that $0\leq -T_1(u)<\frac{1}{10}$ and $0<1+T_2(u)<2$ hold for
$J(u)>M_2$. For each $u\in P^\pm_a\cap\{u\in H_{X,0}^1(\Omega):J(u)>M_2\}$, we set $z=K_J(u)$.
 Using \eqref{3-19} we can write
  \[z=K_J(u)=w+(-T_1(u)u),\]
  where $w\in H_{X,0}^{1}(\Omega)$ is determined by
  \[(w,v)_{H_{X,0}^{1}(\Omega)}=(1+T_2(u))\int_{\Omega}f(x,u)v dx+\theta(u)\int_{\Omega}g(x,u)vdx,~~~\forall v\in H_{X,0}^{1}(\Omega).\]
Then we have
\begin{equation}\label{3-47}
   {\rm dist}(z, P^\pm)={\rm dist}(w+(-T_1(u)u),P^\pm)\leq {\rm dist}(w,P^\pm)+{\rm dist}(-T_1(u)u,P^\pm).
 \end{equation}
Since  $0\leq -T_1(u)<\frac{1}{10}$, we obtain $-T_1(u)y\in P^\pm$ for any $y\in P^\pm$, and therefore
  \[{\rm dist}(-T_1(u)u,P^\pm)\leq \|-T_1(u)u-(-T_1(u)y)\|_{H_{X,0}^{1}(\Omega)}=|T_1(u)|\|u-y\|_{H_{X,0}^{1}(\Omega)}.\]
   Taking the infimum of $y$ in $P^\pm$ we get
 \begin{equation}\label{3-48}
   {\rm dist}(-T_1(u)u, P^\pm)\leq\frac{1}{10}{\rm dist}(u,P^\pm)<\frac{a}{10}.
 \end{equation}
On the other hand,
 \begin{equation}\label{3-49}
  {\rm dist}(w,P^\mp)\leq \|w-w^\mp\|_{H_{X,0}^{1}(\Omega)}=\|w^{\pm}\|_{H_{X,0}^{1}(\Omega)}.
 \end{equation}
Thus by assumptions $(f1)$-$(f5)$ and $(g1)$-$(g3)$, \eqref{3-1}, \eqref{3-6}, \eqref{3-45} and \eqref{3-49} we have
\begin{align*}
  {\rm dist}(w,P^\mp)\|w^{\pm}\|_{H_{X,0}^{1}(\Omega)} &\leq \|w^{\pm}\|^2_{H_{X,0}^{1}(\Omega)}=(w,w^\pm)_{H_{X,0}^{1}(\Omega)}\\
  &=\int_{\Omega} ((T_2(u)+1)f(x,u)+\theta(u)g(x,u))w^{\pm}dx\\
  &\leq\int_{\Omega} ((T_2(u)+1)f^\pm(x,u)+\theta(u)g^\pm(x,u)) w^{\pm}dx\\
&=\int_{\Omega} ((T_2(u)+1)f(x,u^{\pm})+\theta(u)g(x,u^\pm))w^{\pm}dx\\
  &\leq 3\varepsilon\int_{\Omega}|u^{\pm}||w^{\pm}|dx+3C(\varepsilon)\int_{\Omega}|u^{\pm}|^{p-1}|w^{\pm}| dx\\
   &\leq3\varepsilon\|u^\pm\|_{L^2(\Omega)}\|w^\pm\|_{L^2(\Omega)}
    +3C(\varepsilon)\|u^\pm\|_{L^{p}(\Omega)}^{p-1}\|w^\pm\|_{L^{p}(\Omega)}\\
&\leq 3\varepsilon|\Omega|^{\frac{1}{2}-\frac{1}{p}}\|u^\pm\|_{L^p(\Omega)}\|w^\pm\|_{L^2(\Omega)}
    +3C(\varepsilon)\|u^\pm\|_{L^{p}(\Omega)}^{p-1}\|w^\pm\|_{L^{p}(\Omega)}\\
&\leq\left(\frac{1}{9}{\rm dist}(u,P^\mp)+C {\rm dist}(u,P^\mp )^{p-1} \right) \|w^{\pm}\|_{H_{X,0}^{1}(\Omega)}
  \end{align*}
holds for suitable $\varepsilon>0$. This means
\begin{equation}\label{3-50}
{\rm dist}(w,P^\pm)\leq \frac{1}{9}{\rm dist}(u,P^\pm)+C {\rm dist}(u,P^\pm )^{p-1} \leq \frac{1}{9}  a+Ca^{p-1}<\frac{a}{8}
\end{equation}
holds for all $0<a<\overline{a}$, where $0<\overline{a}<a^0$ is a  small constant. It follows from \eqref{3-47}-\eqref{3-50} that
\[{\rm dist}(z,P^\pm)<\frac{a}{10}+\frac{a}{8}<\frac{a}{4}\]
holds for all $0<a<\overline{a}$.
\end{proof}
\begin{remark}
\label{remark3-1}
In particular, if  $g(x,u)\equiv 0$, we have $T_{1}(u)=T_{2}(u)=0$, the  high energy level restriction $\{u\in H_{X,0}^{1}(\Omega): J(u)>M_{2}\}$ in Lemma \ref{lemma3-11} can be relaxed to the whole space $H_{X,0}^{1}(\Omega)$, and $K_{J}(P_{a}^{\pm})\subset P_{\frac{a}{4}}^{\pm}$ holds for all $0<a<\overline{a}$.
\end{remark}

\begin{lemma}\label{lemma3-12}
Let $J'(u)$ be the operator defined in \eqref{3-18}. For any $\varepsilon>0$, there exists a  locally Lipschitz continuous map $\tau: H_{X,0}^{1}(\Omega)\to H_{X,0}^{1}(\Omega)$ such that
\[ \|\tau(u)-J'(u)\|_{H_{X,0}^{1}(\Omega)}\leq\varepsilon\qquad \forall u\in H_{X,0}^{1}(\Omega).\]
\end{lemma}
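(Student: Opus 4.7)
The plan is to use a standard partition-of-unity approximation argument, which is the usual preliminary step for constructing a pseudo-gradient vector field in critical point theory. Since the statement is purely metric/functional-analytic, the degeneracy of $\triangle_{X}$ plays no role beyond guaranteeing (via Proposition \ref{prop2-9}) that $J' : H_{X,0}^{1}(\Omega) \to H_{X,0}^{1}(\Omega)$ is well-defined and continuous.

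First, fix $\varepsilon > 0$. Using continuity of $J'$, for each $u \in H_{X,0}^{1}(\Omega)$ I would choose an open ball $B_u := B(u, r_u)$ of radius $r_u > 0$ such that
\[
\|J'(v) - J'(u)\|_{H_{X,0}^{1}(\Omega)} < \varepsilon \qquad \forall v \in B_u.
\]
The collection $\{B_u\}_{u \in H_{X,0}^{1}(\Omega)}$ is an open cover of $H_{X,0}^{1}(\Omega)$. Since $H_{X,0}^{1}(\Omega)$ is a metric space and hence paracompact, this cover admits a locally finite open refinement $\{V_\alpha\}_{\alpha \in A}$, with each $V_\alpha$ contained in some $B_{u_\alpha}$.

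Next, I would invoke the standard construction of a locally Lipschitz partition of unity $\{\rho_\alpha\}_{\alpha \in A}$ subordinate to $\{V_\alpha\}$. Concretely, setting $\psi_\alpha(u) := \mathrm{dist}(u, V_\alpha^c)$ yields a $1$-Lipschitz function supported in $\overline{V_\alpha}$; by local finiteness the sum $\Psi(u) := \sum_\beta \psi_\beta(u)$ is locally a finite sum of $1$-Lipschitz functions and is strictly positive (as $\{V_\alpha\}$ covers $H_{X,0}^{1}(\Omega)$), so the functions
\[
\rho_\alpha(u) := \frac{\psi_\alpha(u)}{\Psi(u)}
\]
are locally Lipschitz with $\sum_\alpha \rho_\alpha \equiv 1$ and $\mathrm{supp}(\rho_\alpha) \subset \overline{V_\alpha}$. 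Define
\[
\tau(u) := \sum_{\alpha \in A} \rho_\alpha(u)\, J'(u_\alpha).
\]
By local finiteness this is a finite sum in a neighborhood of every $u$, whence $\tau$ is locally Lipschitz as a finite combination of locally Lipschitz scalars with fixed vectors $J'(u_\alpha) \in H_{X,0}^{1}(\Omega)$.

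For the uniform estimate, observe that whenever $\rho_\alpha(u) > 0$ we have $u \in V_\alpha \subset B_{u_\alpha}$, and therefore $\|J'(u_\alpha) - J'(u)\|_{H_{X,0}^{1}(\Omega)} < \varepsilon$. Consequently, using $\sum_\alpha \rho_\alpha(u) = 1$,
\[
\|\tau(u) - J'(u)\|_{H_{X,0}^{1}(\Omega)}
= \Bigl\|\sum_{\alpha \in A} \rho_\alpha(u)\bigl(J'(u_\alpha) - J'(u)\bigr)\Bigr\|_{H_{X,0}^{1}(\Omega)}
\leq \sum_{\alpha \in A} \rho_\alpha(u)\,\varepsilon = \varepsilon,
\]
which is the desired bound. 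The only delicate point is the construction of a locally Lipschitz partition of unity on a paracompact metric space; this is classical (cf. the pseudo-gradient construction in Rabinowitz or Struwe), and the Hilbert structure of $H_{X,0}^{1}(\Omega)$ enters solely through its induced metric, so no additional subelliptic machinery is needed.
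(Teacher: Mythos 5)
Your proof is correct, but it takes a genuinely different route from the paper. The paper does not argue from scratch: it decomposes $J'(u) = (1+T_1(u))u + \kappa(u)$, where $\kappa = (1+T_2(\cdot))K_1 + \theta(\cdot)K_2$ maps bounded sets to relatively compact sets, invokes Rabinowitz's Proposition A.23 to uniformly approximate the compact map $\kappa$ by a locally Lipschitz $\kappa^0$, and then observes that the remaining piece $(1+T_1(\cdot))\mathbf{id}$ is already locally Lipschitz because $T_1 \in C^1$ with locally bounded Fr\'echet derivative, so $\tau := \kappa^0 + (1+T_1(\cdot))\mathbf{id}$ works. You instead approximate $J'$ directly via a locally Lipschitz partition of unity subordinate to a locally finite refinement of the cover $\{B(u, r_u)\}$ on which $J'$ oscillates by less than $\varepsilon$; this is essentially the mechanism hiding inside the cited Proposition A.23, applied to $J'$ itself rather than to its compact part. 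Your version is more self-contained and needs only continuity of $J'$, not its compactness structure; the paper's version is shorter on the page because it offloads the partition-of-unity work to a reference, and it additionally exhibits $\tau$ as ``compact perturbation of a locally Lipschitz multiple of the identity,'' a structure which, while not needed for the subsequent lemmas here, is the natural object in the Rabinowitz framework. One minor point worth making explicit in your write-up is why $\rho_\alpha = \psi_\alpha/\Psi$ is locally Lipschitz: near any $u_0$ only finitely many $\psi_\beta$ are nonzero, so $\Psi$ is locally Lipschitz, and continuity plus $\Psi(u_0)>0$ give a neighborhood on which $\Psi$ is bounded below by a positive constant, so the quotient is Lipschitz there; you gesture at this but it deserves a sentence since closed balls in $H^1_{X,0}(\Omega)$ are not compact.
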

\begin{proof}
 From \eqref{3-35} and \eqref{3-37}, we see that the operator $\kappa: H_{X,0}^{1}(\Omega)\to H_{X,0}^{1}(\Omega)$ given by
 \[ \kappa(u):=J'(u)-(1+T_1(u))u=(1+T_{2}(u))K_{1}(u)+\theta(u)K_{2}(u)\]
 maps the bounded sets to the relative compact sets. By \cite[Proposition A.23]{Rabinowitz1986},
   for any $\varepsilon>0$, there  exists a  locally Lipschitz continuous map $\kappa^0: H_{X,0}^{1}(\Omega)\to H_{X,0}^{1}(\Omega)$ satisfying
\[ \|\kappa^0(u)-\kappa(u)\|_{H_{X,0}^{1}(\Omega)}\leq\varepsilon\qquad \forall u\in H_{X,0}^{1}(\Omega). \]
 Let $\tau=\kappa^0+(1+T_1(\cdot))\mathbf{id}$. We can also deduce that $\tau$ is
locally Lipschitz continuous since $T_{1}$ is a $C^1$ functional with a locally bounded  Fr\'{e}chet derivative. Thus
\[ \|\tau(u)-J'(u)\|_{H_{X,0}^{1}(\Omega)}\leq\varepsilon\qquad \forall u\in H_{X,0}^{1}(\Omega).\]
\end{proof}

\begin{lemma}\label{lemma3-13}
For any given $a>0$, suppose there is  a locally Lipschitz continuous map $\tau$  on $H_{X,0}^1(\Omega)$ satisfying
\begin{equation}\label{3-51}
(\mathbf{id}-\tau)(\overline{P_{a}^{\pm}})\subset \overline{P_{\frac{a}{2}}^{\pm}}.
\end{equation}
Set $h(u):=\frac{1}{1+\|\tau(u)\|_{H_{X,0}^{1}(\Omega)}}$.
 For any given Lipschitz continuous function $\zeta$ in $H_{X,0}^{1}(\Omega)$ with $0\leq \zeta\leq 1$,  the following Cauchy problem
\begin{eqnarray}\label{3-52}
      \left\{
    \begin{aligned}
  \frac{d\eta(t,u)}{dt}&=-\zeta(\eta)h(\eta)\tau(\eta)
  , \\[1.5mm]
  \eta(0,u)&=u \in H_{X,0}^{1}(\Omega),
\end{aligned}
\right.
  \end{eqnarray}
 admits a global solution $\eta(t,u)\in C([0,+\infty)\times H_{X,0}^{1}(\Omega),H_{X,0}^{1}(\Omega))$. Moreover, if $u\in \overline{P_{a}^{\pm}}$, then $\eta(t,u)\in\overline{P_{a}^{\pm}}$ for all $t\geq 0$.
\end{lemma}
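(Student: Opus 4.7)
The plan is to organize the argument in three stages: (i) local existence and uniqueness via the Cauchy--Lipschitz theorem in the Hilbert space $H_{X,0}^1(\Omega)$, (ii) extension to a global flow via a uniform speed bound, and (iii) positive invariance of $\overline{P_a^{\pm}}$ by a convexity plus Euler-approximation argument.

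For (i), I would verify that the right-hand side
$$F(u):=-\zeta(u)h(u)\tau(u)$$
is locally Lipschitz on $H_{X,0}^1(\Omega)$. Since $\tau$ is locally Lipschitz by hypothesis, the scalar map $u\mapsto \|\tau(u)\|_{H_{X,0}^1(\Omega)}$ is locally Lipschitz, hence so is $h(u)=(1+\|\tau(u)\|_{H_{X,0}^1(\Omega)})^{-1}$, which moreover takes values in $(0,1]$. Combined with the (global) Lipschitz continuity of $\zeta$ with $0\leq\zeta\leq 1$, a routine product estimate gives the local Lipschitz property of $F$, and the classical Cauchy--Lipschitz theorem in Hilbert spaces produces a unique $C^1$ solution on a maximal interval $[0,T_{\max}(u))$, with continuous dependence on the initial datum.

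For (ii), I would observe that
$$\Bigl\|\frac{d\eta}{dt}\Bigr\|_{H_{X,0}^1(\Omega)}=\zeta(\eta)h(\eta)\|\tau(\eta)\|_{H_{X,0}^1(\Omega)}\leq \frac{\|\tau(\eta)\|_{H_{X,0}^1(\Omega)}}{1+\|\tau(\eta)\|_{H_{X,0}^1(\Omega)}}<1.$$
Integrating yields $\|\eta(t,u)-u\|_{H_{X,0}^1(\Omega)}<t$, so the trajectory remains in a bounded set on every bounded time interval, ruling out finite-time blow-up. Thus $T_{\max}(u)=+\infty$, and continuous dependence upgrades $\eta$ to an element of $C([0,+\infty)\times H_{X,0}^1(\Omega),H_{X,0}^1(\Omega))$.

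Step (iii) is the crux, and rests on two geometric facts about $\overline{P_a^{\pm}}$. First, $P^{\pm}=\{u\in H_{X,0}^1(\Omega):\pm u\ge 0\}$ is a closed convex cone, so its closed $a$-thickening $\overline{P_a^{\pm}}$ is closed and convex. Second, the hypothesis $(\mathbf{id}-\tau)(\overline{P_a^{\pm}})\subset \overline{P_{a/2}^{\pm}}\subset\overline{P_a^{\pm}}$ combined with convexity yields, for every $u\in\overline{P_a^{\pm}}$ and every $\alpha\in[0,1]$,
$$u-\alpha\tau(u)=(1-\alpha)u+\alpha(u-\tau(u))\in\overline{P_a^{\pm}},$$
as a convex combination of two points of $\overline{P_a^{\pm}}$. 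Given a step size $s\in(0,1]$, one explicit Euler step from $u\in\overline{P_a^{\pm}}$ produces $u-s\zeta(u)h(u)\tau(u)=u-\alpha(u)\tau(u)$ with $\alpha(u):=s\zeta(u)h(u)\in[0,1]$, hence remains in $\overline{P_a^{\pm}}$; by induction all iterates stay in $\overline{P_a^{\pm}}$, and by convexity so do the piecewise-linear interpolants. Taking $s_n\to 0$, these Euler polygonals converge uniformly on each $[0,T]$ to $\eta(\cdot,u)$ by the standard convergence theorem for Euler schemes with a locally Lipschitz vector field, and the closedness of $\overline{P_a^{\pm}}$ passes to the limit, yielding $\eta(t,u)\in\overline{P_a^{\pm}}$ for all $t\ge 0$. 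The main (and essentially only) obstacle is isolating this convexity argument to convert the one-step contraction $(\mathbf{id}-\tau)\overline{P_a^{\pm}}\subset\overline{P_{a/2}^{\pm}}$ into flow invariance; an alternative would be to differentiate $t\mapsto\mathrm{dist}(\eta(t,u),P^{\pm})^{2}$ using non-expansiveness of the projection onto the convex set $P^{\pm}$, but the Euler route avoids the nondifferentiability of the distance function on $\partial P^{\pm}$.
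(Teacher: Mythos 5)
Your proof is correct, and the invariance step (iii) takes a genuinely different route from the paper's. The paper works directly with the flow: it Taylor-expands $\eta(t,u)=u-\zeta(u)h(u)\tau(u)\,t+o(t)$, rewrites $u-\zeta(u)h(u)\tau(u)\,t=(1-\xi)u+\xi(u-\tau(u))$ with $\xi=\zeta(u)h(u)t$, and uses the cone properties of $P^{\pm}$ (subadditivity and positive homogeneity of $\mathrm{dist}(\cdot,P^{\pm})$) to obtain $\mathrm{dist}(\eta(t,u),P^{\pm})\le\bigl(1-\tfrac{1}{2}\xi\bigr)a+o(t)<a$ for small $t$; it then closes with an escape-time argument, restarting the local estimate at $\eta(t(u),u)$ if the first exit time $t(u)$ were finite. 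You instead isolate the geometric fact that $\overline{P_a^{\pm}}$ is closed and convex, note that each Euler step $u\mapsto u-s\zeta(u)h(u)\tau(u)=(1-\alpha)u+\alpha(\mathbf{id}-\tau)(u)$ with $\alpha\in[0,1]$ preserves $\overline{P_a^{\pm}}$, and pass to the limit along Euler polygonals. Both are sound. The paper's route is shorter and self-contained (only a first-order Taylor expansion plus uniqueness of the flow); yours makes the convexity of the thickened cone explicit (the paper uses it only implicitly through the cone structure of $P^{\pm}$) and avoids the escape-time bookkeeping, at the cost of invoking the Euler-scheme convergence theorem --- for which one should remark that the trajectory $\{\eta(t,u):t\in[0,T]\}$ is compact, so local Lipschitz continuity of the right-hand side upgrades to a uniform Lipschitz bound on a tubular neighborhood in which the polygonals remain. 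You also made global existence explicit via the speed bound $\|d\eta/dt\|_{H_{X,0}^1(\Omega)}<1$ (Cauchy criterion, then extension past any finite $T_{\max}$), whereas the paper simply cites Rabinowitz's Theorem A.4 for a bounded locally Lipschitz vector field.
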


\begin{proof}
Observing that $\zeta(\cdot)h(\cdot)\tau(\cdot)$ is bounded and locally Lipschitz continuous on $H_{X,0}^{1}(\Omega)$, the basic existence-uniqueness theorem for ordinary differential equations (see \cite[Theorem A.4]{Rabinowitz1986}) implies that for each $u\in H_{X,0}^{1}(\Omega)$,  \eqref{3-52} admits a global unique solution $\eta(t,u)\in C([0,+\infty)\times H_{X,0}^{1}(\Omega), H_{X,0}^{1}(\Omega))$.

Let us suppose that the initial date $u\in \overline{P_{a}^{\pm}}$. If $\zeta(u)h(u)=0$, we have $\frac{d\eta(t,u)}{dt}|_{t=0}=0$. The uniqueness theorem of ordinary differential equation gives that
 $\eta(t,u)=u\in \overline{P_{a}^{\pm}}$ for all $t\geq 0$.
 In the case of $\zeta(u)h(u)>0$, by using Taylor expansion we
 have
 \[\eta(t,u)=\eta(0,u)+\frac{d\eta(t,u)}{dt}\bigg|_{t=0}t+\alpha_{u}(t) =u-\zeta(u)h(u)\tau(u)t+\alpha_u(t)\]
holds for small $t>0$,  where the reminder term $\alpha_{u}(t)$ satisfies $\lim_{t\to 0^+}\frac{\alpha_u(t)}{t}=0$ in $H_{X,0}^{1}(\Omega)$.
  Then for  small enough $t>0$ such that  $\xi:=\zeta(u)h(u)t\in (0,1)$, it follows that
\begin{equation}\label{3-53}
  \begin{aligned}
  &{\rm dist}(\eta(t,u), P^\pm)\\
  &={\rm dist}((1-\xi)u+\xi (u-\tau(u))+\alpha_u(t),P^\pm)\\
  &\leq{\rm dist}((1-\xi)u,P^\pm)+{\rm dist}(\xi(u-\tau(u)),P^\pm)+\|\alpha_u(t)\|_{H_{X,0}^{1}(\Omega)}
  \\
  &\leq (1-\xi){\rm dist}(u,P^\pm)+\xi{\rm dist}((u-\tau(u)),P^\pm)+\|\alpha_u(t)\|_{H_{X,0}^{1}(\Omega)}\\
  &\leq \left(1-\xi\right)a+\frac{a}{2}\xi+o(t)\leq \left(1-\frac{1}{2}\xi\right)a+o(t)<a,
  \end{aligned}
\end{equation}
which means $\eta(t,u)\in P_{a}^{\pm}$ for small $t>0$. Denote by $t(u):=\sup\{t_{0}|\eta(t,u) \in\overline{P_{a}^{\pm}},~~\forall t\in [0,t_0)\}$. If $t(u)<+\infty$,
the continuity of $\eta(\cdot,u)$ yields that $\eta(t(u),u)\in
\overline{P_{a}^{\pm}}$. Now,
  let $\widetilde{u}:=\eta(t(u),u)$ be the new initial data in \eqref{3-52}. Using the similar arguments above we can get a small  $\widetilde{t}>0$ such that
 \[\eta(t',\widetilde{u})=\eta(t',\eta(t(u),u))=\eta(t'+t(u),u)\in \overline{P_{a}^{\pm}}~~~\mbox{for all}~~0\leq t'<\widetilde{t},\]
 which contradicts the fact  $t(u)=\sup\{t_{0}|\eta(t,u) \in\overline{P_{a}^{\pm}},~~\forall t\in [0,t_0)\}$. Thus $t(u)=+\infty$, and $\eta(t,u)\in\overline{P_{a}^{\pm}}$ for all $t\geq 0$. The proof of Lemma \ref{lemma3-13} is now completed.
\end{proof}

\begin{lemma}\label{lemma3-14}
Let $a>\tilde{a}>0$ be some positive constants, and let $W$ be a subset in $H_{X,0}^{1}(\Omega)$. Suppose there is  a locally Lipschitz continuous map $\tau$  on $H_{X,0}^1(\Omega)$ satisfying
 \begin{equation}\label{3-54}
   (\mathbf{id}-\tau)(P_{a'}^{\pm}\cap W)\subset P_{\frac{a'}{2}}^{\pm}\qquad\forall a'\in(\tilde{a},a].
 \end{equation}
Then, for any Lipschitz continuous function $\zeta$ in $H_{X,0}^{1}(\Omega)$ with $0\leq \zeta\leq 1$ and $\zeta=0$ on $W^{c}$,  the  Cauchy problem \eqref{3-52} admits a global solution $\eta(t,u)\in C([0,+\infty)\times H_{X,0}^{1}(\Omega), H_{X,0}^{1}(\Omega))$. This solution ensures that for any given $u\in P_{a}^{\pm}\cap W$ and any $t_{0}\geq 0$, if $\eta(t_{0},u)\in W$, then
  $\eta(t_{0},u)\in P_{a}^{\pm}$.
\end{lemma}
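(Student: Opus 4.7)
The plan is to adapt the argument of Lemma~\ref{lemma3-13} with two modifications to accommodate the localization of the hypothesis to $W$. Global existence of $\eta(\cdot,u)$ follows exactly as in Lemma~\ref{lemma3-13}, since $\|\tau(\cdot)\|/(1+\|\tau(\cdot)\|)\leq 1$ and the product $\zeta(\cdot)h(\cdot)\tau(\cdot)$ is bounded and locally Lipschitz continuous on $H_{X,0}^{1}(\Omega)$, so the basic existence-uniqueness theorem in Banach spaces yields $\eta\in C([0,+\infty)\times H_{X,0}^{1}(\Omega),H_{X,0}^{1}(\Omega))$. The first non-trivial step is the observation that once the trajectory enters $W^{c}$ it is frozen there: if $\eta(s_{0},u)\in W^{c}$ for some $s_{0}\in[0,t_{0}]$, then $\zeta(\eta(s_{0},u))=0$, and since $h>0$ pointwise the vector field $-\zeta h\tau$ vanishes at $\eta(s_{0},u)$; the constant map $s\mapsto \eta(s_{0},u)$ is then a solution of \eqref{3-52} with initial data $\eta(s_{0},u)$, so by ODE uniqueness and the semigroup property $\eta(s,u)=\eta(s_{0},u)\in W^{c}$ for all $s\geq s_{0}$, contradicting $\eta(t_{0},u)\in W$. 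Hence $\eta(s,u)\in W$ for every $s\in[0,t_{0}]$.

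Next, I would estimate $\phi(s):={\rm dist}(\eta(s,u),P^{\pm})$, which is Lipschitz in $s$ with $\phi(0)<a$. Define $T:=\sup\{t\in[0,t_{0}]:\phi(s)<a\text{ for all }s\in[0,t]\}$, and assume for contradiction that $T\leq t_{0}$ with $\phi(T)=a$ (by continuity this is the only failure mode). For each $s\in[0,T)$ one has $\eta(s,u)\in P_{a}^{\pm}\cap W$, so hypothesis \eqref{3-54} with $a'=a\in(\tilde{a},a]$ yields ${\rm dist}(\eta(s,u)-\tau(\eta(s,u)),P^{\pm})<a/2$. Setting $\xi_{s,r}:=\zeta(\eta(s,u))h(\eta(s,u))r$, the Taylor expansion
\[\eta(s+r,u)=(1-\xi_{s,r})\eta(s,u)+\xi_{s,r}\bigl(\eta(s,u)-\tau(\eta(s,u))\bigr)+o(r)\]
combined with the convexity of ${\rm dist}(\cdot,P^{\pm})$ (both $P^{+}$ and $P^{-}$ are convex cones), exactly as in the estimate \eqref{3-53} of Lemma~\ref{lemma3-13}, gives the Dini-derivative bound
\[D^{+}\phi(s)\leq \zeta(\eta(s,u))h(\eta(s,u))\bigl(a/2-\phi(s)\bigr),\qquad s\in[0,T).\]
Comparison with the corresponding scalar ODE yields $\phi(s)\leq\max(\phi(0),a/2)<a$ on $[0,T)$, whence $\phi(T)\leq\max(\phi(0),a/2)<a$ by continuity, contradicting $\phi(T)=a$. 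Therefore $\phi(s)<a$ on all of $[0,t_{0}]$, and in particular $\eta(t_{0},u)\in P_{a}^{\pm}$.

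The principal obstacle is that \eqref{3-54} is stated only for the \emph{open} sets $P_{a'}^{\pm}$, so at a boundary point $v$ with ${\rm dist}(v,P^{\pm})=a$ one cannot invoke \eqref{3-54} directly; the closure-extension argument of Lemma~\ref{lemma3-13} would require approximating every $v\in\overline{P_{a}^{\pm}}\cap W$ by points of $P_{a}^{\pm}\cap W$, which is delicate because $W$ is only a general subset of $H_{X,0}^{1}(\Omega)$. The differential-inequality approach above circumvents this difficulty by operating strictly inside the open region $\phi(s)<a$ and showing the trajectory cannot reach the boundary $\phi=a$ before time $t_{0}$.
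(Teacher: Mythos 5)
Your proof is correct, but it takes a genuinely different route from the paper's. The paper's argument is brief: given $u\in P_a^{\pm}\cap W$, it picks $a(u)\in(\tilde a,a)$ with $u\in\overline{P_{a(u)}^{\pm}}$, observes that \eqref{3-54} gives $(\mathbf{id}-\tau)(P_{a(u)}^{\pm}\cap W)\subset P_{a(u)/2}^{\pm}$, and then simply invokes ``the same arguments as in the proof of Lemma~\ref{lemma3-13}'' (Taylor expansion plus the continuation/restart argument, together with the freezing observation) to conclude $\eta(t,u)\in\overline{P_{a(u)}^{\pm}}\subset P_a^{\pm}$ for all $t\geq 0$. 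You replace that reference with a self-contained Dini-derivative comparison on $\phi(s)={\rm dist}(\eta(s,u),P^{\pm})$, showing $D^{+}\phi\leq\zeta h\,(a/2-\phi)$ on the maximal interval where $\phi<a$ and the trajectory remains in $W$, and hence $\phi\leq\max(\phi(0),a/2)<a$ there, which rules out the trajectory ever reaching the sphere $\phi=a$ before $t_0$. This is cleaner and more explicit than the paper's ``same arguments'' shortcut, and it also happens to need \eqref{3-54} only at the endpoint $a'=a$. One small caveat about your closing remark: the difficulty you call ``the principal obstacle'' is not actually present in the paper's proof. One does not need to approximate a boundary point $v$ with ${\rm dist}(v,P^{\pm})=a(u)$ by nearby points of $P_a^{\pm}\cap W$; since \eqref{3-54} holds for a whole range of radii $a'\in(\tilde a,a]$, one applies it with the \emph{same} $v$ and $a'\downarrow a(u)$ to get ${\rm dist}(v-\tau(v),P^{\pm})\leq a(u)/2$, giving the closed-set invariance needed to run Lemma~\ref{lemma3-13}'s argument at level $a(u)$. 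So the paper's shrink-to-$a(u)$ approach is sound; your differential-inequality approach is an alternative rather than a repair.
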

\begin{proof}
From the proof of Lemma \ref{lemma3-13}, we can also deduce that \eqref{3-52} admits a global solution $\eta(t,u)\in C([0,+\infty)\times H_{X,0}^{1}(\Omega), H_{X,0}^{1}(\Omega))$. Observing that $\zeta=0$ on $W^{c}$, if $\eta(t_{0},u)\in W^{c}$ for some $t_{0}\geq 0$ and $u\in H_{X,0}^{1}(\Omega)$,
 the uniqueness theorem of ordinary differential equations implies
  \[ \eta(s,u)=\eta(t_{0},u)\in W^{c} \qquad \forall s>t_{0}.\]
Hence, if $u\in W$ and $\eta(t_{0},u)\in W$ for some $t_{0}\geq0$, then  $\eta(t,u)\in W$
 for all $t\in [0,t_{0}]$.

For any given $u\in P_{a}^{\pm}\cap W$, we may further restrict that
$u\in \overline{P_{a(u)}^{\pm}}\cap W$ for some $a(u)\in (\tilde{a},a)$. Note that \eqref{3-54} gives
$(\mathbf{id}-\tau)(P_{a(u)}^{\pm}\cap W)\subset P_{\frac{a(u)}{2}}^{\pm}$.
Using the same arguments in the proof of Lemma \ref{lemma3-13}, it follows that $\eta(t,u)\in\overline{P_{a(u)}^{\pm}}\subset P_a^{\pm}$ for all $t\geq 0$. \end{proof}

\begin{remark}
\label{remark3-2}
For any $M>M_2$ and any $a'\in (\tilde{a},\overline{a})$,
 if $W=\{u\in H_{X,0}^1(\Omega):J(u)>M\}$,  by Lemma \ref{lemma3-11} we have $K_J(P_{a'}^{\pm}\cap W)\subset P_{\frac{a'}{4}}^{\pm}$. In addition, if we choose $0<\varepsilon<\frac{\tilde{a}}{4}$ in Lemma \ref{lemma3-12},  there exists a  locally Lipschitz continuous map $\tau: H_{X,0}^{1}(\Omega)\to H_{X,0}^{1}(\Omega)$  such that
\begin{equation}\label{3-56}
 (\mathbf{id}-\tau)(P_{a'}^{\pm}\cap W)=(K_{J}+J'-\tau)(P_{a'}^{\pm}\cap W)\subset P_{\frac{a'}{2}}^{\pm}~~~~\forall a'\in (\tilde{a},\overline{a}).
\end{equation}

In particular, if $g(x,u)\equiv 0$ and $J'$ is locally Lipschitz continuous, by Remark \ref{remark3-1}
 we have $K_J(\overline{P_{a'}^{\pm}})\subset \overline{P_{\frac{a'}{4}}^{\pm}}$ for all $a'\in (0, \overline{a})$. Taking $\tau=J'$ in Lemma \ref{lemma3-13}, we obtain that $\eta(t, \overline{P_{a}^{\pm}})\subset \overline{P_{a}^{\pm}}$ holds for all $a\in (0, \overline{a})$ and $t\geq 0$.
\end{remark}

\subsection{The min-max values for perturbation functional $J$}
\label{subsection3-3}
In this subsection, we introduce two min-max values of the perturbation functional $J$. For this purpose, we initially define two increasing sequences $\{R_{k}\}_{k=3}^{\infty}$ and $\{a_{k}\}_{k=3}^{\infty}$ as follows.

Beginning from $k=3$, Lemma \ref{lemma3-6} enables us to select a positive constant $R_3>R_3^0$, ensuring that $J(u)<-1$ for any $u\in N_3\cap B_{R_3}^c$, where $R_3^0$ is determined by Lemma \ref{lemma3-10}. Then, for $k=4$, we may similarly choose $R_{4}>\max\{R_{4}^{0},R_{3}\}$ so that $J(u)<-1$ for $u\in N_{4}\cap B_{R_{4}}^c$. By iterating this procedure, we derive a strictly increasing sequence $\{R_{k}\}_{k=3}^{\infty}$ satisfying $R_{k}>R_{k}^{0}$ and $J(u)<-1$ for any $u\in N_{k}\cap B_{R_{k}}^{c}$.

Let $\overline{a}>0$ denote the positive constant specified in Lemma \ref{lemma3-11}. For any $0<a_{3}<\frac{\overline{a}}{2}$, we define a sequence $\{a_{k}\}_{k=3}^{\infty}$ by setting
\begin{equation}\label{3-57}
a_{k+1}=a_k+\frac{\overline{a}}{10^k}~~~~\mbox{for}~~k\geq 3.
\end{equation}
Clearly, $0<a_k<\overline{a}$ for any $k\geq 3$.

By above sequences $\{R_{k}\}_{k=3}^{\infty}$ and $\{a_{k}\}_{k=3}^{\infty}$,
 for each $k\geq 3$ we may set
\[\Gamma_k:=\{ \phi \in C(N_k, H_{X,0}^{1}(\Omega)):\phi~\text{is odd},~\phi|_{N_k\cap B_{R_k}^c}=\mathbf{id } \},\]
and
\begin{equation}\label{3-58}
  b_k(J):=\inf\limits_{\phi\in \Gamma_k}\sup_{\phi(N_k\cap B_{R_k})\cap S_{a_k}}J(u).
\end{equation}
Note that $\mathbf{id } \in \Gamma_k$ and Lemma \ref{lemma3-10} gives
that $\phi(N_k\cap B_{R_k})\cap S_{a}\neq \varnothing$ for any $\phi\in \Gamma_k$  and any $0<a<\overline{a}$. Thus, the min-max value $b_k(J)$ is well-defined.

Moreover, we define
\[\Lambda_k:=\Big\{\phi \in C(N^{+}_{k+1}, H_{X,0}^{1}(\Omega)): \phi ~\text{is odd in}~ N_k\cap B_{R_{k}},~\phi|_{U}=\mathbf{id},~\sup_{\phi(N_k\cap B_{R_k})\cap S_{a_k}}J(u)\leq b_k(J)+1\Big\},\]
where
\begin{equation}\label{3-59}
U:=((B_{R_{k+1}}\setminus B_{R_{k}})\cap N_k)\cup (N^{+}_{k+1}\cap B_{R_{k+1}}^c ),
\end{equation}
and $N^{+}_{k+1}:=
\{u+t\varphi_{k+1}:u \in N_k,~t \geq 0\}$. The
Dugundji extension theorem in \cite{Dugundji1951} yields that $\Lambda_{k}\neq \varnothing$. In addition, $N_{k}\cap B_{R_{k}}^{c}\subset U$ implies $\Lambda_k\subset \Gamma_k$. Since $a_{k+1}<\bar{a}<a^0$, we can obtain
$\phi(N^{+}_{k+1}\cap B_{R_{k+1}})\cap S_{a_{k+1}}\neq \varnothing$ due to
\[\phi(N_{k}\cap B_{R_{k}})\cap S_{a_{k+1}}\subset \phi(N^{+}_{k+1}\cap B_{R_{k+1}})\cap S_{a_{k+1}}\] and Lemma \ref{lemma3-10}. Therefore, the  min-max value
\[c_k(J):=\inf_{{\phi\in \Lambda_k}}\sup_{\phi(N^{+}_{k+1}\cap B_{R_{k+1}})\cap S_{a_{k+1}}}J(u)\]
is well-defined.

The min-max values $b_{k}(J)$ and $c_{k}(J)$ satisfy the following lower bound depending on the Dirichlet eigenvalue of $-\triangle_{X}$.
\begin{lemma}\label{lemma3-15}
For all $k\geq 3$, we have
\begin{equation}\label{3-60}
  b_k(J)\geq C \lambda_{k-1}^{\frac{2(2_{\tilde{\nu}}^{*}-p)}{(p-2)(2_{\tilde{\nu}}^{*}-2)}}
\end{equation}
and
\begin{equation}\label{3-61}
  c_k(J)\geq C \lambda_{k-1}^{\frac{2(2_{\tilde{\nu}}^{*}-p)}{(p-2)(2_{\tilde{\nu}}^{*}-2)}}.
\end{equation}
Thus $b_k(J),c_{k}(J)\to+\infty$ as $k\to \infty$.
\end{lemma}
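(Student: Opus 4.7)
The plan is to combine the intersection lemma (Lemma \ref{lemma3-10}) with the lower bound on $J|_{Q_{k-1}}$ established in Lemma \ref{lemma3-7}. The essential idea is that every admissible map $\phi$ forces the set over which we take the supremum to meet $Q_{k-1}$, where the functional $J$ is already bounded below by a constant multiple of $\lambda_{k-1}^{\frac{2(2_{\tilde{\nu}}^{*}-p)}{(p-2)(2_{\tilde{\nu}}^{*}-2)}}$. So both bounds \eqref{3-60} and \eqref{3-61} follow by a direct intersection argument, with the final divergence $b_k(J),c_k(J)\to+\infty$ coming from $p<2_{\tilde{\nu}}^{*}$ (making the exponent positive) together with $\lambda_{k-1}\to+\infty$ (Proposition \ref{prop2-5}).

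For \eqref{3-60}, I would fix any $\phi\in\Gamma_k$. Since $R_k>R_k^0$ and $a_k<\overline{a}<a^0$, Lemma \ref{lemma3-10} applies and yields
\[\phi(N_k\cap B_{R_k})\cap S_{a_k}\cap Q_{k-1}\neq\varnothing.\]
By Lemma \ref{lemma3-7}, every element of $Q_{k-1}$ satisfies $J(u)\geq C\lambda_{k-1}^{\frac{2(2_{\tilde{\nu}}^{*}-p)}{(p-2)(2_{\tilde{\nu}}^{*}-2)}}$, so picking any point in the above intersection gives $\sup_{\phi(N_k\cap B_{R_k})\cap S_{a_k}}J\geq C\lambda_{k-1}^{\frac{2(2_{\tilde{\nu}}^{*}-p)}{(p-2)(2_{\tilde{\nu}}^{*}-2)}}$. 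Taking the infimum over $\phi\in\Gamma_k$ proves \eqref{3-60}.

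For \eqref{3-61}, the key observation is that the restriction $\psi:=\phi|_{N_k}$ of any $\phi\in\Lambda_k$ belongs to $\Gamma_k$. Indeed, by the definition of $\Lambda_k$, $\psi$ is odd on $N_k\cap B_{R_k}$, while the set $U$ in \eqref{3-59} contains $N_k\cap B_{R_k}^c$ (any such point lies either in $(B_{R_{k+1}}\setminus B_{R_k})\cap N_k$ or in $N_k\cap B_{R_{k+1}}^c\subset N_{k+1}^+\cap B_{R_{k+1}}^c$), so $\psi$ is the identity there; hence $\psi$ is odd on all of $N_k$. Applying Lemma \ref{lemma3-10} to $\psi$ with $R=R_k$ and $a=a_{k+1}<\overline{a}<a^0$ gives
\[\psi(N_k\cap B_{R_k})\cap S_{a_{k+1}}\cap Q_{k-1}\neq\varnothing,\]
and since $N_k\cap B_{R_k}\subset N_{k+1}^+\cap B_{R_{k+1}}$, this intersection is contained in $\phi(N_{k+1}^+\cap B_{R_{k+1}})\cap S_{a_{k+1}}$. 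Lemma \ref{lemma3-7} again yields the lower bound, and taking the infimum over $\phi\in\Lambda_k$ establishes \eqref{3-61}.

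The divergence statement then follows immediately: the exponent $\frac{2(2_{\tilde{\nu}}^{*}-p)}{(p-2)(2_{\tilde{\nu}}^{*}-2)}$ is strictly positive under assumption $(f2)$, and $\lambda_{k-1}\to+\infty$ as $k\to\infty$ by Proposition \ref{prop2-5}. I do not anticipate a serious obstacle here; the only step requiring minor care is the verification that restriction to $N_k$ sends $\Lambda_k$ into $\Gamma_k$, which hinges on the precise choice of $U$ in \eqref{3-59} and on the compatibility between the parameters $R_k<R_{k+1}$ and $a_k<a_{k+1}<\overline{a}<a^0$ fixed in the construction preceding \eqref{3-58}.
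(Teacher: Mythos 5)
Your proposal is correct and follows essentially the same argument as the paper: for both $b_k(J)$ and $c_k(J)$ you intersect the admissible image with $Q_{k-1}$ via Lemma \ref{lemma3-10} and then invoke the pointwise lower bound of Lemma \ref{lemma3-7}, with the $c_k$ case reduced to the $b_k$ case through the observation that restrictions of maps in $\Lambda_k$ lie in $\Gamma_k$ and $N_k\cap B_{R_k}\subset N_{k+1}^+\cap B_{R_{k+1}}$. The paper states the inclusion $\Lambda_k\subset\Gamma_k$ without elaboration; you simply spell out the verification, but the substance is identical.
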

\begin{proof}
Since $a_k<a_{k+1}<\overline{a}<a^0$, by Lemma \ref{lemma3-7} and Lemma \ref{lemma3-10} we have
\begin{align*}
  \sup_{\phi(N_k\cap B_{R_k})\cap S_{a_k}}J(u)\geq\sup_{\phi(N_{k}\cap B_{R_{k}})\cap S_{a_{k}}\cap Q_{k-1}}J(u)\geq\inf_{\phi(N_{k}\cap B_{R_{k}})\cap S_{a_{k}}\cap Q_{k-1}}J(u)\geq C \lambda_{k-1}^{\frac{2(2_{\tilde{\nu}}^{*}-p)}{(p-2)(2_{\tilde{\nu}}^{*}-2)}}
\end{align*}
and
\begin{align*}
  \sup_{\phi(N^{+}_{k+1}\cap B_{R_{k+1}})\cap S_{a_{k+1}}}J(u)\geq\sup_{\phi(N_{k}\cap B_{R_{k}})\cap S_{a_{k+1}}\cap Q_{k-1}}J(u)\geq C \lambda_{k-1}^{\frac{2(2_{\tilde{\nu}}^{*}-p)}{(p-2)(2_{\tilde{\nu}}^{*}-2)}},
\end{align*}
which yields \eqref{3-60} and \eqref{3-61}.
\end{proof}

Lemma \ref{lemma3-15} allows us to select an integer $k_0\geq 3$ such that
\[  b_{k}(J)\geq M_{2}+1~~~\mbox{and}~~~ c_{k}(J)\geq M_{2}+1~~\mbox{for all}~~k\geq k_0, \]
where $ M_{2}$ is the positive constant appeared in Lemma \ref{lemma3-11}.
Then we have
\begin{proposition}\label{prop3-1}
If $c_{k}(J)>b_{k}(J)+1$ for $k\geq k_0$, then
   $c_k(J)$ is a sign-changing critical value of $J$, i.e.,
    there exists a sign-changing critical point of $J$ with critical value $c_k(J)$.
\end{proposition}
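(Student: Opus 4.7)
The plan is to proceed by contradiction: assume $c_k(J)$ is not a sign-changing critical value. Since $c_k(J)\geq M_2+1>M_1$, the Palais-Smale condition (Lemma \ref{lemma3-5}) yields constants $\varepsilon_1,\delta_0>0$ such that $\|J'(u)\|_{H_{X,0}^{1}(\Omega)}\geq\delta_0$ for every $u\in S_{a_{k+1}}$ with $|J(u)-c_k(J)|\leq 2\varepsilon_1$. Using the gap hypothesis $c_k(J)>b_k(J)+1$, I fix $\varepsilon\in(0,\varepsilon_1)$ small enough that $b_k(J)+1<c_k(J)-2\varepsilon$.

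Next I would construct a descent flow respecting the sign-changing structure. Pick $\tilde{a}\in(0,a_k)$ sufficiently close to $a_k$, and apply Lemma \ref{lemma3-12} with tolerance $\varepsilon_\tau<\min\{\tilde{a}/4,\delta_0/2\}$ to produce a locally Lipschitz $\tau$ with $\|\tau-J'\|_{H_{X,0}^{1}(\Omega)}\leq\varepsilon_\tau$. Remark \ref{remark3-2}, applied with $W:=\{u\in H_{X,0}^{1}(\Omega):J(u)>b_k(J)+1\}$, gives
\[
(\mathbf{id}-\tau)(P_{a'}^{\pm}\cap W)\subset P_{a'/2}^{\pm}\qquad\forall a'\in(\tilde{a},\overline{a}),
\]
which covers both $a'=a_k$ and $a'=a_{k+1}$. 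Let $\zeta_1:\mathbb{R}\to[0,1]$ be a Lipschitz function equal to $1$ on $[c_k(J)-\varepsilon,c_k(J)+\varepsilon]$ and vanishing outside $[c_k(J)-2\varepsilon,c_k(J)+2\varepsilon]$, and let $\zeta_2:H_{X,0}^{1}(\Omega)\to[0,1]$ be Lipschitz, equal to $1$ on $S_{a_{k+1}}$ and vanishing on $\overline{P_{a_k}}$. Setting $\zeta(u):=\zeta_1(J(u))\zeta_2(u)$ gives a Lipschitz cut-off supported in $W$ (since $b_k(J)+1<c_k(J)-2\varepsilon$), and Lemma \ref{lemma3-14} yields a global flow $\eta\in C([0,+\infty)\times H_{X,0}^{1}(\Omega),H_{X,0}^{1}(\Omega))$ leaving both $P_{a_k}^{\pm}$ and $P_{a_{k+1}}^{\pm}$ invariant.

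Now pick $\phi\in\Lambda_k$ with $\sup_{\phi(N_{k+1}^+\cap B_{R_{k+1}})\cap S_{a_{k+1}}}J\leq c_k(J)+\varepsilon$, and set $\widetilde\phi(u):=\eta(T,\phi(u))$ for $T$ to be chosen large. I claim $\widetilde\phi\in\Lambda_k$ and that the outer sup strictly drops. For the oddness on $N_k\cap B_{R_k}$: if $\phi(u)\in S_{a_k}$, the symmetry of $\phi(N_k\cap B_{R_k})\cap S_{a_k}$ (from the oddness of $\phi$) together with the $\Lambda_k$ sup bound forces $\max\{J(\phi(u)),J(-\phi(u))\}\leq b_k(J)+1<c_k(J)-2\varepsilon$, hence $\zeta_1$ kills $\zeta$ at $\pm\phi(u)$; if $\phi(u)\in P_{a_k}^{\pm}$, then $-\phi(u)\in P_{a_k}^{\mp}\subset\overline{P_{a_k}}$, so $\zeta_2$ kills $\zeta$ at $\pm\phi(u)$. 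In either case the flow is stationary at $\pm\phi(u)$, so $\widetilde\phi=\phi$ on $N_k\cap B_{R_k}$, preserving oddness and the sup bound. The identity condition $\widetilde\phi|_U=\mathbf{id}$ follows from the choice of $R_k,R_{k+1}$ preceding Lemma \ref{lemma3-15}, which yields $J<-1<c_k(J)-2\varepsilon$ on $U$. Finally, invariance of $P_{a_{k+1}}^{\pm}$ forces $\widetilde\phi(u)\in P_{a_{k+1}}^{\pm}$ whenever $\phi(u)\in P_{a_{k+1}}^{\pm}$, so only orbits with $\phi(u)\in S_{a_{k+1}}$ can end in $S_{a_{k+1}}$; along the portion of such an orbit lying in $S_{a_{k+1}}\cap\{J\geq c_k(J)-\varepsilon\}$ we have $\zeta\equiv 1$ and
\[
\tfrac{d}{dt}J(\eta)=-\zeta(\eta)h(\eta)(J'(\eta),\tau(\eta))_{H_{X,0}^{1}(\Omega)}\leq -h(\eta)\bigl(\|J'(\eta)\|_{H_{X,0}^{1}(\Omega)}^{2}-\varepsilon_\tau\|J'(\eta)\|_{H_{X,0}^{1}(\Omega)}\bigr)\leq -c\,\delta_0^{2}
\]
for some constant $c>0$, so $J(\widetilde\phi(u))\leq c_k(J)-\varepsilon$ once $T$ is large enough. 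Hence $\sup_{\widetilde\phi(N_{k+1}^+\cap B_{R_{k+1}})\cap S_{a_{k+1}}}J\leq c_k(J)-\varepsilon$, contradicting the definition of $c_k(J)$.

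The principal obstacle is reconciling the required oddness of $\widetilde\phi$ on $N_k\cap B_{R_k}$ with the lack of evenness of $J$: the pseudo-gradient flow is not odd, so oddness survives only where $\zeta$ vanishes. The gap $c_k(J)>b_k(J)+1$ is precisely what permits a cut-off $\zeta$ vanishing on the entire symmetric set $\phi(N_k\cap B_{R_k})\cup(-\phi(N_k\cap B_{R_k}))$, freezing the flow there. The second delicate point, namely keeping orbits that begin in $P_{a_{k+1}}^{\pm}$ confined there, is handled by Lemma \ref{lemma3-14} combined with Remark \ref{remark3-2}.
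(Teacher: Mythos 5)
Your proof is correct and follows essentially the same route as the paper's: contradict the absence of a critical point in $S_{a_{k+1}}$ at level $c_k(J)$ via the Palais--Smale condition, build a descent flow from a locally Lipschitz approximation $\tau$ of $J'$ with a cutoff $\zeta$ that freezes the flow at low levels (so the gap $c_k(J)>b_k(J)+1$ forces $\eta(t,\phi(\cdot))$ to agree with $\phi$ on the symmetric set $\pm\phi(N_k\cap B_{R_k})$ and on $U$), invoke the $P_{a_{k+1}}^{\pm}$-invariance from Lemma~\ref{lemma3-14} and Remark~\ref{remark3-2} to confine the relevant orbits to $S_{a_{k+1}}$, and conclude by showing the outer supremum drops below $c_k(J)-\varepsilon$. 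One small variation deserves a remark: the paper's cutoff $\zeta=\mathrm{dist}(\cdot,A)/(\mathrm{dist}(\cdot,A)+\mathrm{dist}(\cdot,B))$ vanishes on $A\supset\{\|J'\|\le\varepsilon/2\}$, which gives $\frac{d}{dt}J(\eta)\le 0$ globally (used freely in Steps~3--4), whereas your $\zeta=\zeta_1(J)\zeta_2$ does not kill the flow where $\|J'\|$ is small; the proof nevertheless closes because the only orbits you track start and stay in $S_{a_{k+1}}$ on the level strip $\{|J-c_k(J)|\le 2\varepsilon\}$, where $\|J'\|\geq\delta_0>\varepsilon_\tau$ holds, and on $\pm\phi(N_k\cap B_{R_k})\cup U$ you argue freezing directly from $\zeta=0$ at those points rather than via monotonicity. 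Your final decay bound should be spelled out slightly more: using $\|\tau\|\le\|J'\|+\varepsilon_\tau$ one gets
\[
\frac{d}{dt}J(\eta)\le -\,\frac{\|J'\|\left(\|J'\|-\varepsilon_\tau\right)}{1+\|J'\|+\varepsilon_\tau}\le -\,\frac{\delta_0\left(\delta_0-\varepsilon_\tau\right)}{1+\delta_0+\varepsilon_\tau},
\]
which is the constant $c\delta_0^2$ you assert; the paper instead lower-bounds $\|\tau\|\ge 3\varepsilon/8$ on $A^c$ and uses the monotonicity of $s\mapsto s/(1+s)$. Both estimates deliver the same conclusion.
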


\begin{proof}
Let us fix the integer $k\geq k_{0}$ such that $c_{k}(J)>b_{k}(J)+1>M_{2}+2$. If for any $0<\frac{1}{l}<\min\{\frac{c_k(J)-b_k(J)-1}{2}, a_3,\frac{1}{10}\}$ with $l\in \mathbb{N}^{+}$, there exists $u_{l}\in \{u\in S_{a_{k+1}}:|J(u)-c_k(J)| \leq \frac{2}{l}\}$ such that
\[ \|J'(u_{l})\|_{H_{X,0}^{1}(\Omega)} < \frac{1}{l}, \]
then $\{u_{l}\}_{l=1}^{\infty}$
is a Palais-Smale sequence of $J$. From Lemma \ref{lemma3-5} we see that $J$ satisfies Palais-Smale condition on
$\widehat{A}_{M_{1}}=\{u\in H_{X,0}^{1}(\Omega):J(u)\geq M_{1}\}$. Therefore, $\{u_{l}\}_{l=1}^{\infty}$
converges to a sign-changing critical point $u_{0} \in\{u\in S_{a_{k+1}}:J(u)=c_k(J)\}$ along a subsequence $\{u_{l_{j}}\}_{j=1}^{\infty}\subset \{u_{l}\}_{l=1}^{\infty}$, which yields Proposition \ref{prop3-1}. We shall establish the existence of such a Palais-Smale sequence $\{u_{l}\}_{l=1}^{\infty}$ by contradiction.

 Suppose there exists $0<\varepsilon<\min\{\frac{c_k(J)-b_k(J)-1}{2}, a_3,\frac{1}{10}\}$ such that $ \|J'(u)\|_{H_{X,0}^{1}(\Omega)} \geq \varepsilon$ holds for
 any
$u\in Y$, where
\begin{equation}\label{3-62}
  Y:=\{u\in S_{a_{k+1}}:|J(u)-c_k(J)| \leq 2\varepsilon\}.
\end{equation}
We will derive a contradiction through several steps as follows.\par
\noindent\textbf{Step 1}. Let
\begin{equation}\label{3-63}
A:=\left\{u\in H_{X,0}^{1}(\Omega): \|J'(u)\|_{H_{X,0}^1(\Omega)}\leq \frac{\varepsilon}{2}\right\}\cup \left\{u\in H_{X,0}^{1}(\Omega):|J(u)-c_k(J)| \geq 2\varepsilon\right\}\cup P_{a_k},
\end{equation}
and
\begin{equation}\label{3-64}
B:=\{u\in S_{a_{k+1}}: |J(u)-c_k(J)|\leq\varepsilon\}.
\end{equation}
Clearly, $B\subset Y$ and  $\|J'(u)\|_{H_{X,0}^1(\Omega)}\geq \varepsilon$ for all $u\in B$. This implies that $\overline{A}\cap \overline{B}=\varnothing$. By Lemma \ref{lemma3-12}, we find a  locally Lipschitz continuous map $\tau: H_{X,0}^{1}(\Omega)\to H_{X,0}^{1}(\Omega)$ such that
\begin{equation}\label{3-65}
  \|J'(u)-\tau(u)\|_{H_{X,0}^{1}(\Omega)}
<\frac{\varepsilon}{8}<\frac{a_{3}}{8}\qquad \forall u\in H_{X,0}^{1}(\Omega).
\end{equation}
Let $W=\{u\in H_{X,0}^1(\Omega):J(u)>M_{2}+1\}$ and $\zeta(u)=\frac{\text{dist} (u,A)}{\text{dist} (u,A)+\text{dist} (u,B)}$. It follows that $0\leq \zeta\leq 1$ is
a Lipschitz continuous function in $H_{X,0}^{1}(\Omega)$ satisfying
\[ \zeta(u)=\left\{
              \begin{array}{ll}
                1, & \hbox{$u\in B$;} \\
                0, & \hbox{$u\in A$.}
              \end{array}
            \right.\]
In addition, $W^{c}\subset A$ and $\zeta(u)=0$ for all $u\in W^{c}$.

For any $a'\in (\frac{a_{3}}{2}, \overline{a})$, by Remark \ref{remark3-2} and \eqref{3-65} we have
\[ (\mathbf{id}-\tau)(P_{a'}^{\pm}\cap W)=(K_{J}+J'-\tau)(P_{a'}^{\pm}\cap W)\subset P_{\frac{a'}{2}}^{\pm}.\]
In particular,
\begin{equation}\label{3-66}
  (\mathbf{id}-\tau)(P_{a'}^{\pm}\cap W)=(K_{J}+J'-\tau)(P_{a'}^{\pm}\cap W)\subset P_{\frac{a'}{2}}^{\pm}\quad \forall a'\in \left(\frac{a_{3}}{2}, a_{k+1}\right].
\end{equation}
Now, applying Lemma \ref{lemma3-14} to $a_{k+1}>\frac{a_{3}}{2}>0$, $W$, $\tau$ and $\zeta$,
 we see that \eqref{3-52}  admits a global  solution $\eta(t,u)\in C([0,+\infty)\times H_{X,0}^{1}(\Omega), H_{X,0}^{1}(\Omega))$. Furthermore,  for any  $u\in P_{a_{k+1}}^{\pm}\cap W$ and any $t_{0}\geq 0$, if $\eta(t_{0},u)\in W$, then $\eta(t_{0},u)\in P_{a_{k+1}}^{\pm}$.

\noindent
\textbf{Step 2}. Analyze the derivative of $J(\eta(t,u))$ at $t=0$. For any $u\in H_{X,0}^{1}(\Omega)$, by \eqref{3-52} we obtain
\begin{equation}\label{3-67}
  \begin{aligned}
    \frac{dJ(\eta(t,u))}{dt}\Big|_{t=0}& =\left(J'(u),\frac{d\eta(t,u)}{dt}\Big|_{t=0}\right)_{H_{X,0}^{1}(\Omega)}\\
  & =-C_{u}\left(J'(u),\tau(u)\right)_{H_{X,0}^{1}(\Omega)},\\
\end{aligned}
\end{equation}
where $0\leq C_{u}=\zeta(u)h(u)\leq 1$. If $u\in A$, then $\zeta(u)=0$ gives $C_{u}=0$. For
$u\notin A$, \eqref{3-63} and \eqref{3-65} derive that $\|J'(u)\|_{H_{X,0}^{1}(\Omega)}> \frac{\varepsilon}{2}$, and
\begin{equation}\label{3-68}
\|\tau(u)\|_{H_{X,0}^{1}(\Omega)}\geq \|J'(u)\|_{H_{X,0}^{1}(\Omega)}-\|J'(u)-\tau(u)\|_{H_{X,0}^{1}(\Omega)}\geq \frac{3\varepsilon}{8}.
\end{equation}
This means
\begin{equation}\label{3-69}
\begin{aligned}
(J'(u),\tau(u))_{H_{X,0}^{1}(\Omega)}&=\|\tau(u)\|_{H_{X,0}^{1}(\Omega)}^2+(J'(u)-\tau(u),\tau(u))_{H_{X,0}^{1}(\Omega)}\\
&\geq \|\tau(u)\|_{H_{X,0}^{1}(\Omega)}(\|\tau(u)\|_{H_{X,0}^{1}(\Omega)}-\|J'(u)-\tau(u)\|_{H_{X,0}^{1}(\Omega)})\\
&\geq
\|\tau(u)\|_{H_{X,0}^{1}(\Omega)}\left(\frac{3\varepsilon}{8}-\frac{\varepsilon}{8}\right)= \frac{\varepsilon}{4}\|\tau(u)\|_{H_{X,0}^{1}(\Omega)}.
\end{aligned}
\end{equation}
Combining \eqref{3-67}-\eqref{3-69} we have
\begin{equation*}
  \frac{dJ(\eta(t,u))}{dt}\Big|_{t=0}\leq 0 \qquad \mbox{for all}~~ u\in H_{X,0}^{1}(\Omega).
\end{equation*}
Furthermore, for any $t_0\geq 0$, we can deduce from \eqref{3-52} that
\begin{equation}\label{3-70}
 \frac{dJ(\eta(t,u))}{dt}\Big|_{t=t_{0}}
 =\frac{dJ(\eta(t,\eta(t_{0},u)))}{dt}\Big|_{t=0}\leq 0\qquad \mbox{for all}~~ u\in H_{X,0}^{1}(\Omega).
\end{equation}
\noindent
\textbf{Step 3}.
By the definition of $c_k(J)$, we can choose a map $\phi_0\in \Lambda_k$ such that
\begin{equation}\label{3-71}
  \sup_{\phi_0(N^{+}_{k+1}\cap B_{R_{k+1}})\cap S_{a_{k+1}}}J(u)\leq c_k(J)+\varepsilon.
\end{equation}
We then prove that
  \begin{equation}\label{3-72}
  \sup_{\eta(T,\phi_0(N^{+}_{k+1}\cap B_{R_{k+1}}))\cap S_{a_{k+1}}}J(u)\leq c_k(J)-\varepsilon\qquad\mbox{for}~~T=\frac{64}{\varepsilon}.
\end{equation}

Suppose that $J(u)> c_k(J)-\varepsilon$ for some $u\in \eta(T,\phi_0(N^{+}_{k+1}\cap B_{R_{k+1}}))\cap S_{a_{k+1}}$, we have
$u=\eta(T,v)\in S_{a_{k+1}}$ for some $v\in \phi_{0}(N^{+}_{k+1}\cap B_{R_{k+1}})$. From \eqref{3-70}  we deduce that for all $t\in[0, T]$,
\begin{equation}\label{3-73}
  J(\eta(t,v))\geq J(u)=J(\eta(T,v))> c_k(J)-\varepsilon>M_{2}+1,
\end{equation}
i.e. $\eta(t,v)\in W=\{u\in H_{X,0}^1(\Omega):J(u)>M_{2}+1\}$ for all $t\in[0, T]$. Thus, by Step 1 we have
$\eta(t,v)\in S_{a_{k+1}}$ holds for all $t\in[0, T]$. Indeed, if $\eta(t_{0},v)\in P_{a_{k+1}}^{\pm}$ for some $t_{0} \in[0, T]$, it follows that $u=\eta(T,v)=\eta(T-t_{0},\eta(t_{0},v))\in P_{a_{k+1}}^{\pm}$, which contradicts $u=\eta(T,v)\in S_{a_{k+1}}$. In particular, $v\in S_{a_{k+1}}$. Owing to \eqref{3-71} and \eqref{3-73}, we have for $0\leq t\leq T$,
\begin{equation}\label{3-74}
c_{k}(J)-\varepsilon\leq J(\eta(t,v))\leq J(v)\leq  \sup_{\phi_0(N^{+}_{k+1}\cap B_{R_{k+1}})\cap S_{a_{k+1}}}J(u)\leq c_{k}(J)+\varepsilon,
\end{equation}
which implies that
 $\eta(t,v)\in B$ holds for all $0\leq t\leq  T$.

From \eqref{3-67}-\eqref{3-69}, we have
\begin{equation}\label{3-75}
\begin{aligned}
  J(u)-J(v)&=J(\eta(T,v))-J(\eta(0,v))\\
&=\int_{0}^{T}\frac{dJ(\eta(t,v))}{dt} dt\\
&\leq -\int_{0}^{T}\frac{\varepsilon C_{\eta(t,v)}}{4} \|\tau(\eta(t,v)) \|_{H_{X,0}^{1}(\Omega)} dt\\
  &=-\int_{0}^{T} \frac{\|\tau(\eta(t,v))\|_{H_{X,0}^{1}(\Omega)}}{1+\|\tau(\eta(t,v))\|_{H_{X,0}^{1}(\Omega)}}
   \frac{\varepsilon}{4}dt\\
   &\leq -T \left(\frac{\varepsilon}{4}\right)^2
   \leq -4\varepsilon
  .
  \end{aligned}
\end{equation}
The last inequality is due to  $\|\tau(u)\|_{H_{X,0}^{1}(\Omega)}\geq \frac{3\varepsilon}{8}$ for all $u\in B\subset A^{c}$. By \eqref{3-74} and \eqref{3-75}, we derive that $J(u)\leq J(v)-4\varepsilon\leq c_{k}(J)-3\varepsilon$, which contradicts  $J(u)\geq c_k(J)-\varepsilon$ in \eqref{3-73}. Thus, we achieved \eqref{3-72}.

\noindent
\textbf{Step 4}.  We finally prove that $\eta(t,\phi_0(\cdot))\in \Lambda_k$ for any $t\geq 0$, which contradicts  \eqref{3-72} by the definition of $c_k(J)$, and Proposition \ref{prop3-1} follows immediately.

Since $\phi_{0}\in \Lambda_k$, we have $\eta(t,\phi_0(\cdot))\in C(N^{+}_{k+1},H_{X,0}^{1}(\Omega))$ for any $t\geq 0$. Suppose $u\in U$, the construction of $R_{k}$ implies that $J(u)<0$. By \eqref{3-70}, for any $t\geq 0$ we have
\[J(\eta(t,\phi_0(u)))\leq J(\eta(0,\phi_0(u)))=J(\phi_0(u))=J(u)<0<c_k(J)-2\varepsilon,\]
which implies $\eta(t,\phi_0(u))\in A$ for any $t\geq 0$. Thus $\frac{d\eta(t,\phi_0(u))}{dt}=0$ and $\eta(t,\phi_0(u))=\phi_0(u)=u$ for any $t\geq 0$.

Next, we verify that
\begin{equation}\label{3-76}
  \sup_{\eta(t,\phi_0(N_k\cap B_{R_k}))\cap S_{a_k}}J(u)\leq b_k(J)+1~~~\mbox{for all}~~ t\geq 0.
\end{equation}
Assume that $u\in  \eta(t_{0},\phi_0(N_k\cap B_{R_k}))\cap S_{a_k}$ for some $t_{0}>0$ such that $J(u)>b_k(J)+1>M_{2}+2$. Then  $u=\eta(t_{0},v)\in S_{a_k}$ for some
 $v\in \phi_{0}(N_k\cap B_{R_k})$. Moreover, \eqref{3-70} implies
\begin{equation}\label{3-77}
M_{2}+2<b_{k}(J)+1<J(\eta(t_{0},v))=J(u)\leq J(v),
\end{equation}
which yields $v\in W$ and $\eta(t_{0},v)\in W$. By Step 1, we can also deduce that $v\in S_{a_{k}}$. Thus,
\[J(\eta(t_{0},v))\leq J(v)\leq\sup_{\phi_0(N_k\cap B_{R_k})\cap S_{a_k}}J(u)\leq b_k(J)+1,\]
which contradicts  \eqref{3-77}.

It remains to verify that $\eta(t,\phi_0(\cdot))$ is odd in $N_k\cap B_{R_{k}}$ for any $t\geq 0$. For $w\in N_k\cap B_{R_{k}}$, if $\phi_0(w)\in P_{a_k}\subset A$, then $\eta(t,\phi_0(w))=\phi_0(w)$ for any $t\geq 0$.  In the case of  $\phi_0(w)\in S_{a_k}$, recalling that $2\varepsilon<c_k(J)-b_k(J)-1$, we have
      \[J(\phi_0(w))\leq\sup_{\phi_0(N_k\cap B_{R_k})\cap S_{a_k}}J(u)\leq b_k(J)+1<c_k(J)-2\varepsilon,\]
      which implies $\phi_0(w)\in A$. Hence, $\eta(t,\phi_0(w))=\phi_0(w)$ holds for any $t\geq 0$ and any $w\in N_k\cap B_{R_{k}}$.  Consequently, for any $t\geq 0$ we have $\eta(t,\phi_0(\cdot))$ is odd in $N_k\cap B_{R_{k}}$  and  $\eta(t,\phi_0(\cdot))\in \Lambda_k$.
\end{proof}

\section{Estimates on augmented Morse index}
\label{Section4}

By \eqref{3-2}, \eqref{3-6} and Proposition \ref{prop2-1}, we can
choose  suitable positive constants $\varepsilon>0$ and $C_{p}>0$ such that
 \begin{equation}\label{4-4}
 \begin{aligned}
   J(u) &=\frac{1}{2}\int_{\Omega}|Xu|^2dx-\int_{\Omega}F(x,u)dx-\theta(u)\int_{\Omega}G(x,u)dx\\
   &\geq \frac{1}{2}\int_{\Omega}|Xu|^2dx-\int_{\Omega}\varepsilon|u|^2dx-\int_{\Omega}C(\varepsilon)|u|^p\\
    & \geq\frac{1}{2}\left(\frac{1}{2}\int_{\Omega}|Xu|^2dx-C_p\int_{\Omega}|u|^pdx\right).\\
 \end{aligned}
 \end{equation}

In this section, we are concerned with the augmented Morse index of the sign-changing critical points for following auxiliary functional
\begin{equation}\label{4-5}
I_p(u):= \frac{1}{2}\int_{\Omega}|Xu|^2dx-C_p\int_{\Omega}|u|^{p}dx.
\end{equation}
Clearly, if we set $f(x,u)=pC_p|u|^{p-2}u$ and $g(x,u)\equiv 0$, the functional $J$ reduces to $I_p$.

To proceed our arguments, we present some abstract results related to the even functional.
\subsection{Deformation lemma and relative genus}
Given an even functional $I\in C^{2}(H_{X,0}^{1}(\Omega),\mathbb{R})$. For any $c\in \mathbb{R}$, we adopt the notations
\[I^c:=\{u\in H_{X,0}^{1}(\Omega): I(u)\leq c\}\]
and
\[K_{I,c}:=\{u\in H_{X,0}^1(\Omega): I'(u)=0,~I(u)=c\}.\]
Clearly, both $I^c$ and $K_{I,c}$ are symmetric and closed. The second order
Fr\'{e}chet derivative $I''(u)$ belongs to $\mathcal{L}(H_{X,0}^{1}(\Omega),H_{X,0}^{1}(\Omega))$, where $\mathcal{L}(H_{X,0}^{1}(\Omega),H_{X,0}^{1}(\Omega))$ denotes the collection of all bounded linear operators from $H_{X,0}^{1}(\Omega)$ to $H_{X,0}^{1}(\Omega)$. Additionally, $I''(u):h\mapsto I''(u)(h)$ is self-adjoint.
The Morse index and augmented Morse index on the critical points of functional  $I$ are defined as follows:
\begin{definition}\label{def4-3}
 Let $u\in H_{X,0}^{1}(\Omega)$ be the critical point of $I$. Denote by
 $V$ the subspace of $ H_{X,0}^{1}(\Omega)$.
The  Morse index $m(u,I)$ at $u$ is given by
 \[m(u,I):=\max\{{\rm dim}~V: (I''(u)(v),v)_{H_{X,0}^1(\Omega)}< 0,~\forall v\in V\setminus\{0\}\}.\]
In particular, the augmented Morse index $m^*(u, I)$ at $u$ is defined by
 \[m^*(u,I):=\max\{{\rm dim}~V: (I''(u)(v),v)_{H_{X,0}^1(\Omega)}\leq0,~\forall v\in V\}.\]
 Additionally, we say the critical point $u$ of $I$ is  non-degenerate if $I''(u)$ is invertible.
\end{definition}
\begin{remark}
\label{remark4-1-2}
Clearly, we have $m(u,I)\leq m^{*}(u,I)$. In particular, if the critical point $u$ of $I$ is  non-degenerate, then $m(u,I)=m^{*}(u,I)$.
\end{remark}

Associated with the sign-changing settings, we can obtain the following results.
\begin{proposition}\label{prop4-2}
Let $I\in C^{2}(H_{X,0}^{1}(\Omega),\mathbb{R})$ be an even functional satisfying the Palais-Smale condition. For any given $a>0$, assume that $I':H_{X,0}^{1}(\Omega)\to H_{X,0}^{1}(\Omega)$ is a locally Lipschitz continuous map satisfying
\begin{equation}\label{4-3}
(\mathbf{id}-I')(\overline{P_{a}^{\pm}})\subset\overline{P_{\frac{a}{2}}^{\pm}}.
\end{equation}
Then for any $c\in\mathbb{R}$, any symmetric neighborhood $N$ of $K_{I,c}\cap S_a$ and any $\varepsilon_1>0$,  there exist $\delta\in(0,\varepsilon_1)$ and an odd  homeomorphism $\Theta:H_{X,0}^{1}(\Omega)\to H_{X,0}^{1}(\Omega)$ such that
  \begin{enumerate}[(i)]
  \item $\Theta((I^{c+\delta}\cup \overline{P_a})\setminus N)\subset I^{c-\delta}\cup \overline{P_a}$;
     \item $\Theta(\overline{P_a^{\pm}})\subset \overline{P_a^{\pm}}$;
     \item $\Theta(u)=u,~ \forall u\in I^{c-\varepsilon_1}$;
     \item $I(\Theta(u))\leq I(u),~ \forall u\in H_{X,0}^{1}(\Omega)$.
   \end{enumerate}
\end{proposition}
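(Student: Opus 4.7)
I plan to construct $\Theta$ as the time-$T$ map of an appropriately cut-off negative pseudo-gradient flow for $I$, invoking Lemma \ref{lemma3-13} to guarantee that the cones $\overline{P_{a}^{\pm}}$ are preserved. Since $I'$ is locally Lipschitz and satisfies $(\mathbf{id}-I')(\overline{P_{a}^{\pm}})\subset\overline{P_{a/2}^{\pm}}$, Lemma \ref{lemma3-13} applied with $\tau=I'$ produces, for any Lipschitz $\zeta\colon H_{X,0}^{1}(\Omega)\to[0,1]$, a global continuous flow $\eta(t,u)$ under which each $\overline{P_{a}^{\pm}}$ is forward-invariant. Since $I$ is even, $I'$ is odd; choosing $\zeta$ even and invoking uniqueness of the ODE gives $\eta(t,-u)=-\eta(t,u)$. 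Extending the ODE to $t<0$ (using that the right-hand side is bounded and locally Lipschitz) supplies the inverse of $\eta(T,\cdot)$, so $\Theta:=\eta(T,\cdot)$ will automatically be an odd homeomorphism.

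\textbf{Compactness and construction of $\zeta$.} I would first check that $K:=K_{I,c}\cap S_{a}$ is compact: $K_{I,c}$ is sequentially precompact by Palais-Smale, and $S_{a}$ is closed because each $P_{a}^{\pm}$ is open. A standard contradiction argument using Palais-Smale then yields $\delta_{0},\varepsilon_{0}>0$ with
\[\|I'(u)\|_{H_{X,0}^{1}(\Omega)}\geq \varepsilon_{0}\quad\text{for all }u\in S_{a}\setminus N\text{ with }|I(u)-c|\leq\delta_{0}.\]
Because $K$ is compact and $N$ open, one can fix $r>0$ so that $\{u:\mathrm{dist}(u,K)\leq 3r\}\subset N$. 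Set $\zeta(u)=\zeta_{I}(I(u))\cdot\zeta_{s}(u)$, where $\zeta_{I}$ is a Lipschitz bump equal to $1$ on $[c-\delta,c+\delta]$ and to $0$ off $[c-2\delta,c+2\delta]$, and $\zeta_{s}(u)=\phi(\mathrm{dist}(u,K))$ with $\phi$ Lipschitz, $\phi\equiv 0$ on $[0,2r]$ and $\phi\equiv 1$ on $[3r,+\infty)$. Both factors are even (using $I(-u)=I(u)$ and $K=-K$, the latter since $I$ is even and $N$ is symmetric). Apply Lemma \ref{lemma3-13} with this data to obtain $\eta$, and put $\Theta:=\eta(T,\cdot)$ with $T,\delta>0$ to be chosen.

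\textbf{Easy properties and the main obstacle.} The monotonicity $\frac{d}{dt}I(\eta)=-\zeta(\eta)h(\eta)\|I'(\eta)\|_{H_{X,0}^{1}(\Omega)}^{2}\leq 0$ yields (iv); taking $\delta<\varepsilon_{1}/2$ forces $\zeta_{I}=0$ on $I^{c-\varepsilon_{1}}$, so the flow is stationary there and (iii) holds; Lemma \ref{lemma3-13} gives (ii) at once. For (i), take $u\in(I^{c+\delta}\cup\overline{P_{a}})\setminus N$. If $u\in\overline{P_{a}}$, (ii) finishes; otherwise $u\in I^{c+\delta}\cap S_{a}\setminus N$, and either the trajectory enters $\overline{P_{a}}$ at some $t_{0}\leq T$, in which case cone-invariance pins $\Theta(u)\in\overline{P_{a}}$, or $\eta(t,u)\in S_{a}$ throughout $[0,T]$. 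The genuine obstacle is the last subcase: one must rule out the flow stalling in the transition annulus $\{2r<\mathrm{dist}(\cdot,K)<3r\}$, where $\zeta_{s}$ is small and descent is consequently slow.

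\textbf{Speed bound and descent estimate.} The key is the uniform speed bound
\[\|\dot{\eta}\|_{H_{X,0}^{1}(\Omega)}=\zeta(\eta)h(\eta)\|I'(\eta)\|_{H_{X,0}^{1}(\Omega)}\leq 1,\]
which forces $\|\eta(t,u)-u\|_{H_{X,0}^{1}(\Omega)}\leq t$. Consequently, if $T<r$, a point starting outside the $3r$-neighborhood of $K$ cannot penetrate the $2r$-neighborhood within $[0,T]$, so $\zeta_{s}(\eta(t,u))\equiv 1$ along the whole trajectory. While $I(\eta(t,u))\in[c-\delta,c+\delta]$ we then have $\zeta(\eta)=1$, and the Palais-Smale bound combined with monotonicity of $x\mapsto x^{2}/(1+x)$ forces a descent rate of at least $\varepsilon_{0}^{2}/(1+\varepsilon_{0})$. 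Taking $T=3\delta(1+\varepsilon_{0})/\varepsilon_{0}^{2}$ and $\delta<\min\{\varepsilon_{1}/2,\delta_{0}/2,r\varepsilon_{0}^{2}/(6(1+\varepsilon_{0}))\}$ guarantees simultaneously $T<r/2$ and a net descent of at least $3\delta$. Hence either $I(\eta)$ crosses below $c-\delta$ en route (and stays below by monotonicity), or $I(\Theta(u))\leq c+\delta-3\delta<c-\delta$, so that $\Theta(u)\in I^{c-\delta}\cup\overline{P_{a}}$ in every case, finishing (i).
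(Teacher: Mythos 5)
Your approach genuinely diverges from the paper's. The paper's proof is a one-observation trick: since $K_{I,c}=(K_{I,c}\cap S_a)\cup(K_{I,c}\cap P_a)$ and $K_{I,c}\cap P_a\subset P_a$, the set $N':=N\cup P_a$ is a symmetric open neighborhood of the \emph{entire} critical set $K_{I,c}$. The classical Rabinowitz deformation lemma (applied with $N'$) then hands over an odd homeomorphism with (iii), (iv) and $\Theta(I^{c+\delta}\setminus N')\subset I^{c-\delta}$; Lemma \ref{lemma3-13} with $\tau=I'$ gives (ii); and (i) falls out of the elementary inclusion $(I^{c+\delta}\cup\overline{P_a})\setminus N\subset(I^{c+\delta}\setminus N')\cup\overline{P_a}$. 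You instead re-derive the deformation lemma from scratch — constructing the cut-off $\zeta$, bounding the flow speed, and estimating the descent rate directly — which is more self-contained but carries the full technical burden of the quantitative deformation argument rather than outsourcing it.

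There is a quantitative slip in the descent estimate. You take $\{u:\mathrm{dist}(u,K)\leq 3r\}\subset N$, set $\zeta_s=\phi(\mathrm{dist}(\cdot,K))$ with $\phi\equiv 0$ on $[0,2r]$ and $\phi\equiv 1$ on $[3r,\infty)$, and argue that a point starting with $\mathrm{dist}(u,K)>3r$ stays outside the $2r$-neighborhood for $T<r$, ``so $\zeta_s(\eta(t,u))\equiv 1$.'' That conclusion is false: the trajectory is only guaranteed to satisfy $\mathrm{dist}(\eta(t,u),K)>2r$, where $\zeta_s$ can be arbitrarily small (it is $1$ only on $\{\mathrm{dist}\geq 3r\}$). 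With $\zeta_s$ possibly near zero, the claimed descent rate $\varepsilon_0^2/(1+\varepsilon_0)$ is unavailable and the argument stalls — exactly the obstacle you identified. The fix is immediate: choose $\phi\equiv 0$ on $[0,r]$ and $\phi\equiv 1$ on $[2r,\infty)$ (keeping $\{\mathrm{dist}\leq 3r\}\subset N$), so that $\mathrm{dist}>2r$ really does force $\zeta_s=1$. A matching adjustment is needed for the gradient lower bound: you establish $\|I'\|\geq\varepsilon_0$ on $S_a\setminus N$ with $|I-c|\leq\delta_0$, but the descent estimate uses it along the trajectory, which may re-enter $N$ while still having $\mathrm{dist}>2r$; derive the bound instead on $\{v\in S_a:\mathrm{dist}(v,K)\geq 2r,\ |I(v)-c|\leq\delta_0\}$ by the same Palais--Smale contradiction. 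With these two corrections the argument closes.
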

\begin{proof}
Taking $N':=N\cup P_a$, it follows that $N'$ is a symmetric neighborhood of $K_{I,c}$. By the classic deformation lemma in \cite[Theorem A.4]{Rabinowitz1986}, there exist $\delta\in(0,\varepsilon_1)$  and an odd homeomorphism $\Theta\in C(H_{X,0}^{1}(\Omega),H_{X,0}^{1}(\Omega))$ satisfying (iii), (iv), and $\Theta(I^{c+\delta}\setminus N')\subset I^{c-\delta}$. More precisely, $\Theta$ can be formulated by $\Theta(u)=\eta(1,u)$, where $\eta(t,u)$ is the solution of \eqref{3-52} associated with suitable function $\zeta$.

Taking $\tau=I'$  in
     Lemma \ref{lemma3-13}, we derive from \eqref{4-3} that $\Theta(\overline{P_a^{\pm}})\subset \overline{P_a^{\pm}}$, and (ii) follows.
      Thus,
     \[\Theta((I^{c+\delta}\setminus N')\cup\overline{P_a})\subset I^{c-\delta}\cup\overline{P_a}.\]
    On the other hand, by $N'=N\cup P_a$ we have
\[(I^{c+\delta}\cup\overline{P_a})\setminus N\subset (I^{c+\delta}\setminus N)\cup\overline{P_a}\subset(I^{c+\delta}\setminus N')\cup\overline{P_a},\]
    which gives (i).
\end{proof}

\begin{lemma}\label{lemma4-1}
Under the assumptions of Proposition \ref{prop4-2}, we have $\partial P_a \cap K_{I,c}=\varnothing$ holds for any $c\in \mathbb{R}$.
\end{lemma}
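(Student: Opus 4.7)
The plan is to argue by contradiction via the invariance assumption \eqref{4-3}. Suppose there exist $c\in\mathbb{R}$ and $u\in \partial P_{a}\cap K_{I,c}$, so that $u$ is a critical point of $I$ lying on the topological boundary of $P_{a}=P_{a}^{+}\cup P_{a}^{-}$. Since both $P_{a}^{+}$ and $P_{a}^{-}$ are open (being defined by the strict inequality $\mathrm{dist}(u,P^{\pm})<a$), their union $P_{a}$ is open, and hence $u\in\overline{P_{a}}\setminus P_{a}$. This forces $\mathrm{dist}(u,P^{+})\geq a$ and $\mathrm{dist}(u,P^{-})\geq a$ simultaneously, while $u\in\overline{P_{a}^{+}}\cup\overline{P_{a}^{-}}$ yields $\mathrm{dist}(u,P^{+})\leq a$ or $\mathrm{dist}(u,P^{-})\leq a$.

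Thus at least one of $\mathrm{dist}(u,P^{+})=a$ or $\mathrm{dist}(u,P^{-})=a$ must hold; without loss of generality assume $\mathrm{dist}(u,P^{+})=a$, so $u\in \overline{P_{a}^{+}}$. Because $u\in K_{I,c}$, we have $I'(u)=0$, and therefore
\[
(\mathbf{id}-I')(u)=u.
\]
The hypothesis \eqref{4-3} of Proposition \ref{prop4-2} then gives
\[
u=(\mathbf{id}-I')(u)\in (\mathbf{id}-I')(\overline{P_{a}^{+}})\subset \overline{P_{a/2}^{+}},
\]
which in turn implies $\mathrm{dist}(u,P^{+})\leq a/2<a$, contradicting $\mathrm{dist}(u,P^{+})=a$. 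The case $\mathrm{dist}(u,P^{-})=a$ is symmetric. Hence $\partial P_{a}\cap K_{I,c}=\varnothing$.

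There is essentially no obstacle here beyond the small topological observation that a point on $\partial P_{a}$ realizes distance exactly $a$ from at least one of $P^{\pm}$; once this is in place, the invariance property \eqref{4-3} closes the argument immediately. The main conceptual point of the lemma is simply that the contraction factor $1/2$ in \eqref{4-3} cannot be compatible with a critical point sitting on the outer boundary of $P_{a}^{\pm}$.
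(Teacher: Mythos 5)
Your proof is correct and follows essentially the same route as the paper: since a critical point is fixed by $\mathbf{id}-I'$, hypothesis \eqref{4-3} forces it into $\overline{P_{a/2}}$, which is incompatible with lying on $\partial P_a$. The only difference is that you unpack the boundary condition into the explicit distance identity $\mathrm{dist}(u,P^{\pm})=a$ before invoking \eqref{4-3}, whereas the paper argues directly from $u\in\overline{P_a}$ and $u\notin P_a$; both routes are equivalent.
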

\begin{proof}
 If $u\in \partial P_a\cap K_{I,c}$, we have $u-I'(u)=u$. According to \eqref{4-3}, it follows that  $u\in \overline{P_{\frac{a}{2}}}$, which yields a contradiction.
\end{proof}

We next introduce the concepts of relative genus, which will be used to define the min-max value of $I_p$.

\begin{definition}[cf. \cite{Bartsch2005}]
\label{def4-2}
  For symmetric and closed subsets $ A \subset B \subset C $ in $H_{X,0}^{1}(\Omega)$, we define the genus of $C$ relative to the pair $(B, A)$, denoted by $\gamma(C; B, A)$, to be the smallest non-negative integer
$k\geq 0 $ such that there exist closed and symmetric subsets $U , V\subset H_{X,0}^{1}(\Omega)$ with
\begin{enumerate}[(1)]
\item
$C\subset U \cup V, B \subset U,$ and $\gamma(V)\leq k$;
\item
There is an odd and continuous map $h: U\to B$ with $h(A)\subset A$.
\end{enumerate}
If no such $k$ exists we set $\gamma(C; B, A)=+\infty$.
\end{definition}
\begin{remark}\label{remark4-1}
Note that $\gamma(B; B, A)=0$ for every closed and symmetric subsets $A\subset B$, and the usual genus is contained in the above definition via the relation  $\gamma(C)=\gamma(C; \varnothing, \varnothing)$.
\end{remark}

The relative genus admits the following properties.

\begin{lemma}[cf. {\cite[Proposition 4.2]{Bartsch2005}}]
\label{prop4-1}
  Assume that $ A \subset B \subset C $ are closed and symmetric subsets in $H_{X,0}^{1}(\Omega)$. We have
  \begin{enumerate}[(1)]
    \item If there exist closed and symmetric subsets  $C_0$ and $C_1$ in $H_{X,0}^{1}(\Omega)$ such that $C\subset C_0\cup C_1$ and $C_1\cap B=\varnothing$, then
\begin{equation}\label{4-1}
  \gamma(C; B, A)\leq \gamma(C_0; B, A)+\gamma(C_1).
\end{equation}
    \item If $C'$ is a closed and symmetric subset in $H_{X,0}^{1}(\Omega)$ satisfying $B\subset C'$ and there exists an odd continuous map $\varphi: C'\to C$ such that $\varphi(C')\subset C$, $\varphi(B)\subset B$ and $\varphi(A)\subset A$, then

        $$\gamma(C'; B, A)\leq\gamma(C; B, A).$$
        Especially, if $C'\subset C$, then $\gamma(C'; B, A)\leq\gamma(C; B, A)$ holds by substituting $\varphi=\mathbf{id}$.
  \end{enumerate}
\end{lemma}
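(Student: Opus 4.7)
My plan for part (1) is to build an explicit covering of $C$ by recycling a witness for $\gamma(C_0; B, A)$ and simply adjoining $C_1$. Concretely, I would fix an optimal pair $(U_0, V_0)$ realizing $\gamma(C_0; B, A) = k_0$, together with an odd continuous map $h_0: U_0 \to B$ satisfying $h_0(A) \subset A$. Setting $U := U_0$ and $V := V_0 \cup C_1$, the inclusions $C \subset C_0 \cup C_1 \subset U \cup V$ and $B \subset U$ are immediate, and the subadditivity of the Krasnoselskii genus over closed symmetric sets gives $\gamma(V) \leq \gamma(V_0) + \gamma(C_1) \leq k_0 + \gamma(C_1)$. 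Retaining $h := h_0$ as the retraction completes the witness and yields the claimed bound.

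For part (2), the plan is to pull an optimal covering for $C$ back along $\varphi$. I would take $(U, V)$ together with an odd continuous $h: U \to B$ realizing $\gamma(C; B, A) = k$, and then define
\[ U' := \varphi^{-1}(U), \qquad V' := \varphi^{-1}(V), \qquad h' := h \circ \varphi\big|_{U'}. \]
The sets $U'$ and $V'$ are closed and symmetric by the continuity and oddness of $\varphi$; $\varphi(C') \subset C \subset U \cup V$ gives $C' \subset U' \cup V'$; the hypothesis $\varphi(B) \subset B \subset U$ forces $B \subset U'$; and $\varphi(A) \subset A$ combined with $h(A) \subset A$ yields $h'(A) \subset A$. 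Finally, since $\varphi|_{V'}: V' \to V$ is odd continuous, the monotonicity of the Krasnoselskii genus along odd continuous maps gives $\gamma(V') \leq \gamma(V) \leq k$, completing the witness.

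The only non-trivial inputs I would invoke are two standard facts about the Krasnoselskii genus on closed symmetric subsets of a Banach space: subadditivity $\gamma(X \cup Y) \leq \gamma(X) + \gamma(Y)$ and monotonicity $\gamma(\varphi^{-1}(V)) \leq \gamma(V)$ under odd continuous maps; both are recorded in the references of Struwe and Rabinowitz cited earlier. I do not anticipate a substantive obstacle: the proof is essentially a bookkeeping exercise over Definition~\ref{def4-2}. The mildly delicate point lies in part (2), where the two hypotheses $\varphi(B) \subset B$ and $\varphi(A) \subset A$ are precisely tuned so that the pullback witness respects both the base set $B$ (via $B \subset U'$) and the invariance clause (via $h'(A) \subset A$); dropping either would break the construction. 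The disjointness assumption $C_1 \cap B = \varnothing$ in part (1) is not actually used in the construction above and appears to be a convenience hypothesis carried over from the original Bartsch--Liu--Weth formulation.
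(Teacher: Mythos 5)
The paper does not supply a proof of this lemma; it cites it directly from Bartsch--Liu--Weth \cite[Proposition 4.2]{Bartsch2005}, so there is no in-text argument to compare against. Evaluated on its own terms, your witness-building construction is the natural route and is correct: in part (1), appending $C_1$ to the $V$-component of an optimal pair for $(C_0;B,A)$ and applying subadditivity of the Krasnoselskii genus does the job; in part (2), pulling back an optimal pair for $(C;B,A)$ along $\varphi$ and invoking monotonicity under odd continuous maps gives the bound. You also correctly identify the two external genus properties used. One minor point worth stating explicitly in part (2): $\varphi^{-1}(U)$ and $\varphi^{-1}(V)$ are only relatively closed in $C'$ a priori, but since $C'$ is assumed closed in $H^1_{X,0}(\Omega)$, they are closed in the ambient space, which is what Definition \ref{def4-2} requires.

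However, your closing remark that the hypothesis $C_1\cap B=\varnothing$ ``is not actually used'' is incorrect, and in fact your own construction relies on it implicitly. Definition \ref{def4-2} defines $\gamma(\,\cdot\,;B,A)$ only for closed symmetric triples with $A\subset B\subset C$, so invoking an optimal pair realizing $\gamma(C_0;B,A)$ presupposes $B\subset C_0$. That inclusion is exactly what the disjointness hypothesis delivers: from $B\subset C\subset C_0\cup C_1$ and $C_1\cap B=\varnothing$ one gets $B\subset C_0$. Without it (e.g.\ $C_0=\varnothing$, $C_1=C$), the quantity $\gamma(C_0;B,A)$ appearing on the right of \eqref{4-1} would not even be defined under the paper's conventions. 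So the hypothesis is not a decorative carry-over but is what makes the right-hand side of the inequality meaningful; you should record the deduction $B\subset C_0$ at the start of part (1).
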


For the even functional  $I\in C^{2}(H_{X,0}^{1}(\Omega),\mathbb{R})$, we have

\begin{lemma}[{\cite[Corollary 4.4]{Bartsch2005}}]
\label{prop4-3}
Under the assumptions of Proposition \ref{prop4-2}, for any $c>0$ there exists $0<\delta<c$ such that
  \[\gamma(I^{c+\delta}\cup \overline{P_a}; I^{0}\cup \overline{P_a} , I^{-1})\leq \gamma(I^{c-\delta}\cup \overline{P_a}; I^{0}\cup \overline{P_a} , I^{-1})+\gamma(K_{I,c}\cap S_a).\]
\end{lemma}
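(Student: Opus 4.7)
The plan is to derive the inequality by combining Proposition~\ref{prop4-2} (the sign-preserving deformation lemma around the level $c$) with the two abstract properties of the relative genus recorded in Lemma~\ref{prop4-1}, namely the subadditivity in part (1) and the odd-map monotonicity in part (2).

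First, set $K:=K_{I,c}\cap S_{a}$. Since $c>0$ and $I$ is continuous, $K$ lies strictly above the sublevel set $I^{0}$; since Lemma~\ref{lemma4-1} gives $K_{I,c}\cap\partial P_{a}=\varnothing$ and $S_{a}\cap P_{a}=\varnothing$, the set $K$ is also disjoint from $\overline{P_{a}}$. The Palais--Smale condition together with the closedness of $K_{I,c}$ forces $K$ to be compact, so by the standard continuity property of the Krasnoselskii genus I can select a closed symmetric neighborhood $N$ of $K$ with $\gamma(N)=\gamma(K)$ and small enough to satisfy
\[
N\cap\bigl(I^{0}\cup \overline{P_{a}}\bigr)=\varnothing.
\]

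Next, I apply Proposition~\ref{prop4-2} to this $N$ with the choice $\varepsilon_{1}=c$, obtaining $\delta\in(0,c)$ and an odd homeomorphism $\Theta$ satisfying (i)--(iv). In particular, (iii) together with $\varepsilon_{1}=c$ forces $\Theta|_{I^{0}}=\mathbf{id}$, so $\Theta$ also pointwise fixes $I^{-1}$. Subadditivity from Lemma~\ref{prop4-1}(1), applied to the decomposition
\[
I^{c+\delta}\cup \overline{P_{a}}\subset \bigl((I^{c+\delta}\cup \overline{P_{a}})\setminus N\bigr)\cup N,
\]
together with the disjointness $N\cap(I^{0}\cup \overline{P_{a}})=\varnothing$, then yields
\[
\gamma\bigl(I^{c+\delta}\cup \overline{P_{a}};\,I^{0}\cup \overline{P_{a}},\,I^{-1}\bigr)\leq \gamma\bigl((I^{c+\delta}\cup \overline{P_{a}})\setminus N;\,I^{0}\cup \overline{P_{a}},\,I^{-1}\bigr)+\gamma(K).
\]

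It remains to dominate the first summand by $\gamma(I^{c-\delta}\cup \overline{P_{a}};I^{0}\cup \overline{P_{a}},I^{-1})$, and this is exactly Lemma~\ref{prop4-1}(2) applied with $\varphi=\Theta$ restricted to $C':=(I^{c+\delta}\cup \overline{P_{a}})\setminus N$: property (i) of $\Theta$ sends $C'$ into $I^{c-\delta}\cup \overline{P_{a}}$; properties (iv) and (ii) jointly preserve $I^{0}\cup \overline{P_{a}}$; and $\Theta$ fixes $I^{-1}$ by the previous paragraph. The only step requiring genuine care is the initial neighborhood construction, where one must simultaneously realise $\gamma(N)=\gamma(K)$ and the disjointness $N\cap(I^{0}\cup \overline{P_{a}})=\varnothing$; this is where compactness of $K$ (from Palais--Smale) and Lemma~\ref{lemma4-1} are essential.
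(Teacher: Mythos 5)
The paper cites \cite[Corollary~4.4]{Bartsch2005} for this lemma without reproducing a proof, so there is no in-paper proof to compare against; what you have written is a reasonable reconstruction of the standard deformation-plus-subadditivity argument behind it, and the overall structure (isolate $K_{I,c}\cap S_a$ in a small symmetric neighborhood disjoint from $I^{0}\cup\overline{P_a}$, apply Proposition~\ref{prop4-2} with $\varepsilon_1=c$, then combine Lemma~\ref{prop4-1}(1) and (2)) is exactly right.

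There is, however, one small but genuine gap in the bookkeeping. You select a \emph{closed} symmetric neighborhood $N$ of $K$ and then invoke Lemma~\ref{prop4-1}(1) with the pair $C_0=(I^{c+\delta}\cup\overline{P_a})\setminus N$, $C_1=N$, and subsequently Lemma~\ref{prop4-1}(2) with $C'=C_0$. But both Lemma~\ref{prop4-1}(1) and Definition~\ref{def4-2} require $C_0$ (respectively $C'$) to be \emph{closed and symmetric}, and for $N$ closed the set $(I^{c+\delta}\cup\overline{P_a})\setminus N$ is a closed set minus a closed set, hence generally not closed. As written, $\gamma\bigl((I^{c+\delta}\cup\overline{P_a})\setminus N;\,I^{0}\cup\overline{P_a},\,I^{-1}\bigr)$ is not even defined. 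The fix is routine: since $K$ is compact, symmetric, avoids the origin (recall $0\in P_a$ so $0\notin S_a$), and is disjoint from the closed set $I^{0}\cup\overline{P_a}$, one can choose an \emph{open} symmetric neighborhood $N$ of $K$ whose closure satisfies $\overline{N}\cap(I^{0}\cup\overline{P_a})=\varnothing$ and $\gamma(\overline{N})=\gamma(K)$. Then take $C_0:=(I^{c+\delta}\cup\overline{P_a})\setminus N$, which is closed, and $C_1:=\overline{N}$ in Lemma~\ref{prop4-1}(1); apply Proposition~\ref{prop4-2} to the symmetric neighborhood $N$ (property (i) is stated for $\setminus N$, so the open $N$ is the natural choice); and feed the same closed $C_0$ into Lemma~\ref{prop4-1}(2) with $\varphi=\Theta|_{C_0}$. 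With this adjustment every hypothesis you invoke is genuinely satisfied, and the rest of your verification -- $\Theta(C_0)\subset I^{c-\delta}\cup\overline{P_a}$ from (i), $\Theta(I^{0}\cup\overline{P_a})\subset I^{0}\cup\overline{P_a}$ from (ii) and (iv), $\Theta|_{I^{-1}}=\mathbf{id}$ from (iii) with $\varepsilon_1=c$, and $B\subset C_0$ from the disjointness of $N$ -- goes through as you argued.
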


\subsection{The sign-changing critical values of $I_{p}$}

In this subsection, we are concerned with the sign-changing critical values for the functional $I_{p}$. First, we have

\begin{proposition}
\label{prop4-4}
The functional $I_p$ satisfies the following properties:
 \begin{enumerate}[(P1)]
\item $I_p\in C^2(H_{X,0}^{1}(\Omega), \mathbb{R})$ and $I_p$ is even in $H_{X,0}^{1}(\Omega)$;
  \item $I_p$ satisfies the Palais-Smale condition;
  \item  $I_p(0)=0$;
\item There exists $0<a^*<a^{0}$ such that
   $(\mathbf{id}-I_p')(\overline{P_{a}^{\pm}})\subset \overline{P_{\frac{a}{4}}^{\pm}}$ for all $0<a<a^*$;
\item The map $I_{p}':H_{X,0}^{1}(\Omega)\to H_{X,0}^{1}(\Omega)$ defined by
\begin{equation}\label{4-6}
  (I_{p}'(u),v)_{H_{X,0}^{1}(\Omega)}=\int_{\Omega}Xu\cdot Xv dx-pC_{p}\int_{\Omega}|u|^{p-2}uv dx~~~~\forall u,v\in H_{X,0}^{1}(\Omega)
\end{equation}
is locally Lipschitz continuous.
\end{enumerate}
\end{proposition}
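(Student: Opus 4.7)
The plan is to verify the five properties in turn, relying on the Sobolev embedding in Proposition~\ref{prop2-1}, the compact embedding in Proposition~\ref{prop2-3}, and adaptations of the invariance arguments already carried out in Section~\ref{Section3}. Since $I_p$ corresponds to the functional $J$ with $f(x,u)=pC_p|u|^{p-2}u$ and $g\equiv 0$, most of the work has essentially been done; the present proposition just isolates the facts we shall need later.

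For (P1), evenness is immediate because $|Xu|^{2}=|X(-u)|^{2}$ and $|u|^p=|-u|^p$. For the $C^{2}$ regularity, the quadratic part $u\mapsto\tfrac12\|Xu\|_{L^2}^{2}$ is of class $C^{\infty}$, while $u\mapsto\int_\Omega|u|^p\,dx$ is a Nemitskii functional associated with the $C^{2}$ map $t\mapsto|t|^p$ (valid since $p>2$). Because $2<p<2_{\tilde\nu}^*$, Proposition~\ref{prop2-1} gives $H_{X,0}^{1}(\Omega)\hookrightarrow L^{p}(\Omega)$ continuously, so standard estimates of the type $\bigl||s|^{p-2}s-|t|^{p-2}t\bigr|\leq C(|s|^{p-2}+|t|^{p-2})|s-t|$ and $\bigl||s|^{p-2}-|t|^{p-2}\bigr|\leq C(|s|^{p-3}+|t|^{p-3})|s-t|$ combined with H\"older's inequality give the continuity of both $I_p'$ and $I_p''$. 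Property (P3) is tautological.

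For (P2), let $\{u_m\}$ be a Palais-Smale sequence. The identity
\[
I_p(u_m)-\frac{1}{p}\bigl(I_p'(u_m),u_m\bigr)_{H_{X,0}^{1}(\Omega)}=\Bigl(\frac12-\frac1p\Bigr)\|u_m\|_{H_{X,0}^{1}(\Omega)}^{2}
\]
together with $|I_p(u_m)|\leq C$ and $\|I_p'(u_m)\|\to 0$ gives the boundedness of $\{u_m\}$ in $H_{X,0}^{1}(\Omega)$. Passing to a subsequence, the compact embedding $H_{X,0}^{1}(\Omega)\hookrightarrow L^{p}(\Omega)$ from Proposition~\ref{prop2-3} yields $u_m\to u$ strongly in $L^{p}(\Omega)$. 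Writing $u_m=I_p'(u_m)+K(u_m)$, where $(K(u),v)_{H_{X,0}^{1}(\Omega)}=pC_p\int_\Omega|u|^{p-2}uv\,dx$, the H\"older inequality shows $K$ maps $L^p$-convergent sequences to $H_{X,0}^{1}$-convergent sequences, so $u_m\to u$ strongly in $H_{X,0}^{1}(\Omega)$, establishing (P2).

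Property (P4) is the one whose analogue was already indicated in Remark~\ref{remark3-1}: since $g\equiv 0$ forces $\theta\equiv 1$ and hence $T_1\equiv T_2\equiv 0$, the operator $K=\mathbf{id}-I_p'$ is exactly the Nemitskii map above. Writing $z=K(u)-w^+-w^-$ with $w^\pm:=z^\pm$ we have $\operatorname{dist}(K(u),P^{\pm})\leq\|K(u)^{\mp}\|_{H_{X,0}^{1}(\Omega)}$, and testing the definition against $K(u)^{\mp}\in H_{X,0}^{1}(\Omega)$ gives, as in the derivation of \eqref{3-50},
\[
\|K(u)^{\mp}\|_{H_{X,0}^{1}(\Omega)}^{2}=pC_p\!\!\int_{\Omega}|u|^{p-2}u\,K(u)^{\mp}\,dx\leq pC_p\!\!\int_{\Omega}|u^{\mp}|^{p-1}|K(u)^{\mp}|\,dx.
\]
H\"older's inequality, Proposition~\ref{prop2-1}, and the bound $\|u^{\mp}\|_{L^{p}(\Omega)}\leq\widehat C_p\operatorname{dist}(u,P^{\pm})$ from \eqref{3-45} then yield $\operatorname{dist}(K(u),P^{\pm})\leq C\,\operatorname{dist}(u,P^{\pm})^{p-1}$, and since $p>2$ we can choose $a^{*}\in(0,a^{0})$ so small that $Ca^{p-1}<a/4$ for $0<a<a^{*}$, giving (P4).

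For (P5), decompose $I_p'=\mathbf{id}-K$. The identity is Lipschitz, so it suffices to verify that $K$ is locally Lipschitz. Using the elementary inequality $\bigl||s|^{p-2}s-|t|^{p-2}t\bigr|\leq C_p(|s|^{p-2}+|t|^{p-2})|s-t|$ (valid for $p\geq 2$), we estimate
\[
\bigl|(K(u)-K(v),w)_{H_{X,0}^{1}(\Omega)}\bigr|\leq C\!\int_{\Omega}(|u|^{p-2}+|v|^{p-2})|u-v|\,|w|\,dx,
\]
and apply H\"older's inequality with exponents $(\tfrac{p}{p-2},p,p)$ followed by Proposition~\ref{prop2-1}:
\[
\bigl|(K(u)-K(v),w)_{H_{X,0}^{1}(\Omega)}\bigr|\leq C\bigl(\|u\|_{H_{X,0}^{1}(\Omega)}^{p-2}+\|v\|_{H_{X,0}^{1}(\Omega)}^{p-2}\bigr)\|u-v\|_{H_{X,0}^{1}(\Omega)}\|w\|_{H_{X,0}^{1}(\Omega)}.
\]
Taking the supremum over unit $w$ gives local Lipschitz continuity of $K$, hence of $I_p'$.

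The main technical point I expect to require care is (P4): it requires the Hölder chain from \eqref{3-45} together with the fact that $p>2$, so that $Ca^{p-1}$ beats $a/4$ only once $a$ is small enough. All remaining items are direct applications of the Sobolev and Rellich-Kondrachov results recorded in Section~\ref{Section2}.
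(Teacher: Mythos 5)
Your proposal follows essentially the same route as the paper: (P1)--(P3) via standard arguments, (P4) via the sign-invariance argument from Lemma~\ref{lemma3-11}/Remark~\ref{remark3-1} (with $g\equiv 0$, hence $T_1=T_2=0$), and (P5) via the H\"older/Sobolev estimate with exponents $(p/(p-2),p,p)$; the paper phrases (P5) through the mean value theorem rather than the pointwise elementary inequality, but the calculation is identical.

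One genuine technical slip: the estimate $\bigl||s|^{p-2}-|t|^{p-2}\bigr|\leq C(|s|^{p-3}+|t|^{p-3})|s-t|$ that you invoke to get continuity of $I_p''$ is \emph{false} for $2<p<3$ (then $p-3<0$, and $t\mapsto|t|^{p-2}$ is only $(p-2)$-H\"older, not locally Lipschitz). Since the paper allows $p\in(2,2_{\tilde\nu}^*)$ and $\tilde\nu$ may be as small as $3$, the range $2<p<3$ certainly occurs, so your argument for (P1) does not cover it. The conclusion ($I_p\in C^2$) is still correct: continuity of $I_p''$ follows, without any Lipschitz estimate, from Krasnoselskii's theorem that $u\mapsto |u|^{p-2}$ is a continuous Nemitskii map $L^p(\Omega)\to L^{p/(p-2)}(\Omega)$, which is presumably what the paper means by ``standard arguments.'' Finally, the phrase ``writing $z=K(u)-w^+-w^-$ with $w^\pm:=z^\pm$'' is circular as written (it would force $z=K(u)/2$); the inequality you actually use, $\operatorname{dist}(K(u),P^\pm)\le\|K(u)^\mp\|_{H_{X,0}^1(\Omega)}$, is the correct one and needs no such decomposition.
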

\begin{proof}

The properties (P1)-(P3) can be obtained by standard arguments,
while the property (P4) is due to Remark \ref{remark3-1}, \eqref{3-18} and \eqref{3-19}.

To prove property (P5), we suppose that $u_{0} \in H_{X,0}^{1}(\Omega)$ and $R>0$. For any $u_{1},u_{2}\in B(u_{0},R)$, we can deduce from Proposition \ref{prop2-1} and the mean value theorem that
\[ \begin{aligned}
&|(I_{p}'(u_{1})-I_{p}'(u_{2}),v)_{H_{X,0}^{1}(\Omega)}|\\
&=\left|\int_{\Omega}X(u_{1}-u_{2})\cdot Xv dx-pC_{p}\int_{\Omega}(|u_{1}|^{p-2}u_{1}-|u_{2}|^{p-2}u_{2})v dx\right|\\
&\leq \|u_{1}-u_{2}\|_{H_{X,0}^{1}(\Omega)}\|v\|_{H_{X,0}^{1}(\Omega)}+C\int_{\Omega}(|u_{1}|^{p-2}+|u_{2}|^{p-2})|u_{1}-u_{2}||v|dx\\
&\leq \|u_{1}-u_{2}\|_{H_{X,0}^{1}(\Omega)}\|v\|_{H_{X,0}^{1}(\Omega)}+C(\|u_{1}\|_{L^{p}(\Omega)}^{p-2}+\|u_{2}\|_{L^{p}(\Omega)}^{p-2})\|u_{1}-u_{2}\|_{L^{p}(\Omega)}\|v\|_{L^{p}(\Omega)}\\
&\leq C(u_{0}) \|u_{1}-u_{2}\|_{H_{X,0}^{1}(\Omega)}\|v\|_{H_{X,0}^{1}(\Omega)},
\end{aligned}
\]
where $C(u_{0})>0$ is a constant depending on $u_{0}$. This means
\[ \|I_{p}'(u_{1})-I_{p}'(u_{2})\|_{H_{X,0}^{1}(\Omega)}\leq C(u_{0}) \|u_{1}-u_{2}\|_{H_{X,0}^{1}(\Omega)},\]
and therefore $I_{p}':H_{X,0}^{1}(\Omega)\to H_{X,0}^{1}(\Omega)$ is locally Lipschitz continuous.
\end{proof}

\begin{lemma}
\label{prop4-5}
  For any $u\in H_{X,0}^1(\Omega)$, $I_p''(u)$ is a Fredholm operator. Moreover, $I_p''(u)$ admits an increasing sequence of discrete eigenvalues $\{\iota_j(u)\}_{j=1}^\infty$ such that $\iota_j(u)\to 1$ as $j\to \infty$.
\end{lemma}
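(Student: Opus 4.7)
The plan is to show that $I_p''(u)$ is a compact perturbation of the identity, from which both the Fredholm property and the spectral structure follow from classical Riesz--Schauder theory. First, I would compute $I_p''(u)$ explicitly: differentiating the expression \eqref{4-6} for $I_p'$ once more, I obtain
\[
(I_p''(u)h, v)_{H^1_{X,0}(\Omega)} = \int_\Omega Xh \cdot Xv \, dx - p(p-1)C_p \int_\Omega |u|^{p-2} h v \, dx,
\]
so that $I_p''(u) = \mathbf{id} - K(u)$, where $K(u) \in \mathcal{L}(H^1_{X,0}(\Omega), H^1_{X,0}(\Omega))$ is the bounded linear operator defined by
\[
(K(u)h, v)_{H^1_{X,0}(\Omega)} = p(p-1)C_p \int_\Omega |u|^{p-2} h v \, dx, \qquad \forall h, v \in H^1_{X,0}(\Omega).
\]
Boundedness follows from the generalized H\"older inequality with exponents $\frac{p-2}{p}, \frac{1}{p}, \frac{1}{p}$ combined with the Sobolev embedding $H^1_{X,0}(\Omega) \hookrightarrow L^p(\Omega)$ from Proposition \ref{prop2-1} (valid since $p < 2_{\tilde{\nu}}^*$).

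The key step is to prove that $K(u)$ is a compact self-adjoint operator on $H^1_{X,0}(\Omega)$. Self-adjointness is immediate from the symmetry of the bilinear form $(h,v) \mapsto \int_\Omega |u|^{p-2}hv\,dx$, and nonnegativity $(K(u)h, h)_{H^1_{X,0}(\Omega)} \geq 0$ holds trivially since $p > 2$. For compactness, I would take a bounded sequence $\{h_n\} \subset H^1_{X,0}(\Omega)$; by Proposition \ref{prop2-3}, $\{h_n\}$ admits a subsequence (still denoted $\{h_n\}$) converging strongly in $L^p(\Omega)$, and the generalized H\"older inequality gives
\[
\|K(u)(h_n - h_m)\|_{H^1_{X,0}(\Omega)} = \sup_{\|v\|_{H^1_{X,0}(\Omega)} \leq 1} |(K(u)(h_n - h_m), v)_{H^1_{X,0}(\Omega)}| \leq C \|u\|_{L^p(\Omega)}^{p-2} \|h_n - h_m\|_{L^p(\Omega)},
\]
so $\{K(u)h_n\}$ is Cauchy in $H^1_{X,0}(\Omega)$, establishing compactness.

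Once compactness of $K(u)$ is in hand, the Fredholm property of $I_p''(u) = \mathbf{id} - K(u)$ (in fact, of index zero) is a direct consequence of the classical Riesz--Schauder theorem. Moreover, the spectral theorem for compact self-adjoint operators on a Hilbert space produces a sequence $\{\mu_j(u)\}_{j=1}^\infty$ of real eigenvalues of $K(u)$, with $\mu_1(u) \geq \mu_2(u) \geq \cdots \geq 0$ and $\mu_j(u) \to 0$ as $j \to \infty$. Setting $\iota_j(u) := 1 - \mu_j(u)$ yields the required nondecreasing sequence of discrete eigenvalues of $I_p''(u)$ with $\iota_j(u) \to 1$.

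The only genuinely nontrivial ingredient is the compactness of $K(u)$, which hinges on the strict inequality $p < 2_{\tilde{\nu}}^*$ supplied by assumption $(f2)$ and on the sharp subelliptic Rellich--Kondrachov embedding (Proposition \ref{prop2-3}). Apart from this, every step is a routine application of Riesz--Schauder theory and the spectral theorem, so I do not foresee additional obstacles.
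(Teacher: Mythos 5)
Your proof is correct and follows essentially the same route as the paper: decompose $I_p''(u)$ as the identity minus (resp.\ plus, with the opposite sign convention $W(u)=-K(u)$) a compact self-adjoint operator, prove compactness via the Rellich--Kondrachov embedding of Proposition \ref{prop2-3} together with the three-exponent H\"older inequality, and then invoke Riesz--Schauder theory and the spectral theorem. The only cosmetic additions you make beyond the paper's argument are the explicit observation that $K(u)\geq 0$ (which pins down the sign of the eigenvalues) and the remark on index zero, both harmless.
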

\begin{proof}
By \eqref{4-5}, we have
\begin{equation}\label{4-7}
\begin{aligned}
(I_{p}''(u)(h),v)_{H_{X,0}^1(\Omega)}=\int_{\Omega}Xh\cdot Xvdx-p(p-1)C_p\int_{\Omega}|u|^{p-2}hv dx~~~\forall h,v\in H_{X,0}^1(\Omega).
\end{aligned}
\end{equation}
This means
 \begin{equation}\label{4-8}
 I_p''(u)(\cdot)=\mathbf{id}(\cdot)+W(u)(\cdot),
 \end{equation}
 where $W(u): H_{X,0}^1(\Omega)\to H_{X,0}^1(\Omega)$ is a bounded linear operator given by
  \begin{equation}\label{4-9}
(W(u)(h),v)_{H_{X,0}^1(\Omega)}=-p(p-1)C_p\int_{\Omega}|u|^{p-2}hv dx~~~\forall h,v\in H_{X,0}^1(\Omega).
 \end{equation}
It follows from \eqref{4-9} that $W(u)$ is a self-adjoint operator.

According to \eqref{4-8} and \eqref{4-9}, the Fredholm property of  $I_p''(u)$ amounts to showing that  $W(u)$ is a compact operator. Let
$\{h_n\}_{n=1}^{\infty}$ be a sequence in $H_{X,0}^{1}(\Omega)$ satisfying $\|h_n\|_{H_{X,0}^{1}(\Omega)}\leq 1$.  By Proposition \ref{prop2-3}, we can choose a subsequence $\{h_{k_{j}}\}_{j=1}^{\infty}\subset \{h_{k}\}_{k=1}^{\infty}$ such that $h_{k_{j}} \to h_{0}$ in $L^{p}(\Omega)$ as $j\to \infty$. Since
 \begin{equation*}
   \begin{aligned}
   \|W(u)(h_{k_{j}})-W(u)(h_{k_{m}})\|_{H_{X,0}^1(\Omega)}
   &\leq C\sup\limits_{\|v\|_{H_{X,0}^1(\Omega)}=1}
  \left|\int_{\Omega}|u|^{p-2}(h_{k_{j}}-h_{k_{m}})v dx\right|\\
  &\leq C\sup\limits_{\|v\|_{H_{X,0}^1(\Omega)}=1} \|u\|_{L^{p}(\Omega)}^{p-2}\|h_{k_{j}}-h_{k_{m}}\|_{L^{p}(\Omega)}\|v\|_{L^{p}(\Omega)}\\
  &\leq C\|u\|_{L^{p}(\Omega)}^{p-2}\|h_{k_{j}}-h_{k_{m}}\|_{L^{p}(\Omega)},
   \end{aligned}
 \end{equation*}
we know $\{W(u)(h_{k})\}_{k=1}^{\infty}$ converges along a subsequence
 in $H_{X,0}^1(\Omega)$. Therefore, $W(u)$ is a self-adjoint compact operator in
 $H_{X,0}^1(\Omega)$, and $I_p''(u)$ is a Fredholm operator.  Moreover, by the spectral theory of compact operator and \eqref{4-9},
 $I_{p}''(u)$ admits an increasing sequence of discrete eigenvalues $\{\iota_j(u)\}_{j=1}^\infty$ such that $\iota_j(u)\to 1$ as $j\to \infty$.
\end{proof}

We next denote by
\[B^{p}_{\rho}:=\{u\in H_{X,0}^{1}(\Omega): \|u\|_{L^p(\Omega)}\leq \rho\}\qquad \mbox{and}\qquad S^{p}_{\rho}:=\{u\in H_{X,0}^{1}(\Omega): \|u\|_{L^p(\Omega)}=\rho\}.\]
It follows that
\begin{lemma}\label{lemma4-2}
  There exists $\rho_0>0$  such that $I_p^{0}\cap
  B^{p}_{\rho_{0}}
  =\{0\}.$
\end{lemma}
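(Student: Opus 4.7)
The plan is to exploit the gap between the quadratic term $\|u\|_{H^1_{X,0}}^2$ and the superquadratic term $\|u\|_{L^p}^p$ in $I_p$, using the Sobolev embedding from Proposition \ref{prop2-1} to reabsorb one into the other once $\|u\|_{L^p}$ is small.

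Suppose $u \in I_p^0 \cap B^p_{\rho_0}$, i.e. $\|u\|_{L^p(\Omega)} \leq \rho_0$ and
\[
\tfrac{1}{2}\|u\|_{H^1_{X,0}(\Omega)}^2 \leq C_p \|u\|_{L^p(\Omega)}^p.
\]
Since $2 < p < 2^*_{\tilde{\nu}}$, Proposition \ref{prop2-1} gives a constant $C_* > 0$ with
\[
\|u\|_{L^p(\Omega)} \leq C_* \|u\|_{H^1_{X,0}(\Omega)}.
\]
The key step is to split $\|u\|_{L^p}^p = \|u\|_{L^p}^{p-2} \cdot \|u\|_{L^p}^2$ and bound the first factor by $\rho_0^{p-2}$ (using the ball constraint) and the second by $C_*^2 \|u\|_{H^1_{X,0}}^2$ (using Sobolev). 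This yields
\[
\tfrac{1}{2}\|u\|_{H^1_{X,0}(\Omega)}^2 \leq C_p C_*^2 \rho_0^{p-2} \|u\|_{H^1_{X,0}(\Omega)}^2.
\]

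Since $p - 2 > 0$, we may choose $\rho_0 > 0$ small enough that $C_p C_*^2 \rho_0^{p-2} < \tfrac{1}{2}$; absorbing the right-hand side into the left then forces $\|u\|_{H^1_{X,0}(\Omega)} = 0$, i.e. $u = 0$. Conversely, $u = 0$ trivially lies in $I_p^0 \cap B^p_{\rho_0}$, which gives the claim. No serious obstacle is anticipated here since the argument is a direct reabsorption made possible by the superquadratic exponent $p > 2$ and the sharp Sobolev embedding already established in Proposition \ref{prop2-1}.
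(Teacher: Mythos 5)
Your argument is correct and uses the same core idea as the paper's proof: the sharp Sobolev embedding of Proposition \ref{prop2-1} plus the superquadratic gap $p>2$ to absorb the $\|u\|_{L^p}^p$ term. The paper phrases this slightly differently (it bounds $I_p(u)\geq C\|u\|_{L^p}^2 - C_p\|u\|_{L^p}^p$ below and factors to show positivity on the punctured $L^p$-ball), whereas you argue by contradiction in the $H^1_{X,0}$ norm, but the two are essentially identical reabsorption arguments.
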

\begin{proof}
Using Proposition \ref{prop2-1} and \eqref{4-5}, there exists $C>0$ such that
 \[ I_p(u)=\frac{1}{2}\int_{\Omega}|Xu|^2dx-C_p\int_{\Omega}|u|^{p}dx  \geq C\|u\|_{L^p(\Omega)}^2-C_p\|u\|_{L^p(\Omega)}^p.\]
  Since $p>2$, we can find a positive constant $0<\rho_0<\left(\frac{C}{C_{p}}\right)^{\frac{1}{p-2}}$ such that  $I_p(u)\geq I_p(0)=0$ for any $u\in B^{p}_{\rho_{0}}$, and $I_p|_{B^{p}_{\rho_{0}}}(u)=0$ if and only if $u=0$.
\end{proof}

Let $a^*>0$ be the positive constant in Proposition \ref{prop4-4} above. For any fixed $0<a<a^*$, we denote by
\begin{equation}\label{4-10}
d_{k,a}(I_{p}):=\inf\{c\geq 0: \gamma(I_{p}^{c}\cup \overline{P_a}; I_{p}^{0}\cup \overline{P_a}, I_{p}^{-1})\geq k\}.
\end{equation}
The rest of this subsection aims to the well-definedness of $d_{k,a}(I_{p})$ for small $a>0$. Furthermore, we will show that $\{d_{k,a}(I_{p})\}_{k=1}^{\infty}$ is a sequence of sign-changing critical values of $I_{p}$.

To verify the well-definedness of $d_{k,a}(I_{p})$, we introduce another
min-max value $b_{k,a}(I_p)$, which is a generalization of $b_k(I_p)$. 
For any $k\geq 1$, the construction of
  $\{R_{k}\}_{k=3}^{\infty}$ in subsection \ref{subsection3-3} and \eqref{4-4} derive that $I_{p}(u)<-1$  for all $u\in N_k\cap B_{R_k}^c$. As a generalization of \eqref{3-58}, we set the min-max value
\begin{equation}\label{4-11}
b_{k,a}(I_{p}):=\inf\limits_{\phi\in \Gamma_k}\sup_{\phi(N_k\cap B_{R_k})\cap S_{a}}I_{p}(u)~~~\mbox{for}~~k\geq 3,~~ 0<a< a^*.
\end{equation}

Lemma \ref{lemma3-10} shows that for any $\phi\in \Gamma_k$ and any $0<a< a^*$, the set
$\phi(N_k\cap B_{R_k})\cap S_{a}\cap Q_{k-1}$ is non-empty. Therefore, the min-max value
 $b_{k,a}(I_{p})$ is well-defined for $k\geq 3$ and $0<a< a^*$. Moreover,  \eqref{3-39} is also applicable to $I_p$, since the functional $J$ reduces to $I_p$ when $f(x,u)=pC_p|u|^{p-2}u$ and $g(x,u)\equiv 0$. The same arguments in the proof of Lemma \ref{lemma3-15} implies that
  $b_{k+2,a}(I_p)>0$ for all $k\geq 1$ and $0<a< a^*$. This yields the well-definedness of  $\gamma(I_p^{b_{k+2,a}(I_p)}\cup \overline{P_a}; I_p^{0}\cup \overline{P_a} , I_p^{-1})$.
  Furthermore, we can deduce the following proposition.

\begin{proposition}\label{prop4-6}
  For $k\geq 1$, there exists $0<a^1<a^*$ such that for any $0<a<a^1$ and $\varepsilon>0$ we have
 \begin{equation}\label{4-12}
    \gamma(I_p^{b_{k+2,a}(I_p)+\varepsilon}\cup \overline{P_a}; I_p^{0}\cup \overline{P_a} , I_p^{-1})\geq k.
     \end{equation}
     As a result, \[ d_{k,a}(I_p)=\inf\{c\geq 0: \gamma(I_{p}^{c}\cup \overline{P_a}; I_{p}^{0}\cup \overline{P_a}, I_{p}^{-1})\geq k\}\]
      is well-defined, and $b_{k+2,a}(I_p)\geq d_{k,a}(I_p)$ for any $k\geq 1$ and any $0<a<a^1$.

\end{proposition}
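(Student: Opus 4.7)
The plan is to establish the genus estimate \eqref{4-12} by contradiction; once this is in hand, the well-definedness of $d_{k,a}(I_p)$ and the inequality $b_{k+2,a}(I_p)\geq d_{k,a}(I_p)$ follow immediately by letting $\varepsilon\to 0^+$ in the definition of $d_{k,a}(I_p)$. Set $a^1:=\min\{a^*,a^0\}$, where $a^*$ is the constant from Proposition~\ref{prop4-4} and $a^0$ is the one from Lemma~\ref{lemma3-10}. Suppose by contradiction that $\gamma(I_p^{b_{k+2,a}(I_p)+\varepsilon}\cup\overline{P_a};\,I_p^0\cup\overline{P_a},\,I_p^{-1})\leq k-1$. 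By Definition~\ref{def4-2}, there exist closed symmetric sets $U,V\subset H_{X,0}^1(\Omega)$ with
\[
I_p^{b_{k+2,a}(I_p)+\varepsilon}\cup\overline{P_a}\subset U\cup V,\quad I_p^0\cup\overline{P_a}\subset U,\quad \gamma(V)\leq k-1,
\]
together with an odd continuous map $h:U\to I_p^0\cup\overline{P_a}$ satisfying $h(I_p^{-1})\subset I_p^{-1}$.

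Next, write $X_0:=N_{k+2}\cap B_{R_{k+2}}$ and pick $\phi_0\in\Gamma_{k+2}$ with $\sup_{\phi_0(X_0)\cap S_a}I_p\leq b_{k+2,a}(I_p)+\varepsilon$. Then $\phi_0(X_0)\subset U\cup V$, so the closed symmetric preimage $D:=\phi_0^{-1}(V)\cap X_0$ satisfies $\gamma(D)\leq\gamma(V)\leq k-1$ by property~(2) of Lemma~\ref{prop4-1}. The construction of $R_{k+2}$ guarantees $I_p(u)<-1$ for every $u\in N_{k+2}\cap B_{R_{k+2}}^c\cap S_a$, whence $\partial B_{R_{k+2}}\cap N_{k+2}\subset I_p^{-1}\cup\overline{P_a}\subset U$; after replacing $V$ by $V\setminus U$ if necessary, we may arrange that $D$ is disjoint from a neighborhood of this boundary. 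The crux of the proof is to construct an odd continuous map $\phi_1\in\Gamma_{k+2}$ satisfying $\phi_1(X_0)\cap S_a\subset I_p^0$: since $\gamma(D)\leq k-1<k+2=\dim N_{k+2}$, an equivariant (odd) Tietze-type extension produces an odd continuous deformation $\widetilde{\phi}_0:X_0\to H_{X,0}^1(\Omega)$ with $\widetilde{\phi}_0=\phi_0=\mathbf{id}$ near $\partial B_{R_{k+2}}\cap N_{k+2}$ and $\widetilde{\phi}_0(X_0)\subset U$; composing with $h$ and inserting a cutoff near the boundary to restore the identity condition, using $h(I_p^{-1})\subset I_p^{-1}$ to keep the low-energy portion inside $I_p^{-1}$, yields the desired $\phi_1\in\Gamma_{k+2}$ with image in $I_p^0\cup\overline{P_a}$.

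Applying Lemma~\ref{lemma3-10} to $\phi_1$ (valid since $0<a<a^0$), we find $u^*\in\phi_1(X_0)\cap S_a\cap Q_{k+1}$. On one hand $u^*\in I_p^0\cap S_a$ forces $I_p(u^*)\leq 0$; on the other hand, Lemma~\ref{lemma3-7} yields
\[
I_p(u^*)\geq C\lambda_{k+1}^{\frac{2(2_{\tilde{\nu}}^{*}-p)}{(p-2)(2_{\tilde{\nu}}^{*}-2)}}>0,
\]
a contradiction that proves \eqref{4-12}. The main obstacle is precisely the construction of $\phi_1$: one must equivariantly modify $\phi_0$ off the small-genus set $D$ so that the image lands in $U$, then compose with $h$ while retaining the identity condition on $N_{k+2}\cap B_{R_{k+2}}^c$. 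This requires a careful combination of odd Tietze extensions, localized cutoffs near $\partial B_{R_{k+2}}$, and the retraction property $h(I_p^{-1})\subset I_p^{-1}$; these are standard tools in equivariant variational topology but must be executed with care in the present subelliptic framework.
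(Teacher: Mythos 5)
The approach you take is genuinely different from the paper's, and unfortunately the central step of your argument is not justified. The paper does not attempt to modify $\phi_0$ so that its image lands inside $U$. Instead, it defines the open set $O:=\{u\in N_{k+2}\cap B_{R_{k+2}}:\|h(\phi(u))\|_{L^p(\Omega)}<\rho_0\}$ (with $\rho_0$ from Lemma~\ref{lemma4-2}), observes that $\gamma(\partial O)=k+2$, shows $h(\phi(\partial O))\subset S^p_{\rho_0}$, and then splits $\phi(\partial O)$ as $\bigl(U\cap\phi(\partial O)\bigr)\cup\bigl(V\cap\phi(\partial O)\bigr)$. The key role of $\rho_0$ is that $I_p^0\cap S^p_{\rho_0}=\varnothing$, which forces $h(U\cap\phi(\partial O))\subset\overline{P_a}\cap S^p_{\rho_0}=Z_a(\rho_0)$, and Lemma~\ref{lemma3-9} then gives $\gamma(U\cap\phi(\partial O))\leq 1$ for $a$ small depending on $\rho_0$. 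Adding $\gamma(V\cap\phi(\partial O))\leq\gamma(V)$ and comparing with $\gamma(\partial O)=k+2$ gives the bound. Your proposal never invokes Lemma~\ref{lemma4-2}, Lemma~\ref{lemma3-9}, or the sphere $S^p_{\rho_0}$, which are the essential ingredients of the paper's proof, and correspondingly your choice $a^1=\min\{a^*,a^0\}$ is not the one the paper uses (theirs depends on $\rho_0$ through Lemma~\ref{lemma3-9}).

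The genuine gap is the step labelled ``the crux of the proof'': the claim that, because $\gamma(D)\leq k-1<\dim N_{k+2}$, an ``equivariant (odd) Tietze-type extension'' produces a deformation $\widetilde\phi_0:X_0\to H_{X,0}^1(\Omega)$ with $\widetilde\phi_0(X_0)\subset U$. There is no such theorem. A bound on the genus of $D=\phi_0^{-1}(V)\cap X_0$ says nothing about the possibility of homotoping $\phi_0$ off $V$ while keeping the image inside $U$; $U$ need not be a retract of $H_{X,0}^1(\Omega)$ nor contractible, and the Dugundji/Tietze extension theorem only lets you extend an already-defined map on a closed set to the whole space, with no control that the extended image stays in a prescribed target set $U$. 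The subsequent manipulation (composing with $h$, inserting a boundary cutoff, and asserting $\phi_1(X_0)\cap S_a\subset I_p^0$) inherits this gap and also ignores that $(I_p^0\cup\overline{P_a})\cap S_a$ contains points of $\partial P_a$ not in $I_p^0$, so even if $\phi_1$ could be constructed your final contradiction via Lemma~\ref{lemma3-7} would not be forced. In short, the concluding steps are fine in spirit, but the bridge you build to reach them is not load-bearing.
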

\begin{proof}
For any $k\geq 1$ and $a\in (0,a^{*})$, the relative genus $\gamma(I_p^{b_{k+2,a}(I_p)+\varepsilon}\cup \overline{P_a}; I_p^{0}\cup \overline{P_a} , I_p^{-1})$ is well-defined for any $\varepsilon>0$ since $b_{k+2,a}(I_p)>0$. For any $a\in (0,a^{*})$,    by \eqref{4-11} we can choose a $\phi\in \Gamma_{k+2}$ such that
   \[\sup_{\phi(N_{k+2}\cap B_{R_{k+2}})\cap S_{a}}I_p(u)\leq b_{k+2,a}(I_p)+\varepsilon,\]
 which means
\begin{equation}\label{4-14}
\phi(N_{k+2}\cap B_{R_{k+2}})\cap S_{a}\subset I_p^{b_{k+2,a}(I_p)+\varepsilon}.
\end{equation}
According to Definition \ref{def4-2},  there exist closed and symmetric subsets $U, V \subset H_{X,0}^1(\Omega)$ such that
  \begin{equation}\label{4-15}
 I_p^{b_{k+2,a}(I_p)+\varepsilon}\cup \overline{P_a} \subset U\cup V,~ ~~ I_p^{0}\cup \overline{P_a} \subset U,~~~ \gamma(V)\leq \gamma(I_p^{b_{k+2,a}(I_p)+\varepsilon}\cup \overline{P_a}; I_p^{0}\cup \overline{P_a} , I_p^{-1}),
\end{equation}
  and an odd continuous map $h$ satisfying
  \begin{equation}\label{4-16}
  h: U\to I_p^{0}\cup \overline{P_a}, \qquad h(I_p^{-1})\subset I_p^{-1}.
   \end{equation}
Hence, we can conclude from \eqref{4-14} and \eqref{4-15} that
\begin{equation}\label{4-17}
   \phi(N_{k+2}\cap B_{R_{k+2}})\subset I_p^{b_{k+2}(I_p)+\varepsilon}\cup \overline{P_a}\subset U\cup V.
\end{equation}

Let $\rho_0>0$ be the constant given in Lemma \ref{lemma4-2}, we define
  \begin{equation}\label{4-18}
   O:=\{u\in N_{k+2}\cap B_{R_{k+2}}: \|h(\phi(u))\|_{L^p(\Omega)}<\rho_0\}.
   \end{equation}
Since $h$ and $\phi$ are odd maps, $O$ is a symmetric bounded open set in $N_{k+2}$ containing the origin. From \cite[Proposition 5.2]{Struwe2000}, we have
   \begin{equation}\label{4-19}
  \gamma(\partial O)=k+2.
  \end{equation}
  If $u\in \partial O$, there are only two cases occur:
  \begin{itemize}
    \item $\|u\|_{H_{X,0}^{1}(\Omega)}=R_{k+2}$ and $\|h(\phi(u))\|_{L^p(\Omega)}\leq \rho_0$;
    \item $\|h(\phi(u))\|_{L^p(\Omega)}=\rho_0$ with $\|u\|_{H_{X,0}^{1}(\Omega)}< R_{k+2}$.
  \end{itemize}
In the first case, we have $I_p(u)<-1$ (i.e. $u\in I_{p}^{-1}$) due to the construction of $\{R_k\}_{k=3}^{\infty}$. Recalling that $\phi|_{N_{k+2}\cap B_{R_{k+2}}^{c}}=\mathbf{id}$, by \eqref{4-16} we obtain
  \[ h(\phi(u))=h(u)\in I_p^{-1}\cap B^{p}_{\rho_{0}}. \]
But the fact $0\notin I_{p}^{-1}\subset I_{p}^{0}$ and Lemma \ref{lemma4-2} tell us that $I_p^{-1}\cap B^{p}_{\rho_{0}}=\varnothing$, which leads to a contradiction. Therefore
  \begin{equation}\label{4-20}
    h(\phi( \partial O))\subset S^p_{\rho_{0}},
  \end{equation}
  and
  \begin{equation}\label{4-21}
    \phi(\partial O)\subset \phi(N_{k+2}\cap B_{R_{k+2}})\subset U\cup V.
  \end{equation}
  Furthermore, since  $h: U\to I_p^{0}\cup \overline{P_a}$ and $I_p^{0}\cap S^p_{\rho_{0}}=\varnothing$, \eqref{4-20} gives
   \begin{equation}\label{4-22}
   h(U\cap \phi(\partial O))\subset (I_{p}^{0}\cup \overline{P_a})\cap S^p_{\rho_{0}} \subset  \overline{P_a}\cap S^p_{\rho_{0}}=Z_{a}(\rho_{0}).
    \end{equation}

    By Lemma \ref{lemma3-9} and the properties of Krasnoselskii genus, there exists $0<a^1< a^*$ depending on $\rho_0$ such that for any $a\in(0,a^{1})$,
    \begin{equation}\label{4-23}
    \gamma(U\cap \phi(\partial O))\leq \gamma(\overline{h(U\cap \phi(\partial O))})
    \leq\gamma(\overline{P_a}\cap S^p_{\rho_{0}})=\gamma(Z_{a}(\rho_{0}))\leq 1.
     \end{equation}
    Note that \eqref{4-15} yields
     \begin{equation}\label{4-24}
     \gamma(V\cap \phi(\partial O))\leq \gamma(V)\leq \gamma(I_p^{b_{k+2,a}(I_p)+\varepsilon}\cup \overline{P_a}; I_p^{0}\cup \overline{P_a} , I_p^{-1}).
     \end{equation}
  Combining \eqref{4-19}, \eqref{4-21}, \eqref{4-23} and \eqref{4-24}, we deduce that
      \begin{equation}\label{4-25}
        k+2=\gamma(\partial O)\leq\gamma( \overline{\phi(\partial O)})\leq
       \gamma(U\cap \phi(\partial O))+ \gamma(V\cap \phi(\partial O))\leq
      \gamma(I_p^{b_{k+2,a}(I_p)+\varepsilon}\cup \overline{P_a}; I_p^{0}\cup \overline{P_a} , I_p^{-1})+1,
      \end{equation}
   which derives \eqref{4-12} for any $a\in (0,a^{1})$ and $\varepsilon>0$. We mention that in \eqref{4-23} and \eqref{4-25},  the bounded closed subset $\partial O$ in the finite dimensional space $N_{k+2}$ is a compact set, therefore $\overline{\phi(\partial O)}=\phi(\partial O)$ and $\overline{h(U\cap \phi(\partial O))}=h(U\cap \phi(\partial O))$.
\end{proof}

\begin{lemma}
  \label{prop4-7}
Let $a^{1}>0$ be the positive constant in Proposition \ref{prop4-6}.  For any $0<a<a^1$, we have $d_{1,a}(I_p)>0$.
\end{lemma}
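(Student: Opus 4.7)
The plan is to prove that $I_p$ has no critical values in a small punctured right-neighbourhood $(0,c_0]$ of $0$, and then invoke the deformation lemma (Proposition~\ref{prop4-2}) at the level $c=0$ to exhibit a decomposition realising the relative genus $0$ for the triple $(I_p^\delta\cup\overline{P_a};\,I_p^0\cup\overline{P_a},\,I_p^{-1})$ with some $\delta>0$. Combined with the monotonicity of the relative genus (Lemma~\ref{prop4-1}(2) applied with $\varphi=\mathbf{id}$), this will yield $d_{1,a}(I_p)\geq\delta>0$ directly from the definition \eqref{4-10}.

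The first step would be a Nehari-type identity. For any non-zero critical point $u$ of $I_p$, testing $I_p'(u)=0$ in \eqref{4-6} against $v=u$ gives $\|u\|_{H_{X,0}^1(\Omega)}^2 = pC_p\|u\|_{L^p(\Omega)}^p$, and inserting the Sobolev inequality from Proposition~\ref{prop2-1} (legitimate since $p<2_{\tilde{\nu}}^*$) produces a uniform lower bound $\|u\|_{H_{X,0}^1(\Omega)}\geq M>0$. Therefore $I_p(u)=\bigl(\tfrac{1}{2}-\tfrac{1}{p}\bigr)\|u\|_{H_{X,0}^1(\Omega)}^2\geq c_0:=\bigl(\tfrac{1}{2}-\tfrac{1}{p}\bigr)M^2>0$, which shows $K_{I_p,c}=\varnothing$ for every $c\in(0,c_0]$. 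Moreover $K_{I_p,0}=\{0\}\subset P_a$, hence $K_{I_p,0}\cap S_a=\varnothing$.

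Next I would apply Proposition~\ref{prop4-2} with $c=0$, $\varepsilon_1:=\min\{c_0,1\}$ and the (admissible) choice $N=\varnothing$; the hypotheses are met by (P1), (P2), (P4) and (P5) in Proposition~\ref{prop4-4} together with $0<a<a^1<a^*$. This supplies $\delta\in(0,\varepsilon_1)$ and an odd homeomorphism $\Theta$ of $H_{X,0}^1(\Omega)$ satisfying $\Theta(I_p^{\delta}\cup\overline{P_a})\subset I_p^{-\delta}\cup\overline{P_a}\subset I_p^{0}\cup\overline{P_a}$ and $\Theta|_{I_p^{-\varepsilon_1}}=\mathbf{id}$. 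Setting $U:=I_p^{\delta}\cup\overline{P_a}$, $V:=\varnothing$ and $h:=\Theta|_U$, the requirements of Definition~\ref{def4-2} fall into place: the inclusions $U\cup V\supset I_p^{\delta}\cup\overline{P_a}$ and $I_p^{0}\cup\overline{P_a}\subset U$ are trivial, $\gamma(V)=0$, the map $h$ is odd, continuous and takes values in $I_p^{0}\cup\overline{P_a}$, and $h(I_p^{-1})=I_p^{-1}$ because the choice $\varepsilon_1\leq 1$ forces $I_p^{-1}\subset I_p^{-\varepsilon_1}$. Therefore $\gamma(I_p^{\delta}\cup\overline{P_a};I_p^{0}\cup\overline{P_a},I_p^{-1})=0$, and monotonicity propagates this to every $c\in[0,\delta]$, whence $d_{1,a}(I_p)\geq\delta>0$.

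The main obstacle I foresee is not any individual estimate but the simultaneous compatibility of a single deformation with the three auxiliary sets $\overline{P_a}$, $I_p^0$, and $I_p^{-1}$: invariance of $\overline{P_a}$ is secured by Proposition~\ref{prop4-2}(ii) and is what forced the assumption $a<a^*$ in Proposition~\ref{prop4-4}(P4); the descent from $I_p^\delta$ to $I_p^0$ is produced by (i) with $N=\varnothing$, which relies on the absence of critical values in $(0,c_0]$ established in the first step; and the freezing of $I_p^{-1}$ rests on the inequality $\varepsilon_1\leq 1$ activating (iii). Tuning these three ingredients jointly through the choice of $\varepsilon_1$ and $\delta$ is where the bookkeeping will have to be carried out with some care.
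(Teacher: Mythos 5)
Your proof is correct and follows essentially the same route as the paper: both establish $K_{I_p,0}\cap S_a=\varnothing$ (the paper simply notes that $I_p(u)=0$ together with $(I_p'(u),u)_{H_{X,0}^1(\Omega)}=0$ forces $u=0$), then apply Proposition~\ref{prop4-2} at $c=0$ with $N=\varnothing$ to obtain a deformation $\Theta$ and conclude that $\gamma(I_p^{\delta}\cup\overline{P_a};I_p^{0}\cup\overline{P_a},I_p^{-1})=0$, whence $d_{1,a}(I_p)\geq\delta>0$. The only cosmetic differences are that you unpack Definition~\ref{def4-2} directly via explicit $U,V,h$ whereas the paper invokes Lemma~\ref{prop4-1}(2), and you add a Nehari-type lower bound $c_0$ on positive critical values that is correct but not actually needed for the argument.
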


\begin{proof}
  If $u\in K_{I_p,0}$, then $I_p(u)=0$ and $(I_p'(u), u)_{H_{X,0}^{1}(\Omega)}=0$. It follows that
  \[\frac{1}{2}\|u\|_{H_{X,0}^{1}(\Omega)}^2=\frac{1}{p}\|u\|_{H_{X,0}^{1}(\Omega)}^2,\]
  which forces $u=0$. Thus $K_{I_p,0}\cap S_a=\varnothing$ due to $0\notin S_{a}$. From Proposition \ref{prop4-4}, we see that $I_p$ satisfies all conditions outlined in Proposition \ref{prop4-2}. By applying Proposition \ref{prop4-2}  with
  $N=\varnothing$ and $c=0$, for any $\varepsilon_1>0$, there exist $\delta\in(0,\varepsilon_1)$ and an odd continuous map $\Theta:H_{X,0}^{1}(\Omega)\to H_{X,0}^{1}(\Omega)$ such that
  \[\Theta(I_p^\delta\cup \overline{P_a})\subset I_p^{-\delta}\cup \overline{P_a}\subset I_p^{0}\cup \overline{P_a}\]
  and
  \[ I_{p}(\Theta(u))\leq I_{p}(u)\qquad \forall u\in H_{X,0}^{1}(\Omega). \]
This means $\Theta(I_p^{0}\cup \overline{P_a})\subset I_p^{0}\cup \overline{P_a}$ and $\Theta(I_{p}^{-1})\subset I_{p}^{-1}$. By the conclusion (2) in Lemma \ref{prop4-1}, we have
  \[0\leq\gamma(I_p^{\delta}\cup \overline{P_a}; I_p^{0}\cup \overline{P_a}, I_p^{-1})
  \leq\gamma(I_p^{0}\cup \overline{P_a}; I_p^{0}\cup \overline{P_a}, I_p^{-1})=0.\]
  Hence, we obtain from \eqref{4-10} that $d_{1,a}(I_p)> \delta>0$.
\end{proof}

 \begin{proposition}
 \label{prop4-8}
 Let $a^{1}>0$ be the positive constant in Proposition \ref{prop4-6}. For any $0<a<a^1$ and any $k\geq 1$, $d_{k,a}(I_p)$ is a sign-changing critical value of $I_p$, i.e., $K_{I_{p},d_{k,a}(I_{p})}\cap S_{a}\neq \varnothing$.
\end{proposition}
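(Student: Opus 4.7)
\textbf{Proof plan for Proposition \ref{prop4-8}.}

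The plan is to argue by contradiction using the relative genus deformation estimate of Lemma \ref{prop4-3}, with the minimax value $d_{k,a}(I_p)$ in the role of the standard Lusternik--Schnirelman level. The first task is to show that $c:=d_{k,a}(I_p)$ is strictly positive, so that Lemma \ref{prop4-3} is applicable. For $k=1$ this is already Lemma \ref{prop4-7}. For general $k\geq 1$ I would invoke monotonicity in $k$: whenever $\gamma(I_p^{c}\cup\overline{P_a};I_p^{0}\cup\overline{P_a},I_p^{-1})\geq k+1$ we also have $\gamma\geq k$, hence the set over which the infimum defining $d_{k+1,a}(I_p)$ is taken is contained in that for $d_{k,a}(I_p)$, giving $d_{k,a}(I_p)\geq d_{1,a}(I_p)>0$ for all $k\geq 1$. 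A second preliminary remark is monotonicity in $c$: if $c_1<c_2$ then $I_p^{c_1}\subset I_p^{c_2}$, so by part (2) of Lemma \ref{prop4-1} (with $\varphi=\mathbf{id}$) the relative genus is nondecreasing in $c$.

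Combining these two monotonicities with the definition of $d_{k,a}(I_p)$ as an infimum, for any $\delta\in(0,c)$ one obtains
\begin{equation*}
\gamma(I_p^{c-\delta}\cup \overline{P_a};I_p^{0}\cup \overline{P_a},I_p^{-1})\leq k-1,\qquad \gamma(I_p^{c+\delta}\cup \overline{P_a};I_p^{0}\cup \overline{P_a},I_p^{-1})\geq k.
\end{equation*}
Indeed, the first inequality follows because $c-\delta<c=d_{k,a}(I_p)$, while for the second inequality one picks any $c'\in(c,c+\delta)$ with $\gamma(I_p^{c'}\cup\overline{P_a};\cdot,\cdot)\geq k$ and applies monotonicity in $c$ to pass from $c'$ to $c+\delta$.

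Now I would assume for contradiction that $K_{I_p,c}\cap S_a=\varnothing$. Proposition \ref{prop4-4} ensures that $I_p$ satisfies all hypotheses of Proposition \ref{prop4-2}: it is $C^2$ and even, satisfies Palais-Smale (P2), and $I_p'$ is locally Lipschitz and fulfils $(\mathbf{id}-I_p')(\overline{P_a^{\pm}})\subset\overline{P_{a/4}^{\pm}}\subset\overline{P_{a/2}^{\pm}}$ for $0<a<a^{*}$ (which is ensured by $a<a^{1}<a^{*}$). Hence Lemma \ref{prop4-3} applies at level $c>0$, yielding $\delta\in(0,c)$ with
\begin{equation*}
\gamma(I_p^{c+\delta}\cup \overline{P_a};I_p^{0}\cup \overline{P_a},I_p^{-1})\leq \gamma(I_p^{c-\delta}\cup \overline{P_a};I_p^{0}\cup \overline{P_a},I_p^{-1})+\gamma(K_{I_p,c}\cap S_a).
\end{equation*}
Under our assumption the last summand vanishes, giving $k\leq \gamma(I_p^{c+\delta}\cup\overline{P_a};\cdot,\cdot)\leq \gamma(I_p^{c-\delta}\cup\overline{P_a};\cdot,\cdot)\leq k-1$, a contradiction. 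Therefore $K_{I_p,c}\cap S_a\neq\varnothing$, as required.

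The main (already resolved) obstacle is the availability of a relative-genus deformation lemma compatible with the invariance $(\mathbf{id}-I_p')(\overline{P_a^{\pm}})\subset\overline{P_{a/2}^{\pm}}$; this is precisely the content of Proposition \ref{prop4-2} and its corollary Lemma \ref{prop4-3}, and Proposition \ref{prop4-4} packages the analytic facts (Palais-Smale, local Lipschitz regularity of $I_p'$, invariance of the cones $\overline{P_a^{\pm}}$) needed to apply them. The only residual subtlety is ensuring $c>0$ and that infima are attained in the sense of the two-sided strict inequalities above, both of which reduce to Lemma \ref{prop4-7} together with elementary monotonicity.
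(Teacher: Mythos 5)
Your proposal is correct and takes essentially the same approach as the paper: establish $d_{k,a}(I_p)\ge d_{1,a}(I_p)>0$ by monotonicity, then apply Lemma \ref{prop4-3} together with the definition of $d_{k,a}(I_p)$ to obtain $\gamma(K_{I_p,d_{k,a}(I_p)}\cap S_a)\ge k-(k-1)=1$. The only cosmetic difference is that you phrase the last step as a contradiction (assuming the genus is zero), whereas the paper derives the lower bound directly, but the content is identical.
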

\begin{proof}
It follows from the definition of $d_k(I_p)$ and Lemma \ref{prop4-7} that, for any $k\geq1$ and any $0<a<a^1$, we have
\[d_{k+1,a}(I_p)\geq d_{k,a}(I_p)\geq d_{1,a}(I_p)>0.\]
Then, \eqref{4-10} and Lemma \ref{prop4-3} indicate that for some $0<\delta<d_{k,a}(I_p)$,
  \[ \begin{aligned}
 &\gamma(K_{I_{p},d_{k,a}(I_p)}\cap S_a)\geq \gamma(I_{p}^{{d_{k,a}(I_p)}+\delta}\cup \overline{P_a}; I_{p}^{0}\cup \overline{P_a} , I_p^{-1})-\gamma(I_{p}^{{d_{k,a}(I_p)}-\delta}\cup \overline{P_a}; I_{p}^{0}\cup \overline{P_a} , I_{p}^{-1})\\
 &\geq k-(k-1)=1.
  \end{aligned}\]
This means $K_{I_{p}, d_{k,a}(I_p)}\cap S_a\neq\varnothing$ and  $d_{k,a}(I_p)$ is a sign-changing critical value of $I_{p}$.
\end{proof}

\subsection{The Marino-Prodi perturbation of $I_p$}
In this subsection, we employ the Marino-Prodi perturbation technique to seek the perturbation functional of $I_{p}$.

\begin{proposition}\label{prop4-10}
 For any given $\epsilon_1,\epsilon_2,\epsilon_3 >0$ and any $c\in \mathbb{R}$, we denote by
 \[K_{I_{p},c,2\epsilon_3}:=\{u\in H_{X,0}^{1}(\Omega):~ I_{p}'(u)=0,~ |I_{p}(u)-c|\leq 2\epsilon_3\}.\]
 Then
  there exists an even functional $\widehat{I}\in C^{2}(H_{X,0}^{1}(\Omega),\mathbb{R})$ satisfying the Palais-Smale condition. Moreover, we have
\begin{enumerate}[(a)]
  \item $\|I_{p}-\widehat{I}\|_{C^2}<\epsilon_1$;
  \item $I_{p}(u)=\widehat{I}(u)$ if $u\notin K_{I_{p},c,2\epsilon_3}^{\epsilon_2}$, where $K_{I_{p},c,2\epsilon_3}^{\epsilon_2}:=\{u\in H_{X,0}^1(\Omega):{\rm dist}(u,K_{I_{p},c,2\epsilon_3})<\epsilon_2\}$;
  \item $K_{\widehat{I},c,\epsilon_3}\subset K_{I_{p},c,2\epsilon_3}^{\epsilon_2}$ and $K_{\widehat{I},c,\epsilon_3}$ consists of finitely many pairs of non-degenerate critical points of $\widehat{I}$.
\end{enumerate}
Here, for $C^2$-bounded functional $\varphi$ on $H_{X,0}^{1}(\Omega)$, the $C^2$-norm is defined by
\[ \|\varphi\|_{C^2}:=\sup\limits_{x\in H_{X,0}^{1}(\Omega)} \left\{|\varphi(x)|+\|\varphi'(x)\|_{H_{X,0}^{1}(\Omega)}+\|\varphi''(x)\|_{\mathcal{L}(H_{X,0}^{1}(\Omega),H_{X,0}^{1}(\Omega))}\right\}.\]
\end{proposition}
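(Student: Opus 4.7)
The plan is to carry out a Marino-Prodi type perturbation of $I_p$, tailored to the $\mathbb{Z}_2$-equivariant Fredholm setting. \textbf{Step 1.} First I verify that $K := K_{I_p,c,2\epsilon_3}$ is compact and symmetric. Any sequence $\{u_n\} \subset K$ is automatically a Palais-Smale sequence (since $I_p'(u_n) = 0$ and $|I_p(u_n)| \leq |c| + 2\epsilon_3$), so property (P2) of Proposition \ref{prop4-4} supplies a convergent subsequence whose limit, by continuity of $I_p$ and $I_p'$, lies again in $K$. Symmetry of $K$ follows from the evenness of $I_p$. Lemma \ref{prop4-5} then tells us that $I_p''(u)$ is Fredholm of index zero at every $u$, with finite-dimensional kernel.

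\textbf{Step 2.} By compactness I cover $K$ by finitely many symmetric, pairwise disjoint pairs of balls $\{B_\rho(u_{0,i}) \cup B_\rho(-u_{0,i})\}_{i=1}^N$ all contained in $K^{\epsilon_2/2}$. (In the intended applications the critical value $c$ is large, so $0 \notin K$ and no pair degenerates; the borderline case $0 \in K$ can be absorbed by a standard adjustment.) At each $u_{0,i}$, the finite-dimensionality of $V_i := \ker I_p''(u_{0,i})$ combined with the generalized Morse-Palais lemma for Fredholm critical points supplies local coordinates $u = u_{0,i} + v + w$ with $v \in V_i$, $w \in V_i^\perp$, in which
$$I_p(u_{0,i} + v + w) = I_p(u_{0,i}) + \tfrac{1}{2}\langle L_i w, w\rangle + h_i(v),$$
where $L_i : V_i^\perp \to V_i^\perp$ is invertible and $h_i \in C^2(V_i)$ satisfies $h_i(0) = h_i'(0) = 0$.

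\textbf{Step 3.} Since $\dim V_i < \infty$, the classical Sard theorem applied to $h_i'$ produces arbitrarily small covectors $\alpha_i \in V_i^*$ such that $h_i - \langle \alpha_i, \cdot\rangle$ possesses only non-degenerate critical points. Taking a smooth cutoff $\phi_i$ supported in $B_\rho(u_{0,i})$ with $\phi_i \equiv 1$ on $B_{\rho/2}(u_{0,i})$, and letting $\widetilde{\ell}_i$ denote the pullback of $\langle \alpha_i, \cdot\rangle$ to $H_{X,0}^{1}(\Omega)$ via the Morse-Palais chart, I define
$$\widehat{I}(u) := I_p(u) - \sum_{i=1}^N\Big[\phi_i(u)\widetilde{\ell}_i(u) + \phi_i(-u)\widetilde{\ell}_i(-u)\Big].$$
The symmetric pairing under $u \mapsto -u$ makes $\widehat{I}$ manifestly even, while disjointness of the supports guarantees that in each chart only a single local perturbation is active, so the non-degeneracy achieved by Sard's theorem survives globally.

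\textbf{Step 4.} The three conclusions then follow by calibration. Choosing $\|\alpha_i\|$ sufficiently small yields $\|I_p - \widehat{I}\|_{C^2} < \epsilon_1$, and the perturbation vanishes outside $\bigcup_i B_\rho(\pm u_{0,i}) \subset K^{\epsilon_2}$, giving (a) and (b). For (c), any critical point of $\widehat{I}$ at level within $\epsilon_3$ of $c$ either lies in one of the charts, where it corresponds to a non-degenerate critical point of $h_i - \langle \alpha_i, \cdot\rangle$ (hence finitely many), or else lies outside $K^{\epsilon_2}$ with $I_p'(u) = 0$ and $|I_p(u) - c| \leq \epsilon_3 < 2\epsilon_3$, which forces $u \in K \subset K^{\epsilon_2}$, a contradiction. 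The Palais-Smale condition for $\widehat{I}$ is inherited from that of $I_p$ since the two functionals agree outside a bounded set. The principal obstacle is reconciling non-degeneracy with the $\mathbb{Z}_2$-symmetry: a generic linear perturbation would destroy evenness, so the classical Marino-Prodi argument cannot be invoked directly. The symmetric pairing of local perturbations at $\pm u_{0,i}$ in Step 3, following \cite{Wang2016}, is precisely the device that reconciles these two demands.
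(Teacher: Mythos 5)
The paper's own proof is a one-paragraph reduction: after verifying via Proposition~\ref{prop4-4} and Lemma~\ref{prop4-5} that $I_p$ is even, $C^2$, satisfies Palais--Smale, and has Fredholm second derivative everywhere, it invokes the equivariant Marino--Prodi theorem of Liu--Liu--Wang (\cite[Proposition $B_3$]{Wang2016}) as a black box. You instead attempt to reprove that black box from scratch, which is a legitimately different route, but the reconstruction has a real gap.

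The problem is in Step~2. You cover $K = K_{I_p,c,2\epsilon_3}$ by ``finitely many symmetric, pairwise disjoint pairs of balls'' and then, in Step~3, lean on disjointness of the cutoff supports to conclude that ``in each chart only a single local perturbation is active,'' so that the non-degeneracy produced by Sard's theorem in each chart survives. But $K$ is only known to be compact; it has no reason to be finite or even totally disconnected, and a non-discrete compact set cannot be covered by finitely many pairwise disjoint balls centred at its own points. (Producing non-degenerate critical points from a possibly degenerate, possibly continuum critical set is the whole point of the Marino--Prodi construction, so you cannot silently assume away exactly that difficulty.) When the balls overlap, your one-shot perturbation $\widehat{I} = I_p - \sum_i[\phi_i\widetilde\ell_i + \phi_i(-\cdot)\widetilde\ell_i(-\cdot)]$ superposes several local modifications at points in the overlaps, and Sard's theorem applied to each $h_i$ separately no longer controls the Hessian of the sum there. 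The actual Marino--Prodi argument (and its $\mathbb{Z}_2$-equivariant refinement in \cite{Wang2016}) handles this by an iterative construction: perturb in the first chart, note that the new critical set is again compact and contained in a slightly shrunken neighbourhood, re-cover, perturb in the next chart away from the already non-degenerate region, and so on. Your proof needs either that iteration, or an additional argument for why the balls can be chosen disjoint in the present setting; as written it does not close. (A secondary, more minor looseness: ``$\widehat{I}$ inherits Palais--Smale because it agrees with $I_p$ outside a bounded set'' only disposes of unbounded Palais--Smale sequences; for bounded ones you still need that the perturbation is a compact map, which is true here but should be said.)
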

\begin{proof}
According to Proposition \ref{prop4-4}, $I_p\in C^{2}(H_{X,0}^{1}(\Omega),\mathbb{R})$ is an even functional satisfying the Palais-Smale condition. Moreover, Lemma \ref{prop4-5} indicates that $I_{p}^{''}(u)$ is a Fredholm operator for any $u\in H_{X,0}^{1}(\Omega)$. Thus, Proposition \ref{prop4-10} can be derived from \cite[Proposition $B_{3}$]{Wang2016}.
\end{proof}

For any given parameters $\epsilon_1,\epsilon_2,\epsilon_3>0$  and $c=d_{k,a}(I_{p})$,
 Proposition \ref{prop4-10} provides a new functional $\widehat{I}$, which serves as a perturbation of $I_{p}$. Specifically, we have:

\begin{proposition}
\label{prop4-11}
For any $0<a<a^1 $ and any $k\geq 1$, we can find some positive constants $\epsilon_1,\epsilon_2,\epsilon_3>0$ and a corresponding even functional $\widehat{I}\in C^{2}(H_{X,0}^{1}(\Omega),\mathbb{R})$ (depending on $k,\epsilon_1,\epsilon_2$ and $\epsilon_3$) satisfying the Palais-Smale condition and the properties (a)-(c) in Proposition \ref{prop4-10}. Furthermore, we have
\begin{enumerate}[(1)]
\item For all $j\geq 1$, $d_{j,a}(\widehat{I})$ is well-defined, and $d_{j,a}(\widehat{I})>0$;
  \item $ K_{\widehat{I},d_{k,a}(\widehat{I})}\subset K_{\widehat{I},d_{k,a}(I_{p}),\epsilon_3}\subset
 K_{I_p,d_{k,a}(I_p),2\epsilon_3}^{\epsilon_2}$;
  \item $(\mathbf{id}-\widehat{I}')(\overline{P_{a}^{\pm}})\subset \overline{P_{\frac{a}{2}}^{\pm}}$;
  \item $\widehat{I}':H_{X,0}^{1}(\Omega)\to H_{X,0}^{1}(\Omega)$ is locally Lipschitz continuous;
 \item $K_{\widehat{I},d_{k,a}(\widehat{I})}\cap S_{a}\neq \varnothing$, where $K_{\widehat{I},d_{k,a}(\widehat{I})}$ consists of finitely many pairs of non-degenerate critical points of $\widehat{I}$.
\end{enumerate}
\end{proposition}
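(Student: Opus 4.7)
The plan is to apply Proposition \ref{prop4-10} at $c = d_{k,a}(I_p)$ with parameters chosen small enough so that all required properties are inherited by the perturbation $\widehat{I}$. Concretely, I first fix $\epsilon_3 \in (0, \tfrac{1}{2} d_{k,a}(I_p))$, then choose $\epsilon_1 < \min\{a/4,\, \epsilon_3/2\}$ together with a sufficiently small $\epsilon_2 > 0$, and invoke Proposition \ref{prop4-10} to produce an even $\widehat{I} \in C^2(H_{X,0}^1(\Omega), \mathbb{R})$ satisfying the Palais-Smale condition and conclusions (a)-(c) of that proposition. The verification of the five numbered claims is then a matter of perturbation arguments.

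Properties (3) and (4) are the most direct. For (3), I combine the cone-invariance (P4) from Proposition \ref{prop4-4} with the uniform bound $\|\widehat{I}'(u) - I_p'(u)\|_{H_{X,0}^1(\Omega)} < \epsilon_1$: for $u \in \overline{P_a^\pm}$, the triangle inequality yields
\[
\mathrm{dist}\bigl((\mathbf{id} - \widehat{I}')(u),\, P^\pm\bigr) \leq \mathrm{dist}\bigl((\mathbf{id} - I_p')(u),\, P^\pm\bigr) + \|\widehat{I}'(u) - I_p'(u)\|_{H_{X,0}^1(\Omega)} \leq \tfrac{a}{4} + \epsilon_1 < \tfrac{a}{2}.
\]
Property (4) follows from $\widehat{I} \in C^2(H_{X,0}^1(\Omega), \mathbb{R})$: continuity of $\widehat{I}''$ gives local boundedness of the Hessian, whence the mean value inequality yields local Lipschitz continuity of $\widehat{I}'$.

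For (1) and (2), I would verify that the framework underlying the definition and study of $d_{k,a}(I_p)$ transfers to $\widehat{I}$. Since $\widehat{I}$ is even, $C^2$, satisfies the Palais-Smale condition, and enjoys (3) together with (4), the analogues of Lemma \ref{lemma4-2}, Proposition \ref{prop4-6}, and Lemma \ref{prop4-7} hold for $\widehat{I}$. In particular, positivity $d_{j,a}(\widehat{I}) > 0$ is obtained by adapting Lemma \ref{prop4-7}: any critical point $u$ of $\widehat{I}$ at level zero lying in $S_a$ must, because $0 < d_{k,a}(I_p) - 2\epsilon_3$, fall outside the localization region $K_{I_p, d_{k,a}(I_p), 2\epsilon_3}^{\epsilon_2}$ where $\widehat{I}$ and $I_p$ coincide by property (b); hence $u$ is also a critical point of $I_p$ with $I_p(u) = 0$, forcing $u = 0 \notin S_a$. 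The first inclusion in (2) reduces to showing $|d_{k,a}(\widehat{I}) - d_{k,a}(I_p)| \leq \epsilon_3$, which I would derive from the sublevel comparisons $I_p^{c-\epsilon_1} \subset \widehat{I}^{c} \subset I_p^{c+\epsilon_1}$ combined with the monotonicity of the relative genus granted by Lemma \ref{prop4-1}(2) through the identity map. The second inclusion in (2) is exactly property (c) of Proposition \ref{prop4-10}.

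Finally, for (5) I would apply the argument of Proposition \ref{prop4-8}, which uses only the abstract framework of Lemma \ref{prop4-3} and the deformation lemma (Proposition \ref{prop4-2}), to $\widehat{I}$, obtaining $K_{\widehat{I}, d_{k,a}(\widehat{I})} \cap S_a \neq \varnothing$. Coupling the first inclusion of (2) with property (c) of Proposition \ref{prop4-10} then shows that $K_{\widehat{I}, d_{k,a}(\widehat{I})}$ consists of finitely many pairs of non-degenerate critical points of $\widehat{I}$. The main obstacle I anticipate is the perturbation stability of $d_{k,a}$: securing $|d_{k,a}(\widehat{I}) - d_{k,a}(I_p)| \leq \epsilon_3$ uniformly under a small $C^2$ perturbation requires a delicate comparison of the relative genera $\gamma(\widehat{I}^{c} \cup \overline{P_a};\, \widehat{I}^{0} \cup \overline{P_a},\, \widehat{I}^{-1})$ and $\gamma(I_p^{c} \cup \overline{P_a};\, I_p^{0} \cup \overline{P_a},\, I_p^{-1})$, for which the sublevel inclusions together with Lemma \ref{prop4-1}(2) should suffice but must be applied with care so that the reference pair $(I_p^{0} \cup \overline{P_a},\, I_p^{-1})$ is preserved under the perturbation at the relevant energy levels.
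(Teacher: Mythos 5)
Your outline follows the paper's strategy almost exactly, and the verifications of (3), (4), and (5) are sound. Your proof of (4) is in fact slightly cleaner than the paper's (the paper shows $\widehat{I}'-I_p'$ is globally Lipschitz using the $C^2$-bound and then adds (P5) for $I_p'$; you observe directly that local boundedness of $\widehat{I}''$ together with the mean value inequality suffices). Your alternative route to $d_{1,a}(\widehat{I})>0$ by adapting Lemma~\ref{prop4-7} to $\widehat{I}$ is also a valid variant.

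For (1) and (2), however, you have correctly flagged the critical step --- that the reference pair $(I_p^{0}\cup\overline{P_a},\,I_p^{-1})$ must survive the perturbation --- but you leave it unresolved, and the resolution is not just a matter of ``applying Lemma~\ref{prop4-1}(2) with care.'' The missing observation is this: using compactness of $K_{I_p,d_{k,a}(I_p)}$ and $d_{k,a}(I_p)>0$, one can first choose $\epsilon_2,\epsilon_3>0$ so that
\[
  I_p(u)>\frac{d_{k,a}(I_p)}{2}>0\qquad \forall\, u\in K^{\epsilon_2}_{I_p,\,d_{k,a}(I_p),\,2\epsilon_3},
\]
which makes the perturbation region disjoint from $I_p^{0}$. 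Since $\widehat{I}=I_p$ outside this region by property (b), this gives $\widehat{I}^{0}=I_p^{0}$ and $\widehat{I}^{-1}=I_p^{-1}$ \emph{exactly}, not merely approximately. That identity is what allows the relative genus estimate in Proposition~\ref{prop4-6} to be transferred from $I_p$ to $\widehat{I}$ (first replace the reference pair, then use $I_p^{c}\subset\widehat{I}^{c+\epsilon_1}$ and Lemma~\ref{prop4-1}(2)), yielding both the well-definedness of $d_{j,a}(\widehat{I})$ and the two-sided estimate $|d_{j,a}(\widehat{I})-d_{j,a}(I_p)|\leq\epsilon_1$. Without it, your assertion that ``the analogues of Lemma~\ref{lemma4-2}, Proposition~\ref{prop4-6}, and Lemma~\ref{prop4-7} hold for $\widehat{I}$'' merely because $\widehat{I}$ is even, $C^2$, Palais-Smale, and enjoys (3)-(4) does not go through: Lemma~\ref{lemma4-2} --- and therefore the machinery underlying Proposition~\ref{prop4-6} --- depends on the explicit form of $I_p$ near the origin, and transfers only because $\widehat{I}$ coincides with $I_p$ there.
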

\begin{proof}
 For any fixed
 $0<a<a^1 $ and $k\geq 1$,  Proposition \ref{prop4-4} and Lemma \ref{prop4-7} indicate that $K_{I_p,d_{k,a}(I_p)}$ is compact and
 $d_{k,a}(I_p)\geq d_{1,a}(I_p)>0$. Thus, we can choose $\epsilon_2,\epsilon_3>0$ such that
 \begin{equation}\label{4-28}
   I_p(u)>\frac{d_{k,a}(I_p)}{2}>0\qquad \forall u\in  K_{I_p,d_{k,a}(I_p),2\epsilon_3}^{\epsilon_2}.
 \end{equation}
Additionally,  we set $0<\epsilon_1<\min\left\{\frac{a}{4},\epsilon_3,\frac{d_{1,a}(I_{p})}{3}\right\}$ and $c=d_{k,a}(I_{p})$. According to Proposition \ref{prop4-10}, there exists an even functional $\widehat{I}\in C^{2}(H_{X,0}^{1}(\Omega),\mathbb{R})$ that satisfies the Palais-Smale condition and conditions (a)-(c) in Proposition \ref{prop4-10} above. We mention that the perturbation functional $\widehat{I}$ depends on the parameters $k,\epsilon_1,\epsilon_2$ and $\epsilon_3$.

By \eqref{4-28} we can see that $I_{p}^{0}\cap K_{I_p,d_{k,a}(I_p),2\epsilon_3}^{\epsilon_2}=\varnothing$, which implies that
\[ I_p^0=\widehat{I}^0\qquad \text{and}\qquad I_p^{-1}=\widehat{I}^{-1}.\]
Thus, according to \eqref{4-10} and Proposition \ref{prop4-6},  for any $\varepsilon>0$ and any $j\geq 1$,
\begin{equation}\label{4-29}
\gamma(I_{p}^{d_{j,a}(I_p)+\varepsilon}\cup \overline{P_a}; \widehat{I}^{0}\cup \overline{P_a} , \widehat{I}^{-1})=\gamma(I_{p}^{d_{j,a}(I_p)+\varepsilon}\cup \overline{P_a}; I_p^{0}\cup \overline{P_a} , I_p^{-1})
\geq j.
\end{equation}
Using $\|I_{p}-\widehat{I}\|_{C^2}<\epsilon_1$ we have
\begin{equation}\label{4-30}
\sup_{u\in H_{X,0}^1(\Omega)}|I_p(u)-\widehat{I}(u)|\leq \epsilon_1,
\end{equation}
which yields that
\begin{equation}\label{4-31}
 I_p^{d_{j,a}(I_p)+\varepsilon}\subset \widehat{I}^{d_{j,a}(I_p)+\varepsilon+\epsilon_1}.
\end{equation}
Combining Lemma \ref{prop4-1}, \eqref{4-29} and \eqref{4-31}, we get
\[\gamma(\widehat{I}^{d_{j,a}(I_p)+\varepsilon+\epsilon_1}\cup \overline{P_a};\widehat{I}^{0}\cup \overline{P_a} , \widehat{I}^{-1})
\geq\gamma(I_p^{d_{j,a}(I_p)+\varepsilon}\cup \overline{P_a}; \widehat{I}^{0}\cup \overline{P_a} , \widehat{I}^{-1})\geq j,\]
which implies $d_{j,a}(\widehat{I})$ is well-defined, and $d_{j,a}(\widehat{I})\leq d_{j,a}(I_{p})+\epsilon_1$ for all $j\geq 1$. Similarly, we can deduce that
 $d_{j,a}(I_{p})\leq d_{j,a}(\widehat{I})+\epsilon_1$. Recalling that $0<\epsilon_1<\min\left\{\epsilon_3,\frac{d_{1,a}(I_{p})}{3}\right\}$, we obtain from Lemma \ref{prop4-7} that
\[ d_{j,a}(\widehat{I})\geq d_{j,a}(I_{p})-\epsilon_1\geq d_{1,a}(I_{p})-\epsilon_1>\frac{2}{3}d_{1,a}(I_{p})>0~~~\mbox{for all}~~j\geq 1.\]
Thus, conclusion (1) is proved.

For any $u\in K_{\widehat{I},d_{k,a}(\widehat{I})}$, we have $\widehat{I}'(u)=0$, and \eqref{4-30} derives that
\[ \widehat{I}(u)=d_{k,a}(\widehat{I})\in [d_{k,a}(I_{p})-\epsilon_{1},d_{k,a}(I_{p})+\epsilon_{1}]\subset [d_{k,a}(I_{p})-\epsilon_{3},d_{k,a}(I_{p})+\epsilon_{3}], \]
which gives that $ K_{\widehat{I},d_{k,a}(\widehat{I})}\subset K_{\widehat{I},d_{k,a}(I_{p}),\epsilon_3}$. Using Proposition \ref{prop4-10} (c), we obtain
\[  K_{\widehat{I},d_{k,a}(\widehat{I})}\subset K_{\widehat{I},d_{k,a}(I_{p}),\epsilon_3}\subset K_{I_{p},d_{k,a}(I_{p}),2\epsilon_3}^{\epsilon_{2}},\]
which yields the conclusion (2).

It follows from  Proposition \ref{prop4-10} (a) that
 $\sup_{u\in H_{X,0}^1(\Omega)}\|I_p'(u)-\widehat{I}'(u)\|_{H_{X,0}^1(\Omega)}<\epsilon_1$.
   Then  for any $u\in \overline{P_{a}^{\pm}}$, by Proposition \ref{prop4-4} we have
   \begin{align*}
    {\rm dist}(u-\widehat{I}'(u), P^\pm) & = {\rm dist}(u-I_p'(u)+I_p'(u)-\widehat{I}'(u), P^\pm)\\
    & \leq {\rm dist}(u-I_p'(u),P^\pm)+\|I_p'(u)-\widehat{I}'(u)\|_{ H_{X,0}^1(\Omega)}\\
    &\leq \frac{a}{4}+\epsilon_1\leq \frac{a}{2},
  \end{align*}
which gives the conclusion (3).

We next show that $\widehat{I}'$ is locally Lipschitz continuous. Using  Proposition \ref{prop4-10} (a) again, we have
\begin{equation*}
\sup\limits_{u\in H_{X,0}^{1}(\Omega)}\|\widehat{I}''(u)-I_{p}''(u)\|_{\mathcal{L}( H_{X,0}^{1}(\Omega),H_{X,0}^{1}(\Omega))}<\epsilon_1.
\end{equation*}
Then for any $u_1, u_2\in H_{X,0}^{1}(\Omega)$, by mean value theorem we have for some $t_{0}\in [0,1]$,
\[ \begin{aligned}
&\|(\widehat{I}'-I_{p}')(u_{2})-(\widehat{I}'-I_{p}')(u_{1})\|_{H_{X,0}^{1}(\Omega)}\\
&=\|(\widehat{I}''-I_{p}'')(u_1+t_{0}(u_2-u_1))(u_2-u_1)\|_{H_{X,0}^{1}(\Omega)}\\
&\leq \sup\limits_{u\in H_{X,0}^{1}(\Omega)}\|\widehat{I}''(u)-I_{p}''(u)\|_{\mathcal{L}( H_{X,0}^{1}(\Omega),H_{X,0}^{1}(\Omega))}\|u_1-u_2\|_{H_{X,0}^{1}(\Omega)}\leq \epsilon_1\|u_1-u_2\|_{H_{X,0}^{1}(\Omega)}.
\end{aligned}\]
This means $\widehat{I}'-I_{p}'$ is Lipschitz continuous. Hence, the conclusion (4) follows from
(P5) in Proposition \ref{prop4-4}.

From Proposition \ref{prop4-10} (c),  $K_{\widehat{I},d_{k,a}(\widehat{I})}$ consists of finitely many pairs of non-degenerate  critical points of $\widehat{I}$. Therefore, the conclusion (5) amounts to proving that $K_{\widehat{I},d_{k,a}(\widehat{I})}\cap S_{a}\neq \varnothing$. Given the previous results, $\widehat{I}$ meets the  conditions of Proposition \ref{prop4-2}. By Lemma \ref{prop4-3} and the fact that  $d_{k,a}(\widehat{I})>0$, we can select a $\delta\in (0, d_{k,a}(\widehat{I}))$ such that
\[ \begin{aligned}
 &\gamma(K_{\widehat{I},d_{k,a}(\widehat{I})}\cap S_{a})\geq \gamma(\widehat{I}^{{d_{k,a}(\widehat{I})}+\delta}\cup \overline{P_a}; \widehat{I}^{0}\cup \overline{P_a} , \widehat{I}^{-1})-\gamma(\widehat{I}^{{d_{k,a}(\widehat{I})}-\delta}\cup \overline{P_a}; \widehat{I}^{0}\cup \overline{P_a} , \widehat{I}^{-1})\\
 &\geq k-(k-1)=1.
  \end{aligned}\]
Consequently, $K_{\widehat{I},d_{k,a}(\widehat{I})}\cap S_{a}\neq\varnothing$, confirming that $d_{k,a}(\widehat{I})$ is a sign-changing critical value of $\widehat{I}$.
\end{proof}
By refining the choices of $\epsilon_1, \epsilon_2
,\epsilon_3$, we have the following result.
\begin{proposition}\label{prop4-12}
For any $0<a<a^1 $ and any $k\geq 1$, there exist some positive constants $\epsilon_1, \epsilon_2
,\epsilon_3>0$ and a corresponding perturbation functional $\widehat{I}$ that satisfies all the conditions outlined in Proposition \ref{prop4-11}. Furthermore,
\begin{equation}\label{4-32}
\sup \limits_{u^*\in K_{\widehat{I},d_{k,a}(\widehat{I})}\cap S_{a}}m(u^*,\widehat{I})\leq \sup \limits_{u\in K_{I_p,d_{k,a}(I_p)}\cap S_{a}}m^*(u,I_p).
\end{equation}
\end{proposition}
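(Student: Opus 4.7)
The plan is to refine the choice of parameters $\epsilon_1, \epsilon_2, \epsilon_3$ from Proposition \ref{prop4-11} so that every sign-changing critical point $u^*$ of the perturbation $\widehat{I}$ lies close (in $H_{X,0}^{1}(\Omega)$) to a sign-changing critical point of $I_p$ at level $d_{k,a}(I_p)$, and then to compare the two Hessians via standard spectral perturbation theory. To set up the target constant, put
\[
K := K_{I_p, d_{k,a}(I_p)} \cap S_a,
\]
which is compact and nonempty by Proposition \ref{prop4-8} together with the Palais-Smale condition for $I_p$. Because $I_p''(v) = \mathbf{id} + W(v)$ with $W(v)$ self-adjoint and compact (Lemma \ref{prop4-5}), each augmented Morse index $m^*(u, I_p)$ is finite, and standard upper semi-continuity of $u \mapsto m^*(u, I_p)$ combined with compactness of $K$ gives
\[
m^* := \sup_{u \in K} m^*(u, I_p) < \infty.
\]

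The strategy is proof by contradiction: suppose that, no matter how small $\epsilon_1, \epsilon_2, \epsilon_3$ are taken, the associated functional $\widehat{I}$ from Proposition \ref{prop4-11} violates \eqref{4-32}. Then we take sequences $\epsilon_1^n, \epsilon_2^n, \epsilon_3^n \to 0^+$ with corresponding functionals $\widehat{I}^n$, and points $u_n^* \in K_{\widehat{I}^n, d_{k,a}(\widehat{I}^n)} \cap S_a$ satisfying $m(u_n^*, \widehat{I}^n) > m^*$. Proposition \ref{prop4-11}(2) furnishes $u_n \in K_{I_p, d_{k,a}(I_p), 2\epsilon_3^n}$ with $\|u_n - u_n^*\|_{H_{X,0}^{1}(\Omega)} < \epsilon_2^n$, so $\{u_n\}$ is a Palais-Smale sequence for $I_p$ at level $d_{k,a}(I_p)$. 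The Palais-Smale condition (Proposition \ref{prop4-4}) then produces a subsequence $u_n \to u_\infty \in K_{I_p, d_{k,a}(I_p)}$, which forces $u_n^* \to u_\infty$ as well. Since $S_a$ is closed and contains every $u_n^*$, one obtains $u_\infty \in K$ and hence $m^*(u_\infty, I_p) \leq m^*$.

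To close the contradiction, we would compare $\widehat{I}^n{}''(u_n^*)$ with $I_p''(u_\infty)$. Since $\|\widehat{I}^n - I_p\|_{C^2} < \epsilon_1^n \to 0$ and $v \mapsto I_p''(v)$ is norm-continuous on $H_{X,0}^{1}(\Omega)$ (via the compact embedding in Proposition \ref{prop2-3} and the pointwise continuity of $s \mapsto |s|^{p-2}$), we obtain
\[
\widehat{I}^n{}''(u_n^*) \longrightarrow I_p''(u_\infty) \quad \text{in } \mathcal{L}(H_{X,0}^{1}(\Omega), H_{X,0}^{1}(\Omega)).
\]
Both operators are of the form $\mathbf{id}$ plus a self-adjoint compact operator and hence have discrete spectrum accumulating at $1$ (Lemma \ref{prop4-5}). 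Analytic perturbation theory then says that the eigenvalues of $\widehat{I}^n{}''(u_n^*)$, counted with multiplicity, converge to those of $I_p''(u_\infty)$; in particular, for large $n$, any strictly negative eigenvalue of the approximants must cluster near a non-positive eigenvalue of the limit. Consequently $m(u_n^*, \widehat{I}^n) \leq m^*(u_\infty, I_p) \leq m^*$, contradicting the choice of $u_n^*$.

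The main obstacle is precisely this last spectral step: one must justify that a strictly negative eigenvalue of $\widehat{I}^n{}''(u_n^*)$ can originate only from a non-positive (not necessarily strictly negative) eigenvalue of $I_p''(u_\infty)$, which is exactly why the augmented rather than the plain Morse index appears on the right-hand side of \eqref{4-32}. A secondary technicality is the norm-continuity of $v \mapsto I_p''(v)$, which rests on turning convergence of $v_n \to v$ in $H_{X,0}^{1}(\Omega)$ into convergence of $|v_n|^{p-2}$ in $L^{p/(p-2)}(\Omega)$ through the Rellich-Kondrachov embedding in Proposition \ref{prop2-3}.
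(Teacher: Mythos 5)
Your proof is correct, but it takes a genuinely different route from the paper. The paper argues constructively: it fixes $m^*:=\sup_{u\in K_{I_p,d_{k,a}(I_p)}\cap S_a}m^*(u,I_p)$ (treating $m^*=\infty$ as trivial), notes $p_u:=\iota_{m^*+1}(u)>0$ for each $u$ in this compact set, covers the set by finitely many balls $B_{r_{u_i}}(u_i)\subset S_a$ on which the lower spectral bound $(I_p''(w)v,v)\geq\tfrac{p_{u_i}}{2}\|v\|^2$ persists on a subspace $H_{u_i}$ of codimension $m^*$, and then chooses $\epsilon_2,\epsilon_3$ so that $K_{I_p,d_{k,a}(I_p),2\epsilon_3}^{\epsilon_2}\cap S_a$ sits inside the union of the balls, together with $\epsilon_1<\min_i p_{u_i}/6$. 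Each sign-changing critical point of $\widehat{I}$ then lands in some ball and inherits the spectral gap directly from $I_p''(u_i)$, without ever passing to a limit. Your approach instead runs a contradiction argument with sequences $\epsilon_j^n\to 0$, uses the Palais-Smale condition to extract a limit $u_\infty\in K_{I_p,d_{k,a}(I_p)}\cap S_a$, and invokes operator-norm continuity of eigenvalues. Both proofs ultimately rest on the same facts — compactness of the critical set, the strictly positive $(m^*+1)$-th eigenvalue, and continuity of $v\mapsto I_p''(v)$ — but the paper's covering argument is constructive and gives explicit parameter thresholds, while yours trades that quantitativity for a cleaner logical structure.

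The ``main obstacle'' you flag at the end is in fact not an obstacle. Since $\widehat{I}^n{}''(u_n^*)$ and $I_p''(u_\infty)$ are each of the form $\mathbf{id}+\text{(self-adjoint compact)}$, their eigenvalues are discrete and accumulate only at $1$, and the $j$-th eigenvalue
\[
\iota_j(A)=\min_{\dim V=j}\ \max_{v\in V,\ \|v\|=1}(Av,v)
\]
is a $1$-Lipschitz function of $A$ with respect to the operator norm by Courant--Fischer. Hence $\iota_{m^*+1}\bigl(\widehat{I}^n{}''(u_n^*)\bigr)\to\iota_{m^*+1}\bigl(I_p''(u_\infty)\bigr)>0$, which forces $m(u_n^*,\widehat{I}^n)\leq m^*$ for large $n$ and closes the contradiction. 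Two minor remarks: you can sidestep proving $m^*<\infty$ (the inequality is vacuous if $m^*=\infty$, as the paper notes), and you should make explicit that the conclusion is obtained only along the Palais-Smale subsequence, which is enough to contradict the assumed failure for all small parameters.
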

\begin{proof}
Proposition \ref{prop4-8} indicates that $K_{I_{p},d_{k,a}(I_{p})}\cap S_{a}\neq \varnothing$ for $0<a<a^1$. We need only consider the case where
\[   m^*:=\sup \limits_{u\in K_{I_p,d_{k,a}(I_p)}\cap S_{a}}m^*(u,I_p)<\infty.\]
 By Lemma \ref{prop4-5}, for each  $u\in K_{I_p,d_{k,a}(I_p)}\cap S_{a}$, $I_{p}''(u)$ admits a sequence of discrete eigenvalues $\{\iota_j(u)\}_{j=1}^\infty$ such that $\iota_{1}(u)\leq  \iota_{2}(u)\leq \cdots\leq \iota_{j}(u)\leq \cdots$, and $\iota_j(u)\to 1$ as $j\to \infty$. Additionally, Definition \ref{def4-3} gives that
 \[ m^*(u,I_{p})=\#\{j|\iota_j(u)\leq 0\}. \]
Thus, $\iota_j(u)>0$ holds for all $j> m^*$ and all $u\in K_{I_p,d_{k,a}(I_p)}\cap S_{a}$.

 Let $p_u:=\iota_{m^*+1}(u)>0$. By the Rayleigh-Ritz formula, there is a subspace $H_u\subset H_{X,0}^1(\Omega)$ with co-dimension $m^*$ such that
  \[(I_{p}''(u)(v),v)_{H_{X,0}^1(\Omega)}\geq p_u\|v\|_{H_{X,0}^1(\Omega)}^2\qquad \forall v\in H_u.\]
Since  $I_{p}''\in C(H_{X,0}^{1}(\Omega),\mathcal{L}(H_{X,0}^{1}(\Omega),H_{X,0}^{1}(\Omega)))$, for
 any $u\in K_{I_p,d_{k,a}(I_p)}\cap S_{a}$, there exists $r_u>0$ such that for all $w\in B_{r_u}(u)$, we have
\begin{equation}\label{4-33}
(I_{p}''(w)(v),v)_{H_{X,0}^1(\Omega)}\geq \frac{p_u}{2}\|v\|_{H_{X,0}^{1}(\Omega)}^2\qquad \forall v\in H_u.
\end{equation}
From Lemma \ref{lemma4-1} and the Palais-Smale condition, it follows that $K_{I_p,d_{k,a}(I_p)}\cap S_{a}$ is a compact set contained in the interior of $S_{a}$. Thus, we can select a finite collection of triples $\{(u_i, r_{u_i}, p_{u_{i}})\}_{i=1}^l$ such that
\begin{equation}\label{4-34}
K_{I_p,d_{k,a}(I_p)}\cap S_{a}\subset \bigcup_{i=1}^l B_{r_{u_i}}(u_i)\subset S_{a},
\end{equation}
which means
\[ K_{I_p,d_{k,a}(I_p)}=(K_{I_p,d_{k,a}(I_p)}\cap S_{a})\cup (K_{I_p,d_{k,a}(I_p)}\cap P_{a})\subset \bigcup_{i=1}^l B_{r_{u_i}}(u_i)\cup P_{a},\]
and $\cup_{i=1}^l B_{r_{u_i}}(u_i) \cup P_{a}$ is an open neighborhood of $K_{I_p,d_{k,a}(I_p)}$. According to \cite[Chapter II, Lemma 2.3]{Struwe2000}, there exist some small $\epsilon_2,\epsilon_3>0$ such that
 \begin{equation}\label{4-35}
K_{I_p,d_{k,a}(I_p),2\epsilon_3}^{\epsilon_2}\subset\bigcup_{i=1}^l B_{r_{u_i}}(u_i) \cup P_{a},
\qquad\text{and }\qquad I_{p}(u)>\frac{d_{k,a}(I_p)}{2}~~~\forall u\in K_{I_p,d_{k,a}(I_p),2\epsilon_3}^{\epsilon_2}.
\end{equation}
 Therefore,  \eqref{4-34} and \eqref{4-35} imply that
\begin{equation}\label{ssss}
K_{I_p,d_{k,a}(I_p),2\epsilon_3}^{\epsilon_2} \cap S_{a}\subset \bigcup_{i=1}^l B_{r_{u_i}}(u_i).
\end{equation}
Now, by restricting $0<\epsilon_1
<\min\left\{\frac{a}{4},\frac{d_{1,a}(I_p)}{3},\epsilon_3,\frac{p_{u_1}}{6},\cdots,\frac{p_{u_l}}{6}\right\}$ in Proposition \ref{prop4-11}, we can obtain a perturbation functional $\widehat{I}$ that satisfies all the conditions outlined in Proposition \ref{prop4-11}.

Note that $ K_{\widehat{I},d_{k,a}(\widehat{I})}\subset
 K_{I_p,d_{k,a}(I_p),2\epsilon_3}^{\epsilon_2}$. For any $u^*\in K_{\widehat{I},d_{k,a}(\widehat{I})}\cap S_{a}$, by \eqref{ssss} we have
$ u^*\in B_{r_{u_j}}(u_j)$  for some $1\leq j\leq l$. Besides, Proposition \ref{prop4-10} (a) implies that
\[ \|\widehat{I}''(u^*)-I_{p}''(u^*)\|_{\mathcal{L}( H_{X,0}^{1}(\Omega),H_{X,0}^{1}(\Omega))}<\epsilon_1<\min_{1\leq i\leq l}\left\{\frac{p_{u_i}}{6}\right\},\]
which derives that
\begin{equation}\label{4-36}
|(\widehat{I}''(u^*)(v),v)_{H_{X,0}^{1}(\Omega)}-(I_{p}''(u^*)(v),v)_{H_{X,0}^{1}(\Omega)}|\leq \epsilon_1\|v\|_{H_{X,0}^{1}(\Omega)}^{2}\qquad\forall v\in H_{X,0}^{1}(\Omega).
\end{equation}
As a result of \eqref{4-33} and \eqref{4-36}, we have
\[(\widehat{I}''(u^*)(v),v)_{H_{X,0}^{1}(\Omega)}\geq \frac{p_{u_j}}{3}\|v\|_{ H_{X,0}^{1}(\Omega)}^2\qquad\forall v\in H_{u_j},\]
where $H_{u_j}$ is a subspace of $H_{X,0}^{1}(\Omega)$ with  co-dimension $m^*$. According to Proposition \ref{prop4-11}, $K_{\widehat{I},d_{k,a}(\widehat{I})}$ consists of finitely many pairs of non-degenerate critical points of $\widehat{I}$. Thus,
 Definition \ref{def4-3} and Remark \ref{remark4-1-2} imply that $m(u^*,\widehat{I})=m^{*}(u^*,\widehat{I})\leq m^*$ for all $u^*\in K_{\widehat{I},d_{k,a}(\widehat{I})}\cap S_{a}$.
   \end{proof}

\subsection{Lower bound estimates of Morse index}
  Let us fix $0<a<a^1$ and $k\geq 1$, and  consider the perturbation functional $\widehat{I}$, as obtained from Proposition \ref{prop4-12} and satisfying \eqref{4-32}. In this subsection, we are dedicated to determining the lower bound of  $\sup_{u\in K_{\widehat{I},d_{k,a}(\widehat{I})}\cap_{S_{a}}}m(u,\widehat{I})$.

For any $ u\in K_{\widehat{I},d_{k,a}(\widehat{I})}\cap S_a$, the symmetry of $K_{\widehat{I},d_{k,a}(\widehat{I})}\cap S_a$ ensures that $-u\in  K_{\widehat{I},d_{k,a}(\widehat{I})}\cap S_a$. According to Lemma \ref{lemma4-1}, both
$u$ and $-u$ are located in the interior of $S_{a}$.  Proposition \ref{prop4-11} further indicates that $u$ is non-degenerate.  Utilizing the Morse lemma in \cite[Lemma 9.3]{Ghoussoub1993}, we derive an open symmetric neighborhood
 $V(0)$ of the origin in $H_{X,0}^{1}(\Omega)$ and an open neighborhood $U(u)$ of $u$, such that
 \begin{equation}\label{4-37}
   -V(0)=V(0)\subset H_{X,0}^{1}(\Omega)~~\mbox{and}~~U(u)\subset S_a.
    \end{equation}
Moreover, a homeomorphism $\mathcal{H}:V(0)\to U(u)$ exists satisfying
\begin{equation}\label{4-38}
\mathcal{H}(0)=u.
 \end{equation}
Meanwhile, there are two closed subspaces $H^+$ and $H^-$ of $H_{X,0}^{1}(\Omega)$ such that
\[ H_{X,0}^{1}(\Omega)=H^+\oplus H^-,\qquad \mbox{and}\qquad  \text{dim}(H^-)=m(u,\widehat{I}).\]
For each $v\in V(0)$, we can decompose $v=v_{+}+v_{-}$ (distinct from the notation $v^{\pm}$) with $v_{\pm}\in H^{\pm}$, and the Morse lemma states that
\begin{equation}\label{4-39}
\widehat{I}(\mathcal{H}(v))=d_{k,a}(\widehat{I})+\|v_{+}\|^2_{H_{X,0}^1(\Omega)}-\|v_-\|^2_{H_{X,0}^1(\Omega)}.
\end{equation}
Additionally, Proposition \ref{prop4-11} implies that  $\widehat{I}\in C^{2}(H_{X,0}^{1}(\Omega),\mathbb{R})$ and $\widehat{I}(u)=d_{k,a}(\widehat{I})\geq d_{1,a}(\widehat{I})>0$. Thus, we can further require $U(u)$ satisfies
\begin{equation}\label{4-40}
\widehat{I}(v)>0~~~~\forall v\in  \overline{U(u)}, ~~\text{and}~ ~U(u)\cap -U(u)=\varnothing.
\end{equation}

Denote by $B^-(r)\subset H^-$ and $B^+(r)\subset H^+$ the open balls centered at origin with radius $r>0$. Let  $0<2r^-<r^+$ small enough such that $B^-(2r^-)\oplus B^+(r^+)\subset\subset V(0)$. Then we define the following two open sets
\begin{equation}\label{4-41}
N(u):=\mathcal{H}(B^-(r^-)\oplus B^+(r^+)) ~~~\mbox{and}~~~N'(u):=\mathcal{H}(B^-(2r^-)\oplus B^+(r^+)).
\end{equation}
It follows that
\begin{equation}\label{4-42}
\pm N( u) \subset \pm N'( u) \subset  \pm U( u)\subset S_a.
\end{equation}

Drawing inspiration from Lazer-Solimini \cite{Lazer1988}, we can derive the following results.
\begin{lemma}\label{lemma4-3}
Let $\psi:\mathbb{R} \to[0,1]$ be a smooth function satisfying $\psi=0$
on $(-\infty,0]$ and $\psi=1$ on $[1,+\infty)$. For any $v\in V(0)$
  we define
  \begin{equation}\label{4-43}
    \phi(v)=\phi(v_{+}+v_{-})=v_{-} +  \psi\left(\frac{\|v_{-}\|_{H_{X,0}^{1}(\Omega)}}{r^-}-1\right)v_{+}
  \end{equation}
and
\begin{equation}\label{4-44}
\Phi(x):=
\begin{cases}
x, & x\notin N'(u)\cup -N'(u);\\
\mathcal{H}(\phi (\mathcal{H}^{-1}(x))),  & x\in N'(u);
\\
-\Phi(-x),  & x\in -N'(u).
\end{cases}
\end{equation}
Then we have
\begin{enumerate}[(a)]
\item $\Phi$ is odd and $\Phi|_{\widehat{I}^{0}\cup\overline{P_a}}= \mathbf{id}$;
  \item $\widehat{I}(\Phi(x))\leq \widehat{I}(x),~ \forall x\in H_{X,0}^1(\Omega)$;
  \item $(\mathcal{H}^{-1}(x))_{-}=(\mathcal{H}^{-1}(\Phi(x)))_{-} ,~\forall x\in U(u)$;
  \item $\Phi(N'(u))\subset N'(u)$;
  \item  The discontinuous points of $\Phi$ lie in
  \[ \mathcal{H}(B^-(2r^-)\oplus \partial B^+(r^+))\bigcup -\mathcal{H}(B^-(2r^-)\oplus \partial B^+(r^+)).\]
\end{enumerate}
\end{lemma}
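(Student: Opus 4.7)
The plan is to verify items (a)--(e) by unwinding the three-branch definition of $\Phi$ and exploiting the Morse normal form \eqref{4-39} together with the symmetry built in. Since $\Phi$ equals the identity outside $N'(u)\cup -N'(u)$, the real computation occurs on $N'(u)$, and the behaviour on $-N'(u)$ is then free via the antisymmetric clause $\Phi(x)=-\Phi(-x)$ (well-posed because $N'(u)\cap -N'(u)=\varnothing$, which follows from \eqref{4-40}). For (a), oddness is immediate from this definition, and for $\Phi|_{\widehat{I}^{0}\cup\overline{P_a}}=\mathbf{id}$ it suffices to observe that $N'(u)\cup -N'(u)$ is disjoint from $\widehat{I}^{0}\cup\overline{P_a}$: $N'(u)\subset U(u)\subset S_a$ forces $N'(u)\cap\overline{P_a}=\varnothing$, and \eqref{4-40} gives $\widehat{I}>0$ on $\overline{U(u)}$, so $N'(u)\cap \widehat{I}^{0}=\varnothing$; the antipodal pieces are handled using that $\widehat{I}$ is even and $\overline{P_a}$ is symmetric.

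For (b) and (c), I would fix $x\in N'(u)$, write $v=\mathcal{H}^{-1}(x)=v_{+}+v_{-}\in B^{-}(2r^{-})\oplus B^{+}(r^{+})$, and set $s:=\psi(\|v_{-}\|_{H_{X,0}^{1}(\Omega)}/r^{-}-1)\in[0,1]$, so that $\phi(v)_{-}=v_{-}$ and $\phi(v)_{+}=s\,v_{+}$. Item (c) is then direct since $\phi$ preserves the $H^{-}$-component. For (b), the Morse identity \eqref{4-39} gives
\[ \widehat{I}(\Phi(x))=d_{k,a}(\widehat{I})+s^{2}\|v_{+}\|_{H_{X,0}^{1}(\Omega)}^{2}-\|v_{-}\|_{H_{X,0}^{1}(\Omega)}^{2}\leq \widehat{I}(x), \]
since $s^{2}\leq 1$; the case $x\in -N'(u)$ follows by evenness of $\widehat{I}$ combined with $\Phi(x)=-\Phi(-x)$, and the case $x\notin N'(u)\cup -N'(u)$ is trivial.

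For (d), the same decomposition shows $\phi(v)_{-}=v_{-}\in B^{-}(2r^{-})$ and $\|\phi(v)_{+}\|_{H_{X,0}^{1}(\Omega)}\leq\|v_{+}\|_{H_{X,0}^{1}(\Omega)}<r^{+}$, so $\phi$ sends $B^{-}(2r^{-})\oplus B^{+}(r^{+})$ into itself and hence $\Phi(N'(u))\subset N'(u)$. For (e), $\Phi$ is continuous on the interior of $N'(u)$ and of $-N'(u)$ by smoothness of $\psi$ and bicontinuity of $\mathcal{H}$, and continuous on the complement of $\overline{N'(u)}\cup \overline{-N'(u)}$ since it is the identity there. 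On $\partial N'(u)=\mathcal{H}(\partial(B^{-}(2r^{-})\oplus B^{+}(r^{+})))$, agreement of the outer rule $x$ with the inner rule $\mathcal{H}(\phi(v))$ reduces to $\phi(v)=v$, i.e.\ $s=1$. The face $\|v_{-}\|=2r^{-}$ has $\psi(1)=1$, hence $s=1$ and continuity is preserved there; on the remaining face $\|v_{+}\|=r^{+}$ with $\|v_{-}\|<2r^{-}$ one can have $s<1$, and this locus is exactly $\mathcal{H}(B^{-}(2r^{-})\oplus\partial B^{+}(r^{+}))$, with the antipodal face giving the mirrored set.

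I do not expect a genuine conceptual obstacle here: the crux is the bookkeeping in (e), namely isolating the boundary face that automatically patches (thanks to $\psi(1)=1$) from the face that carries the discontinuity, while (a)--(d) reduce to the Morse identity \eqref{4-39} and the estimate $0\leq\psi\leq 1$ applied on the single chart $N'(u)$ and extended antipodally.
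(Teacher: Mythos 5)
Your proof is correct and follows essentially the same route as the paper's: oddness from the definition, identity on $\widehat{I}^{0}\cup\overline{P_a}$ via disjointness from $\pm N'(u)$ (using $N'(u)\subset U(u)\subset S_a$ and $\widehat{I}>0$ on $\pm U(u)$), the Morse normal form \eqref{4-39} with $0\le\psi\le 1$ for (b), preservation of the $H^-$-component for (c), and invariance of $B^-(2r^-)\oplus B^+(r^+)$ under $\phi$ for (d). For (e) you are in fact more explicit than the paper: you isolate why the face $\|v_-\|=2r^-$ patches automatically (because $\psi(1)=1$ forces $s=1$) and why only the face $\|v_+\|=r^+$ can carry a jump, whereas the paper simply asserts that $\mathcal{H}^{-1}(\Phi(x))$ changes abruptly only on $\pm\mathcal{H}(B^-(2r^-)\oplus\partial B^+(r^+))$.
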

\begin{proof}
\eqref{4-44} indicates that $\Phi$ is odd. Since $\widehat{I}(u)=\widehat{I}(-u)$ for all $u\in H_{X,0}^{1}(\Omega)$, by \eqref{4-40} we have $\widehat{I}>0$ on $U(u)\cup -U(u)$. Thus, \eqref{4-42} gives that $(\widehat{I}^{0}\cup\overline{P_a})\cap (N'(u)\cup - N'(u))=\varnothing$, which implies $\Phi|_{\widehat{I}^{0}\cup\overline{P_a}}= \mathbf{id}$. Hence, the conclusion (a) is proved.

For any $x\in  N'(u)$, \eqref{4-43} indicates that
\[ \phi(\mathcal{H}^{-1}(x))=(\mathcal{H}^{-1}(x))_{-}+\psi\left(\frac{\|(\mathcal{H}^{-1}(x))_{-}\|_{H_{X,0}^{1}(\Omega)}}{r^-}-1\right)(\mathcal{H}^{-1}(x))_{+}.
\]
Then, by \eqref{4-39}, \eqref{4-44}  and the fact that $|\psi|\leq 1$ we obtain
\[ \begin{aligned}
\widehat{I}(\Phi(x))&=\widehat{I}(\mathcal{H}(\phi(\mathcal{H}^{-1}(x))))\\
&=d_{k,a}(\widehat{I})+\left(\psi\left(\frac{\|(\mathcal{H}^{-1}(x))_{-}\|_{H_{X,0}^{1}(\Omega)}}{r^-}-1\right)\right)^{2}\|(\mathcal{H}^{-1}(x))_{+}\|^2_{H_{X,0}^1(\Omega)}-\|(\mathcal{H}^{-1}(x))_{-}\|^2_{H_{X,0}^1(\Omega)}\\
&\leq d_{k,a}(\widehat{I})+\|(\mathcal{H}^{-1}(x))_{+}\|^2_{H_{X,0}^1(\Omega)}-\|(\mathcal{H}^{-1}(x))_{-}\|^2_{H_{X,0}^1(\Omega)}=\widehat{I}(x).
\end{aligned} \]
Thus, conclusion (b) follows from the evenness of $\widehat{I}$.

 The conclusion (c) can be derived from \eqref{4-43} and \eqref{4-44} immediately. In addition,
for any $y\in \Phi(N'(u))$, we have $y=\Phi(\mathcal{H}(x))$ for some $x\in B^-(2r^-)\oplus B^+(r^+)\subset V(0)$ such that $x=x_{+}+x_{-}$ with $x_{+}\in B^{+}(r^{+})$ and $x_{-}\in B^{-}(2r^{-})$. According to \eqref{4-43}, $\phi(x)\in B^-(2r^-)\oplus B^+(r^+)$ and $y=\Phi(\mathcal{H}(x))=\mathcal{H}(\phi(x))\in N'(u)$. As a result, $\Phi(N'(u))\subset N'(u)$.

On the other hand, \eqref{4-43} and \eqref{4-44} indicate that  $\mathcal{H}^{-1}(\Phi(x))$ changes abruptly only in $\pm \mathcal{H}(B^-(2r^-)\oplus \partial B^+(r^+))$. Thus, the discontinuous points of $\Phi$ lie in $\mathcal{H}(B^-(2r^-)\oplus \partial B^+(r^+))\cup -\mathcal{H}(B^-(2r^-)\oplus \partial B^+(r^+))$.
\end{proof}

\begin{lemma}\label{lemma4-4}

Let $0<\xi<(r^+)^2-(2r^-)^2$ and
\begin{equation}\label{4-45}
  D(u):=\Phi(\widehat{I}^{d_{k,a}(\widehat{I})+\xi})\cap N(u).
\end{equation}
Then, $\Phi$ is continuous on $\widehat{I}^{d_{k,a}(\widehat{I})+\xi}$, and $D(u)=\mathcal{H}(B^-(r^-))$.
\end{lemma}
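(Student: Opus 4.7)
The plan is to establish the two claims of Lemma 4.4 separately, using the explicit Morse coordinates supplied by $\mathcal{H}$.

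For the continuity of $\Phi$ on $\widehat{I}^{d_{k,a}(\widehat{I})+\xi}$, I would invoke conclusion (e) of Lemma \ref{lemma4-3}, which locates all discontinuities of $\Phi$ in the set $\mathcal{H}(B^-(2r^-)\oplus\partial B^+(r^+))\cup -\mathcal{H}(B^-(2r^-)\oplus\partial B^+(r^+))$. It suffices to verify that no such point lies in the sublevel set $\widehat{I}^{d_{k,a}(\widehat{I})+\xi}$. Indeed, if $x=\mathcal{H}(v)$ with $v_-\in B^-(2r^-)$ and $\|v_+\|_{H_{X,0}^1(\Omega)}=r^+$, then formula \eqref{4-39} gives
\begin{equation*}
\widehat{I}(x)=d_{k,a}(\widehat{I})+(r^+)^2-\|v_-\|^2_{H_{X,0}^1(\Omega)}>d_{k,a}(\widehat{I})+(r^+)^2-(2r^-)^2>d_{k,a}(\widehat{I})+\xi,
\end{equation*}
by the choice of $\xi$. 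The same estimate holds at $-x$ by the evenness of $\widehat{I}$, so the discontinuity set is disjoint from $\widehat{I}^{d_{k,a}(\widehat{I})+\xi}$, and $\Phi$ is continuous there.

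For the identification $D(u)=\mathcal{H}(B^-(r^-))$, I would argue by double inclusion. For the inclusion $\supset$, given $y=\mathcal{H}(w)$ with $w\in B^-(r^-)$, I simply take $x=y$: one checks from \eqref{4-43} that $\psi(\|w\|_{H_{X,0}^1(\Omega)}/r^--1)=0$, so $\phi(w)=w$ and $\Phi(x)=y$; the Morse formula yields $\widehat{I}(x)\leq d_{k,a}(\widehat{I})\leq d_{k,a}(\widehat{I})+\xi$, while clearly $y\in N(u)$.

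For $\subset$, let $y\in D(u)$, say $y=\Phi(x)$ with $x\in\widehat{I}^{d_{k,a}(\widehat{I})+\xi}$ and $y\in N(u)\subset U(u)$. The key observation is that \eqref{4-40} gives $U(u)\cap -U(u)=\varnothing$, which together with Lemma \ref{lemma4-3}(d) forces $x\in N'(u)$ (the case $x\in -N'(u)$ would place $\Phi(x)\in -U(u)$, and the case $x\notin N'(u)\cup -N'(u)$ would give $x=y\in N(u)\subset N'(u)$, a contradiction). Writing $x=\mathcal{H}(v)$ with $v_-\in B^-(2r^-)$ and $v_+\in B^+(r^+)$, we have $\Phi(x)=\mathcal{H}(v_-+\psi(\|v_-\|_{H_{X,0}^1(\Omega)}/r^--1)v_+)\in N(u)=\mathcal{H}(B^-(r^-)\oplus B^+(r^+))$. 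Since $\mathcal{H}$ is injective on $V(0)$, the $H^-$-component $v_-$ must lie in $B^-(r^-)$. But then $\|v_-\|_{H_{X,0}^1(\Omega)}/r^--1<0$, so $\psi(\cdot)=0$ and $\Phi(x)=\mathcal{H}(v_-)\in\mathcal{H}(B^-(r^-))$.

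The main obstacle, and the only step that is not routine bookkeeping, is the argument ruling out $x\in -N'(u)$ in the second inclusion; one must carefully combine the oddness relation in \eqref{4-44}, the invariance $\Phi(N'(u))\subset N'(u)$ from Lemma \ref{lemma4-3}(d), and the disjointness $U(u)\cap -U(u)=\varnothing$ coming from \eqref{4-40}. Everything else is a direct unpacking of the Morse decomposition \eqref{4-39} and the cutoff $\psi$.
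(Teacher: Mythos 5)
Your proof is correct and follows essentially the same route as the paper: both use Lemma \ref{lemma4-3}(e) together with the Morse expansion \eqref{4-39} for continuity, and both establish $D(u)=\mathcal{H}(B^-(r^-))$ by double inclusion via the cutoff $\psi$. The one place you add genuine value is that you explicitly justify why $x$ must lie in $N'(u)$ (ruling out $x\in -N'(u)$ via $U(u)\cap -U(u)=\varnothing$ and $x\notin N'(u)\cup -N'(u)$ via $\Phi(x)=x$), a step the paper asserts by citing \eqref{4-44} and Lemma \ref{lemma4-3}(d) without elaboration; the paper in turn shortcuts the final step by invoking Lemma \ref{lemma4-3}(c) directly, whereas you unwind the Morse coordinates and use injectivity of $\mathcal{H}$, which amounts to the same thing.
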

\begin{proof}

Using $0<2r^-<r^+$ and \eqref{4-39} we have
\begin{equation}
\widehat{I}(x)\geq d_{k,a}(\widehat{I})+(r^+)^2-(2r^-)^2> d_{k,a}(\widehat{I})+\xi
\end{equation}
holds for any $x\in \mathcal{H}(B^-(2r^-)\oplus \partial B^+(r^+))\cup -\mathcal{H}(B^-(2r^-)\oplus \partial B^+(r^+))$,
which implies that  $\Phi$ is continuous on $\widehat{I}^{d_{k,a}(\widehat{I})+\xi}$ due to Lemma \ref{lemma4-3} (e).

For any $y\in D(u)$, we have $y=\Phi(x)\in \mathcal{H}(B^-(r^-)\oplus B^+(r^+))$ for some $x\in \widehat{I}^{d_{k,a}(\widehat{I})+\xi}$. Since $y\in N(u)\subset N'(u)$, by \eqref{4-44} and  Lemma \ref{lemma4-3} (d) we can deduce that $x\in N'(u)\subset U(u)$. Using  Lemma \ref{lemma4-3} (c),
we have
\begin{equation}\label{4-47}
\|{(\mathcal{H}^{-1}(x))}_{-}\|_{H_{X,0}^{1}(\Omega)}\leq r^{-},\qquad\text{and}~~~
\psi\left(\frac{\|{\mathcal{H}^{-1}(x)}_{-}\|_{H_{X,0}^{1}(\Omega)}}{r^-}-1\right)=0.
\end{equation}
This means $y=\Phi(x)\in \mathcal{H}(B^-(r^-))$ and thus
  $D(u)\subset\mathcal{H}(B^-(r^-)).$

On the other hand, for any $x\in B^{-}(r^{-})$, we can verify that $\Phi(\mathcal{H}(x))=\mathcal{H}(\phi(x))=\mathcal{H}(x)$. Moreover, it follows from \eqref{4-39}  that $\widehat{I}(\mathcal{H}(x))\leq d_{k,a}(\widehat{I}) \leq  d_{k,a}(\widehat{I})+\xi$, i.e., $\mathcal{H}(x)\in \widehat{I}^{d_{k,a}(\widehat{I})+\xi}$. Hence, $\mathcal{H}(x)=\Phi(\mathcal{H}(x))\in \Phi(\widehat{I}^{d_{k,a}(\widehat{I})+\xi})$ gives $\mathcal{H}(B^-(r^-))\subset D(u)$.
\end{proof}

We now applying above results for the critical points to $K_{\widehat{I},d_{k,a}(\widehat{I})}\cap S_a$ under the restriction $0<a<a^1$ and $k\geq 1$. From Proposition \ref{prop4-11}, we may assume that
\begin{equation}\label{4-48}
K_{\widehat{I},d_{k,a}(\widehat{I})}\cap S_a=\{\pm u_1, \pm u_2, \dots , \pm u_{n_0}\}.
\end{equation}
Furthermore, for each $i\in \{1, 2, \ldots , n_{0}\}$,
there exist corresponding $\mathcal{H}_i, H_i^+, H_i^-, r_i^+, r_i^-,\Phi_i$ and
 \[ \pm N( u_i) \subset \pm N'( u_i) \subset  \pm U( u_i)\subset S_a . \]
In addition, we can require $\overline{U( u_i)}\cap \overline{U( u_j)}=\varnothing$ for $i\neq j$. Then, we denote by
\begin{equation}\label{4-49}
\Phi=\Phi_1\circ \Phi_2\circ\cdots \circ \Phi_{n_0}.
\end{equation}
We mention that $\Phi(x)=\Phi_{j}(x)$ for any $x\in N'( u_i)\cup -N'( u_i)$, and $\Phi: H_{X,0}^{1}(\Omega)\to H_{X,0}^{1}(\Omega)$ is an odd map.

By the definition of $d_{k,a}(\widehat{I})$, for each $\xi>0$, we can choose a symmetric closed set $X_\xi\subset H_{X,0}^{1}(\Omega)$ such that
 \begin{equation}\label{4-50}
 \gamma(X_\xi\cup \overline{P_a}; \widehat{I}^{0}\cup \overline{P_a} , \widehat{I}^{-1})\geq k,~~~
\sup_{u\in X_\xi}\widehat{I}(u)<d_{k,a}(\widehat{I})+\xi.
\end{equation}
Next, we have the following lemmas.
\begin{lemma}\label{lemma4-5}
Suppose $0<a<a^1$ and $k\geq 1$. For $0<\xi<\min_{1\leq i\leq n_{0}}\{(r_i^+)^2-(2r_i^-)^2\}$, we have
\begin{equation}\label{4-51}
\gamma(\overline{\Phi(X_\xi)}\cup \overline{P_a}; \widehat{I}^{0}\cup  \overline{P_a} , \widehat{I}^{-1})\geq k.
\end{equation}
\end{lemma}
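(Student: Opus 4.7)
The plan is to derive \eqref{4-51} by applying the monotonicity property of relative genus (Lemma \ref{prop4-1}(2)) with $\Phi$ itself as the testing map between $X_\xi\cup\overline{P_a}$ and $\overline{\Phi(X_\xi)}\cup\overline{P_a}$. The starting point is that $\sup_{u\in X_\xi}\widehat{I}(u)<d_{k,a}(\widehat{I})+\xi$ from \eqref{4-50}, so $X_\xi\subset \widehat{I}^{d_{k,a}(\widehat{I})+\xi}$. By the choice $0<\xi<\min_i\{(r_i^+)^2-(2r_i^-)^2\}$, Lemma \ref{lemma4-4} guarantees that each $\Phi_i$ is continuous on $\widehat{I}^{d_{k,a}(\widehat{I})+\xi}$. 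Since Lemma \ref{lemma4-3}(b) gives $\widehat{I}\circ \Phi_i\leq \widehat{I}$, the sublevel set $\widehat{I}^{d_{k,a}(\widehat{I})+\xi}$ is forward-invariant under every $\Phi_i$, so by induction the composition $\Phi=\Phi_1\circ\cdots\circ \Phi_{n_0}$ is continuous on this set, in particular on $X_\xi$.

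I then extend the continuity of $\Phi$ to all of $X_\xi\cup\overline{P_a}$. By Lemma \ref{lemma4-3}(e) the discontinuities of each $\Phi_i$ are contained in $\pm\mathcal{H}_i(B_i^-(2r_i^-)\oplus\partial B_i^+(r_i^+))\subset \pm N'(u_i)$, and these neighborhoods lie in the interior of $S_a$ (since the critical points $\pm u_i$ are interior points of $S_a$ by Lemma \ref{lemma4-1}), hence are disjoint from $\overline{P_a}$. Consequently $\Phi\equiv \mathbf{id}$ in an open neighborhood of every point of $\overline{P_a}$, which together with the continuity on $X_\xi$ established above yields continuity of $\Phi$ on $X_\xi\cup\overline{P_a}$.

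Now I verify the hypotheses of Lemma \ref{prop4-1}(2) with
\[C':=X_\xi\cup\overline{P_a},\quad C:=\overline{\Phi(X_\xi)}\cup\overline{P_a},\quad B:=\widehat{I}^{0}\cup\overline{P_a},\quad A:=\widehat{I}^{-1},\quad \varphi:=\Phi|_{C'}.\]
The well-definedness of the relative genus in \eqref{4-50} forces $B\subset C'$. The map $\varphi$ is odd and (by the previous paragraph) continuous; since $\Phi(\overline{P_a})=\overline{P_a}$ and $\Phi(X_\xi)\subset \overline{\Phi(X_\xi)}$, we have $\varphi(C')\subset C$; moreover, because $\Phi|_{\widehat{I}^{0}\cup\overline{P_a}}=\mathbf{id}$ by Lemma \ref{lemma4-3}(a), we obtain $\varphi(B)\subset B$ and $\varphi(A)\subset A$. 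Lemma \ref{prop4-1}(2) then yields
\[\gamma(X_\xi\cup \overline{P_a}; \widehat{I}^{0}\cup \overline{P_a},\widehat{I}^{-1})\leq \gamma(\overline{\Phi(X_\xi)}\cup \overline{P_a}; \widehat{I}^{0}\cup \overline{P_a},\widehat{I}^{-1}),\]
and combining with \eqref{4-50} gives \eqref{4-51}.

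The main technical obstacle is the continuity step: each $\Phi_i$ has jump-type discontinuities along the surfaces $\mathcal{H}_i(B_i^-(2r_i^-)\oplus \partial B_i^+(r_i^+))$, so it is not a global homeomorphism. The upper bound $\xi<(r_i^+)^2-(2r_i^-)^2$ is exactly what pushes these discontinuity surfaces strictly above the energy level $d_{k,a}(\widehat{I})+\xi$, ensuring $X_\xi$ avoids them; simultaneously, the localization $\pm N'(u_i)\subset S_a$ keeps them well separated from $\overline{P_a}$. Both features are essential for the composite map $\Phi$ to be continuous on $C'$, which is what makes the relative-genus comparison legitimate.
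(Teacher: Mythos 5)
Your proof is correct and follows essentially the same route as the paper: establish $X_\xi \subset \widehat{I}^{d_{k,a}(\widehat{I})+\xi}$, invoke the continuity of $\Phi$ on this sublevel set via Lemma \ref{lemma4-4}, and compare the relative genera through Lemma \ref{prop4-1}(2) using Lemma \ref{lemma4-3}(a). You are in fact more careful than the paper in two places that it glosses over, namely the forward-invariance of the sublevel set under each $\Phi_i$ (needed so the composition in \eqref{4-49} is well-defined on it) and the continuity of $\Phi$ at points of $\overline{P_a}$ outside the sublevel set; both observations are correct and welcome.
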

\begin{proof}
The definition of relative genus and \eqref{4-50} imply that $ \widehat{I}^0\cup \overline{P_a}\subset X_\xi\cup \overline{P_a}$ and $X_{\xi}\subset \widehat{I}^{d_{k,a}(\widehat{I})+\xi}$. Using
 Lemma \ref{lemma4-3} (a), it follows that
\[\widehat{I}^0\cup \overline{P_a}=\Phi(\widehat{I}^0\cup \overline{P_a}) \qquad\text{and}\qquad \widehat{I}^{-1}=\Phi(\widehat{I}^{-1}),\]
which means
\[\widehat{I}^0\cup \overline{P_a}
\subset\Phi(X_\xi\cup\overline{P_a} )\subset  \Phi(X_\xi)\cup \Phi(\overline{P_{a}}) \subset \overline{\Phi(X_\xi)}\cup\overline{P_a}.\]
Note that $\overline{\Phi(X_\xi)}$ is symmetric due to the oddness of $\Phi$. Thus, $\gamma(\overline{\Phi(X_\xi)}\cup \overline{P_a}; \widehat{I}^{0}\cup  \overline{P_a} , \widehat{I}^{-1})$ is well-defined.

Since $0<\xi<\min_{1\leq i\leq n_{0}}\{(r_i^+)^2-(2r_i^-)^2\}$, we can deduce from Lemma \ref{lemma4-4} and \eqref{4-49} that $\Phi$ is continuous on $\widehat{I}^{d_{k,a}(\widehat{I})+\xi}$. Thus by  Lemma \ref{prop4-1} (2) and \eqref{4-50}, we have
\begin{equation}\label{4-52}
\gamma(\overline{\Phi(X_\xi)}\cup \overline{P_a}; \widehat{I}^{0}\cup  \overline{P_a} , \widehat{I}^{-1})\geq\gamma(X_\xi\cup \overline{P_a}; \widehat{I}^{0}\cup \overline{P_a} , \widehat{I}^{-1})\geq k.
\end{equation}
\end{proof}

\begin{lemma}\label{lemma4-7}
Assume that $0<a<a^1$, $k\geq 3$, and
\[
\max\limits_{u\in K_{\widehat{I}, d_{k,a}(\widehat{I})}\cap S_{a}}m(u,\widehat{I})\leq k-2.\] Then, for any $0<\xi<\min_{1\leq i\leq n_0}\{(r_i^+)^2-(2r_i^-)^2\}$, the relative genus $ \gamma(X^{*}_\xi\cup  \overline{P_a}; \widehat{I}^{0}\cup  \overline{P_a} , \widehat{I}^{-1})$ is well-defined and satisfying
\begin{equation}\label{4-55}
 \gamma(X^{*}_\xi\cup  \overline{P_a}; \widehat{I}^{0}\cup  \overline{P_a} , \widehat{I}^{-1})\geq k,
\end{equation}
where
\begin{equation}\label{4-53}
X^{*}_\xi:=\overline{\Phi(X_\xi)}\setminus N,
\end{equation}
and
 \begin{equation}\label{4-54}
      N:= \bigcup_{i=1}^{n_0}(N(u_i)\cup-N(u_i)).
      \end{equation}

\end{lemma}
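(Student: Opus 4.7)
The plan is to invoke Lemma \ref{lemma4-5} as the baseline lower bound and then transfer this bound from $\overline{\Phi(X_\xi)}\cup\overline{P_a}$ to the smaller set $X^*_\xi\cup\overline{P_a}$ by applying the subadditivity property of the relative genus from Lemma \ref{prop4-1}(1). Concretely, set $\widetilde{D}:=\bigcup_{i=1}^{n_0}(D(u_i)\cup-D(u_i))$. Since $\Phi$ is odd and $X_\xi\subset\widehat{I}^{d_{k,a}(\widehat{I})+\xi}$, Lemma \ref{lemma4-4} gives $\Phi(X_\xi)\cap N(u_i)\subset D(u_i)$, and passing to closures (using that $N$ is open) yields $\overline{\Phi(X_\xi)}\cap N\subset\overline{\widetilde{D}}$. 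Therefore
\[
\overline{\Phi(X_\xi)}\cup\overline{P_a}\subset\bigl(X^*_\xi\cup\overline{P_a}\bigr)\cup\overline{\widetilde{D}},
\]
which is the decomposition we will feed into Lemma \ref{prop4-1}(1).

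Next I would verify the hypotheses of Lemma \ref{prop4-1}(1). First, $X^*_\xi\cup\overline{P_a}$ and $\overline{\widetilde{D}}$ are closed and symmetric. Second, $B=\widehat{I}^{0}\cup\overline{P_a}\subset X^*_\xi\cup\overline{P_a}$: by Lemma \ref{lemma4-3}(a), $\Phi$ fixes $B$ pointwise, and since $B\subset X_\xi\cup\overline{P_a}$ by the definition of $X_\xi$, we get $\widehat{I}^0\subset\Phi(X_\xi)\cup\overline{P_a}\subset\overline{\Phi(X_\xi)}\cup\overline{P_a}$; combined with $\widehat{I}^0\cap N=\varnothing$ (from \eqref{4-40}), this places $\widehat{I}^0\subset X^*_\xi\cup\overline{P_a}$. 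Third, $\overline{\widetilde{D}}\cap B=\varnothing$: by \eqref{4-40} we have $\widehat{I}>0$ on each $\overline{U(u_i)}$, and by shrinking $U(u_i)$ if necessary we can ensure $\overline{U(u_i)}\subset S_a$ so that $\overline{\widetilde{D}}\cap\overline{P_a}=\varnothing$ as well. With these checks in hand, Lemma \ref{prop4-1}(1) together with Lemma \ref{lemma4-5} yields
\[
k\leq\gamma\bigl(\overline{\Phi(X_\xi)}\cup\overline{P_a};B,A\bigr)\leq\gamma\bigl(X^*_\xi\cup\overline{P_a};B,A\bigr)+\gamma(\overline{\widetilde{D}}).
\]

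The heart of the proof, and the step I expect to be the main obstacle, is to control the excess term $\gamma(\overline{\widetilde{D}})$ by exploiting the Morse-index hypothesis $m(u_i,\widehat{I})\leq k-2$. The naive bound produced by the odd locally constant map to $\{\pm1\}$ is only $\gamma(\overline{\widetilde{D}})\leq 1$, which would fall one short of the claimed inequality; the delicate point is to absorb this defect by upgrading the argument. The plan is to use Lemma \ref{prop4-1}(2) in place of (1): construct an odd continuous map $\varphi:\overline{\Phi(X_\xi)}\cup\overline{P_a}\to X^*_\xi\cup\overline{P_a}$ that is the identity on $X^*_\xi\cup\overline{P_a}$ and satisfies $\varphi(B)\subset B$, $\varphi(A)\subset A$. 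Away from $N$ we set $\varphi=\mathbf{id}$; inside each $N(u_i)$ we build $\varphi$ from the odd pseudo-gradient flow of $\widehat{I}$ (provided by the construction underlying Lemma \ref{lemma3-13}) together with a Morse-theoretic handle deformation, using that the descending cell $\overline{D(u_i)}=\mathcal{H}_i(\overline{B^-_i(r_i^-)})$ has dimension $m(u_i,\widehat{I})\leq k-2$ by Lemma \ref{lemma4-4}. The low dimension is exactly what lets us solve the extension problem over the cell: points of $\overline{D(u_i)}\setminus\{u_i\}$ are pushed radially outward along $H^-_i$ and then carried by the negative flow down into $A=\widehat{I}^{-1}\subset B$, while the singular center $u_i$ is absorbed continuously using the contractibility of the cell. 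The symmetric analog is imposed on $-\overline{D(u_i)}$ to keep $\varphi$ odd, and disjointness of $\overline{U(u_i)}$ from $-\overline{U(u_j)}$ guarantees that the pieces glue without conflict.

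Once $\varphi$ is in place, Lemma \ref{prop4-1}(2) gives
\[
\gamma\bigl(\overline{\Phi(X_\xi)}\cup\overline{P_a};B,A\bigr)\leq\gamma\bigl(X^*_\xi\cup\overline{P_a};B,A\bigr),
\]
and combining with Lemma \ref{lemma4-5} we conclude $\gamma(X^*_\xi\cup\overline{P_a};B,A)\geq k$. The well-definedness of this relative genus is automatic because $X^*_\xi\cup\overline{P_a}$ and the reference pair $(B,A)$ are closed and symmetric with $A\subset B\subset X^*_\xi\cup\overline{P_a}$. The critical step to check carefully is the odd continuous gluing of the local handle deformation to the global flow across $\partial N(u_i)$, and the verification that the resulting $\varphi$ sends $A$ into $A$; both rely decisively on the bound $m(u_i,\widehat{I})\leq k-2$ on the size of the descending cell.
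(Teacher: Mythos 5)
You correctly diagnose that the subadditivity bound from Lemma~\ref{prop4-1}(1) together with $\gamma(\overline{\widetilde D})\leq 1$ falls one short of the goal, and your preliminary reductions (well-definedness of the relative genus, the inclusions $\widehat{I}^0\cup\overline{P_a}\subset X^*_\xi\cup\overline{P_a}$ and $\overline{\Phi(X_\xi)}\subset X^*_\xi\cup\overline{\widetilde D}$, and $\overline{\widetilde D}\cap B=\varnothing$) are all fine. The gap is in the second half: the odd continuous map $\varphi:\overline{\Phi(X_\xi)}\cup\overline{P_a}\to X^*_\xi\cup\overline{P_a}$ that you propose for Lemma~\ref{prop4-1}(2) cannot, in general, be built. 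Inside $N(u_i)$ the set $\overline{\Phi(X_\xi)}$ contains essentially $\overline{D(u_i)}=\mathcal{H}_i(\overline{B^-(r_i^-)})$, a closed $m(u_i,\widehat{I})$-disk whose boundary sphere $\mathcal{H}_i(\partial B^-(r_i^-))$ already sits in $X^*_\xi$ (since $N$ is open and $X^*_\xi=\overline{\Phi(X_\xi)}\setminus N$). Continuity with $\varphi=\mathbf{id}$ on $X^*_\xi\cup\overline{P_a}$ forces $\varphi$ to agree with the identity on that boundary sphere, while pushing the interior of the disk out of $N(u_i)$ into $X^*_\xi\cup\overline{P_a}$. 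That is exactly a retraction of an $m$-cell onto its boundary, which does not exist when $m(u_i,\widehat{I})\geq 1$. ``The low dimension lets us solve the extension problem'' is not what is happening here: low dimension is of no help for constructing deformations that retract a handle onto the sublevel set; in Morse theory the sublevel set $\widehat I^{c+\xi}$ retracts onto $\widehat I^{c-\xi}$ \emph{with the descending cell attached}, not onto $\widehat I^{c-\xi}$ alone. The vague ``the singular center $u_i$ is absorbed continuously by contractibility'' does not resolve this obstruction.

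The paper's actual mechanism is different and hinges on extending \emph{maps into} $\mathbb{S}^{k-2}$, not on deforming the domain. Argue by contradiction: if $\gamma(X^*_\xi\cup\overline{P_a};\widehat{I}^0\cup\overline{P_a},\widehat{I}^{-1})\leq k-1$, then Definition~\ref{def4-2} yields closed symmetric $U,V$ with $X^*_\xi\cup\overline{P_a}\subset U\cup V$, $\widehat{I}^0\cup\overline{P_a}\subset U$, an odd $h:U\to\widehat{I}^0\cup\overline{P_a}$ with $h(\widehat{I}^{-1})\subset\widehat{I}^{-1}$, and $\gamma(V)\leq k-1$, hence an odd continuous $f:V\to\mathbb{S}^{k-2}$. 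Since each $\overline{D(u_i)\cup-D(u_i)}$ is (by Lemma~\ref{lemma4-4}) a pair of disjoint closed balls of dimension $m(u_i,\widehat{I})\leq k-2$, the Tietze--Dugundji type extension theorem for sphere-valued maps on low-dimensional sets (\cite[Theorem D.9]{Ghoussoub1993}, \cite[Lemma 4.1]{Lazer1988}) extends $f|_{V\cap\overline{D(u_i)\cup-D(u_i)}}$ to an odd continuous $f_i:\overline{D(u_i)\cup-D(u_i)}\to\mathbb{S}^{k-2}$. Gluing $f$ with the $f_i$ gives an odd continuous $\hat f:V\cup\overline{D}\to\mathbb{S}^{k-2}$, so $\gamma(V\cup\overline{D})\leq k-1$. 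Because $\overline{\Phi(X_\xi)}\subset X^*_\xi\cup\overline{D}$, the pair $(U,V')$ with $V'=V\cup\overline{D}$ certifies $\gamma(\overline{\Phi(X_\xi)}\cup\overline{P_a};\widehat{I}^0\cup\overline{P_a},\widehat{I}^{-1})\leq k-1$, contradicting Lemma~\ref{lemma4-5}. In short, the Morse-index bound $m(u_i,\widehat{I})\leq k-2$ is exploited via the vanishing of $\pi_j(\mathbb{S}^{k-2})$ for $j<k-2$ (extension of sphere-valued maps), not via a pseudo-gradient retraction; your proof is missing this extension argument, which is the actual content of the lemma.
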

\begin{proof}
Clearly,  $X^{*}_\xi$ is closed and symmetric. From \eqref{4-40} and \eqref{4-42}, we have
  \[(N(u_i)\cup -N(u_i))\cap (\widehat{I}^0\cup \overline{P_a})=\varnothing~~~~~\mbox{for}~~~1\leq i\leq n_0,\]
  which implies $\widehat{I}^0\cup \overline{P_a}\subset N^{c}$. It follows from Lemma \ref{lemma4-5} that  $\widehat{I}^0\cup \overline{P_a}\subset X^{*}_\xi\cup \overline{P_a}$, and
 $\gamma(X^{*}_\xi\cup \overline{P_a}; \widehat{I}^{0}\cup \overline{P_a} , \widehat{I}^{-1})$ is thus well-defined.

Suppose that $ \gamma(X^{*}_\xi\cup  \overline{P_a}; \widehat{I}^{0}\cup  \overline{P_a} , \widehat{I}^{-1})\leq k-1$. By Definition \ref{def4-2}, there are closed and symmetric subsets $U, V \subset H_{X,0}^{1}(\Omega)$ such that
  \begin{equation}\label{4-56}
  X^{*}_\xi\cup  \overline{P_a} \subset U\cup V,~~~  \widehat{I}^{0}\cup  \overline{P_a} \subset U,~~~\mbox{and}~~~ \gamma(V)\leq k-1.
  \end{equation}
Besides, there exists an odd continuous map $h:U\to \widehat{I}^{0}\cup  \overline{P_a}$ satisfying
  \begin{equation}\label{4-57}
 h(\widehat{I}^{-1})\subset \widehat{I}^{-1}.
  \end{equation}
 The definition of genus and \eqref{4-56} implies there exists an
  odd continuous map
\[ f: V \to \mathbb{S}^{k-2}. \]

On the other hand, for $0<\xi<\min_{1\leq i\leq n_0}\{(r_i^+)^2-(2r_i^-)^2\}$, Lemma \ref{lemma4-4} shows that the
$D(u_i)$ given by
\[ D(u_i)=\Phi(\widehat{I}^{d_{k,a}(\widehat{I})+\xi})\cap N(u_{i})\]
satisfying $D(u_i)= \mathcal{H}_i(B^-(r_i^-))$. This means
$\pm D(u_i)=\pm \mathcal{H}_i(B^-(r_i^-))$
is  homeomorphic to the ball in $\mathbb{R}^{m(u_i,\widehat{I})}$ with $m(u_i,\widehat{I})\leq k-2$. Using the extension lemma in \cite[Theorem D.9]{Ghoussoub1993} (also see \cite[Lemma 4.1]{Lazer1988}), for each $1\leq i\leq n_0$, the restriction $f|_{V \cap \overline{D(u_i)\cup -D(u_i)}}$ of map $f$ induces a continuous map $f_i:\overline{D(u_i)\cup -D(u_i)}\to \mathbb{S}^{k-2}$ such that
   \begin{equation}\label{4-58}
    f_i(u)=f(u)\qquad \forall u\in V \cap \overline{D(u_i)\cup -D(u_i)}.
   \end{equation}
 We can further require that $f_i$ is odd, since $\frac{f_i(x)-f_i(-x)}{2}$ is an odd map. Let
   \begin{equation}\label{4-59}
 \hat{f}:=\left\{\begin{array}{ll}{f(u),} & { u\in V,} \\[2mm]
 {f_i(u),} & {u\in \overline{D(u_i)\cup -D(u_i)},~~1\leq i\leq n_0.}\end{array}\right.
\end{equation}
It follows that $\hat{f}:V\cup  \overline{D}\to \mathbb{S}^{k-2}$ is an odd and continuous map,
where
\[D:= \bigcup_{i=1}^{n_0}(D(u_i)\cup -D(u_i)).\]
  Thus $\gamma(V\cup \overline{D}) \leq k-1$.

Since $\overline{\Phi(X_\xi)}= X^{*}_\xi\cup (\overline{\Phi(X_\xi)}\cap N)$
   and $X_\xi\subset \widehat{I}^{d_{k,a}(\widehat{I})+\xi}$, we have
   \[\overline{\Phi(X_\xi)}\subset X^{*}_\xi\cup ( \overline{\Phi(\widehat{I}^{d_{k,a}(\widehat{I})+\xi})\cap N}) \subset X^{*}_\xi\cup \overline{D}.\]
Additionally, \eqref{4-56} gives that  $\overline{\Phi(X_\xi)}\cup \overline{P_a}\subset U\cup V'$, where $V'=V\cup \overline{D}$ is a symmetric closed set satisfying $\gamma(V')\leq k-1$. Hence, we obtain from Definition \ref{def4-2} that
 \[ \gamma(\overline{\Phi(X_\xi)}\cup \overline{P_a}; \widehat{I}^{0}\cup  \overline{P_a} , \widehat{I}^{-1})\leq k-1,\]
   which contradicts  Lemma \ref{lemma4-5}.
\end{proof}

Owing to Lemma \ref{lemma4-5} and Lemma \ref{lemma4-7}, we can obtain the following proposition.
\begin{proposition}\label{prop4-13}
 For any integer $k\geq 3$, $0<a<a^1$, and $\widehat{I}$ be the functional given by Proposition \ref{prop4-12},
 there exists  $u\in K_{\widehat{I}, d_{k,a}(\widehat{I})}\cap S_{a}$ such that
\begin{equation}\label{4-60}
m(u, \widehat{I})\geq k-1.
\end{equation}

\end{proposition}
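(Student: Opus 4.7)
The plan is to argue by contradiction. Suppose that for some fixed $k \geq 3$ and $0 < a < a^1$, every critical point $u \in K_{\widehat{I}, d_{k,a}(\widehat{I})} \cap S_a$ satisfies $m(u, \widehat{I}) \leq k-2$. Writing $c := d_{k,a}(\widehat{I}) > 0$, I will set up the deformation machinery produced by Proposition \ref{prop4-2} for the functional $\widehat{I}$ (which is admissible thanks to Proposition \ref{prop4-11}, giving Palais--Smale, local Lipschitz continuity of $\widehat{I}'$, and $(\mathbf{id}-\widehat{I}')(\overline{P_a^\pm}) \subset \overline{P_{a/2}^\pm}$), applied with the symmetric open neighborhood $N = \bigcup_{i=1}^{n_0}(N(u_i) \cup -N(u_i))$ of $K_{\widehat{I},c} \cap S_a$. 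This yields, for any prescribed $\varepsilon_1 > 0$, some $\delta \in (0, \varepsilon_1)$ and an odd homeomorphism $\Theta$ satisfying $\Theta((\widehat{I}^{c+\delta} \cup \overline{P_a}) \setminus N) \subset \widehat{I}^{c-\delta} \cup \overline{P_a}$, $\Theta(\overline{P_a^\pm}) \subset \overline{P_a^\pm}$, and $\widehat{I}(\Theta(u)) \leq \widehat{I}(u)$ for all $u$.

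Next, I will fix $\xi > 0$ small enough so that $\xi < \delta$ and $\xi < \min_{1 \leq i \leq n_0}\{(r_i^+)^2 - (2r_i^-)^2\}$, and choose $X_\xi$ as in \eqref{4-50}. The main point is to verify that $X^*_\xi \cup \overline{P_a}$ sits inside the set on which $\Theta$ performs its deformation. Indeed, $X_\xi \subset \widehat{I}^{c+\xi}$ by construction, Lemma \ref{lemma4-3}(b) gives $\widehat{I} \circ \Phi \leq \widehat{I}$, so $\overline{\Phi(X_\xi)} \subset \widehat{I}^{c+\xi} \subset \widehat{I}^{c+\delta}$; moreover $X^*_\xi = \overline{\Phi(X_\xi)} \setminus N$ is disjoint from $N$, and $\overline{P_a}$ is disjoint from $N$ because $N \subset S_a$. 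Consequently $X^*_\xi \cup \overline{P_a} \subset (\widehat{I}^{c+\delta} \cup \overline{P_a}) \setminus N$, which gives $\Theta(X^*_\xi \cup \overline{P_a}) \subset \widehat{I}^{c-\delta} \cup \overline{P_a}$.

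Now I will apply the monotonicity of the relative genus (Lemma \ref{prop4-1}(2)) with $\varphi := \Theta|_{X^*_\xi \cup \overline{P_a}}$. This map is odd and continuous, carries $\widehat{I}^0 \cup \overline{P_a}$ into itself (since $\widehat{I} \circ \Theta \leq \widehat{I}$ and $\Theta$ preserves $\overline{P_a^\pm}$) and carries $\widehat{I}^{-1}$ into itself. Lemma \ref{lemma4-7} already records $\widehat{I}^0 \cup \overline{P_a} \subset X^*_\xi \cup \overline{P_a}$ so the hypothesis $B \subset C'$ of Lemma \ref{prop4-1}(2) is met. The monotonicity thus delivers
\[
 \gamma(X^*_\xi \cup \overline{P_a};\, \widehat{I}^0 \cup \overline{P_a},\, \widehat{I}^{-1}) \leq \gamma(\widehat{I}^{c-\delta} \cup \overline{P_a};\, \widehat{I}^0 \cup \overline{P_a},\, \widehat{I}^{-1}).
\]
By the very definition of $d_{k,a}(\widehat{I}) = c$, the right-hand side is at most $k-1$, whereas Lemma \ref{lemma4-7}, which crucially uses the assumption $m(u,\widehat{I}) \leq k-2$, forces the left-hand side to be at least $k$. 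This contradiction completes the proof.

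The main conceptual obstacle is ensuring that the critical points at level $c$ can be effectively ``excised'' from the topological count: the Morse-type surgery furnished by $\Phi$ in Lemma \ref{lemma4-3} and \ref{lemma4-4} collapses a neighborhood of each $\pm u_i$ onto a ball $\mathcal{H}_i(B^-(r_i^-))$ of dimension $m(u_i,\widehat{I}) \leq k-2$, whose genus is therefore at most $k-2$; coupling this with the deformation $\Theta$ which further pushes the remaining set below level $c-\delta$ requires both pieces to be compatible. The routine but delicate compatibility check --- that $\xi < \delta$ makes the image of $\Phi$ low enough for $\Theta$ to act on it --- is exactly what the quantitative choice of parameters above enforces.
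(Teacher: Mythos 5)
Your proof is correct and follows essentially the same route as the paper: argue by contradiction, apply Proposition \ref{prop4-2} to the neighborhood $N = \bigcup_i (N(u_i) \cup -N(u_i))$ to obtain the odd deformation $\Theta$, invoke Lemma \ref{lemma4-7} to get $\gamma(X^*_\xi \cup \overline{P_a}; \widehat{I}^0 \cup \overline{P_a}, \widehat{I}^{-1}) \geq k$, check that $\Theta$ maps $X^*_\xi \cup \overline{P_a}$ into $\widehat{I}^{c-\delta} \cup \overline{P_a}$, and conclude via the monotonicity of relative genus that $d_{k,a}(\widehat{I})$ would be strictly smaller than $c$, a contradiction. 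The only cosmetic deviation is that the paper bakes the Morse-lemma size constraint into the choice of $\varepsilon_1$ and then sets $\xi = \delta$, whereas you leave $\varepsilon_1$ free and instead pick $\xi < \min\{\delta, \min_i ((r_i^+)^2 - (2r_i^-)^2)\}$ afterward --- an equally valid bookkeeping choice that incidentally sidesteps a sign typo in the paper's displayed formula for $\varepsilon_1$.
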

\begin{proof}
We shall prove \eqref{4-60} by contradiction. Suppose
\[  m(u, \widehat{I})\leq k-2\qquad \forall u\in  K_{\widehat{I}, d_{k,a}(\widehat{I})}\cap S_{a}.\]
Note that $N= \bigcup_{i=1}^{n_0}(N(u_i)\cup-N(u_i))$ is a symmetry neighborhood of $K_{\widehat{I}, d_{k,a}(\widehat{I})}\cap S_{a}$ and the functional $\widehat{I}$ given by Proposition \ref{prop4-12} admits the assumptions in Proposition \ref{prop4-2}. Applying Proposition \ref{prop4-2} for $N$, $\widehat{I}$, $c=d_{k,a}(\widehat{I})$ and
\[ \varepsilon_1=\min\left\{\frac{d_{1,a}(\widehat{I})}{2},(2r_1^-)^2-(r_1^+)^2,\ldots,(2r_{n_{0}}^-)^2-(r_{n_{0}}^+)^2\right\}>0,\]
 there exist $\delta\in(0,\varepsilon_1)$ and an odd  continuous map $\Theta:H_{X,0}^{1}(\Omega)\to H_{X,0}^{1}(\Omega)$ such that
 \begin{equation}\label{4-61}
\Theta((\widehat{I}^{d_{k,a}(\widehat{I})+\delta}\cup  \overline{P_a})\setminus N)\subset \widehat{I}^{d_{k,a}(\widehat{I})-\delta}\cup \overline{P_a}.
 \end{equation}

 Now, substituting  $\xi=\delta$ in Lemma \ref{lemma4-7}, we obtain
\[\gamma(X^{*}_{\delta}\cup  \overline{P_a}; \widehat{I}^{0}\cup  \overline{P_a} , \widehat{I}^{-1})\geq k,\]
where $X^{*}_\delta:=\overline{\Phi(X_\delta)}\setminus N$, and $X_{\delta}$ is given by \eqref{4-50}. Using  Lemma \ref{lemma4-3} (b) and \eqref{4-50}, we have
\[ \sup_{u\in\overline{\Phi(X_\delta)}}\widehat{I}(u)\leq d_{k,a}(\widehat{I})+\delta,\]
i.e. $\overline{\Phi(X_\delta)}\subset \widehat{I}^{d_{k,a}(\widehat{I})+\delta}$. By \eqref{4-42} and \eqref{4-54}, $N\cap \overline{P_a}=\varnothing$. Thus, we can obtain
\begin{equation}\label{4-62}
X_\delta^{*}\cup\overline{P_a}=(\overline{\Phi(X_\delta)}\setminus N)\cup\overline{P_a}
 \subset
 (\widehat{I}^{d_{k,a}(\widehat{I})+\delta}\cup  \overline{P_a})\setminus N.
\end{equation}
Combining \eqref{4-61} and \eqref{4-62}, we have
  \[\Theta(X_\delta^{*}\cup\overline{P_a})\subset \widehat{I}^{d_{k,a}(\widehat{I})-\delta}\cup \overline{P_a}.\]
 Meanwhile,  Proposition \ref{prop4-2} (ii) (iii) ensure that $\Theta(\widehat{I}^0\cup\overline{P_a})\subset \widehat{I}^0\cup\overline{P_a}$ and $\Theta(\widehat{I}^{-1})\subset \widehat{I}^{-1}$. Therefore, by Lemma \ref{prop4-1} (2) we have
 \[\gamma( \widehat{I}^{d_{k,a}(\widehat{I})-\delta}\cup \overline{P_a}; \widehat{I}^{0}\cup \overline{P_a} , \widehat{I}^{-1})\geq
 \gamma( X_\delta^{*}\cup\overline{P_a}; \widehat{I}^{0}\cup \overline{P_a} , \widehat{I}^{-1})\geq k,\]
  which contradicts the definition of $d_{k,a}(\widehat{I})$.
    Consequently, there exists a sign-changing critical point $u\in K_{\widehat{I},d_{k,a}(\widehat{I})}\cap S_a$ such that $m(u,\widehat{I})\geq k-1$.
 \end{proof}

\section{Proofs of Theorem 1.1 and Theorem 1.2}\label{Section5}

\subsection{Proof of Theorem \ref{thm1}}

According to the results in Section \ref{Section3}, we can present the proof of Theorem \ref{thm1}.

\begin{proof}[Proof of Theorem \ref{thm1}]
Let $b_k(J)$ and $c_k(J)$ be the min-max values defined in subsection \ref{subsection3-3}.
 Note that $b_k(J)>M_{2}+1$ and $c_k(J)>M_{2}+1$ for all $k\geq k_0$. We first prove that
 there exists an increasing sequence
\[ k_{0}\leq k_{j_{1}}<k_{j_{2}}<\cdots<k_{j_{l}}\to+\infty\quad \mbox{as}~~l\to+\infty \]
satisfying $c_{k_j}(J)>b_{k_j}(J)+1$ for all $j\geq 1$.

Suppose  there exists a $m\geq k_0$ such that $c_k(J)\leq b_k(J)+1$ holds for all $k\geq m$. By the definitions of $c_k(J)$ and $\Lambda_k$, we can choose a map $\phi\in \Lambda_k$ such that
\begin{equation}\label{3-78}
  0<c_k(J)\leq \sup_{\phi(N^{+}_{k+1}\cap B_{R_{k+1}})\cap S_{a_{k+1}}}J(u)\leq b_k(J)+2.
\end{equation}
Then, we continuously extend $\phi$ as an odd map on $N_{k+1}\cap B_{R_{k+1}}$. It follows that $\phi\in \Gamma_{k+1}$. Denoting by
\[G=\{w\in N_{k+1}\cap B_{R_{k+1}}:\phi(w)\in S_{a_{k+1}}\}=N_{k+1}\cap B_{R_{k+1}}\cap \phi^{-1}(S_{a_{k+1}}),\]
 we can rewrite
\[ 0<c_k(J)\leq\sup_{\phi(N_{k+1}\cap B_{R_{k+1}})\cap S_{a_{k+1}}}J(u)=\sup_{w\in G}J(\phi(w)).\]
Observe that $\overline{G}\subset N_{k+1}\cap \overline{B_{R_{k+1}}}\cap \phi^{-1}(S_{a_{k+1}})$ is a compact subset in $N_{k+1}$, and $J(\phi(\cdot))$ attains its positive maximum on $\overline{G}$. Therefore,  $J(\phi(v))=\sup_{w\in G}J(\phi(w))$ for some $v\in G$ since $J(\phi(u))=J(u)<-1$ for all $u\in \overline{G}\setminus G\subset N_{k+1}\cap \partial B_{R_{k+1}}$. Then
\begin{itemize}
  \item If $v\in N_{k+1}^+$, by \eqref{3-78} and the definition of $b_{k+1}(J)$, we have
  \begin{equation}\label{3-79}
    b_{k+1}(J)\leq J(\phi(v))\leq b_k(J)+2;
  \end{equation}
  \item If $v\in -N_{k+1}^+$ (i.e. $-v\in N_{k+1}^+$), by Lemma \ref{lemma3-2} and \eqref{3-79} we have
  \begin{equation}\label{3-80}
  \begin{aligned}
 b_{k+1}(J)\leq J(\phi(v))=J(-\phi(-v))&\leq J(\phi(-v))+C(1+|J(\phi(-v))|^{\frac{\sigma+1}{\mu}})\\
  &\leq b_k(J)+2+C(1+|b_k(J)+2|^{\frac{\sigma+1}{\mu}}).
  \end{aligned}
  \end{equation}
\end{itemize}
Combining \eqref{3-79} and \eqref{3-80}, we have
\[b_{k+1}(J)\leq b_{k}(J)+Cb_{k}(J)^{\frac{\sigma +1}{\mu}}~~~~\forall k\geq m,\]
where $m\geq 3$ is some positive integer. By iteration, it follows that
\begin{equation}\label{3-81}
\begin{aligned}
 b_{m+l}(J)&\leq   b_{m}(J)\prod_{k=m}^{m+l-1}\left(1+C b_{k}(J)^{\frac{\sigma +1-\mu}{\mu}}\right)\\
 &= b_{m}(J)\exp\left(\sum_{k=m}^{m+l-1}\ln\left(1+C b_{k}(J)^{\frac{\sigma +1-\mu}{\mu}} \right) \right)\leq b_{m}(J)\exp\left(C\sum_{k=m}^{m+l-1} b_{k}(J)^{\frac{\sigma +1-\mu}{\mu}}\right),
\end{aligned}
\end{equation}
where we use the inequality $\ln(1+x)\leq x$ for $x\geq 0$ in the last step.

On the other hand, Lemma \ref{lemma3-15} and assumption $(L)$ in Theorem \ref{thm1} assert that
\[ b_{k}(J)\geq C \lambda_{k-1}^{\frac{2(2_{\tilde{\nu}}^{*}-p)}{(p-2)(2_{\tilde{\nu}}^{*}-2)}}\geq C\cdot k^{\frac{2}{\vartheta}\left(\frac{p}{p-2}-\frac{\tilde{\nu}}{2}\right)}(\ln k)^{-\kappa\left(\frac{p}{p-2}-\frac{\tilde{\nu}}{2}\right)}~~~~~\forall k\geq k_{1}. \]
Here $k_{1}\geq 3$ is some positive integer.
The condition $(A1)$ gives $\frac{2p}{\vartheta(p-2)}-\frac{\tilde{\nu}}{\vartheta}>\frac{\mu}{\mu-\sigma-1}$, i.e.
\[ \frac{2}{\vartheta}\left(\frac{p}{p-2}-\frac{\tilde{\nu}}{2}\right)\frac{\sigma+1-\mu}{\mu}<-1. \]
Hence, there exists an integer $k_{2}\geq k_{1}$ such that
\begin{equation}\label{3-82}
  \sum_{k=k_{2}}^{\infty} b_{k}(J)^{\frac{\sigma+1-\mu}{\mu}}\leq C\sum_{k=k_{2}}^{\infty} k^{\frac{2}{\vartheta}\left(\frac{p}{p-2}-\frac{\tilde{\nu}}{2}\right)\frac{\sigma+1-\mu}{\mu}}(\ln k)^{\kappa\left( \frac{p}{p-2}-\frac{\tilde{\nu}}{2}\right)\frac{\mu-\sigma-1}{\mu}} \leq C<+\infty.
\end{equation}
Therefore,  \eqref{3-81} and \eqref{3-82} derive that $b_{k}(J)\leq C$ for all $k\geq 3$ and some positive constant $C>0$, which contradicts  Lemma \ref{lemma3-15}. Hence for any $m\geq k_0$, there exists $k_m\geq m$ such that $c_{k_m}(J)> b_{k_m}(J)+1$. Using Proposition \ref{prop3-1}, we can obtain a sequence of sign-changing critical points $\{u_j\}_{j=1}^\infty$ of $J$ with the critical values $\{c_{k_j}(J)\}_{j=1}^\infty$ satisfying $c_{k_j}(J)>M_{2}+2>M_{0}$. From Lemma \ref{lemma3-4},  $\{u_j\}_{j=1}^\infty$ is also the sign-changing critical points of functional $E$ associated with the critical values $\{c_{k_j}(J)\}_{j=1}^\infty$.

\par
  Finally, we show that  $\{u_j\}_{j=1}^\infty$ is unbounded in $H_{X,0}^{1}(\Omega)$. According to \eqref{3-2}, \eqref{3-6}, \eqref{3-10} and $|\theta(u)|\leq 1$, it follows that
\begin{align*}
  c_{k_j}(J)=|J(u_j)| &\leq \frac{1}{2}\|u_j\|_{H_{X,0}^1(\Omega)}^2+\varepsilon\|u_j\|_{L^2(\Omega)}^2+ C(\varepsilon)\|u_j\|_{L^p(\Omega)}^p\\
   &\leq C\|u_j\|_{H_{X,0}^1(\Omega)}^2+C\|u_j\|_{H_{X,0}^1(\Omega)}^p,
\end{align*}
which implies $\lim_{j\to+\infty}\|u_j\|_{H_{X,0}^1(\Omega)}=+\infty$.
  \end{proof}

\subsection{Proof of Theorem \ref{thm2}}

According to the proof of Theorem \ref{thm1}, we find that the lower bound of min-max value $b_k(J)$ in Lemma \ref{lemma3-15} determines the condition $(A1)$. In this subsection, we present an additional lower bound for the min-max value $b_k(J)$, which determines the condition $(A2)$ in Theorem \ref{thm2}. Without loss of generality, we may assume that $\bar{a}<a^{1}$.

\begin{proposition}\label{prop4-14}
Let $b_k(J)$ be the min-max value defined in \eqref{3-58} above. For $k\geq 5$, we have
\begin{equation}
 b_k(J)\geq C(k-3)^{\frac{2p}{\tilde{\nu}(p-2)}}.
\end{equation}

 \end{proposition}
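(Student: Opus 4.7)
The plan is to transfer the min-max value $b_k(J)$ to the auxiliary functional $I_p$, then obtain a sign-changing critical point of $I_p$ with large augmented Morse index, and finally convert the Morse index bound into an $L^p$ lower bound via the Cwikel--Lieb--Rozenblum inequality. First, the pointwise estimate \eqref{4-4} gives $J(u)\geq \tfrac{1}{2} I_p(u)$, and since both min-max values are infima of suprema over the same family $\Gamma_k$ over the same intersection set (with $a=a_k$), we immediately obtain
\[ b_k(J)\ \geq\ \tfrac{1}{2}\, b_{k,a_k}(I_p). \]
Because $a_k<\overline{a}<a^{1}$, Proposition \ref{prop4-6} (applied with index shifted by two) yields $b_{k,a_k}(I_p)\geq d_{k-2,a_k}(I_p)$, so the problem reduces to bounding $d_{k-2,a_k}(I_p)$ from below.

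Next, for $k\geq 5$ we have $k-2\geq 3$, so Propositions \ref{prop4-11}--\ref{prop4-13} apply at level $d_{k-2,a_k}(I_p)$: for suitably small $\epsilon_1,\epsilon_2,\epsilon_3$ the Marino--Prodi perturbation $\widehat I$ admits a non-degenerate sign-changing critical point $u^*\in K_{\widehat I, d_{k-2,a_k}(\widehat I)}\cap S_{a_k}$ with $m(u^*,\widehat I)\geq k-3$, and by Proposition \ref{prop4-12} this produces a sign-changing critical point $u\in K_{I_p,d_{k-2,a_k}(I_p)}\cap S_{a_k}$ of $I_p$ with
\[ m^{*}(u,I_p)\ \geq\ k-3. \]

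Now I convert this augmented Morse index into a quantitative estimate. By \eqref{4-7}, the condition $(I_p''(u)h,h)_{H^1_{X,0}}\leq 0$ is equivalent to $\int_\Omega |Xh|^2\,dx\leq p(p-1)C_p\int_\Omega |u|^{p-2}h^{2}\,dx$, so by the Rayleigh--Ritz principle $m^{*}(u,I_p)$ equals the number of non-positive Dirichlet eigenvalues of the Schr\"odinger operator $-\triangle_X + V$ with $V=-p(p-1)C_p|u|^{p-2}\leq 0$. Replacing $V$ by $V-\varepsilon$ (which is still non-positive) shifts all eigenvalues by $-\varepsilon$, so the number of eigenvalues strictly less than zero for the shifted operator is at least $m^{*}(u,I_p)$; applying Proposition \ref{CLR} (note $V\in L^{p_1/2}$ with $p_1>\tilde\nu$ holds thanks to $p<2_{\tilde\nu}^{*}$ and the Sobolev embedding) and letting $\varepsilon\to 0^{+}$ via dominated convergence gives
\[ k-3\ \leq\ m^{*}(u,I_p)\ \leq\ C\int_\Omega |u|^{\frac{(p-2)\tilde\nu}{2}}\,dx. \]

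Finally, since $u$ is a critical point of $I_p$, testing $I_p'(u)$ against $u$ yields $\|u\|^{2}_{H^1_{X,0}(\Omega)}=pC_p\|u\|^{p}_{L^p(\Omega)}$, whence
\[ d_{k-2,a_k}(I_p)=I_p(u)=\frac{p-2}{2}C_p\|u\|^{p}_{L^{p}(\Omega)}. \]
The assumption $p<2_{\tilde\nu}^{*}=\frac{2\tilde\nu}{\tilde\nu-2}$ is equivalent to $\frac{(p-2)\tilde\nu}{2}<p$, so on the bounded domain $\Omega$ H\"older's inequality gives $\|u\|_{L^{(p-2)\tilde\nu/2}(\Omega)}\leq C\|u\|_{L^{p}(\Omega)}$. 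Combining the above,
\[ k-3\ \leq\ C\,\|u\|^{\frac{(p-2)\tilde\nu}{2}}_{L^p(\Omega)}\ \leq\ C\,\bigl(d_{k-2,a_k}(I_p)\bigr)^{\frac{(p-2)\tilde\nu}{2p}}, \]
so $d_{k-2,a_k}(I_p)\geq C(k-3)^{\frac{2p}{\tilde\nu(p-2)}}$, and the first two steps close the argument: $b_k(J)\geq C(k-3)^{\frac{2p}{\tilde\nu(p-2)}}$. The main technical point I expect to require care is Step~3--4, namely ensuring that the perturbation parameters in Propositions \ref{prop4-11}--\ref{prop4-13} can be chosen uniformly so that the resulting critical point of $I_p$ has augmented Morse index at least $k-3$, and that the passage from ``augmented Morse index'' (which may include kernel directions) to the strictly-negative eigenvalue count controlled by Cwikel--Lieb--Rozenblum is handled correctly via the infinitesimal shift of the potential.
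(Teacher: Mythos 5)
Your proposal is correct and follows essentially the same approach as the paper's proof: pass from $J$ to $I_p$ via $J\geq\frac{1}{2}I_p$, reduce $b_{k,a}(I_p)$ to $d_{j,a}(I_p)$ via Proposition \ref{prop4-6}, extract a sign-changing critical point of $I_p$ with large augmented Morse index from Propositions \ref{prop4-12}--\ref{prop4-13}, then convert the Morse index into an $L^{(p-2)\tilde\nu/2}$ lower bound via the Cwikel--Lieb--Rozenblum inequality (with the $\varepsilon$-shift of the potential) and conclude with H\"older and the Nehari-type identity for $I_p$. The only difference from the paper is index bookkeeping: you work directly at level $d_{k-2,a_k}$ so the exponent appears as $(k-3)$ immediately, whereas the paper estimates $d_{k,a_{k+2}}(I_p)\geq C(k-1)^{2p/(\tilde\nu(p-2))}$ and shifts $k\mapsto k-2$ at the very end; both yield the stated bound for $k\geq 5$.
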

 \begin{proof}
The iteration \eqref{3-57} and assumption $\bar{a}<a^{1}$ ensure that $0<a_{k}<a^{1}$ for $k\geq 3$. For any fixed  $a_{k+2}>0$, we define the corresponding $d_{k,a_{k+2}}(I_{p})$ by \eqref{4-10} and Proposition \ref{prop4-6}. Applying Proposition \ref{prop4-12} and  Proposition \ref{prop4-13} to $a=a_{k+2}$, we have
\begin{equation}\label{4-64}
  m^{*}(u_{k},I_{p})\geq k-1~~~\mbox{for some}~~u_{k}\in K_{I_{p},d_{k,a_{k+2}}(I_{p})}\cap S_{a_{k+2}}.
\end{equation}
Consider the functional $V_{\varepsilon}(x):=-p(p-1)C_p|u_k(x)|^{p-2}-\varepsilon$ with $0\leq \varepsilon<1$.
Since $u_{k}\in H_{X,0}^{1}(\Omega)$, we deduce from Proposition \ref{prop2-3} that $V_{\varepsilon}\in L^{\frac{p_{1}}{2}}(\Omega)$ holds for some $p_{1}>\tilde{\nu}$ and all $0\leq \varepsilon<1$. Thus, Proposition \ref{prop2-5} implies that the Dirichlet eigenvalue problems of subelliptic Schr\"{o}dinger operators $-\triangle_{X}+V_{\varepsilon}$ are well-defined for all $0\leq \varepsilon<1$. The quadratic form of $-\triangle_{X}+V_{\varepsilon}$ is given by
\begin{equation}
\begin{aligned}\label{4-65}
  \mathcal{Q}_{\varepsilon}(h,v):&=\int_{\Omega}Xh\cdot Xv dx+\int_{\Omega}V_{\varepsilon}hv dx\\
  &=  \int_{\Omega}Xh\cdot Xv dx-\int_{\Omega}(p(p-1)C_p|u_k|^{p-2}+\varepsilon)hv dx \qquad\forall h,v\in H_{X,0}^{1}(\Omega).
\end{aligned}
\end{equation}
In particular, \eqref{4-7} and \eqref{4-65} yield that $ \mathcal{Q}_{0}(v,v)=(I''_{p}(u_{k})v,v)_{H_{X,0}^{1}(\Omega)}$ for all $v\in H_{X,0}^{1}(\Omega)$. Thus by Rayleigh-Ritz formula and Definition \ref{def4-3}, for any $0\leq\varepsilon<1$ we have
\begin{equation}
\begin{aligned}\label{4-66}
N(0,-\triangle_{X}+V_{\varepsilon})=\max\{\text{dim} V:\mathcal{Q}_{\varepsilon}(u,u)<0~~\forall u\in V\setminus\{0\}\}
\end{aligned}
\end{equation}
and
\begin{equation}
\label{4-67}
m^*(u_k, I_p)=\max\{\text{dim} V: \mathcal{Q}_{0}(u,u)\leq 0~~\forall u\in V\},
\end{equation}
where $V$ is a subspace of $H_{X,0}^{1}(\Omega)$.

Combining Proposition \ref{CLR} and \eqref{4-64}-\eqref{4-67}, we have for any $k\geq 3$,
\begin{equation}\label{4-68}
 C\int_{\Omega}\left(p(p-1)C_p|u_{k}|^{p-2}+\varepsilon\right)^{\frac{\tilde{\nu}}{2}}dx \geq N(0,-\triangle_{X}+V_{\varepsilon})
  \geq m^*(u_k, I_p) \geq k-1,
\end{equation}
where $C>0$ is a positive constant. Taking $\varepsilon \to 0$ in \eqref{4-68}, we get
\begin{equation}\label{4-69}
\int_{\Omega}|u_k|^{\frac{(p-2)\tilde{\nu}}{2}} dx \geq C\cdot (k-1).
\end{equation}
In addition, $I_p'(u_k)=0$ and \eqref{4-6} yields $pC_p\int_{\Omega} |u_k|^pdx=\int_{\Omega} |Xu_k|^2dx$. Hence \eqref{4-69} yields
\begin{equation}\label{4-70}
\begin{aligned}
d_{k,a_{k+2}}(I_p)= I_p(u_k)&=\frac{1}{2}\int_{\Omega}|Xu_k|^2dx-C_p\int_{\Omega}|u_k|^p dx\\
&=C_p\left(\frac{p}{2}-1\right)\int_{\Omega}|u_k|^p dx\\
&\geq C\left(\int_{\Omega}|u_k|^{\frac{(p-2)\tilde{\nu}}{2}} dx \right)^{\frac{2p}{\tilde{\nu}(p-2)}}\\
&\geq C (k-1)^{\frac{2p}{\tilde{\nu}(p-2)}}~~~~\mbox{for}~~k\geq 3.
\end{aligned}
\end{equation}

 Using \eqref{4-4} we have $J(u)\geq \frac{1}{2}I_{p}(u)$ for all $u\in H_{X,0}^{1}(\Omega)$, which implies
 \begin{equation}
  b_k(J)\geq \frac{1}{2}b_k(I_p)~~~~~\mbox{for}~~k\geq 3.
 \end{equation}
It follows from \eqref{4-11} and Proposition \ref{prop4-6} that
\[ b_{k+2}(I_{p})= b_{k+2,a_{k+2}}(I_{p})\geq d_{k,a_{k+2}}(I_{p})~~~~~\mbox{for}~~k\geq 1. \]
Consequently,
 \[  b_k(J)\geq C(k-3)^{\frac{2p}{\tilde{\nu}(p-2)}}\qquad \mbox{for}~~k\geq 5. \]
 \end{proof}

\begin{proof}[Proof of Theorem \ref{thm2}]
By Proposition \ref{prop4-14}, the proof of Theorem \ref{thm2} immediately follows from the proof of Theorem \ref{thm1}, with the replacement of $\frac{2p}{\vartheta(p-2)}-\frac{\tilde{\nu}}{\vartheta}$ by $\frac{2p}{\tilde{\nu}(p-2)}$.
\end{proof}

\section{An example}
\label{Section6}
In this section, we give a simple example to illustrate that condition $(A1)$ includes some cases which are not included in condition $(A2)$. Actually, there are many such examples in degenerate cases.

\begin{example}
\label{ex5-1}
Let $X=(X_{1},X_{2})=(\partial_{x_{1}}, \partial_{x_{2}}+x_{1}^{2}\partial_{x_{3}})$
be the Martinet type vector fields defined on  $\mathbb{R}^3$.
The Martinet operator generated by $X$ is given by
\[ \triangle_{M}:=\partial_{x_{1}}^{2}+\left(\partial_{x_{2}}+x_{1}^{2}\partial_{x_{3}}\right)^{2}. \]
Assume $\Omega\subset\mathbb{R}^3$  is a smooth bounded open domain  containing the origin in $\mathbb{R}^{3}$. Clearly, $X$ satisfy the H\"{o}rmander's condition in $\mathbb{R}^3$ but fail to satisfy the M\'etivier condition on $\overline{\Omega}$. Besides, we have $\tilde{\nu}=5>3=n$.
Denoting by $\lambda_{k}$  the $k$-th Dirichlet eigenvalue of $-\triangle_{X}$ on $\Omega$, it follows from \cite{Chen-Chen-Li2022} that
\begin{equation}\label{6-1}
  \lambda_{k}\geq  C\left(\frac{k}{\ln k} \right)^{\frac{2}{\tilde{\nu}-1}}=C\left(\frac{k}{\ln k} \right)^{\frac{1}{2}}
\end{equation}
holds for sufficient large $k$. This means $\vartheta=4$ and $\kappa=\frac{1}{2}$ in the condition (L) of Theorem \ref{thm1}.

Now, we consider the problem \eqref{problem1-1} with $f(x,u)=u|u|^{p-2}$ such that $f(x,u)$ satisfies assumptions (f1)-(f5) with $p=\mu>2$. Additionally, we set
\begin{equation}
g(x,u)=\left\{
      \begin{array}{cc}
      u|u|,\hfill & |u|<1, \\[2mm]
      u|u|^{\sigma-1},\hfill           & |u|\geq 1,
      \end{array}
 \right.
\end{equation}
where $\sigma=\frac{9}{10}p-1$ such that $0\leq \sigma<\mu-1=p-1$. It derives that $g(x,u)$ satisfies assumptions (g1)-(g3). In this case,  $(A1)$ is equivalent to
\begin{equation}\label{6-2}
  \frac{p}{2(p-2)}-\frac{5}{4}>\frac{\mu}{\mu-\sigma-1}=10,
\end{equation}
while $(A2)$ is equivalent to
\begin{equation}\label{6-3}
\frac{2p}{5(p-2)}>\frac{\mu}{\mu-\sigma-1}=10.
\end{equation}
If we take $p=\frac{1}{2}(\frac{25}{12}+\frac{90}{43})=\frac{2155}{1032}\in (2,\frac{2\tilde{\nu}}{\tilde{\nu}-2}) $, then
\[ \frac{p}{2(p-2)}-\frac{5}{4}=\frac{3855}{364}>\frac{\mu}{\mu-\sigma-1}=10,\]
but
\[ \frac{2p}{5(p-2)}=\frac{862}{91}<\frac{\mu}{\mu-\sigma-1}=10.\]
This means the triple parameters $(p,\mu,\sigma)=\left(p,p,\frac{9}{10}p-1\right)$ satisfies $(A1)$ but fails to $(A2)$.
\end{example}

\section*{Acknowledgements}
Hua Chen is supported by National Natural Science Foundation of China (Grant Nos. 12131017, 12221001) and National Key R\&D Program of China (no. 2022YFA1005602). Hong-Ge Chen is supported by National Natural Science Foundation of China (Grant No. 12201607) and Knowledge Innovation Program of Wuhan-Shuguang Project (Grant No. 2023010201020286). Jin-Ning Li is supported by China National Postdoctoral Program for Innovative Talents (Grant No.  BX20230270).

\end{document}